\newtheorem{theorem}{Theorem}[section]
\newtheorem{corollary}[theorem]{Corollary}
\newtheorem{lemma}[theorem]{Lemma}
\newtheorem{proposition}[theorem]{Proposition}
\theoremstyle{definition}
\newtheorem{conjecture}[theorem]{Conjecture}
\newtheorem{definition}[theorem]{Definition}
\theoremstyle{remark}
\newtheorem{remark}[theorem]{Remark}
\definecolor{mr}{rgb}{0.1,0.2,0.7}
\newcommand{\eps}{\varepsilon}
\newcommand{\ox}{\overline{x}}
\newcommand{\R}{\mathds{R}}
\newcommand{\N}{{\mathds{N}}}
\newcommand{\RR}{\mathrm{I\kern-0.20emR}}
\newcommand{\D}{\mathrm{d}\kern0.2pt}
\newcommand{\vp}{{\varphi}}
\DeclareMathOperator{\dist}{dist}
\title[On concavity of solution of the equation $(-\Delta)^{1/2} \vp = 1$]{On concavity of solution of Dirichlet problem for the equation $(-\Delta)^{1/2} \vp = 1$ in a convex planar region}
\author[T. Kulczycki]{Tadeusz Kulczycki}
\thanks{The research was supported in part by NCN grant no. 2011/03/B/ST1/00423.}
\address{Institute of Mathematics and Computer Science, Wroc{\l}aw University of Technology, Wyb. Wyspia{\'n}skiego 27, 50-370 Wroc{\l}aw, Poland.}
\email{Tadeusz.Kulczycki@pwr.edu.pl}
\begin{document}
%\sloppy
%\footnotetext{}
\begin{abstract}
For a sufficiently regular open bounded set $D \subset \R^2$ let us consider the equation $(-\Delta)^{1/2} \vp(x) = 1$, $x \in D$ with the Dirichlet exterior condition $\vp(x) = 0$, $x \in D^c$. $\vp$ is the expected value of the first exit time from $D$ of the Cauchy process in $\R^2$. We prove that if $D \subset \R^2$ is a convex bounded domain then $\vp$ is concave on $D$. To show it we study the Hessian matrix of the harmonic extension of $\vp$. The key idea of the proof is based on a deep result of Hans Lewy concerning determinants of Hessian matrices of harmonic functions.
\end{abstract}

\maketitle

\section{Introduction}

Let $D \subset \R^2$ be an open bounded set which satisfies a uniform exterior cone condition on $\partial D$ and let us consider the following Dirichlet problem for the square root of the Laplacian 
\begin{eqnarray}
\label{maineq1}
(-\Delta)^{1/2} \vp(x) &=& 1, \quad \quad x \in D,
\\
\label{maineq2}
\vp(x) &=& 0, \quad \quad x \in D^c,
\end{eqnarray}
where we understand that $\vp$ is a continuous function on $\R^2$.
$(-\Delta)^{1/2}$ in $\R^2$ is given by $(-\Delta)^{1/2} f(x) = \frac{1}{2 \pi} \lim_{\eps \to 0^+} \int_{|y - x| > \eps} \frac{f(x) - f(y)}{|y - x|^3} \, dy$, whenever the limit exists.

It is well known that (\ref{maineq1}-\ref{maineq2}) has a unique solution. It has a natural probabilistic interpretation. Let $X_t$ be the Cauchy process in $\R^2$ (that is a symmetric $\alpha$-stable process in $\R^2$ with $\alpha = 1$) with a transition density $p_t(x) = \frac{1}{2 \pi} t (t^2 + |x|^2)^{-3/2}$ and let $\tau_D = \inf\{t \ge 0: \, X_t \notin D\}$ be the first exit time of $X_t$ from $D$. Then $\vp(x) = E^x(\tau_D)$ \cite{G1961}, $x \in \R^2$, where $E^x$ is the expected value of the process $X_t$ starting from $x$. The function $E^x(\tau_D)$ plays an important role in the potential theory of symmetric stable processes (see e.g. \cite{BKK2008}, \cite{book2009}, \cite{CS2010}).

About 10 years ago R. Ba{\~n}uelos posed a problem of $p$-concavity of $E^x(\tau_D)$ for symmetric $\alpha$-stable processes. The problem was inspired by a beautiful result of Ch. Borell about $1/2$-concavity of $E^x(\tau_D)$ for the Brownian motion.

The main result of this paper is the following theorem. It solves the problem posed by R. Ba{\~n}uelos for the Cauchy process in $\R^2$.

\begin{theorem}
\label{mainthm}
If $D \subset \R^2$ is a bounded convex domain then the solution of (\ref{maineq1}-\ref{maineq2}) is concave on $D$.
\end{theorem}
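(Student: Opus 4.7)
The plan is to lift $\vp$ to $\R^3$ via harmonic extension and then read off concavity from the Hessian of the extension, using Hans Lewy's theorem on the sign of Hessian determinants of harmonic functions in three dimensions. Let $u:\R^2\times[0,\infty)\to\R$ be the Poisson integral of $\vp$, which is harmonic on the open upper half-space, continuous on $\R^2\times[0,\infty)$, and satisfies $u(\cdot,0)=\vp$. By the Dirichlet-to-Neumann interpretation of $(-\Delta)^{1/2}$ the boundary data are $\partial_y u(\cdot,0^+)=-1$ on $D$ and $u(\cdot,0)=0$ on $D^c$. Since $u$ vanishes continuously on $D^c\times\{0\}$, odd Schwarz reflection $u(x,-y):=-u(x,y)$ for $y<0$ yields a harmonic function, still denoted $u$, on the domain $\Omega:=\R^3\setminus(\overline{D}\times\{0\})$.

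By interior regularity for $(-\Delta)^{1/2}\vp=1$, the function $\vp$ is $C^\infty$ on $D$, so $u$ is smooth on $D\times[0,\infty)$ up to the flat part of the boundary. At a point $(x,0)$ with $x\in D$, tangential differentiation of $\partial_y u(\cdot,0)\equiv-1$ kills all mixed derivatives $\partial_{x_i y}u(x,0)$, and harmonicity yields $\partial_{yy}u(x,0)=-\Delta\vp(x)$. Thus $D^2 u(x,0)$ is block-diagonal with upper $2\times 2$ block equal to $D^2\vp(x)$ and lower scalar entry $-\Delta\vp(x)$, so
\[
\det D^2 u(x,0)=-\Delta\vp(x)\cdot\det D^2\vp(x),\qquad x\in D.
\]
In two dimensions, concavity of $\vp$ on $D$ is equivalent to the two scalar inequalities $\Delta\vp\le 0$ and $\det D^2\vp\ge 0$, which I would establish separately.

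The trace inequality $\Delta\vp\le 0$, equivalent to $\partial_{yy}u(\cdot,0)\ge 0$, I would prove directly by studying the harmonic function $\partial_{yy}u$ on the upper half-space, exploiting the non-negativity of $\vp$ and the constancy of $\partial_y u$ on $D\times\{0\}$ through a Poisson-kernel or maximum-principle argument. Given this, the determinant inequality $\det D^2\vp\ge 0$ is equivalent to $\det D^2 u\ge 0$ on $D\times\{0\}$, and this is where Lewy's theorem enters: applied to the reflected harmonic function $u$ on $\Omega$, it should forbid $\det D^2 u$ from switching sign inside $\Omega$ in an uncontrolled way, so that once the correct sign is verified in one part of $\Omega$ it propagates to all of $\Omega$ and, by continuity, to the flat face $D\times\{0\}$.

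I expect the main obstacle to be controlling $\det D^2 u$ near the singular edge $\partial D\times\{0\}$, where $\vp$ behaves only like $\sqrt{\dist(\cdot,\partial D)}$ and the odd reflection creates a sharp seam that is not accessible to Lewy's theorem directly. The natural technical device is to approximate $D$ from within by strictly convex $C^\infty$ domains $D_n$, establish the concavity of the corresponding solutions $\vp_n$ by a continuity or induction argument in $n$ that keeps $\det D^2 u_n$ of one sign via Lewy's rigidity, and then pass to the limit. The convexity of $D$ should enter both in guaranteeing such a regular approximating family and in supplying the geometric/topological input that Lewy's rigidity statement needs to propagate the sign of the Hessian determinant.
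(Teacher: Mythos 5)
You have correctly identified the high-level strategy of the paper --- harmonic extension to $\R^3$, reflection across $\{x_3=0\}$, sign control of the Hessian determinant via Lewy's theorem, and the factorization $\det D^2 u(x,0) = -\Delta\vp(x)\cdot\det D^2\vp(x)$ on $D\times\{0\}$ --- but your odd Schwarz reflection $u(x,-y):=-u(x,y)$ is the wrong choice and the argument breaks at exactly that step. Odd reflection is justified only across the set where $u$ vanishes, namely $D^c\times\{0\}$, so it produces a harmonic function on $\R^3\setminus(\overline{D}\times\{0\})$, which \emph{excludes} the very set $D\times\{0\}$ on which you must read off the sign of $\det D^2 u$; on $D\times\{0\}$ the odd reflection is in fact discontinuous since $\vp>0$ there. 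Moreover, odd reflection sends $u_{ij}\mapsto -u_{ij}$ for $i,j\in\{1,2,3\}$, $i\neq 3$ and $j\neq 3$, and also $u_{33}\mapsto -u_{33}$, while $u_{13},u_{23}\mapsto u_{13},u_{23}$, so $\det D^2 u(x_1,x_2,-x_3)=-\det D^2 u(x_1,x_2,x_3)$: the Hessian determinant is \emph{odd} across the plane and cannot be positive on both sides. The paper instead extends by $u(x_1,x_2,-x_3):=u(x_1,x_2,x_3)-2x_3$ for $x_3<0$; the affine correction makes both one-sided normal derivatives equal $-1$ on $D\times\{0\}$, so the extension is $C^1$ there and harmonic on $\R^3\setminus(D^c\times\{0\})$, that is, harmonic across $D\times\{0\}$ and not across the exterior --- the opposite of your reflection --- and its Hessian determinant is \emph{even} across $\{x_3=0\}$ (Lemma~\ref{lowerhalfspace}).

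Beyond the reflection, the propagation step is underspecified. Lewy's theorem by itself does not forbid $\det D^2 u$ from changing sign; it only says that an isolated zero of $H(u)$, when $H(u)\not\equiv 0$, must see both signs nearby. To convert this into global positivity one needs a base case where the sign is known and a continuous deformation along which the sign is controlled on the boundary of a compact control region, so that a contradiction with Lewy's theorem can be reached at the first parameter for which $H$ touches zero at an interior point. The paper does not simply approximate $D$ from within by smooth domains; it deforms $D(t)=(1-t)D+tB(0,1)$ to the unit ball, where $H(u^{(B(0,1))})>0$ is established by a separate argument (Section~5), and it controls the sign on the boundary of a large cylinder uniformly in $t$. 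That control is exactly what requires (a) the fine estimates of $u_{ij}$ near $\partial D\times\{0\}$ from Sections~3--4, and (b) the auxiliary perturbation $v^{(\eps)}=u+\eps(-x_1^2/2-x_2^2/2+x_3^2)$ to offset the degeneration of $H(u)$ near $(\operatorname{int}D^c)\times\{0\}$. These are not deferrable technicalities but the core of the proof. Your separate observation that $\Delta\vp<0$ on $D$ can be obtained by a direct Hopf-type normal-derivative argument for $u_{33}$ is correct and matches the paper's Lemma~\ref{onD}.
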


To the best of author's knowledge this is the first result concerning concavity of solutions of equations for fractional Laplacians on general convex domains. There is a recent interesting paper of R. Ba{\~n}uelos and R. D. DeBlassie \cite{BB2013} in which the first eigenfunction of the Dirichlet eigenvalue problem for fractional Laplacians on Lipschitz domains is studied but in that paper superharmonicity and not concavity of the first eigenfunction is proved (similar results were also obtained by M. Ka{\ss}mann and L. Silvestre \cite{KS}). In \cite{BKM2006} concavity of the first eigenfunction for fractional Laplacians was studied but \cite{BKM2006} concerns boxes and not general convex domains.

Now let $D \subset \R^d$, $d \ge 1$ be an open bounded set which satisfies a uniform exterior cone condition on $\partial D$, $\alpha \in (0,2]$ and let us consider a more general Dirichlet problem for the fractional Laplacian 
\begin{eqnarray}
\label{aeq1}
(-\Delta)^{\alpha/2} \vp(x) &=& 1, \quad \quad x \in D,
\\
\label{aeq2}
\vp(x) &=& 0, \quad \quad x \in D^c,
\end{eqnarray}
where we understand that $\vp$ is a continuous function on $\R^d$.
$(-\Delta)^{\alpha/2}$ in $\R^d$ for $\alpha \in (0,2)$ is given by $(-\Delta)^{\alpha/2} f(x) = \mathcal{A}_{d,-\alpha}
 \lim_{\eps \to 0^+} \int_{|y - x| > \eps} \frac{f(x) - f(y)}{|y - x|^{d + \alpha}} \, dy$, whenever the limit exists, $\mathcal{A}_{d,-\alpha} = 2^{\alpha} \Gamma((d + \alpha)/2)/(\pi^{d/2} |\Gamma(-\alpha/2)|)$. For $\alpha = 2$ the operator $(-\Delta)^{\alpha/2}$ is simply $-\Delta$.

It is well known that (\ref{aeq1}-\ref{aeq2}) has a unique solution. It is the expected value of the first exit time from $D$ of the symmetric $\alpha$-stable process in $\R^d$.

\begin{remark}
\label{Remark1}
For $\alpha  = 2$ i.e. for the Laplacian, it is well known that if $D \subset \R^d$ is a bounded convex domain then the solution of (\ref{aeq1}-\ref{aeq2}) is $1/2$-concave, that is $\sqrt{\vp}$ is concave. This was proved for $d = 2$ in 1969 by L. Makar-Limanov \cite{ML1971}. For $d \ge 3$ it was proved in 1983 by Ch. Borell \cite{B1985} and independently by A. Kennington \cite{Ke1985}, \cite{Ke1984} using ideas of N. Korevaar \cite{K1983}.
\end{remark}

\begin{remark}
\label{Remark2}
Let $\alpha \in (0,2]$ and $\vp$ be a solution of (\ref{aeq1}-\ref{aeq2}) for $D = B(0,r) \subset \R^d$, $d \ge 1$ a ball with centre $0$ and radius $r > 0$. Then $\vp$ is given by an explicit formula \cite{G1961} (see also \cite{KP1950}, \cite{E1959}) $\vp(x) = C_B (r^2 - |x|^2)^{\alpha/2}$, $x \in B(0,r)$, where $C_B = \Gamma(d/2)(2^{\alpha} \Gamma(1 + \alpha/2) \Gamma(d/2 + \alpha/2))^{-1}$. In particular $\vp$ is concave on $B(0,r)$.
\end{remark}

\begin{remark}
\label{Remark3}
For any $\alpha \in (1,2)$ and $d \ge 2$ there exists a bounded convex domain $D \subset \R^d$ (a sufficiently narrow bounded cone) such that $\vp$ is not concave on $D$. The justification of this statement is in Section 7. In particular, this implies that the assertion of Theorem \ref{mainthm}  is not true for the problem (\ref{aeq1}-\ref{aeq2}) for $\alpha \in (1,2)$.
\end{remark}

For general $\alpha \in (0,2)$ and $d \ge 2$ we have the following regularity result. 
\begin{theorem}
\label{generalthm}
Let $\alpha \in (0,2)$, $d \ge 2$ and let $\vp$ be a solution of (\ref{aeq1}-\ref{aeq2}). If $D \subset \R^d$ is a bounded convex domain then we have

a) for any $x_0 \in \partial D$, $x \in D$, $\lambda \in (0,1)$
$$
\vp(\lambda x + (1 - \lambda) x_0) \ge \lambda^{\alpha} \vp(x),
$$

b) for any $x, y \in D$, $\lambda \in (0,1)$
$$
\vp(\lambda x + (1 - \lambda) y) 
\ge \frac{1}{2} \left(\lambda^{\alpha} \vp(x) + (1 - \lambda)^{\alpha} \vp(y)\right).
$$
\end{theorem}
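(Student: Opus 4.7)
The approach is soft and bypasses any of the machinery behind Theorem \ref{mainthm}: both inequalities follow from the scaling property of the symmetric $\alpha$-stable process $X_t$ in $\R^d$ together with domain monotonicity of exit times. Specifically, for any $\lambda > 0$ the process $\{\lambda X_{t/\lambda^\alpha}\}_{t \ge 0}$ has the same law as $\{X_t\}_{t \ge 0}$, and if $D' \subset D$ then $\tau_{D'} \le \tau_D$ pointwise in $\omega$, since $X_t \notin D$ implies $X_t \notin D'$.

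For part (a), fix $x_0 \in \partial D$, $x \in D$, $\lambda \in (0,1)$, and set $y := \lambda x + (1-\lambda) x_0$. Introduce the contracted domain
\[
D_\lambda := \lambda D + (1-\lambda) x_0 = \{\lambda z + (1-\lambda) x_0 : z \in D\}.
\]
Since $D$ is convex and $x_0 \in \overline{D}$, an elementary argument shows $D_\lambda \subset D$ (and $y \in D_\lambda$). Define $Y_t := \lambda X_{t/\lambda^\alpha} + (1-\lambda) x_0$. If $X_t$ starts at $x$, then $Y_t$ starts at $y$, and by the scaling identity combined with translation invariance, $Y_t$ is itself a symmetric $\alpha$-stable process. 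A direct computation gives
\[
\tau^Y_{D_\lambda} = \inf\{t \ge 0 : X_{t/\lambda^\alpha} \notin D\} = \lambda^\alpha \tau^X_D,
\]
and taking expectations yields $E^y[\tau_{D_\lambda}] = \lambda^\alpha E^x[\tau_D] = \lambda^\alpha \vp(x)$. Combined with $D_\lambda \subset D$ and domain monotonicity, $\vp(y) = E^y[\tau_D] \ge E^y[\tau_{D_\lambda}] = \lambda^\alpha \vp(x)$, which is the desired inequality.

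Part (b) is then essentially a corollary. Inspection of the argument above shows that the only property of $x_0$ actually used is $x_0 \in \overline{D}$; the hypothesis $x_0 \in \partial D$ is inessential, because what is needed is merely that $\lambda z + (1-\lambda) x_0 \in D$ for every $z \in D$, which holds whenever $x_0 \in \overline{D}$, $z \in D$ and $\lambda \in (0,1)$. Applying the proof of (a) with $x_0 := y \in D$ yields
\[
\vp(\lambda x + (1-\lambda) y) \ge \lambda^\alpha \vp(x),
\]
and applying it with the roles of $x$ and $y$ interchanged, $\lambda$ replaced by $1-\lambda$, and $x_0 := x \in D$ yields
\[
\vp(\lambda x + (1-\lambda) y) \ge (1-\lambda)^\alpha \vp(y).
\]
Averaging the two bounds gives (b). No serious obstacle is anticipated: the only non-formal steps are the inclusion $D_\lambda \subset D$ (elementary convex geometry) and the identity $\tau^Y_{D_\lambda} = \lambda^\alpha \tau^X_D$ (immediate from the definitions). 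This is consistent with Theorem \ref{generalthm} being much weaker than the genuine concavity of Theorem \ref{mainthm}.
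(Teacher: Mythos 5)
Your proof is correct, and it takes a genuinely different route from the paper. The paper proves part (a) analytically: set $f(x) = \vp(\lambda x + (1-\lambda)x_0) - \lambda^\alpha \vp(x)$, note that by the scaling identity $(-\Delta)^{\alpha/2}\psi_r(x) = r^\alpha[(-\Delta)^{\alpha/2}\psi](rx)$ and convexity one has $(-\Delta)^{\alpha/2}f = 0$ in $D$, observe $f \ge 0$ on $D^c$, and invoke the maximum principle for $(-\Delta)^{\alpha/2}$. You instead argue probabilistically through the identification $\vp(x) = E^x[\tau_D]$, using the exact self-similarity of the stable process together with domain monotonicity of exit times; this replaces the maximum principle by the trivial pointwise inequality $\tau_{D_\lambda} \le \tau_D$, and is arguably more transparent. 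The two approaches diverge more visibly in part (b). The paper deduces (b) from the literal statement of (a), which requires $x_0 \in \partial D$, and therefore needs a small geometric step: pick the two boundary points $x_0, y_0$ on the line through $x$ and $y$, write $z = \lambda x + (1-\lambda)y$ as a convex combination of $y$ with $x_0$ and of $x$ with $y_0$, and use that $|z - x_0|/|y - x_0| \ge 1-\lambda$ (and symmetrically) before averaging. You short-circuit this by noticing that the proof of (a) never actually uses $x_0 \in \partial D$, only $x_0 \in \overline{D}$ (which is exactly what is needed for $D_\lambda = \lambda D + (1-\lambda)x_0 \subset D$), so you can take $x_0 := y$ and $x_0 := x$ directly and average. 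That strengthened form of (a), with $x_0$ allowed anywhere in $\overline{D}$, is also available in the paper's analytic framework (the same maximum-principle argument goes through), so this is a streamlining rather than a gain in generality, but it does make (b) an immediate corollary rather than a separate geometric argument.
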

The proof of this theorem is in Section 7. It is based on one tricky observation and is much easier than the proof of Theorem \ref{mainthm}. Clearly, Theorem \ref{generalthm} does not imply $p$-concavity of $\vp$ for any $p \in [-\infty,1]$. Some conjectures concerning $p$-concavity of solutions of (\ref{aeq1}-\ref{aeq2}) are presented in Section 7.

Below we present the idea of the proof of Theorem \ref{mainthm}. The proof is in the spirit of papers by L. Caffarelli, A. Friedman \cite{CF1985} and N. Korevaar, J. Lewis \cite{KL1987} in which they study geometric properties of solutions of some PDEs using the constant rank theorem and the method of continuity. In the proof of Theorem \ref{mainthm} the role of the constant rank theorem plays the following result of Hans Lewy from 1968.
\begin{theorem}[Hans Lewy, \cite{L1968}]
\label{HL}
Let $u(x_1,x_2,x_3)$ be real and harmonic in a domain $\Omega$ of $\R^3$. Suppose the Hessian $H(u)$ i.e. the determinant of the matrix of second derivatives $((\partial^2 u/\partial x_i \partial x_j))$ vanishes at a point $x_0 \in \Omega$ without vanishing identically in $\Omega$. 
Then $H(u)$ assumes both positive and negative values near $x_0$.
\end{theorem}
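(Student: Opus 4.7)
The plan is to argue by contradiction. After replacing $u$ by $-u$ if necessary, suppose $H(u) \ge 0$ throughout a neighborhood $B$ of $x_0$, and aim to conclude that $H(u) \equiv 0$ on $B$, contradicting the hypothesis. The key structural observation is that harmonicity forces the trace of $\mathrm{Hess}(u)$ to vanish, so the eigenvalues $\lambda_1, \lambda_2, \lambda_3$ of the Hessian satisfy $\lambda_1 + \lambda_2 + \lambda_3 = 0$ at every point. Combined with $\lambda_1 \lambda_2 \lambda_3 = H(u) \ge 0$, this pins down the signature at each point to be either $(-,-,+)$ or degenerate (at least one zero eigenvalue); this is already a strong pointwise constraint that will be propagated through a Taylor expansion of $u$ at $x_0$.

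Reduce to $x_0 = 0$ with $u(0) = 0$ and $\nabla u(0) = 0$. Since harmonic functions are real-analytic, expand $u = \sum_{k \ge 2} p_k$, where each $p_k$ is a homogeneous harmonic polynomial of degree $k$. The Hessian at $0$ equals $\mathrm{Hess}(p_2)(0)$, so $H(u)(0) = 0$ combined with tracelessness forces the eigenvalues of $\mathrm{Hess}(p_2)$ to be $\mu, -\mu, 0$ for some $\mu \ge 0$. Because $H(u) \not\equiv 0$ by hypothesis, its Taylor series has a nonzero homogeneous component $Q$ of minimal degree, and the task reduces to showing that $Q$ must take both signs. If $\mu > 0$, rotate coordinates so that $p_2 = \tfrac{\mu}{2}(x_1^2 - x_2^2)$; then one computes the first few Taylor coefficients of $H(u)$ at $0$ and uses $H(u) \ge 0$, with $0$ a local minimum, to successively kill these coefficients, yielding differential relations among the $p_k$ that can be iterated. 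If $\mu = 0$ and $k \ge 3$ is the smallest index with $p_k \not\equiv 0$, then $Q = \det \mathrm{Hess}(p_k)$, homogeneous of degree $3(k-2)$; when $k$ is odd and $\det \mathrm{Hess}(p_k) \not\equiv 0$, the homogeneity of an odd-degree polynomial forces $Q$ to take both signs immediately, contradicting $H(u) \ge 0$.

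The principal obstacle is that $\det \mathrm{Hess}(p_k)$ can vanish identically for nontrivial $p_k$, for instance whenever $p_k$ depends on only two of the three coordinates (as in $p_3 = x_1^3 - 3 x_1 x_2^2$); in such cases the leading nonzero homogeneous term of $H(u)$ comes from interactions of $\mathrm{Hess}(p_k)$ with $\mathrm{Hess}(p_{k+1}), \mathrm{Hess}(p_{k+2}), \ldots$, and one must carry out delicate bookkeeping to identify it and again exploit nonnegativity. The algebraic heart of Lewy's argument is a systematic induction showing that at each order the combination of tracelessness (an affine constraint) and nonnegativity (a convex constraint) on the $3 \times 3$ symmetric Hessian is strong enough to cancel successive Taylor coefficients until the whole series collapses into one with $H \equiv 0$. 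Three-dimensionality is essential: in $\R^2$ the Hessian of a harmonic function automatically satisfies $H \le 0$, so the statement is vacuous, and the specific algebraic identities used rely on the structure of $3 \times 3$ symmetric trace-free matrices.
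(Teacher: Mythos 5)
The theorem is quoted from Lewy's 1968 paper; the present paper invokes it as a black box and does not prove it, so there is no internal argument here to compare against. Your sketch correctly identifies the structural ingredients one would expect in a proof: tracelessness of the Hessian from harmonicity; the consequent constraint that under $H(u)\ge 0$ the Hessian has signature $(1,2)$ or is degenerate; real-analyticity permitting an expansion $u=\sum p_k$ in homogeneous harmonic polynomials and making the leading nonzero homogeneous component $Q$ of $H(u)$ well-defined; the odd-degree parity argument; and the observation that in $\R^2$ the statement is vacuous because $u_{11}u_{22}-u_{12}^2=-u_{11}^2-u_{12}^2\le 0$.

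However, the proposal does not constitute a proof. The ``systematic induction'' you gesture at, in which tracelessness (an affine constraint) and nonnegativity (a convex constraint) successively annihilate Taylor coefficients of $H(u)$, is the entire substance of Lewy's theorem, and you never carry it out. Concretely, in the case $\mu>0$, after rotating so that $p_2=\tfrac{\mu}{2}(x_1^2-x_2^2)$, the degree-one homogeneous piece of $H(u)$ is $\mathrm{tr}\bigl(\mathrm{adj}(\mathrm{Hess}\,p_2)\,\mathrm{Hess}\,p_3\bigr)=-\mu^2\,(p_3)_{33}$; nonnegativity of $H(u)$ with a zero at the origin forces this linear term to vanish, which constrains $p_3$ but by no means kills it, and the degree-two piece already mixes quadratic contributions of $\mathrm{Hess}\,p_3$ with $\mathrm{Hess}\,p_2$ and linear contributions of $\mathrm{Hess}\,p_4$, after which the bookkeeping escalates rapidly. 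In the case $\mu=0$ with $k$ even, or with $\det\mathrm{Hess}(p_k)\equiv 0$ (which, as you note, happens for instance whenever $p_k$ depends on only two coordinates), the parity trick gives nothing, and the leading term of $H(u)$ is a sum over mixed products of Hessians of several $p_j$, not a single determinant; identifying this term and showing it must change sign or force further vanishing is the hard core of the argument. Asserting that the two constraints are ``strong enough to cancel successive Taylor coefficients until the whole series collapses'' is exactly the statement that needs to be proved, so the proposal as written has a gap coextensive with the theorem itself.
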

The use of this result is the key element of the proof of Theorem \ref{mainthm}.
Note that it is known the generalization of Theorem \ref{HL} to higher dimensions. This generalization is a remarkable achievement obtained by S. Gleason and T. Wolff in 1991 (see Theorem 1 in \cite{GW1991}). It gives 
some hope that it is possible to extend Theorem \ref{mainthm} to higher dimensions, see Conjecture \ref{alpha1} in Section 7.

Let us come back to presenting the idea of the proof of Theorem \ref{mainthm}. We first show this result for a sufficiently smooth bounded convex domain $D \subset B(0,1) \subset \R^2$, which boundary has a strictly positive curvature. Let us consider the harmonic extension $u$ of $\vp$. Namely, let
\begin{equation}
\label{Kkernel}
K(x) = C_K \frac{x_3}{(x_1^2 + x_2^2 + x_3^2)^{3/2}}, \quad \quad x \in \R_+^3,
\end{equation}
where $C_K = 1/(2 \pi)$, $\R_+^3 = \{x = (x_1,x_2,x_3) \in \R^3: \, x_3 > 0\}$. Put $u(x_1,x_2,0) = \vp(x_1,x_2)$, $(x_1,x_2) \in \R^2$ and
\begin{equation}
\label{ext1}
u(x_1,x_2,x_3) = \int_D K(x_1-y_1,x_2-y_2,x_3) \vp(y_1,y_2) \, dy_1 \, dy_2,
\quad \quad (x_1,x_2,x_3) \in \R_+^3.
\end{equation}
Note that $K(x_1-y_1,x_2-y_2,x_3)$ is the Poisson kernel of $\R_+^3$ for points $x = (x_1,x_2,x_3) \in \R_+^3$ and $(y_1,y_2,0) \in \partial \R_+^3$. By $f_i$ we denote $\frac{\partial f}{\partial x_i}$, by $f_{ij}$ we denote $\frac{\partial^2 f}{\partial x_i \partial x_j}$. It is well known that $u_3(x_1,x_2,0) = - (-\Delta)^{1/2} \vp(x_1,x_2)$, $(x_1,x_2) \in D$ so $u$ satisfies
\begin{eqnarray}
\label{harmonic}
\Delta u(x) &=& 0, \quad \quad x \in \R_+^3,\\
\label{Steklov}
u_3(x) &=& -1, \quad \quad x \in D \times \{0\},\\
\label{Dirichlet}
u(x) &=& 0, \quad \quad x \in D^c \times \{0\},
\end{eqnarray}
where $\Delta u = u_{11} + u_{22} + u_{33}$.

The idea of studying equations for fractional Laplacians via harmonic extensions is well known. It was used for the first time by F. Spitzer in \cite{S1958}. Harmonic extensions were used by many authors e.g. by  S. A. Molchanov, E. Ostrovskii \cite{MO1969}, R. D. DeBlassie \cite{B1990}, P. Mendez-Hernandez \cite{M2002}, R. Ba{\~n}uelos, T. Kulczycki \cite{BK2004}, A. El Hajj, H. Ibrahim, R. Monneau \cite{EIM2009}, L. Caffarelli, L. Silvestre \cite{CS2007}.

In the next step of the proof we extend $u$ to $\R_-^3 = \{x = (x_1,x_2,x_3) \in \R^3: \, x_3 < 0\}$ by putting
\begin{equation}
\label{ext2}
u(x_1,x_2,x_3) = u(x_1,x_2,-x_3) - 2 x_3, \quad \quad (x_1,x_2,x_3) \in \R_-^3.
\end{equation} 
Note that $u$ is continuous on $\R^3$ and for $(x_1,x_2) \in D$ it satisfies
\begin{eqnarray*}
u_{3^-}(x_1,x_2,0) &=& 
\lim_{h \to 0^-} \frac{u(x_1,x_2,h) - u(x_1,x_2,0)}{h}\\
&=& 
\lim_{h \to 0^-} \frac{u(x_1,x_2,-h) -2h - u(x_1,x_2,0)}{h} 
= -1.
\end{eqnarray*}
By standard arguments it follows that $u$ is harmonic in $\R_+^3 \cup \R_-^3 \cup (D \times \{0\}) = \R^3 \setminus (D^c \times \{0\})$.

Since we need to consider different domains $D$ we change our notation $\vp$, $u$ to $\vp^{(D)}$, $u^{(D)}$. Let $H(u^{(D)})$ be the determinant of the Hessian matrix of $u^{(D)}$. Our next aim is to show that $H(u^{(D)})(x) > 0$ for any $x \in \R^3 \setminus (D^c \times \{0\})$. Note that (see Lemma \ref{lowerhalfspace}) $H(u^{(D)})(x_1,x_2,-x_3) = H(u^{(D)})(x_1,x_2,x_3)$ so it is sufficient to control $H(u^{(D)})(x)$ for $x \in \R_+^3 \cup (D \times \{0\})$. Now for technical reasons we need to add an auxiliary function to $u^{(D)}$. Namely, for any $\eps \ge 0$ we consider 
$v^{(\eps,D)}(x) = u^{(D)}(x) + \eps (-x_1^2/2 - x_2^2/2 + x_3^2)$. This is done to control $H(v^{(\eps,D)})(x)$ near $(\text{int} D^c) \times \{0\}$ because $H(u^{(D)})(x) \to 0$ when $x$ approaches $(\text{int} D^c) \times \{0\}$. Note that $v^{(\eps,D)}$ is harmonic in $\R^3 \setminus (D^c \times \{0\})$. Note also that (see Lemma \ref{lowerhalfspace1}) $H(v^{(\eps,D)})(x_1,x_2,-x_3) = H(v^{(\eps,D)})(x_1,x_2,x_3)$ so it is sufficient to control $H(v^{(\eps,D)})(x)$ for $x \in \R_+^3 \cup (D \times \{0\})$.

Now, on the contrary, assume that there exists $x_0 \in \R^3 \setminus (D^c \times \{0\})$ such that we have $H(u^{(D)})(x_0) \le 0$. One can show that $H(u^{(D)})(x)$ is not identically zero in $\R^3 \setminus (D^c \times \{0\})$. If $H(u^{(D)})(x_0) = 0$ and $H(u^{(D)})(x) \ge 0$ for all $x \in \R^3 \setminus (D^c \times \{0\})$ then we get contradiction with Theorem \ref{HL}. So, we may assume that $H(u^{(D)})(x_0) < 0$. Then for sufficiently small $\eps > 0$ we have $H(v^{(\eps,D)})(x_0) < 0$. Recall that $D \subset B(0,1) \subset \R^2$. For $M \ge 10$ we consider the set $W(M,D) = \{x \in \R^3: \, x_1^2 + x_2^2 \le M, x_3 \in [-M,M]\} \setminus (D^c \times \{0\})$ (it is a large cylinder without $D^c \times \{0\}$). One can take large enough $M$ so that $x_0 \in W(M,D)$.

In the next step of the proof using direct formula of $\vp^{(B(0,1))}$ and also using some "tricks" we show that $H(u^{(B(0,1))})(x) > 0$ for any $x \in \R^3 \setminus (B^c(0,1) \times \{0\})$. This is done in Section 5. Later we show that for sufficiently large $M$ and small $\eps$ we have $H(v^{(\eps,B(0,1))})(x) > 0$ for $x \in W(M,B(0,1))$.

Then we use method of continuity (cf. \cite[page 20]{KL1987}, \cite{CF1985}). Namely, we deform $D$ to the ball $B(0,1)$. More precisely we consider the family of domains $\{D(t)\}_{t \in [0,1]}$ such that $D(0) = D$, $D(1) = B(0,1)$, all $D(t)$ are smooth bounded convex domains which boundaries have strictly positive curvature and $\partial D(t) \to \partial D(s)$ when $t \to s$ in the appropriate sense. One can fix (in the appropriate way) sufficiently large $M \ge 10$ and sufficiently small $\eps > 0$ so that for all domains $\{D(t)\}_{t \in [0,1]}$ we can control $H(v^{(\eps,D(t))})(x)$ for $x$ near the boundary of "cylinders" $W(M,D(t))$. For $x \in \partial W(M,D(t))$ such that $|x_3| = M$ or $x_1^2 + x_2^2 = M$ and $x_3 > 0$ ($x_3$ not too small) we have $H(v^{(\eps,D(t))})(x) \approx H(u^{(D(t))})(x) \approx H(K)(x) > 0$. The last inequality follows by a direct computation. Showing that $H(v^{(\eps,D(t))})(x) \approx H(u^{(D(t))})(x) > 0$ near $\partial D \times \{0\}$ is the most technical part of the proof and this is done in Sections 3, 4 and in the proof of Proposition \ref{vepsilon}. The fact that $H(v^{(\eps,D(t))})(x) > 0$ for $x$ near $D^c \times \{0\}$ (when $x$ is not too close to $\partial D \times \{0\}$) is rather easy and here is the place where the auxiliary function $\eps (-x_1^2/2 - x_2^2/2 + x_3^2)$ helps.

In fact, one can show that there exists $c > 0$ such that $H(v^{(\eps,D(t))})(x) \ge c > 0$ for all $t \in [0,1]$ and all $x$ near $\partial W(M,D(t))$.

Recall that $H(v^{(\eps,D(0))})(x_0) < 0$ for some $x_0 \in W(M,D(0))$ and $H(v^{(\eps,D(1))})(x) > 0$ for all $x \in W(M,D(1))$. Using the method of continuity one can show that there exists $\tilde{t} \in (0,1)$, $\tilde{D} = D(\tilde{t})$ and $\tilde{x} \in W(M,\tilde{D})$ such that $H(v^{(\eps,\tilde{D})})(\tilde{x}) = 0$ and $H(v^{(\eps,\tilde{D})})(x) \ge 0$ for all $x \in W(M,\tilde{D})$. Moreover one can show that $H(v^{(\eps,\tilde{D})})(x) > 0$ for $x$ near $\partial W(M,\tilde{D})$ so $x \in \text{int} (W(M,\tilde{D}))$. This gives contradiction with Theorem \ref{HL}. So we finally obtain 
\begin{equation}
\label{Hessian}
H(u^{(D)})(x) > 0, \quad \quad x \in \R^3 \setminus (D^c \times \{0\}).
\end{equation}
A closer look gives that in fact the Hessian matrix of $u$ has a constant signature $(1,2)$. It seems that this observation could help in studying the analogous problem in higher dimensions (cf. Conjecture \ref{alpha1} in Section 7 and Theorem 1 in \cite{GW1991}).

Let $(x_1,x_2) \in D$. By (\ref{Steklov}) we get $u_{13}^{(D)}(x_1,x_2,0) = 0$, $u_{23}^{(D)}(x_1,x_2,0) = 0$, $u_{33}^{(D)}(x_1,x_2,0) > 0$ (see Lemma \ref{onD}). Using this and (\ref{Hessian}) we obtain 
\begin{equation}
\label{u112212intro}
u_{11}^{(D)}(x_1,x_2,0) u_{22}^{(D)}(x_1,x_2,0) - (u_{12}^{(D)}(x_1,x_2,0))^2 > 0.
\end{equation}
We also have $u_{11}^{(D)}(x_1,x_2,0) + u_{22}^{(D)}(x_1,x_2,0) = - u_{33}^{(D)}(x_1,x_2,0) < 0$. This and (\ref{u112212intro}) implies $u_{11}^{(D)}(x_1,x_2,0) < 0$, $u_{22}^{(D)}(x_1,x_2,0) < 0$.

This gives that $\vp^{(D)}(x_1,x_2) = u^{(D)}(x_1,x_2,0)$ is strictly concave on $D$. Recall that we have assumed that $D$ is a sufficiently smooth bounded convex domain, $D \subset B(0,1)$ and $\partial D$ has a strictly positive curvature. The concavity of $\vp^{(D)}$ for arbitrary convex domains $D$ follows by approximation arguments and scaling.

Of course, this is only the sketch of the proof. In fact one has to be very carefull with the method of continuity. In particular one has to control $H(v^{(\eps,D(t))})(x)$ for $x$ near $\partial W(M,D(t))$ in a "uniform way" according to $t \in [0,1]$.

The paper is organized as follows. In Section 2 we present notation and collect some known facts needed in the rest of the paper. In Section 3 we estimate $\vp_{ij}^{(D)}$ near $\partial D$. 
Section 4 contains estimates of $u_{ij}^{(D)}$ near $\partial D \times \{0\}$. In Section 5 the harmonic extension for a ball is studied. Section 6 contains the proof of the main theorem. In Section 7 some extensions and conjectures are presented.

\section{Preliminaries}

For $x \in \R^d$ and $r > 0$ we let $B(x,r) = \{y \in \R^d: \, |y - x| < r\}$. By $a \wedge b$ we denote $\min(a,b)$ and by $a \vee b$ we denote $\max(a,b)$ for $a, b \in \R$. For $x \in \R^d$, $D \subset \R^d$ we put $\delta_D(x) = \dist(x,\partial D)$. For any $\psi: \R^d \to \R$ we denote $\psi_i(x) = \frac{\partial \psi}{\partial x_i}(x)$, $\psi_{ij}(x) = \frac{\partial^2 \psi}{\partial x_i \partial x_j}(x)$, $i, j \in \{1,\ldots,d\}$.
We put $\R_+^3 = \{(x_1,x_2,x_3) \in \R^3: \, x_3 > 0\}$, $\R_-^3 = \{(x_1,x_2,x_3) \in \R^3: \, x_3 < 0\}$. The definition of a uniform exterior cone condition may be found e.g. in \cite[page 195]{GT1977}.

Let us define a subclass of bounded, convex $C^{2,1}$ domains in $\R^2$ with strictly positive curvature, which will be suitable for our purposes.

\begin{definition}
\label{classF}
Let $C_1 > 0$, $R_1 > 0$, $\kappa_2 \ge \kappa_1 > 0$ and let us fix a Cartesian coordinate system $CS$ in $\R^2$. We say that a domain $D \subset \R^2$ belongs to the class $F(C_1,R_1,\kappa_1,\kappa_2)$ when

1. $D$ is convex. In $CS$ coordinates we have
$$
\{(y_1,y_2): \, y_1^2 + y_2^2 < R_1^2\} \subset D
\subset \{(y_1,y_2): \, y_1^2 + y_2^2 < 1\}.
$$ 

2. For any $x \in \partial D$ there exist a Cartesian coordinate system $CS_x$ with origin at $x$ obtained by translation and rotation of $CS$, there exist $R > 0$, $f: [-R,R] \to [0,\infty)$ ($R$, $f$ depend on $x$), such that $f \in C^{2,1}[-R,R]$, $f(0) = 0$, $f'(0) = 0$ and in $CS_x$ coordinates
$$
\{(y_1,y_2): \,  y_2 \in [-R,R], y_1 \in (f(y_2),R]\} 
= D \cap \{(y_1,y_2): \,  y_1 \in [-R,R], y_2 \in [-R,R]\}. 
$$

3. For any $y \in \partial D$ we have 
$$
\kappa_1 \le \kappa(y) \le \kappa_2,
$$
where $\kappa(y)$ denotes the curvature of $\partial D$ at $y$.

4. For any $y, z \in \partial D$ we have 
$$
|\kappa(y) - \kappa(z)| \le C_1 |y - z|.
$$

For brevity, we will often use notation $\Lambda = \{C_1,R_1,\kappa_1,\kappa_2\}$ and write $D \in F(\Lambda)$.
\end{definition}

Let $C_1 > 0$, $R_1 > 0$, $\kappa_2 \ge \kappa_1 > 0$ and put $\Lambda = \{C_1,R_1,\kappa_1,\kappa_2\}$. Let $D \in F(\Lambda)$. For any $y \in \partial D$ by $\vec{n}(y)$ we denote the normal inner unit vector at $y$ and by $\vec{T}(y)$ we denote the tangent unit vector at $y$ which agrees with negative (clockwise) orientation of $\partial D$. We put $e_1 = (1,0)$, $e_2 = (0,1)$.

It may be easily shown that there exists $\tilde{R} = \tilde{R}(\Lambda)$ such that for any $y \in D$, $\delta_D(y) \le \tilde{R}$ there exists a unique $y^* \in \partial D$ such that $|y - y^*| = \delta_D(y)$. For any $y \in D$ such that $\delta_D(y) \le \tilde{R}$ we define $\vec{n}(y) = \vec{n}(y^*)$, $\vec{T}(y) = \vec{T}(y^*)$. For any $\psi \in C^2(D)$, $y \in D$, $v_1(y), v_2(y) \in \R$ and $\vec{v}(y) = v_1(y) {e_1} + v_2(y) {e_2}$ we put 
$\frac{\partial \psi}{\partial \vec{v}}(y) = v_1(y) \psi_1(y) + v_2(y) \psi_2(y)$, (recall that $\psi_i(y) = \frac{\partial \psi}{\partial x_i}(y)$). Similarly, for any $w_1(y), w_2(y) \in \R$ and $\vec{w}(y) = w_1(y) {e_1} + w_2(y) {e_2}$ we put $\frac{\partial^2 \psi}{\partial \vec{v} \partial \vec{w}}(y) = v_1(y) w_1(y) \psi_{11}(y) + v_2(y) w_2(y) \psi_{22}(y) + 
(v_1(y) w_2(y) + v_2(y) w_1(y)) \psi_{12}(y)$.

\begin{figure}
\centering
\includegraphics[scale=0.7]{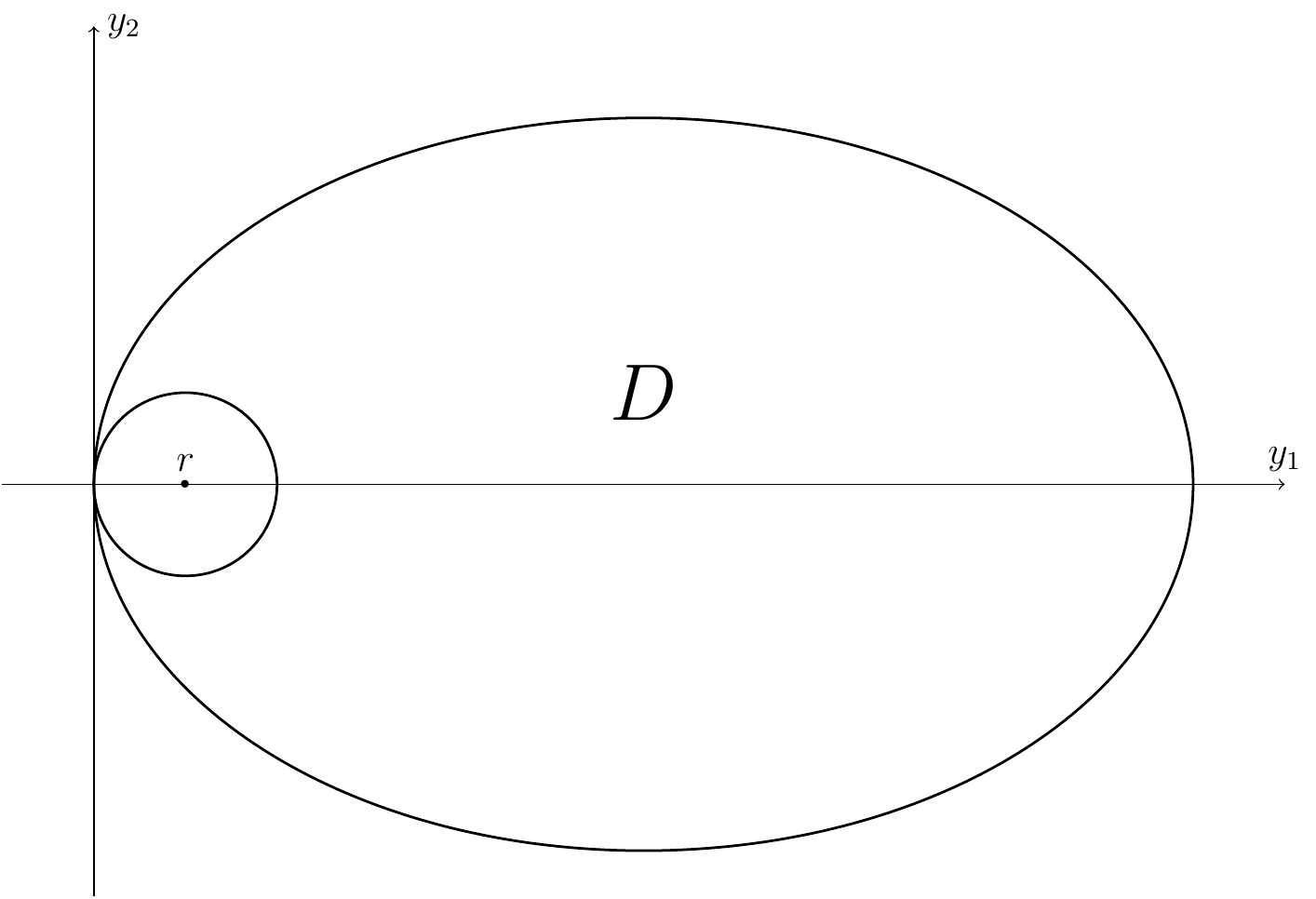}
\caption{}
\label{fig:1}
\end{figure}

\begin{lemma}
\label{ball}
Let $C_1 > 0$, $R_1 > 0$, $\kappa_2 \ge \kappa_1 > 0$ put $\Lambda = \{C_1,R_1,\kappa_1,\kappa_2\}$ and let us fix a Cartesian coordinate system $CS$ in $\R^2$. Fix $D \in F(\Lambda)$ and $x_0 \in \partial D$. Choose a new Cartesian coordinate system $CS_{x_0}$ with origin at $x_0$ obtained by translation and rotation of $CS$ such that the positive coordinate halflines $y_1$, $y_2$ are in the directions $\vec{n}(x_0)$, $\vec{T}(x_0)$ respectively.

From now on all points and vectors are in this new coordinate system $CS_{x_0}$, in particular $\vec{n}(0,0) = (1,0) = {e_1}$, $\vec{T}(0,0) = (0,1) = {e_2}$. For any $y \in \partial D$ define $\alpha(y) \in (-\pi,\pi]$ such that $\vec{T}(y) = \sin \alpha(y) {e_1} + \cos \alpha(y) {e_2}$ (this is an angle between ${e_2}$ and $\vec{T}(y)$).

There exists $r_0 = r_0(\Lambda) \le \tilde{R} \wedge (1/2)$, $c_1 = c_1(\Lambda)$, $c_2 = c_2(\Lambda)$, $c_3 = c_3(\Lambda)$, $c_4 = c_4(\Lambda)$, $c_5 = c_5(\Lambda)$, $c_6 = c_6(\Lambda)$, $f: [-r_0,r_0] \to [0,\infty)$ such that $f \in C^{2,1}[-r_0,r_0]$, $f(0) = 0$, $f'(0) = 0$, $c_4 r_0 \le 1/4$ and for any fixed $r \in (0,r_0]$ we have (see Figure 1)

1. $\{(y_1,y_2): \, (y_1 - r)^2 + y_2^2 < r^2\}  \subset D$,
$$
W := \{(y_1,y_2): \,  y_2 \in [-r,r], y_1 \in (f(y_2),r]\} 
= D \cap \{(y_1,y_2): \,  y_1 \in [-r,r], y_2 \in [-r,r]\}.
$$

2. For any $y \in W$ we have $\alpha(y) \in [-\pi/4,\pi/4]$ and 
$$
c_1 |y_2| \le |\sin \alpha(y)| \le c_2 |y_2|.
$$

3. For any $y_2 \in [-r,r]$ we have 
$$
c_3 y_2^2 \le f(y_2) \le c_4 y_2^2.
$$ 

4. For any $y \in W$ we have ${e_1} = \cos \alpha(y) \vec{n}(y) + \sin \alpha(y) \vec{T}(y)$, ${e_2} = - \sin \alpha(y) \vec{n}(y) + \cos \alpha(y) \vec{T}(y)$. For any $\psi \in C^2(D)$ and $y \in W$ we have
\begin{eqnarray*}
\psi_1(y) &=& \cos \alpha(y) \frac{\partial \psi}{\partial \vec{n}}(y) 
+ \sin \alpha(y) \frac{\partial \psi}{\partial \vec{T}}(y),\\
\psi_2(y) &=& -\sin \alpha(y) \frac{\partial \psi}{\partial \vec{n}}(y) 
+ \cos \alpha(y) \frac{\partial \psi}{\partial \vec{T}}(y),
\end{eqnarray*}
\begin{eqnarray*}
\psi_{11}(y) &=& \cos^2 \alpha(y) \frac{\partial^2 \psi}{\partial \vec{n}^2}(y) 
+ \sin^2 \alpha(y) \frac{\partial^2 \psi}{\partial \vec{T}^2}(y) 
+ 2 \sin \alpha(y) \cos \alpha(y) \frac{\partial^2 \psi}{\partial \vec{n} \partial \vec{T}}(y),\\
\psi_{22}(y) &=& \cos^2 \alpha(y) \frac{\partial^2 \psi}{\partial \vec{T}^2}(y) 
+ \sin^2 \alpha(y) \frac{\partial^2 \psi}{\partial \vec{n}^2}(y) 
- 2 \sin \alpha(y) \cos \alpha(y) \frac{\partial^2 \psi}{\partial \vec{n} \partial \vec{T}}(y),\\
\psi_{12}(y) &=& (\cos^2 \alpha(y) - \sin^2 \alpha(y)) \frac{\partial^2 \psi}{\partial \vec{n} \partial \vec{T}}(y)
- \sin \alpha(y) \cos \alpha(y) \left(\frac{\partial^2 \psi}{\partial \vec{n}^2}(y) - \frac{\partial^2 \psi}{\partial \vec{T}^2}(y) \right). 
\end{eqnarray*}

5. For any $y \in \{(y_1,y_2) \in W: \, y_2 > 0\}$ we have
$$
c_5 (f^{-1}(y_1) - y_2) f^{-1}(y_1) \le \delta_D(y) 
\le c_6 (f^{-1}(y_1) - y_2) f^{-1}(y_1),
$$
where $f^{-1}: [0,f(r)] \to [0,r]$.
\end{lemma}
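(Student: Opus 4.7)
The plan is to establish the five items by standard local differential geometry, exploiting the $C^{2,1}$ graph parametrization from Definition \ref{classF}(2) combined with the ODE for the tangent angle driven by the curvature. All constants will be shown to depend only on $\Lambda$; the main bookkeeping task is tracking this uniformity.

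First I would fix the coordinate system $CS_{x_0}$ and invoke Definition \ref{classF}(2) to obtain a $C^{2,1}$ graph $f\colon[-R,R]\to[0,\infty)$ for $\partial D$ with $f(0)=f'(0)=0$. Convexity of $D$ together with $\vec{n}(0,0)=e_1$ forces $f$ to be convex, and in this parametrization $\kappa(y)=f''(y_2)/(1+f'(y_2)^2)^{3/2}$. A Gronwall-type argument, using $f'(0)=0$ and $f''\le (1+f'^2)^{3/2}\kappa_2$, propagates $|f'|\le 1$ and $f''(y_2)\in[\kappa_1,2^{3/2}\kappa_2]$ throughout $[-r_0,r_0]$ once $r_0$ is small enough depending only on $\Lambda$. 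Taylor's theorem then gives item 3 with explicit $c_3,c_4$; one may shrink $r_0$ further to ensure $c_4 r_0\le 1/4$. For the interior disk in item 1, the disk's left boundary $y_1=r-\sqrt{r^2-y_2^2}\ge y_2^2/(2r)$ dominates the graph $y_1=f(y_2)\le c_4 y_2^2$ whenever $r\le 1/(2c_4)$; for $r_0$ small the disk stays inside the local graph chart, hence inside $D$.

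For item 2 I would parametrize $\partial D$ by arc length $s$ from $x_0$ and use the Frenet relation $\alpha'(s)=-\kappa(\gamma(s))$, with the sign chosen to match the clockwise-tangent convention. Integration against $\kappa\in[\kappa_1,\kappa_2]$ yields $\kappa_1|s|\le |\alpha(y)|\le \kappa_2|s|$, and after shrinking $r_0$ so $\kappa_2 r_0\le \pi/4$ one automatically has $\alpha(y)\in[-\pi/4,\pi/4]$. Combining this with $y_2(s)=\int_0^s\cos\alpha(\sigma)\,d\sigma$, on which $\cos\alpha\ge \cos(\pi/4)$, gives $|s|\asymp|y_2|$, and then $|\sin\alpha(y)|\asymp|\alpha(y)|\asymp|s|\asymp|y_2|$. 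Item 4 is pure linear algebra: at each $y$ the orthonormal frames $(\vec{n}(y),\vec{T}(y))$ and $(e_1,e_2)$ are related by a rotation of angle $\alpha(y)$, and the stated formulas are the bilinear expansion of $D\psi(y)$ and $\mathrm{Hess}\,\psi(y)$ in the rotated frame; no extra terms appear since $\partial^2\psi/\partial\vec{v}\partial\vec{w}$ is defined in the lemma as a pointwise Hessian evaluation, not an iterated directional derivative.

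Item 5 is the most substantive and is where the main obstacle sits. For $y=(y_1,y_2)\in W$ with $y_2>0$ and $\rho:=f^{-1}(y_1)\in[y_2,r]$, the upper bound is immediate from the competitor $(f(y_2),y_2)\in\partial D$:
\[
\delta_D(y)\le y_1-f(y_2)=\int_{y_2}^{\rho}f'(u)\,du\le c\,(\rho-y_2)\,\rho.
\]
For the lower bound I analyze the squared-distance function $g(t)=(y_1-f(t))^2+(y_2-t)^2$: from $g'(y_2)=-2f'(y_2)(y_1-f(y_2))<0$ and $g'(\rho)=2(\rho-y_2)>0$ the nearest-point parameter on the graph satisfies $t^*\in(y_2,\rho)$, and the critical-point identity $t^*-y_2=f'(t^*)(y_1-f(t^*))$ combined with $f'(t^*)\le c\rho$ and $y_1-f(t^*)\le c\rho(\rho-y_2)$ forces $t^*-y_2\le C r_0^2\,(\rho-y_2)$. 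For $r_0$ small this gives $\rho-t^*\ge (\rho-y_2)/2$, hence
\[
\delta_D(y)\ge y_1-f(t^*)=\int_{t^*}^{\rho}f'(u)\,du\ge \frac{\kappa_1}{4}(\rho-y_2)\,\rho.
\]
One must also check that the global nearest boundary point lies on the local graph chart, which follows because for $r_0$ small any other portion of $\partial D$ is much farther from $y\in W$ than the competitors already exhibited. The subtle point—and the main obstacle—is that the matching structure $(\rho-y_2)\rho$ on both sides is available only after the contact parameter $t^*$ is controlled quantitatively as a function of $y$, which uses the $C^{2,1}$ hypothesis through the Lipschitz bound on the curvature. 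Items 1--4 are essentially routine once the graph representation and the arc-length parametrization are in place.
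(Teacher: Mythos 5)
The paper offers no proof for this lemma---it is dismissed with ``This lemma follows by elementary geometry and its proof is omitted''---so there is no canonical argument to compare against; your proposal supplies the natural filling-in (graph parametrization with curvature-driven bounds on $f''$, Frenet/arc-length control of $\alpha$, rotation of the gradient and Hessian for item 4, and a squared-distance critical-point analysis for item 5), and the key computations (the bootstrap $|f'|\le1\Rightarrow f''\in[\kappa_1,2^{3/2}\kappa_2]$, the estimate $t^*-y_2\le Cr_0^2(\rho-y_2)$ forcing $\rho-t^*\ge(\rho-y_2)/2$, and the resulting two-sided bound $\delta_D(y)\asymp(\rho-y_2)\rho$) are correct.

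One place you should tighten the argument: item 2 is asserted for \emph{all} $y\in W$, and for an interior point $y$ the angle is $\alpha(y):=\alpha(y^*)$ where $y^*\in\partial D$ is the nearest boundary point. Your arc-length estimate gives $|\sin\alpha(z)|\asymp|z_2|$ for $z\in\partial D$; you still need $|y_2|\asymp|y^*_2|$. This does follow: writing $y=y^*+\delta_D(y)\vec{n}(y^*)$ and $\vec{n}(y^*)=\cos\alpha(y^*)\,e_1-\sin\alpha(y^*)\,e_2$ gives $y_2=y^*_2-\delta_D(y)\sin\alpha(y^*)$, and since $\delta_D(y)\le y_1\le r_0$ and $|\sin\alpha(y^*)|\le c_2|y^*_2|$ one gets $|y_2-y^*_2|\le c_2 r_0|y^*_2|$, hence $|y_2|\asymp|y^*_2|$ after shrinking $r_0$; but as written your chain $|\sin\alpha(y)|\asymp|s|\asymp|y_2|$ conflates $y_2$ with $y^*_2$. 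Two smaller remarks: your item 5 argument in fact uses only $C^2$ with the two-sided curvature bound (the $C^{2,1}$ Lipschitz hypothesis enters elsewhere in the paper, e.g.\ Lemma \ref{deformation}, not here), and you implicitly need the chart size $R$ of Definition \ref{classF}(2) to be bounded below uniformly in $x_0\in\partial D$; this uniformity is a consequence of $\kappa\le\kappa_2$ and should be stated to justify a $\Lambda$-only choice of $r_0$.
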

This lemma follows by elementary geometry and its proof is omitted.

In the sequel we will use the method of continuity (cf. \cite[page 20]{KL1987}, \cite{CF1985}). Roughly speaking, we will deform a convex bounded domain $D$ to a ball $B(0,1)$. To do this we will consider the following construction. Let $C_1 > 0$, $R_1 > 0$, $\kappa_2 \ge \kappa_1 > 0$. For any $D \in F(C_1,R_1,\kappa_1,\kappa_2)$ and $t \in [0,1]$ we define 
\begin{equation}
\label{construction}
D(t) = (1-t) D + t B(0,1).
\end{equation}
\begin{lemma}
\label{deformation}
For any $C_1 > 0$, $R_1 > 0$, $\kappa_2 \ge \kappa_1 > 0$ there exists $C'_1 > 0$, $R'_1 > 0$, $\kappa'_2 \ge \kappa'_1 > 0$ such that for any $D \in F(C_1,R_1,\kappa_1,\kappa_2)$ and any $t \in [0,1]$ we have $D(t) \in F(C'_1,R'_1,\kappa'_1,\kappa'_2)$.
\end{lemma}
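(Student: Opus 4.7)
The natural machinery is that of support functions. The support function $h_K(\theta) = \sup_{x \in K}(x_1\cos\theta + x_2\sin\theta)$ of a convex body $K$ is linear under Minkowski combinations, and $h_{B(0,1)} \equiv 1$, so
$$
h_{D(t)}(\theta) = (1-t) h_D(\theta) + t.
$$
Combined with the classical identity $\rho(\theta) = h(\theta) + h''(\theta)$ for the radius of curvature of a strictly convex $C^2$ planar body, this yields the central relation
$$
\rho_{D(t)}(\theta) = (1-t)\rho_D(\theta) + t.
$$
The proof then consists in reading off the four conditions of Definition \ref{classF} for $D(t)$ from these two identities, with constants depending only on $\Lambda$ and independent of $t \in [0,1]$.

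Conditions 1 and 3 are essentially immediate. Convexity of $D(t)$ follows from convexity of Minkowski sums, and the invariance $B(0,R_1) = (1-t)B(0,R_1) + t B(0,R_1)$ combined with $B(0,R_1) \subset D$ gives $B(0,R_1) \subset D(t) \subset B(0,1)$, so one takes $R'_1 := R_1$. From $\rho_D(\theta) \in [1/\kappa_2, 1/\kappa_1]$ the central identity gives $\rho_{D(t)}(\theta) \in [\min(1/\kappa_2, 1), \max(1/\kappa_1, 1)]$ for every $t$, hence $\kappa'_1 := \min(\kappa_1, 1)$ and $\kappa'_2 := \max(\kappa_2, 1)$ settle condition 3. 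For condition 2, the $C^{2,1}$ regularity of $h_{D(t)} = (1-t)h_D + t$ (with norms controlled by $\Lambda$) propagates to the boundary parametrization $y(\theta) = h_{D(t)}(\theta)(\cos\theta, \sin\theta) + h'_{D(t)}(\theta)(-\sin\theta, \cos\theta)$, while the uniform lower bound $\rho_{D(t)} \ge 1/\kappa'_2 > 0$ rules out cusps. Rotating the coordinate system at a chosen boundary point so that $y_1$ points along the inner normal then expresses $\partial D(t)$ locally as the graph $y_1 = f(y_2)$ with $f \in C^{2,1}$, $f(0) = 0$, $f'(0) = 0$, on an interval whose length is controlled by $\Lambda$ alone.

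Condition 4 is the main obstacle. The central identity shows $\rho_{D(t)}$ is Lipschitz in $\theta$ with constant at most that of $\rho_D$, so $\kappa_{D(t)} = 1/\rho_{D(t)}$ is Lipschitz in $\theta$ with constant controlled by $\kappa'_2$ and by the $\theta$-Lipschitz constant of $\rho_D$. However, Definition \ref{classF}(4) is phrased in terms of Euclidean distance $|y-z|$ rather than in terms of the normal angle. Two uniform comparisons are therefore required: first, translating the hypothesis $|\kappa_D(y) - \kappa_D(z)| \le C_1 |y-z|$ into a $\theta$-Lipschitz bound on $\rho_D$, using $ds_D = \rho_D\, d\theta$ and the comparability of arc length and chord length on a $C^{2,1}$ convex curve with controlled geometry; and second, converting the $\theta$-Lipschitz bound for $\kappa_{D(t)}$ back into a $|y-z|$-Lipschitz bound on $\partial D(t)$. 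Both comparisons depend only on the uniform containment $B(0,R_1) \subset D(t) \subset B(0,1)$ and on the uniform curvature bounds $[\kappa'_1, \kappa'_2]$ already established, which prevent local or global degeneration of the map $\theta \mapsto y(\theta)$; the resulting constant $C'_1$ is therefore uniform in $t$.
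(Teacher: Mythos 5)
Your proof is correct, and it takes a genuinely different route from the paper. The paper follows the appendix of Gilbarg--Trudinger: it writes $D(t) = E(t) \cup \{x : \delta_{E(t)}(x) < t\}$ with $E(t) = (1-t)D$, applies the implicit function theorem to the distance function $F = \delta_{E(t)} - t$ to get local $C^2$ regularity of $\partial D(t)$, computes $\kappa_{D(t)}(x_0) = \kappa_{E(t)}(y_0)/(1 + t\,\kappa_{E(t)}(y_0))$ from second derivatives of $\delta_{E(t)}$, and then establishes the Lipschitz property of $\kappa_{D(t)}$ by differentiating the nearest-point projection $x \mapsto y(x)$ onto $\partial E(t)$. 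You instead pass to support functions, use linearity of $h$ under Minkowski combination together with $\rho = h + h''$, and obtain the identity $\rho_{D(t)}(\theta) = (1-t)\rho_D(\theta) + t$ directly. Both methods arrive at the same central relation (the paper's curvature formula rearranges to $\rho_{D(t)} = (1-t)\rho_D + t$ after noting $1/\kappa_{E(t)} = (1-t)\rho_D$), so the two proofs are parallel in content. The support-function route makes the affine transport of the radius of curvature transparent and shortens the curvature bounds to a one-line reading of the identity, while the paper's distance-function route avoids invoking the support-function calculus and the $\rho = h + h''$ identity, staying entirely within elementary differential geometry of the distance function. In both proofs the final Lipschitz step requires converting between Euclidean distance on $\partial D$ and the natural parameter (normal angle for you, the projection map for the paper), and in both cases the conversion rests on the same uniform ingredients: $B(0,R_1) \subset D(t) \subset B(0,1)$ and $\kappa_{D(t)} \in [\kappa'_1, \kappa'_2]$. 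Your sketch of that step is appropriately brief, but it is the one place where you should be sure to produce an explicit two-sided comparison between chord length, arc length, and angular separation with constants depending only on $\Lambda$; once that is written out, the argument is complete.
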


\begin{proof}
This lemma seems to be standard, similar results are well known (cf. \cite[proof of Theorem 3.1]{CF1985}). Notation and most of the arguments are taken from Appendix in D. Gilbarg and N. Trudinger's book \cite{GT1977}, pages 381-384.

Clearly, $D(t)$ is a convex domain satisfying $B(0,R_1) \subset D(t) \subset B(0,1)$. Fix $t \in [0,1]$. Put $E(t) = (1-t) D$, we have $D(t) = E(t) \cup \{x \in E(t)^c: \, \delta_{E(t)}(x) < t\}$. In particular
\begin{equation}
\label{boundarydelta}
\partial D(t) = \{x \in E(t)^c: \, \delta_{E(t)}(x) = t\}.
\end{equation} 
By scaling for any $x \in \partial E(t)$ we have
\begin{equation}
\label{Etcurv}
\frac{\kappa_1}{1-t} \le \kappa_{E(t)}(x) \le \frac{\kappa_2}{1-t},
\end{equation}
where $\kappa_{E(t)}(x) > 0$ denotes the curvature of $\partial E(t)$ at $x$ (see the definition on page 381 in \cite{GT1977}).

We will now use \cite[Appendix]{GT1977} (mainly we will use arguments used in the proofs of Lemmas 1, 2 and not necessarily assertions of these lemmas). We will use arguments for the set $E(t)^c$. Fix $x_0 \in E(t)^c$ such that $\dist(x_0,\partial E(t)) = t$. Let $y_0 \in \partial E(t)$ be a point such that $|x_0 - y_0| = t$. By the arguments in Lemmas 1, 2 \cite[Appendix]{GT1977} $\delta_{E(t)}(x)$ is a $C^2$ function on $\text{int}(E(t)^c)$. Choose Cartesian coordinate system $(x_1,x_2)$ such that $x_2$-axis lies in the direction $x_0 - y_0$ and the origin is $x_0$ (i.e. $x_0$ has coordinates $(0,0)$). This coordinate system is obtained by translation and rotation of the original coordinate system.

By arguments as in Lemma 2 \cite[Appendix]{GT1977} $\nabla \delta_{E(t)}(x_0) = (0,1)$, $D_{12} \delta_{E(t)}(x_0) = 0$, $D_{22} \delta_{E(t)}(x_0) = 0$, $D_{11} \delta_{E(t)}(x_0) = \frac{\kappa_{E(t)}(y_0)}{1+\kappa_{E(t)}(y_0) \delta_{E(t)}(x_0)}$ (in the assertion of Lemma 2 there are minuses in front of curvatures, here we do not have minuses because we consider $E(t)^c$ and the curvature $\kappa_{E(t)}$ was chosen to be positive). 

Put $F(x_1,x_2) = \delta_{E(t)}(x_1,x_2) - t$. We have $F_1(0,0) = 0$, $F_2(0,0) = 1$, $F_{11}(0,0) = \frac{\kappa_{E(t)}(y_0)}{1+\kappa_{E(t)}(y_0) \delta_{E(t)}(x_0)}$, $F_{12}(0,0) = 0$, $F_{22}(0,0) = 0$. By the implicit function theorem there exists a $C^2$ function $\psi:(-\eta,\eta) \to \R$, $\eta > 0$ such that $F(x_1,\psi(x_1)) = 0$. Hence by (\ref{boundarydelta}) $\partial D(t)$ is locally $C^2$.

We have $\psi' = -F_1/F_2$, $\psi'' = (2F_1F_2F_{12} - (F_2)^2F_{11} - (F_1)^2F_{22})(F_2)^{-3}$, so $\psi'(0) = 0$, 
$$
\psi''(0) = - F_{11}(0) = 
\frac{-\kappa_{E(t)}(y_0)}{1+\kappa_{E(t)}(y_0) \delta_{E(t)}(x_0)}
= \frac{-\kappa_{E(t)}(y_0)}{1+\kappa_{E(t)}(y_0) t}.
$$
Hence the curvature of $\partial D(t)$ at $x_0$ satisfies
$$
\kappa_{D(t)}(x_0) = \frac{\kappa_{E(t)}(y_0)}{1+\kappa_{E(t)}(y_0) t} 
= \frac{1}{\frac{1}{\kappa_{E(t)}(y_0)}+ t}.
$$
By (\ref{Etcurv})
$$
\frac{1-t}{\kappa_2} \le \frac{1}{\kappa_{E(t)}(y_0)} \le \frac{1-t}{\kappa_1},
$$
so 
$$
\kappa_1 \wedge 1 
\le \frac{1}{\frac{1-t}{\kappa_1}+ t}
\le \frac{1}{\frac{1}{\kappa_{E(t)}(y_0)}+ t}
\le \frac{1}{\frac{1-t}{\kappa_2}+ t}
\le \kappa_2 \vee 1. 
$$
Hence the curvature of $\partial D(t)$ at $x_0$ is between $\kappa_1 \wedge 1$ and $\kappa_2 \vee 1$.

Now we will show that the curvature $\kappa_{D(t)}(x)$ is Lipschitz. For any $x \in \text{int}(E(t)^c)$ there exists a unique point $y = y(x) \in \partial E(t)$ such that $|x - y| = \delta_{E(t)}(x)$. By \cite[Appendix]{GT1977} the function $y(x)$ is $C^1$ on $\text{int}(E(t)^c)$. Let $\nu(y)$ be the unit inner normal vector of $E(t)^c$ at $y$. We have $x = y(x) + \nu(y(x)) \delta_{E(t)}(x)$.

Let $z_0 \in \text{int}(E(t)^c)$, let $y_0 = y(z_0)$ i.e. $y_0$ is a unique point such that $y_0 \in \partial E(t)$ and $|y_0 - z_0| = \delta_{E(t)}(z_0)$. We use a Cartesian coordinate system $(x_1,x_2)$ as above with the origin $z_0$ and such that $x_2$-axis lies in the direction $z_0 - y_0$ (this coordinate system is obtained by translation and rotation of the original coordinate system). 

Using the same notation as in \cite[Appendix]{GT1977} note that for $y = (y_1,y_2) \in \partial E(t)$ near $y_0$ we have $\nu(y_1,y_2) = \overline{\nu}(y_1)$. Let us denote $\overline{\nu}(y) = (\overline{\nu}_1(y), \overline{\nu}_2(y))$. We have $(y_1(z_0),y_2(z_0)) = y(z_0) = y_0 = (y_{01},y_{02}) = (0,y_{02})$, $\overline{\nu}_1(y_{01}) = 0$, $\overline{\nu}_2(y_{01}) = 1$.

For $x \in \text{int}(E(t)^c)$ near $z_0$ we have
$$
x = y(x) + \overline{\nu}(y_1(x)) \delta_{E(t)}(x).
$$
In particular
\begin{eqnarray}
\label{x1y1}
x_1 &=& y_1(x) + \overline{\nu}_1(y_1(x)) \delta_{E(t)}(x),\\
\label{x2y2}
x_2 &=& y_2(x) + \overline{\nu}_2(y_1(x)) \delta_{E(t)}(x)
\end{eqnarray}
By computing $D_1$ derivative of (\ref{x1y1}) we get 
$$
1 = D_1 y_1(x) + D \overline{\nu}_1 (y_1(x)) D_1 y_1(x) \delta_{E(t)}(x) 
+ \overline{\nu}_1 (y_1(x)) D_1 \delta_{E(t)}(x).
$$
Putting $x = z_0$ (recall that $y(z_0) = y_0$) we obtain
$$
1 = D_1 y_1(z_0) + D \overline{\nu}_1 (y_{01}) D_1 y_1(z_0) \delta_{E(t)}(z_0) 
+ \overline{\nu}_1 (y_{01}) D_1 \delta_{E(t)}(z_0).
$$
By \cite[(A6), page 382]{GT1977} we get $D \overline{\nu}_1 (y_{01}) = \kappa_{E(t)}(y_0)$. We also have $\overline{\nu}_1 (y_{01}) = 0$. Hence
$$
D_1 y_1(z_0) = (1 + \kappa_{E(t)}(y_0) \delta_{E(t)}(z_0))^{-1}.
$$
By computing $D_2$ derivative of (\ref{x1y1}) we get 
$$
0 = D_2 y_1(x) + D \overline{\nu}_1 (y_1(x)) D_2 y_1(x) \delta_{E(t)}(x) 
+ \overline{\nu}_1 (y_1(x)) D_2 \delta_{E(t)}(x).
$$
Putting $x = z_0$ we obtain
$$
0 = D_2 y_1(z_0) + D \overline{\nu}_1 (y_{01}) D_2 y_1(z_0) \delta_{E(t)}(z_0) 
+ \overline{\nu}_1 (y_{01}) D_2 \delta_{E(t)}(z_0).
$$
Hence $D_2 y_1(z_0) = 0$.

By computing $D_1$ derivative of (\ref{x2y2}) we get 
$$
0 = D_1 y_2(x) + D \overline{\nu}_2 (y_1(x)) D_1 y_1(x) \delta_{E(t)}(x) 
+ \overline{\nu}_2 (y_1(x)) D_1 \delta_{E(t)}(x).
$$
Putting $x = z_0$ we obtain
$$
0 = D_1 y_2(z_0) + D \overline{\nu}_2 (y_{01}) D_1 y_1(z_0) \delta_{E(t)}(z_0) 
+ \overline{\nu}_2 (y_{01}) D_1 \delta_{E(t)}(z_0).
$$
By \cite[line 6, page 383]{GT1977} we have $D_1 \delta_{E(t)}(z_0) = 0$. 
From the formula for $\overline{\nu}_2 (y_{1}) = \nu_2(y)$ in \cite[(A5), page 382]{GT1977} it follows that $D \overline{\nu}_2 (y_{01}) = 0$. Hence $D_1 y_2(z_0) = 0$.  
By computing $D_2$ derivative of (\ref{x2y2}) we get 
$$
1 = D_2 y_2(x) + D \overline{\nu}_2 (y_1(x)) D_2 y_1(x) \delta_{E(t)}(x) 
+ \overline{\nu}_2 (y_1(x)) D_2 \delta_{E(t)}(x).
$$
Putting $x = z_0$ we obtain
$$
1 = D_2 y_2(z_0) + D \overline{\nu}_2 (y_{01}) D_2 y_1(z_0) \delta_{E(t)}(z_0) 
+ \overline{\nu}_2 (y_{01}) D_2 \delta_{E(t)}(z_0).
$$
We have $\overline{\nu}_2 (y_{01}) = 1$. By \cite[line 6, page 383]{GT1977} we have $D_2 \delta_{E(t)}(z_0) = 1$. Hence $D_2 y_2(z_0) = 0$.

Finally we get 
\begin{equation}
\label{gradients}
|D_1 y_1(z_0)| \le 1, \quad \quad D_2 y_1(z_0) = 0, \quad \quad |\nabla y_2(z_0)| = 0.
\end{equation}

Let $x_0 \in E(t)^c$ be such that $\dist(x_0,\partial E(t)) = t$ i.e. $x_0 \in \partial D(t)$. Choose $z_0 = x_0$, we have $y(z_0) = y(x_0) = y_0$. 
For any $x \in \partial D(t)$ which are sufficiently close to $x_0$ we get
$$
|\kappa_{D(t)}(x_0) - \kappa_{D(t)}(x)| 
= \left|\frac{\kappa_{E(t)}(y_0)}{1+ t \kappa_{E(t)}(y_0)}
- \frac{\kappa_{E(t)}(y(x))}{1+ t \kappa_{E(t)}(y(x))}\right|.
$$
Using $\kappa_{E(t)}(y(x)) = \frac{1}{1 - t} \kappa_D \left(\frac{y(x)}{1-t} \right)$ this is equal to
\begin{eqnarray*}
\frac{(1-t)\left|\kappa_D\left(\frac{y_0}{1-t}\right) - \kappa_D\left(\frac{y(x)}{1-t}\right)\right|}{\left|1 - t + t \kappa_D\left(\frac{y_0}{1-t}\right)\right| \left|1 - t + t \kappa_D\left(\frac{y(x)}{1-t}\right)\right|}
&\le&
\frac{(1-t)C_1 \left|\frac{y_0 - y(x)}{1 - t}\right|}{(\kappa_1 \wedge 1)^2}\\
&\le& \frac{C_1}{(\kappa_1 \wedge 1)^2} |y_0 - y(x)|.
\end{eqnarray*}

We estimate now $|y(x_0) - y(x)|$ for $x \in \partial D(t)$ which are sufficiently close to $x_0$. We have
\begin{eqnarray*}
y(x_0) - y(x) &=& (y_1(x_0) - y_1(x), y_2(x_0) - y_2(x)),\\
|y_1(x_0) - y_1(x)| &=& |\nabla y_1(\xi)| |x - x_0|,\\
|y_2(x_0) - y_2(x)| &=& |\nabla y_2(\tilde{\xi})| |x - x_0|,
\end{eqnarray*}
where $\xi$, $\tilde{\xi}$ are points between $x$ and $x_0$. For $x \in \partial D(t)$ which are sufficiently close to $x_0$ we have $\delta_{E(t)}(\xi) \ge t/2$, $\delta_{E(t)}(\tilde{\xi}) \ge t/2$. It follows that $\xi \in \text{int}(E(t)^c)$, $\tilde{\xi} \in \text{int}(E(t)^c)$. Using (\ref{gradients}) in the appropriate way we get $|\nabla y_1(\xi)| \le 1$, $|\nabla y_2(\tilde{\xi})| \le 1$ (this follows by translation and rotation of a coordinate system). Hence $|y(x_0) - y(x)| \le \sqrt{2} |x- x_0|$. Therefore
$$
|\kappa_{D(t)}(x_0) - \kappa_{D(t)}(x)| 
\le \frac{C_1}{(\kappa_1 \wedge 1)^2} |y_0 - y(x)|
\le \frac{C_1 \sqrt{2}}{(\kappa_1 \wedge 1)^2} |x - x_0|.
$$
This holds for $x \in \partial D(t)$ which are sufficiently close to $x_0$ but by simple geometric arguments it can be extended to any $x \in \partial D(t)$ (with a different constant).
\end{proof}

Now we state some properties of the solution of (\ref{maineq1}-\ref{maineq2}) and its harmonic extension which will be needed in the rest of the paper. 

Let $D \subset \R^2$ be an open bounded set and $\vp^{(D)}$ be the solution of (\ref{maineq1}-\ref{maineq2}) for $D$. Then the following scaling property is well known \cite[(1.61)]{book2009}:
\begin{equation}
\label{scaling}
\vp^{(aD)}(ax) = a \vp^{(D)}(x), \quad x \in D, \, a > 0.
\end{equation}

For any open bounded sets $D_1, D_2 \subset \R^2$ put $d(D_1,D_2) = 
[\sup\{\dist(x,\partial D_2): \, x \in \partial D_1\}] \wedge 
[\sup\{\dist(x,\partial D_1): \, x \in \partial D_2\}]$. 
\begin{lemma}
\label{phiconv}
Let $\{D_n\}_{n = 0}^{\infty}$ be a sequence of bounded convex domains in $\R^2$ and $\vp^{(D_n)}$ be the solution of (\ref{maineq1}-\ref{maineq2}) for $D_n$. If $d(D_n,D_0) \to 0$ as $n \to \infty$ then for any $x \in D_0$ we have $\vp^{(D_n)}(x) \to \vp^{(D_0)}(x)$ as $n \to \infty$.  
\end{lemma}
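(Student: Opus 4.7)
The plan is to use the probabilistic representation $\vp^{(D)}(x)=E^x(\tau_D)$, where $\tau_D$ is the first exit time of the Cauchy process from $D$, and to sandwich $D_n$ between suitable inner and outer convex approximations of $D_0$. For $\epsilon>0$, set $D_0^{-\epsilon}=\{y\in D_0:\dist(y,\partial D_0)>\epsilon\}$ and $D_0^{+\epsilon}=\{y\in\R^2:\dist(y,D_0)<\epsilon\}$. Both are bounded convex domains (convexity of $D_0^{-\epsilon}$ follows from the half-plane representation $D_0=\bigcap_{H\supset D_0}H$, that of $D_0^{+\epsilon}$ from the Minkowski sum description $D_0+B(0,\epsilon)$). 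The first step is geometric: show that for every $\epsilon>0$ there is $N=N(\epsilon)$ with $D_0^{-\epsilon}\subset D_n\subset D_0^{+\epsilon}$ for $n\ge N$. This reduces, via a radial-function argument from a fixed reference point of $D_0\cap D_n$, to the fact that $\partial D_n$ is eventually contained in the $\epsilon$-tubular neighborhood of $\partial D_0$; convexity of all the domains is essential to upgrade the one-sided $\wedge$-hypothesis to the two-sided inclusion. By monotonicity of exit times under set inclusion, for any fixed $x\in D_0$ and any $\epsilon<\dist(x,\partial D_0)$ we then have
\[\vp^{(D_0^{-\epsilon})}(x)\le\vp^{(D_n)}(x)\le\vp^{(D_0^{+\epsilon})}(x),\qquad n\ge N(\epsilon).\]

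It suffices to check that both bounds tend to $\vp^{(D_0)}(x)$ as $\epsilon\downarrow 0$. For the lower bound, $D_0^{-\epsilon}\uparrow D_0$; since the Cauchy process is a pure-jump L\'evy process with absolutely continuous L\'evy measure, $P^x(X_{\tau_{D_0}-}\in\partial D_0)=0$ and the process exits $D_0$ by a jump from strictly inside, so $\tau_{D_0^{-\epsilon}}\uparrow\tau_{D_0}$ $P^x$-almost surely, and monotone convergence gives $\vp^{(D_0^{-\epsilon})}(x)\uparrow\vp^{(D_0)}(x)$. For the upper bound, the strong Markov property at $\tau_{D_0}$ yields
\[\vp^{(D_0^{+\epsilon})}(x)-\vp^{(D_0)}(x)=E^x\bigl(\vp^{(D_0^{+\epsilon})}(X_{\tau_{D_0}})\bigr).\]
The integrand is uniformly bounded (for $\epsilon\le 1$) by $\vp^{(B(0,R))}$ with $R$ large, which is explicit via Remark \ref{Remark2}; on $\mathrm{int}(D_0^c)$ it vanishes as soon as $\epsilon<\dist(X_{\tau_{D_0}},D_0)$; and by the Ikeda--Watanabe formula the exit distribution of $X_{\tau_{D_0}}$ is absolutely continuous on $D_0^c$, so $P^x(X_{\tau_{D_0}}\in\partial D_0)=0$. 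Dominated convergence then gives $\vp^{(D_0^{+\epsilon})}(x)\to\vp^{(D_0)}(x)$.

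The main obstacle is the geometric sandwich step. Since $d(D_n,D_0)\to 0$ is defined via $\wedge$ rather than $\vee$, the hypothesis is a priori weaker than Hausdorff convergence of the boundaries, and some care is needed to extract the two-sided inclusion $D_0^{-\epsilon}\subset D_n\subset D_0^{+\epsilon}$; convexity of all the domains is what makes this work, together with the observation that $D_n$ and $D_0$ must eventually share a common interior ball (otherwise neither one-sided sup can be small). The probabilistic half is then routine monotone/dominated convergence, the only subtle probabilistic input being that the Cauchy process almost surely does not touch $\partial D_0$ at its exit time, which rests on the absolute continuity of the Cauchy L\'evy measure.
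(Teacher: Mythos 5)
Your proof is correct in its probabilistic core but takes a genuinely different and noticeably heavier route than the one the paper has in mind, and the geometric step you flag as "the main obstacle" cannot be resolved as you sketch, because the lemma is actually false as literally written. The paper's own (omitted) proof goes through the scaling property (\ref{scaling}): after translating so that $0\in D_0$, the two-sided inclusion $(1-\epsilon)D_0\subset D_n\subset(1+\epsilon)D_0$ for $n$ large, together with monotonicity of $E^x(\tau_D)$ under set inclusion and (\ref{scaling}), gives
\[
(1-\epsilon)\,\vp^{(D_0)}\!\Bigl(\tfrac{x}{1-\epsilon}\Bigr)\le\vp^{(D_n)}(x)\le(1+\epsilon)\,\vp^{(D_0)}\!\Bigl(\tfrac{x}{1+\epsilon}\Bigr),
\]
and both sides tend to $\vp^{(D_0)}(x)$ simply by continuity of $\vp^{(D_0)}$ at $x$. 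This replaces in one line your quasi-left-continuity argument for $\tau_{D_0^{-\epsilon}}\uparrow\tau_{D_0}$, the strong Markov decomposition at $\tau_{D_0}$, the Ikeda--Watanabe input, and the use of Remark \ref{Remark2} as a dominating bound. The sandwich by parallel sets $D_0^{\pm\epsilon}$ is easily converted to a sandwich by dilations $(1\pm c\epsilon)D_0$ once $B(0,\rho)\subset D_0$, so the geometric step is shared by both approaches; the difference is entirely in how one passes to the limit, and scaling is much cheaper than the probabilistic machinery you invoke.

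On the geometric step itself, your parenthetical claim — that unless $D_n$ and $D_0$ eventually share a common interior ball, neither one-sided supremum can be small — is false, and the $\wedge$-hypothesis cannot be upgraded to the two-sided inclusion. Take $D_0=B(0,1)$ and $D_n=B\bigl((1-\tfrac1n,0),\tfrac1{2n}\bigr)$. Then every point of $\partial D_n$ is within $3/(2n)$ of $\partial D_0$, so $\sup\{\dist(x,\partial D_0):x\in\partial D_n\}\to 0$ and hence $d(D_n,D_0)\to 0$ under the $\wedge$ definition, yet $D_n$ shrinks to a boundary point and $\vp^{(D_n)}(0)=0\not\to\vp^{(D_0)}(0)>0$. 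So the $\wedge$ in the paper's definition of $d$ is a typo for $\vee$ (Hausdorff distance of the boundaries); with $\vee$, the inclusion $D_0^{-\epsilon}\subset D_n\subset D_0^{+\epsilon}$ is immediate for convex domains and both your argument and the scaling argument go through. Your suspicion of the $\wedge$ was well placed, but the correct conclusion is that the hypothesis must be read as $\vee$, not that convexity rescues the $\wedge$ version.
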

This lemma seems to be well known and follows easily from (\ref{scaling}) so we omit its proof (in fact it holds not only for convex domains but we need it only in this case).

\begin{lemma}
\label{lowerhalfspace}
Let $C_1 > 0$, $R_1 > 0$, $\kappa_2 \ge \kappa_1 > 0$, $D \in F(C_1,R_1,\kappa_1,\kappa_2)$, $\vp$ be the solution of (\ref{maineq1}-\ref{maineq2}) for $D$ and $u$ the harmonic extension of $\vp$ given by (\ref{ext1}-\ref{ext2}). For any $(x_1,x_2,x_3) \in \R_+^3$ we have $H(u)(x_1,x_2,-x_3) = H(u)(x_1,x_2,x_3)$.
\end{lemma}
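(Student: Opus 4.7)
The plan is a direct calculation from the definition \eqref{ext2}. For $x_3 > 0$, the point $(x_1,x_2,-x_3)$ lies in $\R_-^3$, and the harmonic extension there is prescribed by $u(x_1,x_2,-x_3) = u(x_1,x_2,x_3) - 2(-x_3) = u(x_1,x_2,x_3) + 2x_3$. So I just differentiate this identity twice in each pair of variables and read off the Hessian.

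More precisely, write $w(x_1,x_2,x_3) := u(x_1,x_2,-x_3) = u(x_1,x_2,x_3) + 2x_3$ for $x_3 > 0$. The chain rule gives
\begin{align*}
w_1 &= u_1, & w_2 &= u_2, & w_3 &= -u_3, \\
w_{11} &= u_{11}, & w_{22} &= u_{22}, & w_{12} &= u_{12},\\
w_{13} &= -u_{13}, & w_{23} &= -u_{23}, & w_{33} &= u_{33},
\end{align*}
where on the left side everything is evaluated at $(x_1,x_2,x_3)$ read as a point of the graph of $u|_{\R_-^3}$ at $(x_1,x_2,-x_3)$, and on the right everything is evaluated at $(x_1,x_2,x_3) \in \R_+^3$. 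The linear term $2x_3$ contributes only to first derivatives, not to any second derivative, so it drops out of the Hessian entirely.

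Therefore, setting $S = \operatorname{diag}(1,1,-1)$, the matrix identity
$$
\bigl(u_{ij}(x_1,x_2,-x_3)\bigr)_{i,j=1}^{3} \;=\; S \,\bigl(u_{ij}(x_1,x_2,x_3)\bigr)_{i,j=1}^{3}\, S
$$
holds for every $(x_1,x_2,x_3) \in \R_+^3$. Taking determinants and using $\det S = -1$ gives
$$
H(u)(x_1,x_2,-x_3) = (\det S)^2\, H(u)(x_1,x_2,x_3) = H(u)(x_1,x_2,x_3),
$$
which is the claim.

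I do not anticipate any real obstacle here; the only thing to check is that the differentiation is legitimate, which is automatic because $u$ is harmonic (hence real-analytic) on $\R_+^3$ by \eqref{ext1}, so all the second partials above exist classically. The same argument will apply verbatim to $v^{(\eps,D)}$ in Lemma \ref{lowerhalfspace1} once one notes that the auxiliary quadratic $\eps(-x_1^2/2 - x_2^2/2 + x_3^2)$ is itself even in $x_3$, so its Hessian contribution is invariant under $x_3 \mapsto -x_3$.
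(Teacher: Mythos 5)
Your argument is correct and is essentially the paper's: the paper also just records that under $x_3\mapsto -x_3$ the diagonal entries and $u_{12}$ are invariant while $u_{13},u_{23}$ flip sign, and deduces $H(u)(\hat x)=H(u)(x)$; you simply package this as conjugation by $S=\operatorname{diag}(1,1,-1)$ with $(\det S)^2=1$. One tiny imprecision that does not affect the proof: from $u(x_1,x_2,-x_3)=u(x_1,x_2,x_3)+2x_3$ the correct first-derivative relation is $u_3(x_1,x_2,-x_3)=-u_3(x_1,x_2,x_3)-2$, not $-u_3$, but as you yourself note the linear term vanishes at second order, so all the Hessian entries and the conclusion are right.
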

\begin{proof}
For $x = (x_1,x_2,x_3)$ put $\hat{x} = (x_1,x_2,-x_3)$. For $x \in \R_+^3$ we have $u_{ii}(\hat{x}) = u_{ii}(x)$ for $i = 1,2,3$, $u_{12}(\hat{x}) = u_{12}(x)$, $u_{13}(\hat{x}) = -u_{13}(x)$, $u_{23}(\hat{x}) = -u_{23}(x)$. Hence $H(u)(\hat{x}) = H(u)(x)$. 
\end{proof}

We will need the following formulas of derivatives of $K(x) = C_K x_3 (x_1^2+x_2^2+x_3^2)^{-3/2}$:
\begin{eqnarray*}
K_1(x) &=& -3 C_K x_3 x_1 (x_1^2+x_2^2+x_3^2)^{-5/2},\\
K_2(x) &=& -3 C_K x_3 x_2 (x_1^2+x_2^2+x_3^2)^{-5/2},\\
K_3(x) &=& C_K (x_1^2+x_2^2-2x_3^2) (x_1^2+x_2^2+x_3^2)^{-5/2}.
\end{eqnarray*}
\begin{eqnarray*}
K_{11}(x) &=& C_K x_3 (12 x_1^2 - 3 x_2^2 - 3 x_3^2)(x_1^2+x_2^2+x_3^2)^{-7/2},\\
K_{22}(x) &=& C_K x_3 (12 x_2^2 - 3 x_1^2 - 3 x_3^2)(x_1^2+x_2^2+x_3^2)^{-7/2},\\
K_{33}(x) &=& C_K x_3 (6 x_3^2 - 9 x_1^2 - 9 x_2^2) (x_1^2+x_2^2+x_3^2)^{-7/2}.\\
K_{12}(x) &=& 15 C_K x_3 x_1 x_2 (x_1^2+x_2^2+x_3^2)^{-7/2},\\
K_{13}(x) &=& C_K x_1 (12 x_3^2 - 3x_1^2 - 3 x_2^2) (x_1^2+x_2^2+x_3^2)^{-7/2},\\
K_{23}(x) &=& C_K x_2 (12 x_3^2 - 3x_1^2 - 3 x_2^2)(x_1^2+x_2^2+x_3^2)^{-7/2}.
\end{eqnarray*}

\begin{remark}
\label{constants}
All constants appearing in this paper are positive and finite. We write $C = C(a,\ldots,z)$ to emphasize that $C$ depends only on $a,\ldots,z$. We adopt the convention that constants denoted by $c$ (or $c_1$, $c_2$, etc.) may change their value from one use to the next.
\end{remark}

\begin{remark}
\label{constants1}
In Sections 3, 4 and in the proof of Proposition \ref{vepsilon} we use the following convention. Constants denoted by $c$ (or $c_1$, $c_2$, etc. ) depend on $\Lambda = \{C_1,R_1,\kappa_1,\kappa_2\}$, where $\Lambda = \{C_1,R_1,\kappa_1,\kappa_2\}$ appear in Definition \ref{classF} 
We write $f(x) \approx g(x)$ for $x \in A \subset \R^2$ to indicate that there exist constants $c_1 = c_1(\Lambda)$, $c_2 = c_2(\Lambda)$ such that for any $x \in A$ we have $c_1 g(x) \le f(x) \le c_2 g(x)$ (in particular, it may happen that both $f$, $g$ are positive on $A$ or both $f$, $g$ are negative on $A$). 
\end{remark}

\section{Estimates of derivatives of $\vp$ near $\partial D$}

In this section the behaviour of $\vp_{i,j}$ near $\partial D$ is studied. The section contains quite complicated and technical estimates. Some new methods are used see e.g. the proof of Lemma \ref{phi22}. Nevertheless, most of the technics used in this section are similar to the technics used in the papers by T. Kulczycki \cite{K1997} and Z.-Q. Chen, R. Song \cite{CS1998}. It should be mentioned that similar estimates of derivatives of $\alpha$-harmonic functions were simultaneously obtained by the author's student G. {\.Z}urek in his Master Thesis \cite{Z2014}.

In the whole section we fix $C_1 > 0$, $R_1 > 0$, $\kappa_2 \ge \kappa_1 > 0$, $D \in F(C_1,R_1,\kappa_1,\kappa_2)$ and $x_0 \in \partial D$. We put $\Lambda = \{C_1,R_1,\kappa_1,\kappa_1\}$. $\vp$ is the solution of (\ref{maineq1}-\ref{maineq2}) for $D$. Unless it is stated otherwise we fix the coordinate system $CS_{x_0}$ and notation as in Lemma \ref{ball} (see Figure 1). In particular $x_0$ is $(0,0)$ in $CS_{x_0}$ coordinates. Let us recall that in the whole section we use convention stated in Remark \ref{constants1}.

Let $r \in (0,r_0]$, $z = (r,0)$, $s \in (0,r]$, $B = B(z,s)$ (where $r_0$ is the constant from Lemma \ref{ball}). It is well known (see e.g. \cite[(1.50), (1.56), (1.57)]{book2009}) that 
\begin{equation}
\label{phiformula}
\vp(x) = h(x) + \int_{B^c} P(x,y) \vp(y) \, dy, \quad x \in B,
\end{equation} 
where $h(x) = C_B (s^2 - |x - z|^2)^{1/2}$, $x \in B$,
\begin{equation}
\label{Pformula}
P(x,y)= C_P \frac{(s^2 - |x - z|^2)^{1/2}}{(|y - z|^2 - s^2)^{1/2} |x-y|^2}, \quad x \in B, \, \, \, y \in (\overline{B})^c,
\end{equation}
$C_B = 2/\pi$, $C_P = \pi^{-2}$.

We have $h_1(x) = C_B (r - x_1) (s^2 - |x - z|^2)^{-1/2}$, $x \in B$. Put $P_i(x,y) = \frac{\partial}{\partial x_i} P(x,y)$, $i = 1,2$. For any $x \in B$, $y \in (\overline{B})^c$ we have $P_1(x,y) = A(x,y) + E(x,y)$ where
\begin{equation}
\label{Aformula}
A(x,y)= -C_P \frac{(s^2 - |x - z|^2)^{-1/2}(x_1 - r)}{(|y - z|^2 - s^2)^{1/2} |x-y|^2}, 
\end{equation}
\begin{equation}
\label{Eformula}
E(x,y)= -2C_P \frac{(s^2 - |x - z|^2)^{1/2}(x_1 - y_1)}{(|y - z|^2 - s^2)^{1/2} |x-y|^4}. 
\end{equation}

It is also well known (see e.g. \cite{CS1997}) that $\vp(y) \le c \delta_D^{1/2}(y)$.

\begin{lemma}
\label{phi1} There exists $r_1 \in (0,r_0/4]$, $r_1 = r_1(\Lambda)$ such that for any $x_1 \in (0,r_1]$ we have $\vp_1(x_1,0) \approx x_1^{-1/2}$.
\end{lemma}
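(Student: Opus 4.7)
The plan is to use the interior-tangent-ball representation (\ref{phiformula}) at the boundary point $x_0 = (0,0)$. I would fix $s = r = r_0$ so that $B = B((r_0,0),r_0)$ is the inscribed disc of radius $r_0$ tangent to $\partial D$ at the origin (valid by Lemma~\ref{ball}(1)), and put $r_1 := r_0/4$. For $x_1 \in (0,r_1]$ the point $x = (x_1,0)$ lies on the symmetry axis of $B$ at distance $x_1$ from $\partial B$. Differentiating (\ref{phiformula}) in $x_1$ (legitimate by dominated convergence, since $P_1$ has an integrable majorant on compact subsets of $B$ and $\vp$ is bounded) gives
\[
\vp_1(x_1,0) = h_1(x_1,0) + \int_{B^c} P_1((x_1,0),y)\,\vp(y)\,dy.
\]
An elementary computation yields $h_1(x_1,0) = C_B(r_0-x_1)/\sqrt{x_1(2r_0-x_1)}$, which on $(0,r_1]$ is comparable to $x_1^{-1/2}$ with constants depending only on $r_0$, hence on $\Lambda$; this will be the leading-order term.

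Next I would control the Poisson integral by splitting $P_1 = A + E$ via (\ref{Aformula})--(\ref{Eformula}). The $A$-part is a scalar multiple of $P$ along the axis: a direct calculation gives
\[
A((x_1,0),y) = \frac{r_0-x_1}{x_1(2r_0-x_1)}\,P((x_1,0),y),
\]
so by (\ref{phiformula}) itself
\[
\int_{B^c} A((x_1,0),y)\,\vp(y)\,dy = \frac{r_0-x_1}{x_1(2r_0-x_1)}\bigl(\vp(x_1,0)-h(x_1,0)\bigr) \in \bigl[0,\; C\,x_1^{-1/2}\bigr],
\]
where non-negativity comes from $A,\vp \ge 0$ and the upper bound from the standard estimate $\vp(y) \le c\sqrt{\delta_D(y)}$, combined with $\delta_D(x_1,0) \le x_1$ and $h(x_1,0) = C_B\sqrt{x_1(2r_0-x_1)}$. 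For the $E$-part I would establish $|\int E\,\vp\,dy| = O(\sqrt{x_1})$ by splitting $B^c \cap D$ into a piece far from the tangent point $(0,0)$, on which $(|y-z|^2 - r_0^2)^{1/2}$ and $|x-y|$ are bounded below and the explicit prefactor $\sqrt{x_1(2r_0-x_1)}$ in $E$ directly produces $O(\sqrt{x_1})$, and a piece near the tangent point, where the geometric information from Lemma~\ref{ball}(3,5) forces $\delta_D(y)$ (hence $\vp(y)$) to decay enough to absorb the kernel singularity.

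Combining these estimates yields the upper bound $\vp_1(x_1,0) \le C\,x_1^{-1/2}$ and, for $x_1$ sufficiently small (in a way depending only on $\Lambda$), the lower bound $\vp_1(x_1,0) \ge h_1(x_1,0) - |\int E\,\vp\,dy| \ge c\,x_1^{-1/2}$. The main obstacle I expect is the $E$-integral near the tangent point, where the factors $(|y-z|^2-r_0^2)^{1/2}$ and $|x-y|$ both vanish simultaneously: the naive bound $|E|\le 2P/|x-y|$ together with $|x-y|\ge x_1$ only yields $|\int E\,\vp\,dy|\le C\,x_1^{-1/2}$, of the same order as $h_1$, and therefore does not by itself give the lower bound. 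Pushing through to the stronger $O(\sqrt{x_1})$ bound requires a careful local analysis at $(0,0)$ using the convexity, the lower curvature bound $\kappa \ge \kappa_1$, and the quadratic description of $\partial D$ from Lemma~\ref{ball}(3,5), in the spirit of the techniques of \cite{K1997,CS1998}.
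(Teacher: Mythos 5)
Your setup matches the paper's proof for the lower bound: the same tangent-ball representation (\ref{phiformula}) with $s=r_0$, the same split $P_1 = A + E$, and the same identification of the near-tangent $E$-integral as the obstacle. Your observation that $\int_{B^c}A\,\vp\,dy = \tfrac{r_0-x_1}{x_1(2r_0-x_1)}\bigl(\vp(x_1,0)-h(x_1,0)\bigr)$ gives positivity and the $O(x_1^{-1/2})$ bound for the $A$-term with essentially no work, which is nice. However, you have left the actual content of the lemma — the estimate $\bigl|\int E\,\vp\bigr| = O(\sqrt{x_1})$ — as an acknowledged gap: your own discussion shows why the naive kernel bound only produces $O(x_1^{-1/2})$, which is useless for the lower bound, and invoking ``the spirit of \cite{K1997}, \cite{CS1998}'' does not discharge the estimate. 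The paper carries it out by partitioning $D\setminus B$ into three regions adapted to the tangent geometry, obtaining $\bigl|\int E\,\vp\bigr|\approx x_1^{1/2}$ on the two regions close to the tangent point (using $\vp(y)\le c\,\delta_D^{1/2}(y)$, the parabola bounds $c_3 y_2^2\le f(y_2)\le c_4 y_2^2$ and the distance formula from Lemma~\ref{ball}, and $|y-z|^2-r_0^2\approx f_1(y_2)-y_1$), and on the remaining bulk region it does \emph{not} estimate $\int E\,\vp$ at all but only uses its sign: that integral is positive there, as is $\int_{D\setminus B}A\,\vp$. Your plan of proving a global $O(\sqrt{x_1})$ bound over all of $D\setminus B$ is somewhat more than needed but would also work; it simply has not been carried out, and it is exactly the technical heart of the lemma.

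For the upper bound $\vp_1(x_1,0)\le c\,x_1^{-1/2}$ the paper takes a genuinely different, cleaner route that avoids the $A$ and $E$ kernel estimates entirely: the function $k := 1_B\int_{B^c}P\,\vp\,dy + 1_{B^c}\,\vp$ is non-negative, dominated by $\vp$, and $1$-harmonic on $B$, so the gradient estimate \cite[Lemma 3.2]{BKN2002} for $\alpha$-harmonic functions gives $k_1(x_1,0)\le c\,x_1^{-1/2}$ at once, and $\vp_1 = h_1 + k_1$. Your direct route via the $A$-identity and $\vp\le c\sqrt{\delta_D}$ can also deliver the upper bound, but it then requires bounding $\int E\,\vp$ from above on the region where $E>0$, which the $k$-function argument renders unnecessary.
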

\begin{proof}
Put $r = r_0$. We will use (\ref{phiformula}) for $s = r$, in particular $B = B(z,r)$. We have $h_1(x_1,0) = C_B (r - x_1) (2r - x_1)^{-1/2} x_1^{-1/2} \approx c x_1^{-1/2}$ for $x_1 \in (0,r/4]$. Put 
$$
k(x) = 1_{B}(x) \int_{B^c} P(x,y) \vp(y) \, dy + 1_{B^c}(x) \vp(x), \quad x \in \R^2.
$$
We have $k(x) \ge 0$ on $\R^2$, by (\ref{phiformula}) $k(x) \le \vp(x)$ on $B$ and $k$ is $1$-harmonic on $B$. For the definition and basic properties of $\alpha$-harmonic functions see \cite[pages 20-21, 61]{book2009}. The fact that $k$ is $1$-harmonic follows from \cite[page 61]{book2009}. By \cite[Lemma 3.2]{BKN2002} (cf. also \cite{KR2013}) $k_1(x_1,0) \le c x_1^{-1/2}$ for $x_1 \in (0,r]$. Hence $\vp_1(x_1,0) = h_1(x_1,0) + k_1(x_1,0) \le c x_1^{-1/2}$ for $x_1 \in (0,r/4]$.

What remains is to show that $\vp_1(x_1,0) \ge c x_1^{-1/2}$. For $x_1 \in (0,r]$ we have $\vp_1(x_1,0) = \int_{B^c} P_1((x_1,0),y) \vp(y) \, dy + h_1(x_1,0)$. We will estimate $\int_{B^c} P_1 \vp$.

Let $x_1 \in (0,f(r/2) \wedge f(-r/2)]$. Note that $f(r/2) \wedge f(-r/2) \ge c_3 r^2/4$, where $c_3$ and $r = r_0$ are constants from Lemma \ref{ball}, $c_3 r^2/4$ depends only on $\Lambda$. Let $p_1 \in (0,r/2]$ be such that $f(p_1) = x_1$, $p_2 \in [-r/2,0)$ be such that $f(p_2) = x_1$ (recall that $f$ is defined in Lemma \ref{ball}). By Lemma \ref{ball} $f(x_1) < c_4 x_1^2 \le (1/2) x_1$, $f(-x_1) \le (1/2) x_1$, so $p_1 > x_1$ and $|p_2| > x_1$. Let $f_1:[-r,r] \to \R$ be defined by $f_1(y_2) = r - (r^2 - y_2^2)^{1/2}$. Put
\begin{eqnarray*}
D_1 &=& \{(y_1,y_2): \, y_2 \in [-x_1,x_1], y_1 \in (f(y_2),f_1(y_2))\}, \\
D_2 &=& \{(y_1,y_2): \, y_2 \in (x_1,p_1] \cup [p_2,-x_1), y_1 \in (f(y_2),f_1(y_2) \wedge x_1)\}, \\
D_3 &=& D \setminus (D_1 \cup D_2 \cup B).
\end{eqnarray*}
Note that $\int_{D \setminus B} A((x_1,0),y) \vp(y) \, dy > 0$ and $\int_{D_3} E((x_1,0),y) \vp(y) \, dy > 0$.

Since $\vp(y) \le c \delta_D^{1/2}(y)$ we get $\vp(y) \le c x_1$ for $y \in D_1$ and 
$\vp(y) \le c y_2$ for $y \in D_2$. Note that for $y \in D_1 \cup D_2$ we have $|y-z|^2 - r^2 \approx f_1(y_2) - y_1$. Hence
\begin{eqnarray*}
\left| \int_{D_1} E((x_1,0),y) \vp(y) \, dy \right|
&\le& c x_1^{-3/2} \int_{D_1} \frac{dy}{(|y-z|^2 - r^2)^{1/2}} \\
&\approx& x_1^{-3/2} \int_{-x_1}^{x_1} \, dy_2 \int_{f(y_2)}^{f_1(y_2)} 
(f_1(y_2) -y_1)^{-1/2} \, dy_1 \\
&\approx& x_1^{1/2}.
\end{eqnarray*}
Note that $p_1 \le c \sqrt{x_1} \wedge (r/2)$, $|p_2| \le c \sqrt{x_1} \wedge (r/2)$ so 
\begin{eqnarray*}
\left| \int_{D_2} E((x_1,0),y) \vp(y) \, dy \right|
&\le& c x_1^{3/2} \int_{x_1}^{c \sqrt{x_1} \wedge (r/2)} \, dy_2 y_2^{-3} \int_{f(y_2)}^{f_1(y_2) \wedge x_1} (f_1(y_2) -y_1)^{-1/2} \, dy_1 \\
&\approx& x_1^{1/2},
\end{eqnarray*}
(we omit here $\int_{p_2}^{-x_1} \ldots$ because it can be estimated in the same way).

It follows that 
\begin{eqnarray*}
\vp_1(x_1,0) &=&
h_1(x_1,0) + \int_{D \setminus B} A \vp + \int_{D_1} E \vp + \int_{D_2} E \vp + \int_{D_3} E \vp \\
&\ge& c x_1^{-1/2} - c_1 x_1^{1/2} \ge c x_1^{-1/2}
\end{eqnarray*}
for sufficiently small $x_1$ (recall that we use convention from Remark \ref{constants} that a constant $c$ may change its value from one use to the next).
\end{proof}

\begin{lemma}
\label{phi2} There exists $r_1 \in (0,r_0/4]$, $r_1 = r_1(\Lambda)$ such that for any $x_1 \in (0,r_1]$ we have $|\vp_2(x_1,0)| \le c x_1^{1/2} |\log x_1|$.
\end{lemma}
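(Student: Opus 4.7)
The plan is to proceed in the same spirit as the proof of Lemma \ref{phi1}, using the Poisson representation (\ref{phiformula}) with $z = (r_0, 0)$, $s = r = r_0$ (after possibly shrinking $r_0$, depending only on $\Lambda$, so that $1/r_0 > \kappa_2$), and $B = B(z, s) \subset D$, but now crucially exploiting the antisymmetry of the integrand under the reflection $\sigma: y = (y_1, y_2) \mapsto \tilde{y} = (y_1, -y_2)$. Differentiating (\ref{phiformula}) in $x_2$ at $x = (x_1, 0)$, one checks that $h_2(x_1, 0) = 0$ (since $h$ depends on $x$ only through $|x - z|^2$, producing a factor $x_2$), and a short calculation gives
$$
P_2((x_1, 0), y) \;=\; 2 C_P \, \frac{(s^2 - (r_0 - x_1)^2)^{1/2} \, y_2}{(|y - z|^2 - s^2)^{1/2} \, |(x_1, 0) - y|^4}.
$$
Since $s^2 - (r_0 - x_1)^2 = 2 r_0 x_1 - x_1^2 \approx x_1$ for small $x_1$, the lemma reduces to showing
$$
I(x_1) \;:=\; \left|\int_{D \setminus B} \frac{y_2 \, \vp(y)}{(|y - z|^2 - s^2)^{1/2} \, |(x_1, 0) - y|^4} \, dy\right| \;\le\; c \, |\log x_1|.
$$

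The integrand in $I(x_1)$ is antisymmetric under $\sigma$ except for the factor $\vp$. I split $D \setminus B = G_{\rm sym} \cup G_{\rm asym}$, where $G_{\rm sym} = \{y \in D \setminus B: \tilde y \in D \setminus B\}$. The choice $1/s > \kappa_2$ makes $B$ strictly smaller than every osculating circle to $\partial D$ at points near $x_0$, giving $f_1(y_2) - f(y_2) \ge c \, y_2^2$ in a neighborhood of $0$, where $f_1(y_2) = r_0 - (r_0^2 - y_2^2)^{1/2}$ describes $\partial B$. On $G_{\rm sym}$ I fold over $\{y_2 > 0\}$ via $P_2((x_1, 0), \tilde y) = -P_2((x_1, 0), y)$:
$$
\int_{G_{\rm sym}} P_2 \, \vp \;=\; \int_{G_{\rm sym} \cap \{y_2 > 0\}} P_2((x_1, 0), y)\,(\vp(y) - \vp(\tilde y)) \, dy,
$$
and insert the standard Hölder estimate $|\vp(y) - \vp(\tilde y)| \le c |y - \tilde y|^{1/2} = c |y_2|^{1/2}$. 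Passing to the coordinates $(y_1, y_2)$ and using $|y - z|^2 - s^2 \approx f_1(y_2) - y_1$ near $x_0$, the $y_1$-integral over $[f(y_2) \vee f(-y_2), f_1(y_2)]$ (of length $\approx y_2^2$) against the integrable weight $(f_1(y_2) - y_1)^{-1/2}$ produces a factor of order $y_2$. Splitting the outer $y_2$-integral at the scale $y_2 = \sqrt{x_1}$ yields a bounded contribution from $y_2 \le \sqrt{x_1}$ and the $|\log x_1|$ contribution from $y_2 \in [\sqrt{x_1}, r_0]$.

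On $G_{\rm asym}$ near $x_0$, $y$ lies in the thin strip where $y \in D$ but $\tilde y \notin D$, i.e.\ $\min(f(y_2), f(-y_2)) < y_1 < \max(f(y_2), f(-y_2))$. Since $f \in C^{2,1}$ with $f(0) = f'(0) = 0$, the odd part of its Taylor expansion begins at order $y_2^3$, so this strip has $y_1$-width at most $c|y_2|^3$; on it $\delta_D(y) \le c|y_2|^3$ and hence $\vp(y) \le c |y_2|^{3/2}$. Combined with the lower bound $(|y - z|^2 - s^2)^{1/2} \ge c |y_2|$ (inherited from $f_1(y_2) - y_1 \ge c y_2^2$), this shows the $G_{\rm asym}$ contribution is $O(1)$. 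The portion of $D \setminus B$ outside a fixed neighborhood of $x_0$ is handled by a trivial estimate, since the integrand has no small denominators there. The main obstacle is the symmetric part: the $1/2$-Hölder continuity of $\vp$ and the $y_1$-integration of $(f_1 - y_1)^{-1/2}$ have to combine to tame the singularity $|(x_1, 0) - y|^{-4}$ down to a log-integrable weight, with the logarithm appearing precisely from the annular range $y_2 \in [\sqrt{x_1}, r_0]$.
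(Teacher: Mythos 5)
The decomposition into $G_{\rm sym}$ and $G_{\rm asym}$, together with the folding trick, is an appealing idea, and your treatment of $G_{\rm asym}$ is fine. But the central estimate on $G_{\rm sym}$ has a genuine gap: the global $1/2$-H\"older bound $|\vp(y) - \vp(\tilde y)| \le c |y_2|^{1/2}$ is not strong enough to deliver the stated $|\log x_1|$ bound for $I(x_1)$.

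Carry the estimate through on the part of $G_{\rm sym}$ near $x_0$, where (for $y_2>0$) it is the thin strip $\max(f(y_2),f(-y_2)) < y_1 < f_1(y_2)$. There $y_1 \lesssim y_2^2$, so $(x_1-y_1)^2 + y_2^2 \approx x_1^2 + y_2^2$, and the $y_1$-integral of $(f_1(y_2)-y_1)^{-1/2}$ contributes a factor $\approx y_2$, as you say. With the H\"older bound the integrand thus becomes $\approx y_2 \cdot y_2^{1/2} \cdot y_2 / (x_1^2+y_2^2)^2$, and
$$
\int_0^{r_0} \frac{y_2^{5/2}}{(x_1^2+y_2^2)^2}\,dy_2
= x_1^{-1/2}\int_0^{r_0/x_1}\frac{t^{5/2}}{(1+t^2)^2}\,dt
\approx x_1^{-1/2},
$$
not $|\log x_1|$. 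In particular the contribution from $y_2 \le \sqrt{x_1}$ is already of order $x_1^{-1/2}$, not bounded as claimed. After multiplying back by the $x_1^{1/2}$ prefactor, this only yields $|\vp_2(x_1,0)| \le c$, far short of $c\,x_1^{1/2}|\log x_1|$.

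What the thin-strip geometry actually gives is a full power better: for $y \in G_{\rm sym}$ near $x_0$ both $y$ and $\tilde y$ lie between the graphs of $f$ and $f_1$, so $\delta_D(y),\delta_D(\tilde y) \le f_1(y_2) \le c\,y_2^2$, hence
$$
|\vp(y) - \vp(\tilde y)| \le \vp(y) + \vp(\tilde y) \le c\,\delta_D^{1/2} \le c\,|y_2|.
$$
Replacing $|y_2|^{1/2}$ by $|y_2|$ turns $y_2^{5/2}$ into $y_2^3$, and $\int_0^{r_0} y_2^3/(x_1^2+y_2^2)^2\,dy_2 \approx |\log x_1|$, as desired (the logarithm now comes from the whole range $y_2 \in (x_1,r_0)$, with a bounded contribution from $y_2 \le x_1$, so the natural split is at $x_1$ rather than $\sqrt{x_1}$). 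Note however that once you use $\vp(y) \le c\,\delta_D^{1/2}(y) \le c|y_2|$ in the strip, the antisymmetrization buys you nothing over estimating $\vp(y)$ itself; this is exactly why the paper's proof forgoes the $G_{\rm sym}/G_{\rm asym}$ split and simply covers the strip by $\{|y_2|\le x_1\}$ and $\{x_1 < |y_2| \le r/2\}$, estimating each piece directly.
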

\begin{proof}
Put $r = r_0$. We will use (\ref{phiformula}) for $s = r$, in particular $B = B(z,r)$. Let $x_1 \in (0,r/4]$. We have $\vp_2(x_1,0) = \int_{B^c} P_2((x_1,0),y) \vp(y) \, dy + h_2(x_1,0)$, $h_2(x_1,0) = 0$, $P_2((x_1,0),y) = 2 C_P \frac{(r^2 - |x - z|^2)^{1/2}y_2}{(|y - z|^2 - r^2)^{1/2} |x-y|^4}$, $y \in (\overline{B})^c$. Let $f_1$ be such as in the proof of Lemma \ref{phi1}. Put 
\begin{eqnarray*}
D_1 &=& \{(y_1,y_2): \, y_2 \in [-x_1,x_1], y_1 \in (f(y_2),f_1(y_2))\}, \\
D_2 &=& \{(y_1,y_2): \, y_2 \in (x_1,r/2] \cup [-r/2,-x_1), y_1 \in (f(y_2),f_1(y_2))\}, \\
D_3 &=& D \setminus (D_1 \cup D_2 \cup B).
\end{eqnarray*} 
Note that $\vp(y) \le cx_1$ for $y \in D_1$ and $\vp(y) \le c y_2$ for $y \in D_2$.
Similarly like in Lemma \ref{phi1} we obtain
$$
\left|\int_{D_1} P_2((x_1,0),y) \vp(y) \, dy\right| 
\le c x_1^{-3/2} \int_{D_1} \frac{dy}{(|y-z|^2 - r^2)^{1/2}} 
\approx x_1^{1/2}, 
$$
\begin{eqnarray*}
\left|\int_{D_2} P_2((x_1,0),y) \vp(y) \, dy\right| 
&\le& c x_1^{1/2} \int_{x_1}^{r/2} \, dy_2 y_2^{-2} \int_{f(y_2)}^{f_1(y_2)} (f_1(y_2) - y_1)^{-1/2} \, dy_1 \\
&\approx& x_1^{1/2} |\log x_1|.
\end{eqnarray*}
$$
\left|\int_{D_3} P_2((x_1,0),y) \vp(y) \, dy\right| 
\le c x_1^{1/2} \int_{D \setminus B} \frac{dy}{(|y-z|^2 - r^2)^{1/2}} 
\le c x_1^{1/2}. 
$$
It follows that $|\vp_2(x_1,0)| \le c x_1^{1/2} |\log x_1|$.
\end{proof}

By Lemmas \ref{phi1}, \ref{phi2} and \ref{ball} we obtain
\begin{corollary}
\label{nT}
There exists $r_1 \in (0,r_0/4]$, $r_1 = r_1(\Lambda)$ such that for any $y \in D$, $\delta_D(y) \le r_1$ we have
\begin{eqnarray}
\label{phin}
\frac{\partial \vp}{\partial \vec{n}}(y) 
&\approx& \delta_D^{-1/2}(y),\\
\label{phit}
\left| \frac{\partial \vp}{\partial \vec{T}}(y) \right| 
&\le& c \delta_D^{1/2}(y) |\log \delta_D(y)|,\\
\label{phigradient}
|\nabla \vp(y)|
&\le& c \delta_D^{-1/2}(y).
\end{eqnarray}
\end{corollary}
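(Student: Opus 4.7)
The plan is to reduce the corollary to a pointwise application of Lemmas \ref{phi1} and \ref{phi2} after choosing, at each interior point $y$ close to $\partial D$, a coordinate system adapted to the nearest boundary point.

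Fix $y \in D$ with $\delta_D(y) \le r_1$, where $r_1$ is the smaller of the radii produced in Lemmas \ref{phi1} and \ref{phi2}. Since $r_1 \le r_0/4 \le \tilde{R}$, the section discussion guarantees a unique $y^* \in \partial D$ with $|y-y^*| = \delta_D(y)$, and $\vec{n}(y) = \vec{n}(y^*)$, $\vec{T}(y) = \vec{T}(y^*)$. I would now invoke Lemma \ref{ball} with the base point $x_0 := y^*$: in the adapted coordinate system $CS_{y^*}$, the origin is $y^*$, the positive $x_1$-axis points along $\vec{n}(y^*)$, the positive $x_2$-axis points along $\vec{T}(y^*)$, and $y$ has coordinates $(\delta_D(y),0)$. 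In these coordinates $\vec n(y) = e_1$ and $\vec T(y)=e_2$, so
\[
\tfrac{\partial \vp}{\partial \vec n}(y) \;=\; \vp_1(\delta_D(y),0),
\qquad
\tfrac{\partial \vp}{\partial \vec T}(y) \;=\; \vp_2(\delta_D(y),0).
\]

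Now the hypotheses of Lemma \ref{phi1} and Lemma \ref{phi2} hold for $x_0=y^*$ with $\Lambda$ unchanged (this is precisely why the class $F(\Lambda)$ was introduced in Definition \ref{classF}: all the geometric constants $r_0,c_1,\dots,c_6$, and therefore $r_1$, depend only on $\Lambda$, not on the choice of boundary point). Applying Lemma \ref{phi1} at $x_1 = \delta_D(y) \le r_1$ yields $\vp_1(\delta_D(y),0) \approx \delta_D(y)^{-1/2}$, which is exactly (\ref{phin}). Applying Lemma \ref{phi2} at the same $x_1$ yields $|\vp_2(\delta_D(y),0)| \le c\,\delta_D(y)^{1/2}|\log \delta_D(y)|$, which is (\ref{phit}).

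Finally, for the gradient bound (\ref{phigradient}), I would use the orthogonal decomposition of $\nabla \vp$ with respect to the orthonormal frame $\{\vec n(y), \vec T(y)\}$, namely
\[
|\nabla \vp(y)|^2 \;=\; \bigl(\tfrac{\partial \vp}{\partial \vec n}(y)\bigr)^2 + \bigl(\tfrac{\partial \vp}{\partial \vec T}(y)\bigr)^2,
\]
and combine the two preceding estimates; the normal term dominates since $\delta_D^{-1/2} \gg \delta_D^{1/2}|\log \delta_D|$ as $\delta_D \to 0^+$, so $|\nabla \vp(y)| \le c\,\delta_D(y)^{-1/2}$ after possibly shrinking $r_1$.

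There is essentially no obstacle here beyond bookkeeping: the only thing to check carefully is that the constant $r_1$ and all implied constants in Lemmas \ref{phi1}, \ref{phi2} depend solely on $\Lambda$ (not on the base point $x_0$ used in their statements), which is ensured by Definition \ref{classF} together with the uniformity convention fixed in Remark \ref{constants1}. Once this uniformity is acknowledged, the corollary is immediate.
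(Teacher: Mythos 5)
Your proposal is correct and is essentially the paper's own (unwritten) argument: the paper states this corollary follows directly ``By Lemmas \ref{phi1}, \ref{phi2} and \ref{ball},'' i.e.\ precisely the pointwise application of those lemmas in the coordinate system $CS_{y^*}$ adapted to the nearest boundary point, with uniformity in $\Lambda$ supplied by Definition~\ref{classF} and Remark~\ref{constants1}. Your treatment of the gradient bound via the orthonormal decomposition and dominance of the normal term is the standard (implicit) step and is fine.
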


\begin{lemma}
\label{12harmonic}
$\vp_1$, $\vp_2$ are singular $1$-harmonic on $D$.
\end{lemma}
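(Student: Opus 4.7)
The plan is to verify the two defining properties of a singular $1$-harmonic function for $\vp_i$, $i \in \{1,2\}$: continuity on $D$, and the mean value identity
\[
\vp_i(x) = \int_{B^c} P_B(x, y) \vp_i(y) \, dy, \quad x \in B,
\]
for every ball $B = B(z, s)$ with $\overline{B} \subset D$, where $P_B$ is the Poisson kernel (\ref{Pformula}) of the Cauchy process killed on leaving $B$. The word ``singular'' just records that $\vp_i$ blows up like $\delta_D^{-1/2}$ at $\partial D$ by Corollary \ref{nT}, so $\vp_i$ is not a bounded (or regular) $1$-harmonic function; the integrability in the identity is nevertheless automatic, since for fixed $x \in B$ the Poisson kernel $P_B(x,\cdot)$ is bounded on $B^c \cap D$ away from $\partial B$, its $(|y-z|-s)^{-1/2}$-singularity on $\partial B$ is locally integrable in $\R^2$, and $\delta_D^{-1/2}$ is locally integrable on $D$. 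Continuity (in fact smoothness) of $\vp_i$ on $D$ follows from standard interior regularity for the fractional Laplacian applied to $(-\Delta)^{1/2} \vp = 1$ on $D$.

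The key step is a translation trick. Fix a ball $B$ with $\overline{B} \subset D$ and take $|\epsilon|$ so small that also $\overline{B} \subset D - \epsilon e_i$. By the translation invariance of $(-\Delta)^{1/2}$, the function $x \mapsto \vp(x + \epsilon e_i)$ solves (\ref{maineq1})-(\ref{maineq2}) for the domain $D - \epsilon e_i$, so applying (\ref{phiformula}) on $B \subset D - \epsilon e_i$ yields
\[
\vp(x + \epsilon e_i) = h(x) + \int_{B^c} P_B(x, y)\, \vp(y + \epsilon e_i) \, dy, \quad x \in B,
\]
with the same $h$ and $P_B$ as in the analogous identity for $\vp$ itself, because both depend only on $B$. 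Subtracting the two identities, dividing by $\epsilon$, and sending $\epsilon \to 0$ gives the desired mean value identity; the left-hand side converges to $\vp_i(x)$ by interior smoothness of $\vp$, and the right-hand side converges by dominated convergence once that is justified.

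The main obstacle is this limit passage on the right. I would split $B^c$ into three regions. On (i) $\{y \in D : \delta_D(y) > 2|\epsilon|\}$, the mean value theorem combined with Corollary \ref{nT} gives the uniform bound $|\vp(y + \epsilon e_i) - \vp(y)|/|\epsilon| \le c\, \delta_D^{-1/2}(y)$, with dominating function $c\, P_B(x, \cdot)\, \delta_D^{-1/2}(\cdot)\, \mathds{1}_D$ integrable by the remarks above. On the thin shell (ii) $\{y \in D : \delta_D(y) \le 2|\epsilon|\}$, which has Lebesgue measure $O(|\epsilon|)$ by convexity of $D$, the a priori estimate $\vp(y) \le c\, \delta_D^{1/2}(y)$ (used throughout Section~3) gives a difference quotient of size $O(|\epsilon|^{-1/2})$ and hence a total contribution of order $|\epsilon|^{1/2} \to 0$. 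On (iii) $\mathrm{int}(D^c)$, both $\vp(y)$ and $\vp(y + \epsilon e_i)$ vanish for $y$ at positive distance from $\partial D$ once $\epsilon$ is small enough, contributing zero. Passing to the limit yields the mean value identity for every ball $B$ with $\overline{B} \subset D$, which is precisely singular $1$-harmonicity of $\vp_i$ on $D$.
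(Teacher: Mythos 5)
Your proof is correct, and it takes a genuinely different route from the paper's. The paper establishes the \emph{pointwise} identity $(-\Delta)^{1/2}\vp_1(x) = 0$ for each $x \in D$: it forms the difference quotient $L(h) = h^{-1}\bigl[(-\Delta)^{1/2}\vp(x+e_1h) - (-\Delta)^{1/2}\vp(x)\bigr]$, which vanishes by the equation, decomposes the compensated singular integral into four pieces over the small ball $B(0,s)$, a thin shell near $\partial D$, the interior of $D$, and $D^c$, and passes to the limit $h\to 0^+$ term by term; it then relies on the standard equivalence between pointwise vanishing of the fractional Laplacian and the mean value property (cf.\ the reference to \cite[p.~61]{book2009}). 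Your approach establishes the Poisson-kernel mean-value identity \emph{directly}, hinging on the clean observation that both $h$ and $P$ in (\ref{phiformula}) depend only on the ball $B$, not on the domain, so the representation for the translated solution $\vp(\cdot + \eps e_i)$ (which solves the problem for $D - \eps e_i$) shares the same kernel and compensator, and these cancel on subtraction. That buys you two things: you avoid invoking the pointwise $\Leftrightarrow$ mean-value equivalence, and you obtain exactly the representation $\vp_i(x) = \int_{B^c}P(x,y)\vp_i(y)\,dy$ in the form the paper actually uses later (in the proofs of Lemmas \ref{phi22}--\ref{phi12}). Two minor points to tighten in the limit passage: in region (iii) the set $\{y\in D^c:\ \dist(y,\partial D)<|\eps|,\ y+\eps e_i\in D\}$ also contributes, but there $\vp(y+\eps e_i)\le c|\eps|^{1/2}$ and the set has measure $O(|\eps|)$, giving an $O(|\eps|^{1/2})$ contribution exactly as in region (ii); and in region (i) the estimate $|\nabla\vp|\le c\,\delta_D^{-1/2}$ from Corollary \ref{nT} is only stated for $\delta_D(y)\le r_1$, but away from $\partial D$ interior regularity gives a uniform gradient bound, so the dominating function remains $c\,P(x,\cdot)\,\delta_D^{-1/2}(\cdot)\,\I_D$ after a harmless case split.
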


\begin{remark}
\label{12harmonic1}
$\vp_{11}$, $\vp_{22}$ are not $1$-harmonic on $D$ because they are not locally integrable on $\R^2$ (see Corollary \ref{nT1}).
\end{remark}

For the definition of singular $\alpha$-harmonic functions a reader is referred to \cite[page 61]{book2009}. The definition of $\alpha$-harmonic functions on an open set $U \subset \R^d$ demands that the function is defined on the whole $\R^d$. $\vp_1$, $\vp_2$ are well defined on $D$ and also on $D^c \setminus \partial D$. $\vp_1$, $\vp_2$ are not well defined on $\partial D$ but $\partial D$ has Lebesgue measure zero. One may formally defined $\vp_1 = \vp_2 = 0$ on $\partial D$.

\begin{proof}[Proof of Lemma \ref{12harmonic}]
It is enough to show the Lemma for $\vp_1$. Fix $x \in D$, put $\delta_D(x) = 2s$. For any $z \in B(x,s/2)$ (see \cite[page 9]{book2009}) we have
$$
-(-\Delta)^{1/2} \vp(z) = 
\int_{\R^2} \left(\vp(z+y) -\vp(z) - y \nabla \vp(z) 1_{B(0,s)}(y)\right) \nu(y) \, dy,
$$
where $\nu(y) = (2 \pi)^{-1} |y|^{-3}$. 

Put $F_1(z,y) = \vp(z+y) -\vp(z) - y \nabla \vp(z)$, $F_2(z,y) = \vp(z+y) -\vp(z)$,
\begin{eqnarray*}
D_h &=& \{y \in \R^2: \, \delta_D(y) < 2h\},\\
U_{1,h} &=& \{y \in \R^2: \, y + x \in D_h \},\\
U_{2,h} &=& \{y \in \R^2: \, y + x \in D \setminus D_h \} \setminus B(0,s),\\
U_{3,h} &=& \{y \in \R^2: \, y + x \in D^c \setminus D_h \}.
\end{eqnarray*}
For any $h \in (0,s/2)$ define
$$
L(h) = \frac{-(-\Delta)^{1/2} \vp(x + e_1 h) + (-\Delta)^{1/2} \vp(x)}{h}
$$
By (\ref{maineq1}-\ref{maineq2}) for any $h \in (0,s/2)$ we have $L(h) = 0$.
On the other hand we have
\begin{eqnarray*}
L(h) &=& 
\frac{1}{h} \int_{B(0,s)} \left(F_1(x + e_1 h,y) - F_1(x,y)\right) \nu(y) \, dy\\
&& + \frac{1}{h} \int_{U_{1,h}} \left(F_2(x + e_1 h,y) - F_2(x,y)\right) \nu(y) \, dy\\
&& + \frac{1}{h} \int_{U_{2,h}} \left(F_2(x + e_1 h,y) - F_2(x,y)\right) \nu(y) \, dy\\  
&& + \frac{1}{h} \int_{U_{3,h}} \left(F_2(x + e_1 h,y) - F_2(x,y)\right) \nu(y) \, dy\\ 
&=& \text{I} + \text{II} + \text{III} + \text{IV}, 
\end{eqnarray*}
$$
\text{I} = (2 \pi)^{-1} \int_{B(0,s)} 
\left(\vp_1(x+ e_1 \xi +y) 
- \vp_1(x+ e_1 \xi)
- y \left(\nabla \vp_1\right) (x+ e_1 \xi) \right) |y|^{-3} \, dy,
$$
where $\xi \in (0,h)$. We have
$$
\left|\vp_1(x+ e_1 \xi +y) 
- \vp_1(x+ e_1 \xi)
- y \left(\nabla \vp_1\right) (x+ e_1 \xi)\right| \le \tilde{C} |y|^2.
$$
In this proof $\tilde{C}$ denotes a constant which depends on $x$ and $\vp$ but not on $h$ (we use the convention that it may change its value from one use to the next). It follows that
$$
\lim_{h \to 0^+} \text{I} =
\int_{B(0,s)} 
\left(\vp_1(x+y) 
- \vp_1(x)
- y \left(\nabla \vp_1\right) (x) \right) 
\nu(y)\, dy.
$$
We also have
\begin{eqnarray*}
\text{II} &=& 
\frac{1}{2 \pi h} \int_{U_{1,h}} 
\left(\vp(x+ e_1 h +y) - \vp(x+y) \right) |y|^{-3} \, dy\\
&& - \frac{1}{2 \pi h} \int_{U_{1,h}} 
\left(\vp(x+ e_1 h) - \vp(x) \right) |y|^{-3} \, dy\\
&=& \text{IV} - \text{V}.
\end{eqnarray*}
Note that $|\vp(x + y)| \le \tilde{C} h^{1/2}$, $|\vp(x + e_1 h + y)| \le \tilde{C} h^{1/2}$ and $\int_{U_{1,h}} |y|^{-3} \, dy \le \tilde{C} h$ for $h \in (0,s/2)$, $y \in U_{1,h}$ so $|\text{IV}| \le \tilde{C} h^{1/2}$. Note also that $|\vp(x+ e_1 h) - \vp(x)| \le \tilde{C} h$ for $h \in (0,s/2)$ so $|\text{V}| \le \tilde{C} h$, $|\text{II}| \le \tilde{C} h^{1/2}$.

We also have
$$
\text{III} = 
(2 \pi)^{-1} \int_{\R^2} 1_{U_{2,h}}(y) 
\left(\vp_1(x+ e_1 \xi +y) 
- \vp_1(x+ e_1 \xi) \right) |y|^{-3} \, dy,
$$
where $\xi \in (0,h)$. Note that for $h \in (0,s/2)$, $y \in U_{2,h}$ and $\xi \in (0,h)$ we have $\delta_D(x+y+e_1 \xi) \approx \delta_D(x + y)$ so
$$
1_{U_{2,h}}(y) \left| \vp_1(x+ e_1 \xi +y) 
- \vp_1(x+ e_1 \xi) \right|
\le \tilde{C} (\delta_D^{-1/2}(x+y) + 1).
$$
It follows that
$$
\lim_{h \to 0^+} \text{III} =
\int_{ \{y: \, y+x \in D\} \setminus B(0,s)} 
\left(\vp_1(x+y) 
- \vp_1(x) \right) 
\nu(y)\, dy.
$$
We also get
\begin{eqnarray*}
\text{IV} &=& 
(2 \pi)^{-1} \int_{U_{3,h}} 
- \vp_1(x+ e_1 \xi) |y|^{-3} \, dy\\
&=& 
(2 \pi)^{-1} \int_{U_{3,h}} 
\left(\vp_1(x +y) 
- \vp_1(x+ e_1 \xi) \right) |y|^{-3} \, dy,
\end{eqnarray*}
where $\xi \in (0,h)$, because $\vp_1(x +y) = 0$ for $y \in U_{3,h}$. Hence
$$
\lim_{h \to 0^+} \text{IV} =
\int_{ \{y: \, y+x \in D^c\}} 
\left(\vp_1(x+y) 
- \vp_1(x) \right) 
\nu(y)\, dy.
$$
It follows that 
$0 = \lim_{h \to 0^+} L(h) = -(-\Delta)^{-1/2} \vp_1(x)$.
\end{proof}

\begin{lemma}
\label{phi22} There exists $r_2 \in (0,r_0/4]$, $r_2 = r_2(\Lambda)$ such that for any $x_1 \in (0,r_2]$ we have $\vp_{22}(x_1,0) \approx -x_1^{-1/2}$.
\end{lemma}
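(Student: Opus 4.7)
The plan is to differentiate the ball representation (\ref{phiformula}) twice with respect to $x_2$. Take the inscribed ball $B = B(z, r_0)$ with $z = (r_0, 0)$ (so $r = s = r_0$), which is tangent to $\partial D$ at the origin. Then
\[
\vp_{22}(x_1, 0) = h_{22}(x_1, 0) + \int_{B^c} P_{22}((x_1, 0), y)\,\vp(y)\,dy,
\]
and since $h(x_1, x_2) = C_B(r_0^2 - (x_1 - r_0)^2 - x_2^2)^{1/2}$, a direct computation yields
\[
h_{22}(x_1, 0) = -C_B\bigl((2r_0 - x_1)x_1\bigr)^{-1/2} \approx -x_1^{-1/2}
\]
with the correct negative sign and target order. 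So the leading-order behaviour is built in; the task is to show the integral term is of order $O(x_1^{-1/2})$ and does not overwhelm the sign.

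Writing $g(x) = (r_0^2 - |x - z|^2)^{1/2}$ and using $g_2(x_1, 0) = 0$, the kernel splits as
\[
P_{22}((x_1, 0), y) = \frac{g_{22}(x_1, 0)}{C_P\,g(x_1, 0)}\,P((x_1, 0), y) + C_P\,g(x_1, 0)(|y-z|^2 - r_0^2)^{-1/2}\left[-\frac{2}{|(x_1,0)-y|^4} + \frac{8\,y_2^2}{|(x_1,0)-y|^6}\right].
\]
The first piece integrates to $\tfrac{g_{22}(x_1, 0)}{C_P\,g(x_1, 0)}\bigl(\vp(x_1, 0) - h(x_1, 0)\bigr)$ by (\ref{phiformula}), which is nonpositive (since $g_{22}(x_1,0) < 0$ and $\vp \ge h = \vp^{(B)}$ on $B$ by monotonicity) and of order $O(x_1^{-1/2})$, using $\vp(x_1, 0), h(x_1, 0) = O(x_1^{1/2})$; this reinforces the sign of $h_{22}$. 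For the remaining two pieces I would decompose $B^c \cap D = D_1 \cup D_2 \cup D_3$ exactly as in Lemma \ref{phi2} (with $|y_2|\le x_1$, $x_1 < |y_2|\le r_0/2$, and the bulk, respectively), apply $\vp(y) \le c\,\delta_D(y)^{1/2}$ with $\delta_D(y) \approx f_1(y_2) - y_1$ near $\partial D$ (Lemma \ref{ball}(5)), and carry out the routine integrations paralleling those in Lemmas \ref{phi1} and \ref{phi2} at one derivative higher. Each subregion contributes $O(x_1^{-1/2})$, so the upper bound $\vp_{22}(x_1, 0) \le -c\,x_1^{-1/2}$ follows from the aggregate of negative terms.

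The main obstacle is the matching lower bound, namely $|\vp_{22}(x_1, 0)| \le c'\,x_1^{-1/2}$, because the positive summand $+8\,g(x_1, 0)\,y_2^2\,|(x_1,0)-y|^{-6}(|y-z|^2 - r_0^2)^{-1/2}$ in the kernel, when integrated against the positive function $\vp$, also produces a contribution of the critical order $x_1^{-1/2}$ that could in principle partially cancel the negative contributions from $h_{22}$ and the two other pieces. To control this cancellation quantitatively, I would exploit the identity $\int_{B^c} P_{22}((x_1, 0), y)\,dy = 0$, obtained by differentiating the harmonic-measure normalisation $\int_{B^c} P((x_1, x_2), y)\,dy = 1$ twice in $x_2$, to replace $\vp(y)$ by $\vp(y) - \vp(x_1, 0)$ in the integrand, thereby converting the worst-case bounds into bounds on $|\vp(y) - \vp(x_1, 0)|$ which vanish on the slice $y_2 = 0$; combined with the $y_2 \mapsto -y_2$ symmetry of the kernel at $x_2 = 0$, this produces the needed gain. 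This careful sign and constant bookkeeping is the delicate new ingredient relative to Lemmas \ref{phi1}--\ref{phi2}.
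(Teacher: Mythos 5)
Your approach differs genuinely from the paper's. You differentiate the fixed-ball representation $\vp = h + \int_{B^c}P\vp$ twice, so the leading term $h_{22}\approx -x_1^{-1/2}$ is built in and you must control $\int_{B^c}P_{22}\vp$. The paper instead invokes Lemma~\ref{12harmonic} (that $\vp_2$ is singular $1$-harmonic), writes $\vp_{22}(x_1,0)=\int_{D\setminus B}P_2((x_1,0),y)\vp_2(y)\,dy$ with only one derivative of the kernel, uses a family of balls $B(z,s)$ with $s\uparrow r$ and evaluates at the specific point $x_1=\sqrt{r-s}$, and then reads off the sign from the normal-derivative part $-\sin\alpha(y)\,\partial\vp/\partial\vec n(y)\approx -y_2(y_1-f(y_2))^{-1/2}$ via Corollary~\ref{nT}. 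Your argument never uses the $1$-harmonicity of $\vp_2$, which is the structural fact the paper exploits.

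There is a genuine gap in the sign control, and you have the two directions of the estimate reversed. The inequality $|\vp_{22}(x_1,0)|\le c'x_1^{-1/2}$ is the \emph{easy} direction: each of your three kernel pieces (integrated against $\vp\lesssim\delta_D^{1/2}$ over $D_1,D_2,D_3$) is $O(x_1^{-1/2})$ by computations parallel to Lemmas~\ref{phi1}--\ref{phi2}, and this gives the absolute-value bound directly. The hard direction is $\vp_{22}(x_1,0)\le -c\,x_1^{-1/2}$, because the third kernel piece $+8C_P\,g(x_1,0)\,y_2^2(|y-z|^2-r_0^2)^{-1/2}|x-y|^{-6}$ is positive and contributes \emph{at the critical order}: for instance, over $D_1$ one finds the combined second plus third pieces behave like
$$g(x_1,0)\,x_1^{-1}\int_{-1}^1\frac{t^2(-2+6t^2)}{(1+t^2)^3}\,dt \approx +(\pi-3)\,(\text{const})\,x_1^{-1/2} > 0,$$
which has the \emph{opposite} sign to $h_{22}$. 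So the assertion that ``the upper bound $\vp_{22}\le -c\,x_1^{-1/2}$ follows from the aggregate of negative terms'' does not follow; the aggregate includes a positive term of the same order, and one must show the negative pieces dominate. Your proposed mechanism for doing this is flawed: the identity $\int_{B^c}P_{22}\,dy=0$ lets you replace $\vp(y)$ by $\vp(y)-\vp(x_1,0)$, but $\vp(y)-\vp(x_1,0)$ does \emph{not} vanish on the slice $y_2=0$ (only at $y_1=x_1$, which is not in $B^c$), and since $\vp(x_1,0)\approx x_1^{1/2}$ while $\vp(y)\lesssim x_1$ on $D_1$, the subtraction replaces a small integrand by one of magnitude $\approx x_1^{1/2}$ there; the individual subregion integrals then become $O(x_1^{-1})$ rather than $O(x_1^{-1/2})$, so the rewrite worsens rather than improves the bookkeeping. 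Salvaging your route would require tracking the constants of all four contributions and proving the negative ones dominate uniformly in $\Lambda$, which is precisely the delicate step the paper's different representation (via $\vp_2$ and the shrinking ball with $x_1=\sqrt{r-s}$) is designed to avoid.
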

\begin{proof}
Put $r = r_0$. Let $r_1$ be the constant from Corollary \ref{nT}. In this proof we take $s \in (r - (r_1/2)^2,r)$, i.e. $0 < r - s < (r_1/2)^2$. Recall that $z = (r,0)$, $B = B(z,s)$ and $P$ is given by (\ref{Pformula}). For any $x_1 \in (r - s, r]$ by Lemma \ref{12harmonic} we have $\vp_2(x_1,0) = \int_{D \setminus B} P((x_1,0),y) \vp_2(y) \, dy$. It follows that $\vp_{22}(x_1,0) = \int_{D \setminus B} P_2((x_1,0),y) \vp_2(y) \, dy$. We have $P_2((x_1,0),y) = 2 C_P \frac{(s^2 - |x - z|^2)^{1/2}y_2}{(|y - z|^2 - s^2)^{1/2} |x-y|^4}$. Take $x_1 = \sqrt{r - s}$ (we have $\sqrt{r - s} < r_1/2$). Let $f_1:[-s,s] \to \R$ be defined by $f_1(y_2) = r - \sqrt{s^2 - y_2^2}$. Put 
\begin{eqnarray*}
D_1 &=& \{(y_1,y_2): \, y_2 \in [-x_1,x_1], y_1 \in (f(y_2),f_1(y_2))\}, \\
D_2 &=& \{(y_1,y_2): \, y_2 \in (x_1,r_1/2] \cup [-r_1/2,-x_1), y_1 \in (f(y_2),f_1(y_2))\}, \\
D_3 &=& D \setminus (D_1 \cup D_2 \cup B).
\end{eqnarray*}
By Lemma \ref{ball} we have for $y \in D_1 \cup D_2$
$$
\vp_2(y) = \cos \alpha(y) \frac{\partial \vp}{\partial \vec{T}}(y) 
-\sin \alpha(y) \frac{\partial \vp}{\partial \vec{n}}(y).
$$
Note that by definition of $s$ we have $\delta_D(y) < r_1$ for $y \in D_1 \cup D_2$. By Corollary \ref{nT} we get for $y \in D_1 \cup D_2$
$$
\left|\frac{\partial \vp}{\partial \vec{T}}(y)\right| 
\le c (y_1 - f(y_2))^{1/2} |\log(y_1 - f(y_2))|
$$
and
$$
\frac{\partial \vp}{\partial \vec{n}}(y) \approx (y_1 - f(y_2))^{-1/2}.
$$
Hence
$$
\left|\cos \alpha(y)\frac{\partial \vp}{\partial \vec{T}}(y)\right| 
\le c (y_1 - f(y_2))^{1/2} |\log(y_1 - f(y_2))|
$$
and 
$$
- \sin \alpha(y) \frac{\partial \vp}{\partial \vec{n}}(y) \approx -y_2 (y_1 - f(y_2))^{-1/2}.
$$
Note also that for $y \in D_1 \cup D_2$ we have $(|y-z|^2 -s^2)^{1/2} \approx (-y_1+f_1(y_2))^{1/2}$. Recall that we have chosen $x_1 = \sqrt{r-s}$. It follows that 
\begin{eqnarray*}
&& -\int_{D_1} P_2((x_1,0),y) \sin \alpha(y) \frac{\partial \vp}{\partial \vec{n}}(y) \, dy \\
&\approx& -x_1^{-7/2} \int_{-x_1}^{x_1} \, dy_2 y_2^2 \int_{f(y_2)}^{f_1(y_2)} \, dy_1 (-y_1+f_1(y_2))^{-1/2} (y_1 - f(y_2))^{-1/2} \approx - x_1^{1/2},
\end{eqnarray*}
because $\int_a^b (x-a)^{-1/2} (b-x)^{-1/2} \, dx = \text{const}$.

Similarly,
\begin{eqnarray*}
&& -\int_{D_2} P_2((x_1,0),y) \sin \alpha(y) \frac{\partial \vp}{\partial \vec{n}}(y) \, dy \\
&\approx& -x_1^{1/2} \int_{x_1}^{r_1/2} \, dy_2 y_2^{-2} \int_{f(y_2)}^{f_1(y_2)} \, dy_1 (-y_1+f_1(y_2))^{-1/2} (y_1 - f(y_2))^{-1/2} \approx - x_1^{1/2}.
\end{eqnarray*}
On the other hand we have
\begin{eqnarray*}
&& \left|\int_{D_1} P_2((x_1,0),y) \cos \alpha(y) \frac{\partial \vp}{\partial \vec{T}}(y) \, dy \right| \\
&\le& c x_1^{-7/2} \int_{-x_1}^{x_1} \, dy_2 y_2 \int_{f(y_2)}^{f_1(y_2)} \, dy_1 (-y_1+f_1(y_2))^{-1/2} (y_1 - f(y_2))^{1/2} |\log(y_1 - f(y_2))| \\
&\le& c x_1^{1/2} |\log x_1|,
\end{eqnarray*}
\begin{eqnarray*}
&& \left|\int_{D_2} P_2((x_1,0),y) \cos \alpha(y) \frac{\partial \vp}{\partial \vec{T}}(y) \, dy \right|\\
&\le&  c x_1^{1/2} \int_{x_1}^{r_1/2} \, dy_2 y_2^{-3} \int_{f(y_2)}^{f_1(y_2)} \, dy_1 (-y_1+f_1(y_2))^{-1/2} (y_1 - f(y_2))^{1/2} |\log(y_1 - f(y_2))| \\
&\le& c x_1^{1/2} |\log x_1|^2, 
\end{eqnarray*}
$$
\left|\int_{D_3} P_2((x_1,0),y) \vp_2(y)\, dy \right|
\le c x_1^{1/2} \int_{D_3} \delta_B^{-1/2}(y) \delta_D^{-1/2}(y) \, dy \le c x_1^{1/2}.
$$
It follows that 
$$
-c_1 x_1^{-1/2} - c_2 x_1^{1/2} |\log x_1|^2 \le \vp_{22}(x_1,0) \le
-c_3 x_1^{-1/2} + c_4 x_1^{1/2} |\log x_1|^2,
$$
where $x_1 = \sqrt{r-s}$. It is very important that $c_1$, $c_2$, $c_3$, $c_4$ do not depend on $s$. Hence there exists $r_2 \in (0,r/4]$, $r_2 = r_2(\Lambda)$ such that for any $x_1 \in (0,r_2]$ we have $\vp_{22}(x_1,0) \approx - x_1^{-1/2}$.
\end{proof}

\begin{lemma}
\label{phi11}
There exists $r_2 \in (0,r_0/4]$, $r_2 = r_2(\Lambda)$ such that for any $x_1 \in (0,r_2]$ we have $\vp_{11}(x_1,0) \approx -x_1^{-3/2}$.
\end{lemma}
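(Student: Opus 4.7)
The plan is to follow the template of Lemma \ref{phi22}, replacing $\vp_2$ by $\vp_1$. By Lemma \ref{12harmonic}, $\vp_1$ is singular $1$-harmonic on $D$, so the Poisson integral representation on $B = B(z, s)$ with $z = (r, 0)$, $r = r_0$, $s \in (r - (r_1/2)^2, r)$ gives, for $x_1 \in (r - s, r]$,
\[
\vp_1(x_1, 0) = \int_{D \setminus B} P((x_1, 0), y) \vp_1(y) \, dy.
\]
Differentiating in $x_1$ under the integral produces
\[
\vp_{11}(x_1, 0) = \int_{D \setminus B} P_1((x_1, 0), y) \vp_1(y) \, dy,
\]
with $P_1 = A + E$ as in (\ref{Aformula})--(\ref{Eformula}). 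As in Lemma \ref{phi22} I then specialize to $x_1 = \sqrt{r - s}$, so that $a(x_1, 0) = (s^2 - (x_1 - r)^2)^{1/2} \approx \sqrt{2 r x_1}$, and split the domain of integration into the three regions $D_1$, $D_2$, $D_3$ used there.

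On $D_1 \cup D_2$ I decompose $\vp_1(y) = \cos\alpha(y) \frac{\partial \vp}{\partial \vec{n}}(y) + \sin\alpha(y) \frac{\partial \vp}{\partial \vec{T}}(y)$ via Lemma \ref{ball}. Using Corollary \ref{nT} together with $|\sin\alpha(y)| \le c|y_2|$ and $0 < 1 - \cos\alpha(y) \le c y_2^2$, and repeating the $\beta$-type integrations from the proof of Lemma \ref{phi22}, the $\sin\alpha \cdot \partial_{\vec{T}}\vp$ contribution is $O(x_1^{1/2} |\log x_1|^2)$ and the $(1 - \cos\alpha)\, \partial_{\vec{n}}\vp$ contribution is $O(x_1^{1/2})$. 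The integral over $D_3$ is bounded by $c x_1^{1/2}$ using $|\vp_1(y)| \le c \delta_D^{-1/2}(y)$ from Corollary \ref{nT}, the regularity of $P_1$ away from $\partial B$, and the boundedness of $D_3$. All of these are $o(x_1^{-3/2})$, so the leading behavior is governed by
\[
\int_{D_1 \cup D_2} P_1((x_1, 0), y) \, (y_1 - f(y_2))^{-1/2} \, dy.
\]

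A direct manipulation gives
\[
P_1((x_1, 0), y) = \frac{C_P \bigl[(r - x_1)\bigl((x_1 - y_1)^2 + y_2^2\bigr) - 2 a(x_1, 0)^2 (x_1 - y_1)\bigr]}{c(y) \, a(x_1, 0) \, \bigl((x_1 - y_1)^2 + y_2^2\bigr)^2},
\]
where $c(y) = (|y - z|^2 - s^2)^{1/2}$. On $D_1$ (where $|y_2| \le x_1$ and $y_1 \ll x_1$) the bracket is $\approx r(y_2^2 - 3 x_1^2)$, strictly negative; combining with the beta integral $\int_{f(y_2)}^{f_1(y_2)} (f_1(y_2) - y_1)^{-1/2} (y_1 - f(y_2))^{-1/2} \, dy_1 = \pi$ and the rescaling $y_2 = x_1 u$, this piece contributes a term of the form $-c_1 x_1^{-3/2}$. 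On $D_2$ the same calculation yields a positive contribution of order $x_1^{-3/2}$, but of strictly smaller absolute value. The principal obstacle is exactly this precise constant tracking: one must verify that the $D_1$ piece dominates the $D_2$ piece, paying attention to the transition region where the sign of the bracket flips. Once that is done, the upper estimate $\vp_{11}(x_1, 0) \le -c x_1^{-3/2}$ is in hand, and the matching absolute-value bound $|\vp_{11}(x_1, 0)| \le C x_1^{-3/2}$ follows from the same decomposition by discarding signs. All constants depend only on $\Lambda$ and not on $s$, so letting $s$ vary inside $(r - (r_1/2)^2, r)$ produces $\vp_{11}(x_1, 0) \approx - x_1^{-3/2}$ uniformly for $x_1 \in (0, r_2]$.
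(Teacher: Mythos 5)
The upper-bound half of your argument --- $|\vp_{11}(x_1,0)|\le c x_1^{-3/2}$ via $\vp_{11}(x_1,0)=\int_{D\setminus B}P_1((x_1,0),y)\vp_1(y)\,dy$ --- is in the spirit of the paper's proof and is essentially all right. The problem is with the lower bound $\vp_{11}(x_1,0)\le -c x_1^{-3/2}$. You reduce the leading behaviour to $\int_{D_1\cup D_2}P_1((x_1,0),y)\,(y_1-f(y_2))^{-1/2}\,dy$ and correctly note that the numerator $(r-x_1)\bigl((x_1-y_1)^2+y_2^2\bigr)-2a(x_1,0)^2(x_1-y_1)$ changes sign, giving a negative contribution of order $x_1^{-3/2}$ from $D_1$ and a positive one of the same order from the bulk of $D_2$. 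You then assert, without proof, that the negative piece strictly dominates --- and you even flag this as the open point (``one must verify that the $D_1$ piece dominates the $D_2$ piece''). Nothing in the proposal actually establishes this. Because the two contributions are of the same order $x_1^{-3/2}$, the sign of the sum depends on the exact constants and on the behaviour near the zero set of the numerator, which you do not track. This is a genuine gap, not a routine detail to be filled in.

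The paper avoids the cancellation entirely for the lower bound by changing representations. It does not work with $\vp_1$ and $P_1$ there; it returns to (\ref{phiformula}) with $s=r$ and differentiates twice, obtaining $\vp_{11}(x_1,0)=h_{11}(x_1,0)+\int_{D\setminus B}P_{11}((x_1,0),y)\vp(y)\,dy$. The explicit function $h$ yields $h_{11}(x_1,0)\approx -x_1^{-3/2}$ directly, and $P_{11}$ is decomposed into six pieces $A^{(1)},A^{(2)},A^{(3)},E^{(1)},E^{(2)},E^{(3)}$, of which $A^{(1)}$, $A^{(2)}$, $E^{(2)}$ are pointwise non-positive (so they only reinforce the negativity) while the remaining three are shown to contribute at most $O(x_1^{-1/2})$. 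No comparison of competing $x_1^{-3/2}$ terms of opposite signs is ever required. To make your route work you would have to carry out the quantitative comparison you left open --- for instance rescale $y_2=x_1 u$ and show the resulting $u$-integral is strictly negative, uniformly in $s$ and robustly under all the approximations you used --- which is a substantially more delicate computation than the paper performs. As written, the lower bound is not proven.
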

\begin{proof}
First we show that $|\vp_{11}(x_1,0)| \le c x_1^{-3/2}$, $x_1 \in (0,r_2]$. We will use similar notation as in Lemma \ref{phi22}. Put $r = r_0$. Let $r_1$ be the constant from Corollary \ref{nT}. We take $s \in (r - (r_1/2)^2,r)$, $z = (r,0)$, $B = B(z,s)$ and $P$ is given by (\ref{Pformula}). For any $x_1 \in (r - s, r]$ by Lemma \ref{12harmonic} we have $\vp_1(x_1,0) = \int_{D \setminus B} P((x_1,0),y) \vp_1(y) \, dy$. It follows that 
\begin{eqnarray*}
&& \vp_{11}(x_1,0) = \int_{D \setminus B} P_1((x_1,0),y) \vp_1(y) \, dy\\
&=&  \int_{D \setminus B} A((x_1,0),y) \vp_1(y) \, dy + \int_{D \setminus B} E((x_1,0),y) \vp_1(y) \, dy,
\end{eqnarray*}
where $A$, $E$ are given by (\ref{Aformula}), (\ref{Eformula}).

Take $x_1 = \sqrt{r - s}$ (we have $\sqrt{r - s} < r_1/2 \le r/8$). By (\ref{phigradient}) $|\vp_1(y)| \le c \delta_D^{-1/2}(y)$, $y \in D$. We have
$$
\int_{D \setminus B} A((x_1,0),y) \vp_1(y) \, dy
= \frac{r - x_1}{s^2 - (x_1 - r)^2} 
\int_{D \setminus B} P((x_1,0),y) \vp_1(y) \, dy,
$$ 
$$
\left|\int_{D \setminus B} P((x_1,0),y) \vp_1(y) \, dy\right| 
= |\vp_1(x_1,0)| \le c x_1^{-1/2}
$$
and $\frac{r - x_1}{s^2 - (x_1 - r)^2} \approx x_1^{-1}$ so
$$
\left|\int_{D \setminus B} A((x_1,0),y) \vp_1(y) \, dy\right| 
\le c x_1^{-3/2}
$$
for $x_1 = \sqrt{r-s}$.

Let $f_1$, $D_1$, $D_2$, $D_3$ be such as in the proof of Lemma \ref{phi22}. Using  $|\vp_1(y)| \le c \delta_D^{-1/2}(y)$ and similar arguments as in the proof of Lemma \ref{phi22} we get the following estimates 
\begin{eqnarray}
\label{phi11D1}
&& \left|\int_{D_1} E((x_1,0),y) \vp_1(y) \, dy\right| \\
\nonumber
&\le& c x_1^{-5/2} \int_{-x_1}^{x_1} \, dy_2 \int_{f(y_2)}^{f_1(y_2)} \, dy_1 (-y_1+f_1(y_2))^{-1/2} (y_1 - f(y_2))^{-1/2} 
\le c x_1^{-3/2}, 
\end{eqnarray}
\begin{eqnarray}
\label{phi11D2}
&& \left|\int_{D_2} E((x_1,0),y) \vp_1(y) \, dy\right| \\
\nonumber
&\le&  c x_1^{1/2} \int_{x_1}^{r_1/2} \, dy_2 \, y_2^{-4} \int_{f(y_2)}^{f_1(y_2)} \, dy_1 (-y_1+f_1(y_2))^{-1/2} (y_1 - f(y_2))^{-1/2} (x_1 + y_1) \\
\nonumber
&\le& c x_1^{-3/2}, 
\end{eqnarray}
(here we used the estimate $y_1 \le c y_2^2$).
$$
\left|\int_{D_3} E((x_1,0),y) \vp_1(y)\, dy \right|
\le c x_1^{1/2} \int_{D_3} \delta_B^{-1/2}(y) \delta_D^{-1/2}(y) \, dy \le c x_1^{1/2}.
$$
It follows that $|\vp_{11}(x_1,0)| \le c x_1^{-3/2}$, where $c$ does not depend on $s$ and $x_1 = \sqrt{r - s}$. Since $s \in (r - (r_1/2)^2,r)$ we get $|\vp_{11}(x_1,0)| \le c x_1^{-3/2}$, $x_1 \in (0,r_1/2]$.

Now we will show that $\vp_{11}(x_1,0) \le -c x_1^{-3/2}$ for $x_1 \in (0,r_2]$. Here we will use notation similar to the notation used in the proof of Lemma \ref{phi1}. We will use (\ref{phiformula}) for $s = r$, in particular $B = B(z,r)$. By (\ref{phiformula}) we get for $x_1 \in (0,r]$
\begin{eqnarray*}
&& \vp_{11}(x_1,0)
= h_{11}(x_1,0) + \int_{D \setminus B} P_{11}((x_1,0),y) \vp(y) \, dy\\
&=& h_{11}(x_1,0) 
+ \int_{D \setminus B} \frac{\partial A}{\partial x_1}((x_1,0),y) \vp(y) \, dy
+ \int_{D \setminus B} \frac{\partial E}{\partial x_1}((x_1,0),y) \vp(y) \, dy.
\end{eqnarray*}
One easily gets $h_{11}(x_1,0) \approx - x_1^{-3/2}$ for $x_1 \in (0,r/4]$. For $x \in B$, $y \in (\overline{B})^c$ we have
\begin{eqnarray*}
\frac{\partial A}{\partial x_1}(x,y) &=&
\frac{-C_{P}(r^2 - |x - z|^2)^{-3/2}(x_1 - r)^2}{(|y - z|^2 - r^2)^{1/2} |x-y|^2}
+ \frac{-C_{P}(r^2 - |x - z|^2)^{-1/2}}{(|y - z|^2 - r^2)^{1/2} |x-y|^2} \\
&+& \frac{-2 C_{P}(r^2 - |x - z|^2)^{-1/2}(r - x_1)(x_1 - y_1)}{(|y - z|^2 - r^2)^{1/2} |x-y|^4}\\
&=& A^{(1)}(x,y) + A^{(2)}(x,y) + A^{(3)}(x,y),
\end{eqnarray*}
\begin{eqnarray*}
\frac{\partial E}{\partial x_1}(x,y) &=&
\frac{-2 C_{P}(r^2 - |x - z|^2)^{-1/2}(r - x_1) (x_1 - y_1)}{(|y - z|^2 - r^2)^{1/2} |x-y|^4}
+ \frac{-2C_{P}(r^2 - |x - z|^2)^{1/2}}{(|y - z|^2 - r^2)^{1/2} |x-y|^4} \\
&+& \frac{8C_{P}(r^2 - |x - z|^2)^{1/2}(x_1 - y_1)^2}{(|y - z|^2 - r^2)^{1/2} |x-y|^6}\\
&=& E^{(1)}(x,y) + E^{(2)}(x,y) + E^{(3)}(x,y).
\end{eqnarray*}

Let $x_1 \in (0,r/8]$, $y \in (\overline{B})^c$. We have $A^{(1)}(x,y) \le 0$, $A^{(2)}(x,y) \le 0$. We also have $A^{(3)}(x,y) \ge 0$ iff $y_1 \ge x_1$. Let $f_1$ be such as in the proof of Lemma \ref{phi1}. Let $p'_1 > 0$ be such that $f_1(p'_1) = x_1$, $p'_2 < 0$ be such that $f_1(p'_2) = x_1$ (we have $p'_2 = - p'_1$). Note that $p'_1 \approx \sqrt{x_1}$, $|p'_2| \approx \sqrt{x_1}$. 
Note also that $f_1(r/2) = r (1 - \sqrt{3}/2) > r/8$ and $f_1(p'_1) = x_1 \le r/8$ so $p'_1 < r/2$.
Put 
\begin{eqnarray*}
D'_1 &=& \{(y_1,y_2): \, y_2 \in [p'_2,p'_1], y_1 \in (f(y_2),f_1(y_2))\}, \\
D'_2 &=& \{(y_1,y_2): \, y_2 \in (p'_1,r/2] \cup [-r/2,p'_2), y_1 \in (f(y_2),f_1(y_2))\}, \\
D'_3 &=& D \setminus (D'_1 \cup D'_2 \cup B).
\end{eqnarray*}
We have $\int_{D'_1} A^{(3)}((x_1,0),y) \vp(y) \, dy \le 0$. Note that for $y \in D'_2$ we have $y_1 \le f_1(y_2) \le c y_2^2$, $\vp(y) \le c \delta_D^{1/2}(y) \le c (y_2^2)^{1/2} = c y_2$. Hence
\begin{eqnarray*}
\int_{D'_2} A^{(3)}((x_1,0),y) \vp(y) \, dy 
&\le& c x_1^{-1/2} \int_{c \sqrt{x_1}}^{r/2} \, dy_2 \, y_2^{-4} \int_{f(y_2)}^{f_1(y_2)} \, dy_1 \, (y_1 - f_1(y_2))^{-1/2} y_1 \vp(y) \\
&\le& c x_1^{-1/2} \int_{c \sqrt{x_1}}^{r/2} \, dy_2 \le c x_1^{-1/2},
\end{eqnarray*}
$$
\left|\int_{D'_3} A^{(3)}((x_1,0),y) \vp(y) \, dy \right|
\le c x_1^{-1/2} \int_{D'_3} \delta_B^{-1/2}(y) \, dy \le c x_1^{-1/2}.
$$
Note that $E^{(1)}(x,y) = A^{(3)}(x,y)$ and $E^{(2)}(x,y) \le 0$. To estimate $\int_{D \setminus B} E^{(3)} \vp$ we put  
\begin{eqnarray*}
D''_1 &=& \{(y_1,y_2): \, y_2 \in [-x_1,x_1], y_1 \in (f(y_2),f_1(y_2))\}, \\
D''_2 &=& \{(y_1,y_2): \, y_2 \in (x_1,r/2] \cup [-r/2,-x_1), y_1 \in (f(y_2),f_1(y_2))\}, \\
D''_3 &=& D \setminus (D''_1 \cup D''_2 \cup B).
\end{eqnarray*}
Note that for $y \in D''_1$ we have $(x_1 - y_1)^2 \le x_1^2$, $\vp(y) \le c \delta_D^{1/2}(y) \le c x_1$ so 
\begin{eqnarray*}
\int_{D''_1} E^{(3)}((x_1,0),y) \vp(y) \, dy 
&\le& c x_1^{-7/2} \int_{-x_1}^{x_1} \, dy_2 \int_{f(y_2)}^{f_1(y_2)} \, dy_1 \, (y_1 - f_1(y_2))^{-1/2} \vp(y) \\
&\le&  c x_1^{-1/2}.
\end{eqnarray*}
Note that for $y \in D''_2$ we have $(x_1 - y_1)^2 \le x_1^2 + y_1^2 \le x_1^2 + c y_2^4$ and $\vp(y) \le c \delta_D^{1/2}(y) \le c y_2$ so
\begin{eqnarray*}
&& \int_{D''_2} E^{(3)}((x_1,0),y) \vp(y) \, dy \\
&\le& c x_1^{1/2} \int_{x_1}^{r/2} \, dy_2 \, y_2^{-6} (x_1^2 + y_2^4) \int_{f(y_2)}^{f_1(y_2)} \, dy_1 \, (y_1 - f_1(y_2))^{-1/2} \vp(y) \\
&\le& c x_1^{5/2} \int_{x_1}^{r/2} y_2^{-4} \, dy_2 
+ c x_1^{1/2} \int_{x_1}^{r/2} \, dy_2 
\le  c x_1^{-1/2}.
\end{eqnarray*}
We also have $\int_{D''_3} E^{(3)}((x_1,0),y) \vp(y) \, dy \le  c x_1^{1/2}$.

It follows that for sufficiently small $x_1$ we have $\vp_{11}(x_1,0) \le -c x_1^{-3/2}$.
\end{proof}

\begin{lemma}
\label{phi12} There exists $r_2 \in (0,r_0/4]$, $r_2 = r_2(\Lambda)$ such that for any $x_1 \in (0,r_2]$ we have $|\vp_{12}(x_1,0)| \le c x_1^{-1/2} |\log x_1|$.
\end{lemma}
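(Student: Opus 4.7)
The approach mirrors the proofs of Lemmas \ref{phi22} and \ref{phi11}. Since $\vp_2$ is singular $1$-harmonic on $D$ by Lemma \ref{12harmonic}, I would apply the Poisson representation on the ball $B = B(z,s)$ with $z = (r,0)$ and $s \in (r - (r_1/2)^2, r)$, giving
\begin{equation*}
\vp_2(x_1, 0) = \int_{D \setminus B} P((x_1,0), y)\, \vp_2(y)\, dy, \qquad x_1 \in (r - s, r].
\end{equation*}
Differentiating in $x_1$ and decomposing $P_1 = A + E$ via \eqref{Aformula}--\eqref{Eformula}, one obtains
\begin{equation*}
\vp_{12}(x_1, 0) = \int_{D \setminus B} A((x_1,0), y)\, \vp_2(y)\, dy + \int_{D \setminus B} E((x_1,0), y)\, \vp_2(y)\, dy.
\end{equation*}
Following Lemma \ref{phi22}, I would specialize $s$ by setting $x_1 = \sqrt{r - s}$ and estimate the two pieces separately.

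The $A$-integral admits a closed form: since $A((x_1,0),y) = \frac{r - x_1}{s^2 - (x_1 - r)^2}\, P((x_1,0), y)$, one has
\begin{equation*}
\int_{D \setminus B} A((x_1,0), y)\, \vp_2(y)\, dy = \frac{r - x_1}{s^2 - (x_1 - r)^2}\, \vp_2(x_1, 0).
\end{equation*}
An elementary computation with $s = r - x_1^2$ gives $\frac{r - x_1}{s^2 - (x_1 - r)^2} \approx x_1^{-1}$, so together with Lemma \ref{phi2} we obtain $\left|\int_{D \setminus B} A\,\vp_2\right| \le c\, x_1^{-1/2} |\log x_1|$. This is the only source of the logarithmic factor in the claimed bound.

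For the $E$-integral, I would partition $D \setminus B$ into the same three pieces $D_1, D_2, D_3$ used in Lemma \ref{phi22}, and decompose $\vp_2$ via Lemma \ref{ball}(4) as $\vp_2(y) = \cos \alpha(y) \frac{\partial \vp}{\partial \vec{T}}(y) - \sin \alpha(y) \frac{\partial \vp}{\partial \vec{n}}(y)$. By Corollary \ref{nT}, on $D_1 \cup D_2$ this yields $|\vp_2(y)| \le c\, |y_2|\, (y_1 - f(y_2))^{-1/2} + c\, (y_1 - f(y_2))^{1/2} |\log(y_1 - f(y_2))|$. Since $|x - y| \approx x_1$ on $D_1$ and $|x - y| \approx y_2$ on $D_2$, while $|x_1 - y_1| \le c(x_1 + y_2^2)$, direct integration along the lines of Lemma \ref{phi22} yields $\left|\int_{D_j} E \vp_2\right| \le c\, x_1^{-1/2}$ for $j = 1, 2$; on $D_3$ the bound $|\vp_2| \le c\, \delta_D^{-1/2}$ together with $|x - y|$ being bounded below gives a contribution $\le c\, x_1^{1/2}$. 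Summing all three pieces bounds the $E$-integral by $c\, x_1^{-1/2}$, and adding the $A$-contribution gives $|\vp_{12}(x_1, 0)| \le c\, x_1^{-1/2} |\log x_1|$.

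The main obstacle is the bookkeeping on $D_2$: one must balance the factor $x_1 + y_2^2$ coming from $|x_1 - y_1|$, the weight $y_2^{-4}$ from $|x - y|^{-4}$, and the logarithmic growth of $\frac{\partial \vp}{\partial \vec{T}}$. A critical ingredient is the estimate $(f_1(y_2) - f(y_2))^{1/2} |\log(f_1(y_2) - f(y_2))| \le c\, y_2 |\log y_2|$, which prevents the tangential contribution from producing a divergent integral as $y_2 \to x_1$.
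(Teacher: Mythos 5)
Your proposal is correct and follows essentially the same route as the paper's proof: Poisson representation via Lemma \ref{12harmonic} on $B(z,s)$ with $x_1 = \sqrt{r-s}$, the split $P_1 = A + E$, the closed-form evaluation of the $A$-part via Lemma \ref{phi2} (which, as you note, is the sole source of the $|\log x_1|$ factor), and the decomposition of $\vp_2$ into tangential and normal parts via Lemma \ref{ball}(4) and Corollary \ref{nT1} over the regions $D_1, D_2, D_3$ from Lemma \ref{phi22}. Your key estimate $(f_1(y_2)-f(y_2))^{1/2}|\log(f_1(y_2)-f(y_2))| \le c\,|y_2|\,|\log y_2|$ is the same observation the paper uses (in the form $y_1^{1/2}|\log y_1| \le c\,y_2|\log y_2|$), and your bound of $cx_1^{-1/2}$ for the full $E$-integral matches the paper's (the paper gets an additional $cx_1^{1/2}|\log x_1|$ on $D_2$, which is lower order).
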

\begin{proof}
We will use similar notation as in Lemma \ref{phi22}. Put $r = r_0$. Let $r_1$ be the constant from Corollary \ref{nT}. We take $s \in (r - (r_1/2)^2,r)$. Recall that $z = (r,0)$, $B = B(z,s)$ and $P$ is given by (\ref{Pformula}). For any $x_1 \in (r - s, r]$ by Lemma \ref{12harmonic} we have $\vp_2(x_1,0) = \int_{D \setminus B} P((x_1,0),y) \vp_2(y) \, dy$. It follows that 
\begin{eqnarray*}
&& \vp_{12}(x_1,0)
= \int_{D \setminus B} P_{1}((x_1,0),y) \vp_2(y) \, dy\\
&=&  
\int_{D \setminus B} A((x_1,0),y) \vp_2(y) \, dy
+ \int_{D \setminus B} E((x_1,0),y) \vp_2(y) \, dy.
\end{eqnarray*}
Take $x_1 = \sqrt{r - s}$ (we have $\sqrt{r - s} < r_1/2 \le r/8$). 
We have
$$
\int_{D \setminus B} A((x_1,0),y) \vp_2(y) \, dy 
= \frac{r - x_1}{(s^2 - (x_1 - r)^2)} \int_{D \setminus B} P((x_1,0),y) \vp_2(y) \, dy.
$$
By Lemma \ref{phi2} we get 
$$
\left| \int_{D \setminus B} P((x_1,0),y) \vp_2(y) \, dy \right| 
= |\vp_2(x_1,0)| \le c x_1^{1/2} |\log x_1|.
$$
Since $(r - x_1)(s^2 - (x_1 - r)^2)^{-1} \approx x_1^{-1}$ we obtain
$$
\left| \int_{D \setminus B} A((x_1,0),y) \vp_2(y) \, dy \right|
\le c x_1^{-1/2} |\log x_1|,
$$
for $x_1 = \sqrt{r - s}$.

Let $f_1$, $D_1$, $D_2$, $D_3$ be such as in the proof of 
Lemma \ref{phi22}. 
By Lemma \ref{ball} we have for $y \in D_1 \cup D_2$
$$
\vp_2(y) = \cos \alpha(y) \frac{\partial \vp}{\partial \vec{T}}(y) 
-\sin \alpha(y) \frac{\partial \vp}{\partial \vec{n}}(y).
$$
By the arguments from the proof of Lemma \ref{phi22} we have for $y \in D_1 \cup D_2$
\begin{eqnarray*}
\left|\cos \alpha(y)\frac{\partial \vp}{\partial \vec{T}}(y)\right| 
&\le& c (y_1 - f(y_2))^{1/2} |\log(y_1 - f(y_2))| \\
&\le& c y_1^{1/2} |\log y_1|,
\end{eqnarray*} 
$$
\left|\sin \alpha(y) \frac{\partial \vp}{\partial \vec{n}}(y) \right| \le c y_2 (y_1 - f(y_2))^{-1/2}.
$$
Similarly like in the proofs of Lemmas \ref{phi22} and \ref{phi11} we obtain the following estimates
\begin{eqnarray*}
&& \left| \int_{D_1} E((x_1,0),y) \cos \alpha(y) \frac{\partial \vp}{\partial \vec{T}}(y) \, dy \right| \\
&\le& c x_1^{-5/2} \int_{-x_1}^{x_1} \, dy_2 \int_{f(y_2)}^{f_1(y_2)} \, dy_1 (-y_1+f_1(y_2))^{-1/2}  y_1^{1/2} |\log y_1| 
\le c x_1^{1/2} |\log x_1|.
\end{eqnarray*}
Here we used the following facts $y_1^{1/2} |\log y_1| \le c y_2 |\log y_2| \le c x_1 |\log x_1|$, $\int_{f(y_2)}^{f_1(y_2)}  (-y_1+f_1(y_2))^{-1/2} \, dy_1 \le c f_1^{1/2}(y_2) \le c y_2 \le c x_1$.

Using similar arguments we get 
\begin{eqnarray*}
&& \left| \int_{D_2} E((x_1,0),y) \cos \alpha(y) \frac{\partial \vp}{\partial \vec{T}}(y) \, dy \right| \\
&\le& c x_1^{1/2} \int_{x_1}^{r/2} \, dy_2 \, y_2^{-4} \int_{f(y_2)}^{f_1(y_2)} \, dy_1 (-y_1+f_1(y_2))^{-1/2} y_1^{1/2} |\log y_1| (x_1 + y_1)\\ 
&\le& c x_1^{1/2} |\log x_1|.
\end{eqnarray*}
By the same arguments as in (\ref{phi11D1}), (\ref{phi11D2}) one can easily obtain 
$$
\left| \int_{D_1} E((x_1,0),y) y_2 (y_1 - f(y_2))^{-1/2} \, dy \right|
\le c x_1^{-1/2},
$$
$$
\left| \int_{D_2} E((x_1,0),y) y_2 (y_1 - f(y_2))^{-1/2} \, dy \right|
\le c x_1^{-1/2} + c x_1^{1/2} |\log x_1|,
$$
We also have
$$
\left|\int_{D_3} E((x_1,0),y) \vp_2(y)\, dy \right|
\le c x_1^{1/2} \int_{D_3} \delta_B^{-1/2}(y) \delta_D^{-1/2}(y) \, dy \le c x_1^{1/2}.
$$
It follows that $|\vp_{12}(x_1,0)| \le c x_1^{-1/2} |\log x_1|$,
where $c$ does not depend on $s$ and $x_1 = \sqrt{r-s}$. Since $s \in (r - (r_1/2)^2,r)$ we get $|\vp_{12}(x_1,0)| \le c x_1^{-1/2} |\log x_1|$, $x_1 \in (0,r_1/2]$.
\end{proof}

By Lemmas \ref{ball}, \ref{phi22}, \ref{phi11}, \ref{phi12}  and Corollary \ref{nT}  we obtain
\begin{corollary}
\label{nT1}
There exists $r_2 \in (0,r_0/4]$, $r_2 = r_2(\Lambda)$ such that for any $y \in D$, $\delta_D(y) \le r_2$ we have (\ref{phin}), (\ref{phit}), (\ref{phigradient}) and
\begin{eqnarray*}
\frac{\partial^2 \vp}{\partial \vec{n}^2}(y) 
&\approx& -\delta_D^{-3/2}(y),\\
\frac{\partial^2 \vp}{\partial \vec{T}^2}(y) 
&\approx& -\delta_D^{-1/2}(y),\\
\left|\frac{\partial^2 \vp}{\partial \vec{n} \partial \vec{T}}(y) \right|
&\le& c \delta_D^{-1/2}(y) |\log(\delta_D(y))|.
\end{eqnarray*}
\end{corollary}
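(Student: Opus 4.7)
The plan is to reduce to the on-axis estimates already established in Lemmas \ref{phi11}, \ref{phi22}, \ref{phi12} by choosing, for each interior point $y$, a coordinate system centred at its nearest boundary point. This lets the directional-derivative statements of the corollary be read off as the special case of the part~4 transformation formulas of Lemma \ref{ball} at angle $\alpha=0$.

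More concretely, first I would fix $r_2 \in (0,r_0/4]$ to be the minimum of the $r_1$ from Corollary \ref{nT} and the three $r_2$'s appearing in Lemmas \ref{phi22}, \ref{phi11}, \ref{phi12}; it depends only on $\Lambda$. The first-derivative bounds \eqref{phin}, \eqref{phit}, \eqref{phigradient} are then immediate, inherited directly from Corollary \ref{nT}. For the second-derivative bounds, fix an arbitrary $y\in D$ with $\delta_D(y)\le r_2$. Since $r_2\le r_0\le \tilde R$, there is a unique nearest boundary point $y^*\in\partial D$ with $|y-y^*|=\delta_D(y)$. I would then reapply the hypotheses of Section~3 with $x_0:=y^*$: Lemma \ref{ball} produces a coordinate system $CS_{y^*}$ in which $y^*=(0,0)$, $\vec n(y^*)=e_1$, $\vec T(y^*)=e_2$, and the point $y$ has coordinates $(x_1,0)$ with $x_1=\delta_D(y)\in (0,r_2]$.

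The crucial observation is that the constants in Lemma~\ref{ball}, as well as those produced in Lemmas \ref{phi22}, \ref{phi11}, \ref{phi12}, depend only on $\Lambda$, and hence not on the particular choice of $y^*$. Since $(x_1,0)$ is on the inward normal ray from $y^*$, the nearest-boundary function satisfies $\vec n(x_1,0)=\vec n(y^*)=e_1$ and $\vec T(x_1,0)=\vec T(y^*)=e_2$, so in $CS_{y^*}$ the angle $\alpha$ introduced in Lemma~\ref{ball} vanishes at $y$. Plugging $\sin\alpha(y)=0$, $\cos\alpha(y)=1$ into the three second-order identities in part~4 of Lemma~\ref{ball} collapses them to
\[
\vp_{11}(y)=\frac{\partial^2 \vp}{\partial \vec n^2}(y),\quad
\vp_{22}(y)=\frac{\partial^2 \vp}{\partial \vec T^2}(y),\quad
\vp_{12}(y)=\frac{\partial^2 \vp}{\partial \vec n \partial \vec T}(y).
\]

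Finally I would invoke Lemmas \ref{phi11}, \ref{phi22}, \ref{phi12} applied in $CS_{y^*}$ to the point $(x_1,0)$ with $x_1=\delta_D(y)$, giving $\vp_{11}(x_1,0)\approx -x_1^{-3/2}$, $\vp_{22}(x_1,0)\approx -x_1^{-1/2}$, and $|\vp_{12}(x_1,0)|\le c x_1^{-1/2}|\log x_1|$; combined with the identities above, these are precisely the three second-order estimates in the corollary. There is essentially no obstacle: the only thing to check carefully is the uniformity of constants in $y^*$, which follows from the fact that all previous results in the section were phrased with $x_0\in\partial D$ arbitrary and constants depending only on $\Lambda$.
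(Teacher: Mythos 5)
Your proposal is correct and is precisely the argument the paper intends (the paper gives no explicit proof, only citing Lemmas \ref{ball}, \ref{phi22}, \ref{phi11}, \ref{phi12} and Corollary \ref{nT}): recenter the coordinate system at the nearest boundary point $y^*$ so that $y = (\delta_D(y),0)$ and $\alpha(y)=0$, which collapses the part-4 transformation formulas of Lemma \ref{ball} to $\vp_{11} = \partial^2\vp/\partial\vec n^2$, etc., and then read off the on-axis estimates. Your explicit check that all constants depend only on $\Lambda$ and not on the chosen $y^*$, so that the choice of coordinate system can vary with $y$, is exactly the right point to verify.
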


The coordinate system and notation in the following lemma is the same as in the whole section.

\begin{lemma}
\label{phiderivatives}
There exists $r_3 \in (0,r_0/4]$, $r_3 = r_3(\Lambda)$ such that for any $y = (y_1,y_2) \in B((r_3,0),r_3)$ we have
\begin{eqnarray}
\label{phi2der}
|\vp_2(y)| &\le&
c (y_1^{1/2} |\log y_1| + |y_2| y_1^{-1/2}),\\ 
\label{phi12der}
|\vp_{12}(y)| &\le&
c (y_1^{-1/2} |\log y_1| + |y_2| y_1^{-3/2}),\\
\label{phi22der}
|\vp_{22}(y)| &\approx& -y_1^{-1/2} 
\end{eqnarray}
and for any $y = (y_1,y_2) \in W_{r_3}$ we have
\begin{equation}
\label{derphi1}
\vp_1(y) \approx \delta_D^{-1/2}(y),
\end{equation}
where $W_{r_3} = \{(y_1,y_2): \, y_2 \in [-r_3,r_3], y_1 \in (f(y_2),r_3]\}$.
\end{lemma}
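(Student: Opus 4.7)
\emph{Plan.} The strategy is to reduce (\ref{phi2der})--(\ref{derphi1}) to the already-established on-axis estimates summarized in Corollary~\ref{nT1}, by converting the intrinsic normal/tangential derivatives at $y$ into Cartesian derivatives in the frame $CS_{x_0}$ via the rotation formulas of Lemma~\ref{ball} part~4. Corollary~\ref{nT1} controls $\partial\vp/\partial\vec n(y),\partial\vp/\partial\vec T(y)$ and their second-order analogues at any $y$ with $\delta_D(y)\le r_2$ in terms of $\delta_D(y)$ alone; part~4 of Lemma~\ref{ball} expresses each of $\vp_i(y),\vp_{ij}(y)$ as a linear combination of these directional derivatives with coefficients that, by Lemma~\ref{ball} part~2, satisfy $\cos\alpha(y)\in[\cos(\pi/4),1]$ and $|\sin\alpha(y)|\le c_2|y_2|$. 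So all the estimates (\ref{phi2der})--(\ref{derphi1}) will follow once we can replace $\delta_D(y)$ by $y_1$ in the boundary regime and show that the tangential corrections are strictly lower order than the leading normal ones.

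The geometric preliminary is that $\delta_D(y)\approx y_1$ on $B((r_3,0),r_3)\cap W_{r_3}$ provided $r_3$ is chosen small in terms of $\Lambda$. Indeed, $y\in B((r_3,0),r_3)$ gives the key bound $y_2^2<2r_3 y_1$, so by Lemma~\ref{ball} part~3 we have $f(y_2)\le c_4 y_2^2\le y_1/2$ once $4c_4 r_3\le 1$, hence $y_1-f(y_2)\in[y_1/2,y_1]$; Lemma~\ref{ball} part~5 (for $y_2>0$, with the symmetric analogue for $y_2<0$, and the trivial identity for $y_2=0$) then yields the two-sided comparison $\delta_D(y)\approx y_1$. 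The same inclusion also gives $|\sin\alpha(y)|\le c\sqrt{r_3}\sqrt{y_1}$, so the tangential corrections carry an extra factor of $\sqrt{r_3}$ that will be used to absorb them into the main term.

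Substituting Corollary~\ref{nT1} into the rotation formulas then yields each estimate directly on $W_{r_3}\cap B((r_3,0),r_3)$. For example,
\[
\vp_{22}(y)\;=\;\cos^{2}\alpha(y)\,\tfrac{\partial^{2}\vp}{\partial\vec T^{2}}(y)\;+\;\sin^{2}\alpha(y)\,\tfrac{\partial^{2}\vp}{\partial\vec n^{2}}(y)\;-\;2\sin\alpha(y)\cos\alpha(y)\,\tfrac{\partial^{2}\vp}{\partial\vec n\partial\vec T}(y),
\]
whose leading term $\cos^{2}\alpha(y)\cdot(-\delta_D^{-1/2}(y))\approx -y_1^{-1/2}$ absorbs both the correction $\sin^{2}\alpha(y)\,\partial^{2}\vp/\partial\vec n^{2}(y)$, bounded in absolute value by $c|y_2|^{2}\delta_D^{-3/2}(y)\lesssim r_3\, y_1^{-1/2}$, and the cross term bounded by $\sqrt{r_3}|\log y_1|$, which is strictly lower order than $y_1^{-1/2}$ as $y_1\to 0$. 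Estimates (\ref{phi2der}), (\ref{phi12der}) and the assertion $\vp_1(y)\approx\delta_D^{-1/2}(y)$ on $W_{r_3}$ come out of the analogous expansions in exactly the same way, with the normal-derivative contribution always dominant.

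The remaining points $y\in B((r_3,0),r_3)\setminus W_{r_3}$ satisfy $y_1\in(r_3,2r_3]$; for such $y$ the distance $\delta_D(y)$ is bounded below by a constant depending only on $\Lambda$ (using $B(0,R_1)\subset D$ and $r_3\le R_1/4$, say), and the same rotation argument applied at the nearest boundary point $y^*$ of $y$ gives bounds of the right order with leading sign intact once $r_3$ is small enough. The main technical obstacle is precisely this interior lower bound in $\vp_{22}(y)\approx -y_1^{-1/2}$: one must verify that, at the chosen $r_3$, the corrections $\sin^2\alpha\cdot\partial^2\vp/\partial\vec n^2$ and $\sin\alpha\cos\alpha\cdot\partial^2\vp/\partial\vec n\partial\vec T$ cannot cancel the leading term $\cos^2\alpha\cdot\partial^2\vp/\partial\vec T^2<0$. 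This is ensured by fixing $r_3$ small enough depending on the constants in Corollary~\ref{nT1} and on $c_2,c_4$ from Lemma~\ref{ball}, exploiting the uniformity supplied by $D\in F(\Lambda)$.
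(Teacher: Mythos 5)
Your proof is correct and is essentially the same argument as the paper's: both rewrite $\vp_i,\vp_{ij}$ in the normal/tangential frame via Lemma~\ref{ball}~(4), plug in the boundary bounds from Corollary~\ref{nT1}, and use the geometry of the inscribed disk $B((r_3,0),r_3)$ to get $\delta_D(y)\approx y_1$ and $y_2^2\lesssim r_3\,y_1$, so that the normal-derivative term dominates once $r_3=r_3(\Lambda)$ is small. Your explicit tracking of the $\sqrt{r_3}$ gain in the tangential corrections is a slightly more detailed version of the paper's ``for sufficiently small $r$'' and your concluding worry about the sign of $\vp_{22}$ is resolved exactly this way.
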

\begin{proof}
We may assume that $y_2 > 0$.
Let $r \in (0,r_2]$ where $r_2$ is the constant from Corollary \ref{nT1} (recall that $r_2 \le r_0/4$). Let $y = (y_1,y_2) \in B((r,0),r)$ with $y_2 > 0$. By Lemma \ref{ball} we have $\sin \alpha(y) \approx y_2$, $\cos \alpha(y) \approx c$. We also have $\delta_D(y) \approx y_1$ and $y_2^2 \le c y_1$.

By Corollary \ref{nT1} we get $\frac{\partial \vp}{\partial \vec{n}}(y) \approx -\delta_D^{-1/2}(y) \approx - y_1^{-1/2}$,
$\left| \frac{\partial \vp}{\partial \vec{T}}(y) \right| \le c \delta_D^{1/2}(y) |\log(\delta_D(y))| \le c y_1^{1/2} |\log y_1|$. Using this and the formula for $\vp_2$ from Lemma \ref{ball} we get (\ref{phi2der}).

By Corollary \ref{nT1} we have 
$$
\left| \frac{\partial^2 \vp}{\partial \vec{n} \partial \vec{T}}(y) \right| \le  c \delta_D^{-1/2}(y) |\log(\delta_D(y))| 
\le c y_1^{-1/2} |\log y_1|,
$$
$$
\left| \frac{\partial^2 \vp}{\partial \vec{n}^2}(y) -
\frac{\partial^2 \vp}{\partial \vec{T}^2}(y) \right| 
\le  c \delta_D^{-3/2}(y) 
\le c y_1^{-3/2}.
$$
Using this and the formula for $\vp_{12}$ from Lemma \ref{ball} we get (\ref{phi12der}).

By Corollary \ref{nT1} we have $\frac{\partial^2 \vp}{\partial \vec{T}^2}(y) \approx -\delta_D^{-1/2}(y)  \approx - y_1^{-1/2}$, $\frac{\partial^2 \vp}{\partial \vec{n}^2}(y) \approx -\delta_D^{-3/2}(y) \approx - y_1^{-3/2}$. $\sin^2 \alpha(y) \approx y_2^2 \le c y_1$,
$$
\left| \sin \alpha(y) \cos \alpha(y) \frac{\partial^2 \vp}{\partial \vec{n} \partial \vec{T}}(y) \right|    
\le c y_2 y_1^{-1/2} |\log y_1| \le c |\log y_1|.
$$
Using this and the formula for $\vp_{22}$ from Lemma \ref{ball} we get (\ref{phi22der}) for sufficiently small $r$.

By (\ref{phin}), (\ref{phit}) and the formula for $\vp_1$ from Lemma \ref{ball} we get (\ref{derphi1}) for sufficiently small $r$.
\end{proof}

We have $(-\Delta)^{1/2} \vp(x) = 1$ for $x \in D$. We need to estimate $(-\Delta)^{1/2} \vp(x) $ for $x \in (\overline{D})^c$. For such $x$ we have  $(-\Delta)^{1/2} \vp(x) = -(2 \pi)^{-1} \int_D \frac{\vp(y)}{|y - x|^3} \, dy$.

\begin{lemma}
\label{DeltaSqrt}
Let $x = (-x_1,0)$, $x_1 > 0$. We have
$$
\left|(-\Delta)^{1/2} \vp(x)\right| \approx \delta_D^{-1/2}(x) (1 + |x|)^{-5/2}.
$$
\end{lemma}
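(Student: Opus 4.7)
The plan is to estimate $I(x) := \int_D \vp(y)|y-x|^{-3}\,dy$ both above and below by $\delta_D^{-1/2}(x)(1+|x|)^{-5/2}$, using the formula $(-\Delta)^{1/2}\vp(x) = -(2\pi)^{-1}I(x)$ recorded just before the lemma. I would work in the coordinate system $CS_{x_0}$ of Lemma \ref{ball}, so $x_0=(0,0)$ and $x=(-x_1,0)$ with $x_1>0$, and exploit two elementary geometric facts. First, by convexity of $D$ together with the tangency of $\partial D$ to $\{y_1=0\}$ at $x_0$, one has $\overline{D}\subset\{y_1\ge 0\}$; since each point $(0,y_2)$ then lies in $\overline{D^c}$, this forces $\delta_D(y)\le y_1$ for every $y\in D$. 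Second, for every $y\in\overline{D}$, $|x-y|^2=(y_1+x_1)^2+y_2^2\ge x_1^2$ with equality exactly at $y=x_0$, so in fact $\delta_D(x)=x_1=|x|$ and $|y-x|\approx x_1+|y|$ on $D$.

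For the upper bound I would use $\vp(y)\le c\,\delta_D^{1/2}(y)\le c\,|y|^{1/2}$ (the first is the H\"older estimate recorded just before Lemma \ref{phi1}, the second from $\delta_D(y)\le y_1\le|y|$) together with $D\subset B(0,R)$ for $R=\mathrm{diam}(D)$, reducing the problem in polar coordinates to
\[
\int_0^R \frac{r^{3/2}}{(x_1+r)^3}\,dr.
\]
Splitting this integral at $r=x_1$ gives $\le c\,x_1^{-1/2}$ when $x_1\le R$ and $\le c\,R^{5/2}x_1^{-3}\le c\,x_1^{-3}$ when $x_1\ge R$; both regimes combine into the single bound $c\,x_1^{-1/2}(1+x_1)^{-5/2}$, which is the claimed upper estimate.

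For the lower bound I would split into three regimes. When $x_1\le r_0$ (with $r_0,c_4$ from Lemma \ref{ball}), restrict the integration to $A := \{y\in D:\ y_1\in[x_1,2x_1],\ |y_2|\le x_1\}$; the inequality $c_4 r_0\le 1/4$ guarantees $f(y_2)\le c_4 x_1^2\le x_1/4\le y_1$, so $A\subset D$. On $A$, Lemma \ref{ball} gives $\delta_D(y)\approx x_1$; Corollary \ref{nT}, integrated along the inner normal from the nearest boundary point, yields $\vp(y)\ge c\,\delta_D^{1/2}(y)\ge c\,x_1^{1/2}$; and $|y-x|\approx x_1$. Since $|A|\approx x_1^2$, this contributes at least $c\,x_1^{1/2}\cdot x_1^{-3}\cdot x_1^2=c\,x_1^{-1/2}$ to $I(x)$. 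When $x_1\ge R$, every $y\in D$ satisfies $|y-x|\le 2x_1$, so $I(x)\ge c\,x_1^{-3}\int_D\vp(y)\,dy$, a positive multiple of $x_1^{-3}$. For intermediate $x_1\in(r_0,R)$ (if non-empty), a fixed ball compactly contained in $D$ contributes a uniform positive amount to $I(x)$, matching the target $\approx 1$ in this range.

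The only mildly delicate step is the small-$x_1$ case: one must choose $A$ so that the two-sided asymptotics $\vp\approx\delta_D^{1/2}$ supplied by Corollary \ref{nT} actually apply on all of $A$, and ensure that the various $\Lambda$-dependent constants line up uniformly across the three regimes. Beyond this bookkeeping no conceptual obstacle arises; it is the observation $\overline{D}\subset\{y_1\ge 0\}$ that reduces the anisotropic singular integral to a simple one-dimensional radial computation.
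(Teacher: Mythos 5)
Your proof is correct and follows essentially the same route as the paper: both exploit the tangency of $\partial D$ to $\{y_1=0\}$ at $x_0$, the two-sided bound $\vp\approx\delta_D^{1/2}$, and $|y-x|\approx\delta_D(x)+|y|$, with a radial decomposition (your polar integral is a continuous version of the paper's annular split). The only difference is that you make the lower bound fully explicit via a fixed box $A$ away from $\partial D$, whereas the paper leaves the two-sided $\approx$ on its first annulus largely to the reader; your constraints on $x_1$ should really be $2x_1\le r_1$ (not merely $x_1\le r_0$) so that both Lemma~\ref{ball} and Corollary~\ref{nT} apply on all of $A$, but that is the bookkeeping you already flag.
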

\begin{proof}
Put $r = r_0$.
When $x_1 \in (-\infty,-r/2)$ we have
$$
\int_D \frac{\vp(y)}{|y - x|^3} \, dy \approx |x|^{-3} \approx \delta_D^{-1/2}(x) (1 + |x|)^{-5/2}.
$$
When $x_1 \in [-r/2,0)$ we have
\begin{eqnarray*}
\int_D \frac{\vp(y)}{|y - x|^3} \, dy &\approx&
\int_{D \cap B(0,\delta_D(x))} \delta_D^{-5/2}(x) \, dy +
\int_{D \cap (B(0,r/2) \setminus B(0,\delta_D(x)))} |y|^{-5/2} \, dy \\
&& + 
\int_{D \cap B^c(0,r/2)} |y|^{-5/2} \, dy 
\approx \delta_D^{-1/2}(x).
\end{eqnarray*}
\end{proof}
By Lemma \ref{DeltaSqrt} we obtain immediately
\begin{corollary}
\label{DeltaSqrt1}
For any $x \in (\overline{D})^c$ we have
$$
\left|(-\Delta)^{1/2} \vp(x)\right| \approx \delta_D^{-1/2}(x) (1 + |x|)^{-5/2}.
$$
\end{corollary}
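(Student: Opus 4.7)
The plan is to deduce the corollary from Lemma \ref{DeltaSqrt} by letting the base point $x_0 \in \partial D$ depend on $x$. Since $D$ is convex (recall $D \in F(\Lambda)$), every $x \in (\overline{D})^c$ admits a unique nearest boundary point $x^* \in \partial D$, and by convexity the segment from $x^*$ to $x$ is aligned with the outward unit normal, so that $x - x^* = -\delta_D(x)\,\vec{n}(x^*)$.

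My first step is to apply Lemma \ref{DeltaSqrt} in the coordinate system $CS_{x^*}$, i.e.\ with $x_0 := x^*$. In these rotated-translated coordinates, the point $x$ has representation $(-\delta_D(x),0)$, so Lemma \ref{DeltaSqrt} with $x_1 := \delta_D(x)$ yields directly
\[
\bigl|(-\Delta)^{1/2}\vp(x)\bigr| \approx \delta_D^{-1/2}(x)\bigl(1 + \delta_D(x)\bigr)^{-5/2},
\]
where the implicit constants depend only on $\Lambda$ (not on the chosen base point $x^*$), because the class $F(\Lambda)$ in Definition \ref{classF} is invariant under the choice of boundary point.

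The only mismatch between this and the claimed statement is that Lemma \ref{DeltaSqrt} is phrased in $CS_{x^*}$, where $|x| = \delta_D(x)$, whereas the corollary uses the norm in the ambient coordinate system $CS$ (in which $D \subset B(0,1)$). The second step is therefore to compare the two. Since $x^* \in \partial D \subset \overline{B(0,1)}$, the triangle inequality gives
\[
\delta_D(x) - 1 \le |x| \le \delta_D(x) + 1,
\]
so $1 + |x| \approx 1 + \delta_D(x)$ uniformly in $x \in (\overline{D})^c$ (distinguishing the two trivial regimes $\delta_D(x) \le 2$ and $\delta_D(x) \ge 2$). Substituting this equivalence into the estimate above yields the claimed bound.

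There is no real obstacle here: the lemma already handled both the near-boundary regime $x_1 \in [-r/2,0)$ and the far-field regime $x_1 \in (-\infty,-r/2)$ in its proof, so the only work in the corollary is the elementary observation that any exterior point sits on an outward normal line at its nearest boundary point, together with the $|x|$ versus $\delta_D(x)$ comparison made possible by $D \subset B(0,1)$.
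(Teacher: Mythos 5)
Your proposal is correct and is exactly the deduction the paper leaves implicit when it says the corollary follows "immediately" from Lemma \ref{DeltaSqrt}: apply the lemma in the coordinate system based at the foot $x^*$ of the normal from $x$ (so that $x=(-\delta_D(x),0)$ there), note the constants depend only on $\Lambda$ and hence are uniform in $x^*\in\partial D$, and use $D\subset B(0,1)$ to get $1+|x|\approx 1+\delta_D(x)$. No gap.
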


\section{Estimates of derivatives of $u$ near $\partial D \times \{0\}$}

In this section we study the behaviour of $u_{i,j}$ near $\partial D \times \{0\}$.

\begin{figure}
\centering
\includegraphics[scale=0.7]{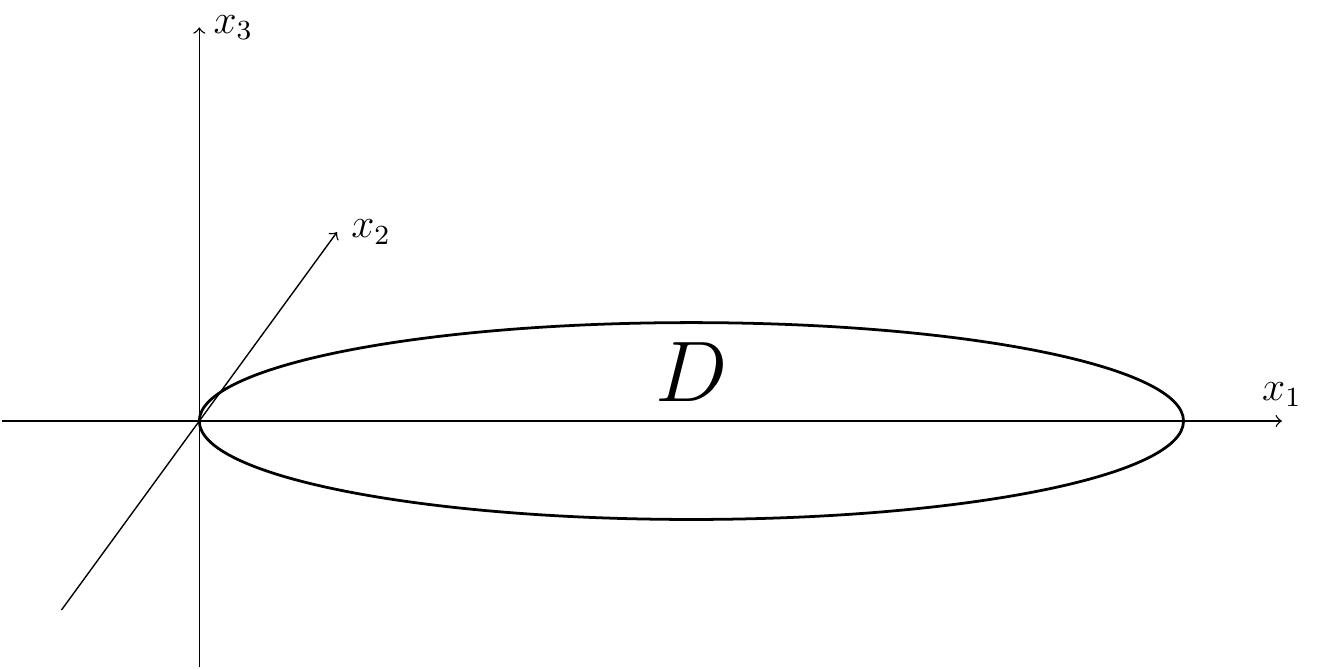}
\caption{}
\label{fig:2}
\end{figure}

In the whole section we fix $C_1 > 0$, $R_1 > 0$, $\kappa_2 \ge \kappa_1 > 0$, $D \in F(C_1,R_1,\kappa_1,\kappa_2)$ and $x_0 \in \partial D$. We put $\Lambda = \{C_1,R_1,\kappa_1,\kappa_1\}$. $\vp$ is the solution of (\ref{maineq1}-\ref{maineq2}) for $D$ and $u$ is the harmonic extension of $\vp$ given by (\ref{ext1}-\ref{ext2}). Unless it is otherwise stated we fix a $2$-dimensional coordinate system $CS_{x_0}$ and notation as in Lemma \ref{ball} (see Figure 1). In particular $x_0$ is $(0,0)$ in $CS_{x_0}$ coordinates. To study $u$ we also use a $3$-dimensional Cartesian coordinate system $0 x_1 x_2 x_3$, see Figure 2, which is formed (roughly speaking) by adding $0 x_3$ axis to the above $2$-dimensional coordinate system. Let us recall that in the whole section we use convention stated in Remark \ref{constants1}.

Put $r = r_2 \wedge r_3 \wedge f(r_0/4) \wedge f(-r_0/4)$, where $r_0$, $r_2$, $r_3$ are the constant from Lemma \ref{ball}, Corollary \ref{nT1} and Lemma \ref{phiderivatives}. Note that $f(r_0/4) \wedge f(-r_0/4) \ge c_3 r_0^2/16$, where $c_3$ is a constant from Lemma \ref{ball}, $c_3 r_0^2/16$ depends only on $\Lambda$. For any $h \in (0,r]$ we put (see Figure 3):
\begin{eqnarray*}
S_1(h) &=& \{(x_1,x_2,x_3): \, x_1 = -h, x_2 = 0, x_3 \in (0,h/4]\},\\
S_2(h) &=& \{(x_1,x_2,x_3): \, x_1 = -h, x_2 = 0, x_3 \in (h/4,h]\} \\
&& \cup \, \, \{(x_1,x_2,x_3): \, x_1 \in (-h,0], x_2 = 0, x_3 = h \},\\
S_3(h) &=& \{(x_1,x_2,x_3): \, x_1 \in (0,h], x_2 = 0, x_3 = h\} \\
&& \cup \, \, \{(x_1,x_2,x_3): \, x_1 = h, x_2 = 0, x_3 \in (h/4,h]\},\\
S_4(h) &=& \{(x_1,x_2,x_3): \, x_1 = h, x_2 = 0, x_3 \in (0,h/4]\}.
\end{eqnarray*}

\begin{figure}
\centering
\includegraphics[scale=0.7]{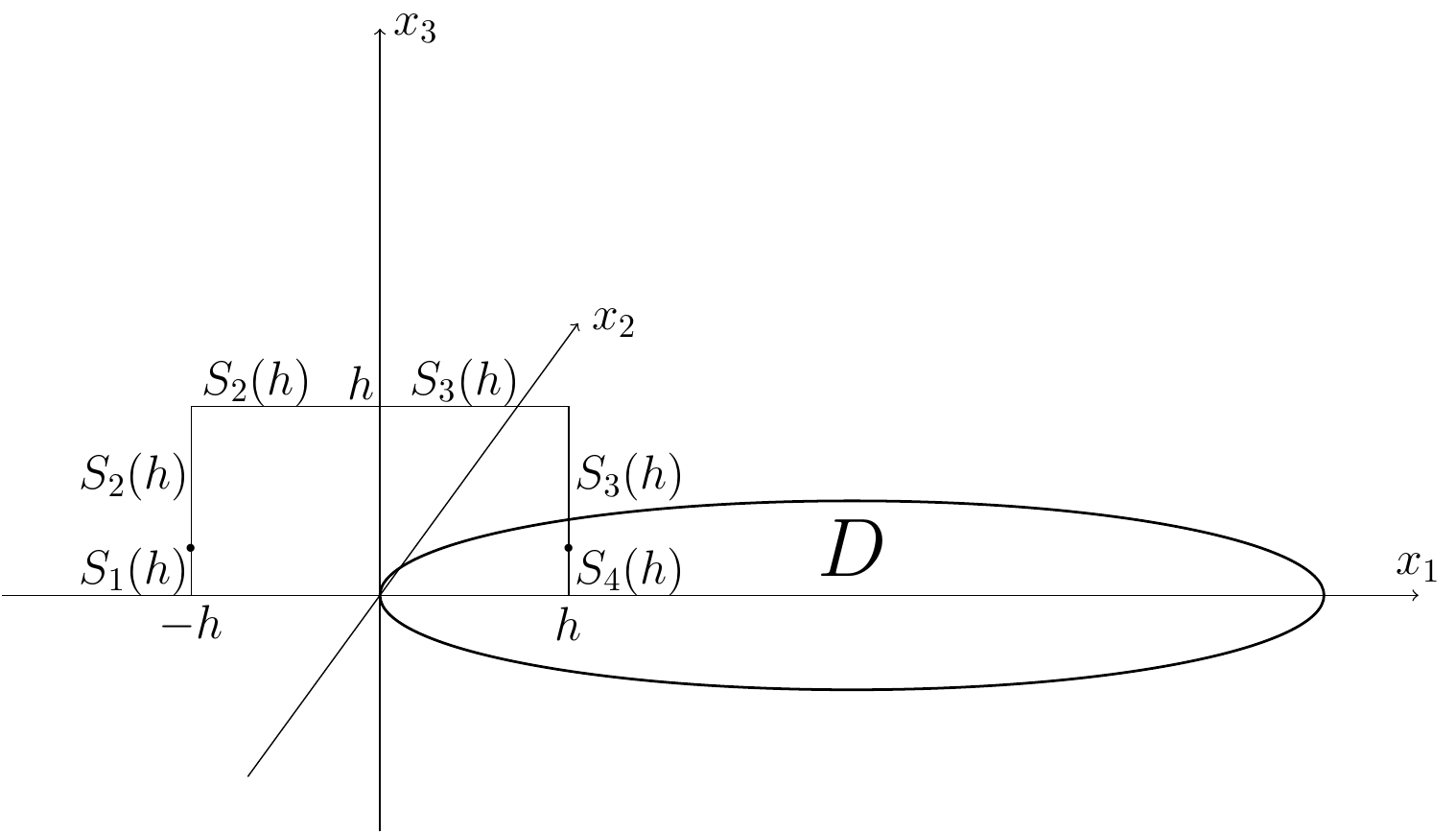}
\caption{}
\label{fig:3}
\end{figure}

The main result of this section is the following proposition.

\begin{proposition}
\label{Hessianboundary}
There exists $h_0 \in (0,r/4]$, $h_0 = h_0(\Lambda)$ such that for any $h \in (0,h_0]$ we have
\begin{eqnarray*}
&& u_{22}(x) \approx -x_3 h^{-3/2} \,\,\, \text{for} \,\,\, x \in S_1(h) \cup S_2(h) \cup S_3(h), \,\,\,
u_{22}(x) \approx -h^{-1/2} \,\,\, \text{for} \,\,\, x \in S_4(h),\\
&& u_{11}(x) \approx h^{-3/2} \,\,\, \text{for} \,\,\, x \in S_2(h), \quad
u_{11}(x) \approx -h^{-3/2} \,\,\, \text{for} \,\,\, x \in S_4(h), \\
&& |u_{11}(x)| \le c x_3 h^{-5/2} \,\,\, \text{for} \,\,\, x \in S_1(h) \cup S_3(h),\\
&& u_{33}(x) \approx -h^{-3/2} \,\,\, \text{for} \,\,\, x \in S_2(h), \quad
u_{33}(x) \approx h^{-3/2} \,\,\, \text{for} \,\,\, x \in S_4(h), \\
&& |u_{33}(x)| \le c x_3 h^{-5/2} \,\,\, \text{for} \,\,\, x \in S_1(h) \cup S_3(h),
\end{eqnarray*}
\begin{eqnarray*}
&& u_{13}(x) \approx h^{-3/2} \,\,\, \text{for} \,\,\, x \in S_1(h), \quad
u_{13}(x) \approx -h^{-3/2} \,\,\, \text{for} \,\,\, x \in S_3(h), \\
&& |u_{13}(x)| \le c h^{-3/2} \,\,\, \text{for} \,\,\, x \in S_2(h) \cup S_4(h), \quad
u_{13}(x) \le - c x_3 h^{-5/2} \,\,\, \text{for} \,\,\, x \in S_4(h),\\
&& |u_{12}(x)| \le c x_3 h^{-3/2} |\log h| \,\,\, \text{for} \,\,\, x \in S_1(h), \\
&& |u_{12}(x)| \le c h^{-1/2} |\log h| \,\,\, \text{for} \,\,\, x \in S_2(h) \cup S_3(h) \cup S_4(h),\\
&& |u_{23}(x)| \le c h^{-1/2} |\log h| \,\,\, \text{for} \,\,\, x \in S_1(h) \cup S_2(h) \cup S_3(h), \\
&& |u_{23}(x)| \le c h^{-3/4} |\log h| \,\,\, \text{for} \,\,\, x \in S_4(h).
\end{eqnarray*}
\end{proposition}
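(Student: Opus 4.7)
My plan is to reduce every claimed estimate to a direct computation based on the Poisson integral representation
\[
u(x) = \int_D K(x_1 - y_1, x_2 - y_2, x_3)\,\varphi(y)\,dy, \qquad x_3 > 0,
\]
combined with the sharp asymptotics of $\varphi$ and its first and second derivatives near $\partial D$ proved in Section 3. Since every point in $S_1(h)\cup\dots\cup S_4(h)$ has $x_3>0$, one may differentiate under the integral sign to obtain $u_{ij}(x)=\int_D K_{ij}(x-y)\varphi(y)\,dy$ with the $K_{ij}$ given by the explicit formulas recorded in Section 2.

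I would begin with $x\in S_4(h)$, where the shadow $(h,0)$ lies inside $D$ at distance $\approx h$ from $\partial D$ and $u$ is real-analytic in an open neighbourhood of $(h,0,0)$. At $x_3=0$ we have $u=\varphi$ and $u_3\equiv -1$, hence $u_{ij}(h,0,0)=\varphi_{ij}(h,0)$ for $i,j\in\{1,2\}$, $u_{13}(h,0,0)=u_{23}(h,0,0)=0$, and by harmonicity $u_{33}(h,0,0)=-(\varphi_{11}+\varphi_{22})(h,0)$. Lemmas \ref{phi22}, \ref{phi11}, \ref{phi12} together with Corollary \ref{nT1} then produce the asserted values at $x_3=0$, and a Taylor expansion in $x_3$ extends them to $x_3\in(0,h/4]$ with corrections of relative size $x_3/h\le 1/4$. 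Next, for $x\in S_1(h)$ the shadow $(-h,0)$ lies in $D^c$ at distance $\approx h$ from $\partial D$, so $|x-y|\gtrsim h$ for every $y\in D$; a dyadic decomposition $\{y\in D:|y|\approx 2^kh\}$ combined with $|\varphi(y)|\lesssim \delta_D^{1/2}(y)$ and the explicit form of $K_{ij}$ produces the $x_3 h^{-3/2}$ and $x_3 h^{-5/2}$ bounds, the linear $x_3$-dependence being a manifestation of $K\sim C_K x_3/|x-y|^3$ vanishing as $x_3\to 0^+$ over $\operatorname{int}(D^c)\times\{0\}$. The sharp size of $u_{13}$ on $S_1$ is tied to Corollary \ref{DeltaSqrt1}, since $u_3(-h,0,0)=-(-\Delta)^{1/2}\varphi(-h,0)\approx -h^{-1/2}$ and differentiation in $x_1$ supplies the extra $h^{-1}$ factor. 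Finally, for $x\in S_2(h)\cup S_3(h)$ one has $x_3\approx h$, so $|x-y|\gtrsim h$ uniformly and the same dyadic scheme yields the $h^{-1/2}$ or $h^{-3/2}$ estimates with no $x_3/h$ attenuation.

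The main obstacle is twofold. First, the sharp \emph{two-sided} bounds, for instance $u_{22}\le -c x_3 h^{-3/2}$ on $S_1\cup S_2\cup S_3$ and $u_{13}\le -c x_3 h^{-5/2}$ on $S_4$, demand that one identify the leading order of each integral and show that the next-order terms cannot cancel it. I would handle this by exploiting the near-symmetry $y_2\leftrightarrow -y_2$ of $D$ near $(0,0)$, valid up to cubic corrections to the profile $f$ of $\partial D$ controlled by the curvature estimates in Lemma \ref{ball}, to eliminate odd-in-$y_2$ contributions, and then invoking the definite-sign information from Lemmas \ref{phi22} and \ref{phi11}. Second, the logarithmic upper bounds for $u_{12}$ and $u_{23}$ require capturing the mild singularity $|\varphi_2(y)|\lesssim y_1^{1/2}|\log y_1|+|y_2| y_1^{-1/2}$ established in Lemma \ref{phiderivatives}; after decomposing $\varphi_2$ into tangential and normal parts via Lemma \ref{ball} and pairing each against the appropriate $K_{ij}$-kernel, the logarithm emerges from an integration of $1/y_2$ over the strip $\{|y_2|\in(x_1,h)\}$ exactly as in the proofs of Lemmas \ref{phi2} and \ref{phi12}.
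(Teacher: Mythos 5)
Your plan for $S_1$, $S_2$, $S_3$ is directionally aligned with the paper: the crucial point, which you do identify, is that the dyadic decomposition against the crude bound $|\varphi(y)|\lesssim\delta_D^{1/2}(y)$ only gives $|u_{22}(x)|\lesssim x_3 h^{-5/2}$ on $S_1$, and the sharper $-x_3 h^{-3/2}$ requires passing to the representation $u_{22}=\int_D K_2\,\varphi_2$ and exploiting the $y_2\leftrightarrow -y_2$ cancellation via $\varphi_2(y_1,y_2)-\varphi_2(y_1,-y_2)=2y_2\varphi_{22}(y_1,\xi)\approx -y_2 y_1^{-1/2}$. However, you omit the other integration-by-parts identities the paper depends on for the definite-sign lower bounds, namely $u_{13}=\int_D K_3\,\varphi_1$, $u_{11}=\int_D K_1\,\varphi_1$ (paired with the positivity $\varphi_1\approx\delta_D^{-1/2}$), and the full-plane representation $u_{ij}(x)=\int_{\R^2}K_i(x-y)\,u_3(y,0)\,dy$ with $u_3(\cdot,0)=-(-\Delta)^{1/2}\varphi$ estimated through Corollary~\ref{DeltaSqrt1}. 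These are not cosmetic; the sign information needed for, say, $u_{13}\approx h^{-3/2}$ on $S_1$ is extracted precisely by choosing the right one of these representations.

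The genuine gap is the Taylor argument for $S_4(h)$. You propose to read off $u_{ij}(h,0,0)=\varphi_{ij}(h,0)$, $u_{13}(h,0,0)=u_{23}(h,0,0)=0$, and then claim that ``a Taylor expansion in $x_3$ extends them to $x_3\in(0,h/4]$ with corrections of relative size $x_3/h$.'' This presumes $|u_{ij3}|\lesssim h^{-1}|u_{ij}(h,0,0)|$ along the segment, which is false. Interior estimates for the harmonic function $u$ in the ball $B((h,0,0),ch)\subset\R^3\setminus(D^c\times\{0\})$ give only $|D^3u|\lesssim h^{-1}\sup|D^2u|\approx h^{-5/2}$, since $u_{11},u_{33}\approx\pm h^{-3/2}$ already dominate $\sup|D^2u|$. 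The resulting correction $x_3\cdot h^{-5/2}\le h^{-3/2}/4$ is the same order as $u_{11}$ and $u_{33}$, and is a factor of $h^{-1}$ \emph{larger} than $u_{22}\approx -h^{-1/2}$ and than the claimed bounds for $u_{12}$ and $u_{23}$; it therefore cannot preserve those estimates. You cannot improve this by appealing to a sharper local bound on $u_{22}$, since that is what you are trying to prove (the argument becomes circular). The problem is most visible for $u_{23}$: since $u_{23}(h,0,0)=0$, any Taylor argument yields a bound of the form $x_3\cdot\sup|u_{233}|$, which at $x_3=h/4$ is at best $\approx h^{-3/2}$, whereas the proposition asserts $|u_{23}|\le ch^{-3/4}|\log h|$. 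The paper obtains the latter only through a genuinely different idea (Step~10): it first proves the two incompatible-looking bounds $|u_{23}|\le cx_3h^{-3}$ and $|u_{23}|\le ch^{-1/2}|\log h|\,|\log x_3|$, then interpolates them by taking a fractional power in the $x_3$-variable. That interpolation is essential and has no analogue in your outline; without it, the estimate of $W_2$ in Case~2.2 of Proposition~\ref{vepsilon} would blow up relative to $W_3$ and the whole continuity argument would fail. So the $S_4$ part of your proposal needs to be replaced by a direct integral analysis of the type used for $S_1$--$S_3$, augmented by the interpolation trick for $u_{23}$.
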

\begin{proof}
Let $h \in (0,r/8]$. Let us define $f_1: \, [-r,r] \to \R$ by $f_1(y_2) = r - \sqrt{r^2 - y_2^2}$ and $g_1: \, [-r,r] \to \R$ by $g_1(y_1) = \sqrt{r^2 - (y_1 - r)^2}$. 

\vskip 5pt
{\bf{Step 1.}} Estimate $u_{22}(x) \approx -x_3 h^{-3/2}$ for $x \in S_1(h) \cup S_2(h) \cup S_3(h)$.

We have
\begin{equation}
\label{u22step1}
u_{22}(x) = \int_D K_2(x_1-y_1,-y_2,x_3) \vp_2(y_1,y_2) \, dy_1 \, dy_2.
\end{equation}
Put 
\begin{eqnarray*}
D_1 &=& \{(y_1,y_2): \, y_1 \in [f_1(h),h], y_2 \in [-g_1(y_1),g_1(y_1)]\},\\
D_2 &=& \{(y_1,y_2): \, y_1 \in (h,r], y_2 \in [-g_1(y_1),g_1(y_1)]\},\\
D_3 &=& \{(y_1,y_2): \, y_2 \in [-h,h], y_1 \in (f(y_2),f_1(h))\},\\
D_4 &=& \{(y_1,y_2): \, y_2 \in [-r/2,-h] \cup [h,r/2], y_1 \in (f(y_2),f_1(y_2))\},\\
D_5 &=& D \setminus (D_1 \cup D_2 \cup D_3 \cup D_4).
\end{eqnarray*}
For $i = 1,2,3,4$ we also put $D_{i+} = \{(y_1,y_2) \in D_i: \, y_2 > 0$, $D_{i-} = \{(y_1,y_2) \in D_i: \, y_2 < 0\}$.

Note that $f_1(h) \le h^2/r \le h/4$. 

We will estimate (\ref{u22step1}). The most important is $\int_{D_1 \cup D_2} K_2 \vp_2$. By Lemma \ref{phiderivatives} for $y \in D_{1+} \cup D_{2+}$ we have
$\vp_2(y_1,y_2) - \vp_2(y_1,-y_2) = 2 y_2 \vp_{22}(y_1,\xi) \approx - y_2 y_1^{-1/2}$, where $\xi \in (-y_2,y_2)$. It follows that 
\begin{eqnarray*}
&& \int_{D_1 \cup D_2} K_2(x_1-y_1,-y_2,x_3) \vp_2(y_1,y_2) \, dy_1 \, dy_2\\
&=& c x_3 \int_{D_{1+} \cup D_{2+}} \frac{y_2}{((x_1 - y_1)^2 + y_2^2 + x_3^2)^{5/2}}
(\vp_2(y_1,y_2) - \vp_2(y_1,-y_2)) \, dy_1 \, dy_2\\
&\approx& c x_3 \int_{D_{1+} \cup D_{2+}} \frac{-y_2^2 y_1^{-1/2}}{((x_1 - y_1)^2 + y_2^2 + x_3^2)^{5/2}} \, dy_1 \, dy_2.
\end{eqnarray*}
We have 
\begin{eqnarray*}
&& \int_{D_{1+}} \frac{-y_2^2 y_1^{-1/2}}{((x_1 - y_1)^2 + y_2^2 + x_3^2)^{5/2}} \, dy_1 \, dy_2\\
&\approx& \frac{1}{h^5} \int_{f_1(h)}^h dy_1 \, y_1^{-1/2} \int_0^{h} dy_2 \, (-y_2^2)
+ \int_{f_1(h)}^h dy_1 \, y_1^{-1/2} \int_h^{g_1(y_1)} dy_2 \, \frac{-y_2^2}{y_2^5}\\
&\approx& - h^{-3/2}.
\end{eqnarray*}
We also have
\begin{eqnarray*}
&& \int_{D_{2+}} \frac{-y_2^2 y_1^{-1/2}}{((x_1 - y_1)^2 + y_2^2 + x_3^2)^{5/2}} \, dy_1 \, dy_2\\
&\approx& \int_{h}^r dy_1 \int_0^{y_1} dy_2 \, \frac{-y_2^2 y_1^{-1/2}}{y_1^5}
+ \int_{h}^r dy_1 \int_{y_1}^{g_1(y_1)} dy_2 \, \frac{-y_2^2 y_1^{-1/2}}{y_2^5}\\
&\approx& - h^{-3/2}.
\end{eqnarray*}
It follows that $\int_{D_1 \cup D_2} K_2 \vp_2 \approx - x_3 h^{-3/2}$.

Now we will estimate $\int_{D_3 \cup D_4} K_2 \vp_2$. It is sufficient to estimate $\int_{D_{3+} \cup D_{4+}} K_2 \vp_2$. The estimate $\int_{D_{3-} \cup D_{4-}} K_2 \vp_2$ is the same. By Lemma \ref{ball} and Corollary \ref{nT1} we get for $y \in D_{3+} \cup D_{4+}$
\begin{eqnarray*}
|\vp_2(y)| &=& \left|\cos \alpha(y) \frac{\partial \vp}{\partial \vec{T}}(y)-\sin \alpha(y) \frac{\partial \vp}{\partial \vec{n}}(y) \right|\\
&\le& c \delta_D^{1/2}(y) |\log \delta_D(y)| + c y_2 \delta_D^{-1/2}(y)\\
&\le& c(f^{-1}(y_1) - y_2)^{1/2} (f^{-1}(y_1))^{1/2} |\log((f^{-1}(y_1) - y_2) f^{-1}(y_1))|\\
&+& c y_2 (f^{-1}(y_1) - y_2)^{-1/2} (f^{-1}(y_1))^{-1/2}.
\end{eqnarray*}
It follows that 
\begin{eqnarray*}
&& \left|\int_{D_{3+}} K_2(x_1-y_1,-y_2,x_3) \vp_2(y_1,y_2) \, dy_1 \, dy_2 \right|\\
&\le& \frac{c x_3}{h^5} \int_0^{f_1(h)} \, dy_1 \int_0^{f^{-1}(y_1)} \, dy_2
y_2 |\vp_2(y_1,y_2)|\\
&\le& \frac{c x_3}{h^5} \int_0^{f_1(h)} \, dy_1 \int_0^{f^{-1}(y_1)} \, dy_2
(f^{-1}(y_1) - y_2)^{1/2} (f^{-1}(y_1))^{1/2} \\
&& \quad \quad \quad \quad \quad \quad \quad \quad \quad \quad \quad \quad \times
|\log((f^{-1}(y_1) - y_2) f^{-1}(y_1))| y_2\\
&+& \frac{c x_3}{h^5} \int_0^{f_1(h)} \, dy_1 \int_0^{f^{-1}(y_1)} \, dy_2
(f^{-1}(y_1) - y_2)^{-1/2} (f^{-1}(y_1))^{-1/2} y_2^2.
\end{eqnarray*}
By substituting $w = f^{-1}(y_1) - y_2$ and using $y_2 = f^{-1}(y_1) - w \le f^{-1}(y_1)$, $f^{-1}(y_1) \approx y_1^{1/2}$, $f_1(h) \le c h^2$ this is bounded from above by
\begin{eqnarray*}
&& \frac{c x_3}{h^5} \int_0^{f_1(h)} \, dy_1 \int_0^{f^{-1}(y_1)} \, dw
w^{1/2} (f^{-1}(y_1))^{3/2} |\log(w f^{-1}(y_1))| \\
&+& \frac{c x_3}{h^5} \int_0^{f_1(h)} \, dy_1 \int_0^{f^{-1}(y_1)} \, dw 
w^{-1/2} (f^{-1}(y_1))^{3/2}\\
&\le& c x_3 |\log h| + c x_3 h^{-1}.
\end{eqnarray*}
In the same way we get 
\begin{eqnarray*}
&& \left|\int_{D_{4+}} K_2(x_1-y_1,-y_2,x_3) \vp_2(y_1,y_2) \, dy_1 \, dy_2 \right|\\
&\le& c x_3 \int_h^{r/2} \, dy_2 \int_{f(y_2)}^{f_1(y_2)} \, dy_1
\frac{y_2}{y_2^5} |\vp_2(y_1,y_2)|\\
&\le& c x_3 \int_{f(h)}^{f_1(r/2)} \, dy_1 \int_{g_1(y_1)}^{f^{-1}(y_1)} \, dy_2
y_2^{-4}(f^{-1}(y_1) - y_2)^{1/2} (f^{-1}(y_1))^{1/2} \\
&& \quad \quad \quad \quad \quad \quad \quad \quad \quad \quad \quad \quad \times
|\log((f^{-1}(y_1) - y_2) f^{-1}(y_1))| \\
&+& c x_3 \int_{f(h)}^{f_1(r/2)} \, dy_1 \int_{g_1(y_1)}^{f^{-1}(y_1)} \, dy_2
y_2^{-3}(f^{-1}(y_1) - y_2)^{-1/2} (f^{-1}(y_1))^{-1/2}.
\end{eqnarray*}
Similarly like in the estimate $\int_{D_{3+}} K_2 \vp_2$ using substitution $w = f^{-1}(y_1) - y_2$ we obtain that it is bounded from above by $c x_3 |\log h|^2 + c x_3 h^{-1}$. We also have 
$$
\left|\int_{D_{5}} K_2(x_1-y_1,-y_2,x_3) \vp_2(y_1,y_2) \, dy_1 \, dy_2 \right|
\le c x_3 \int_{D_5} \delta_D^{-1/2}(y) \, dy \le c x_3.
$$
It follows that $u_{22}(x) = \int_{D} K_2 \vp_2 \approx -x_3 h^{-3/2}$ for $x \in S_1(h) \cup S_2(h) \cup S_3(h)$ and sufficiently small h.

\vskip 5pt
{\bf{Step 2.}} Estimate $u_{22}(x) \approx -h^{-1/2}$ for $x \in S_4(h)$.

We have
\begin{equation*}
u_{22}(x) = \int_D K_2(x_1-y_1,-y_2,x_3) \vp_2(y_1,y_2) \, dy_1 \, dy_2.
\end{equation*}
Put $A = B((h,0),h/2)$, $A_+ = \{y \in A: \, y_2 > 0\}$, $A_{1+} = \{y \in B((h,0),x_3): \, y_2 > 0\}$, $A_{2+} = A_+ \setminus A_{1+}$. By the same argument as in Step 1 we obtain $\int_{D \setminus A} K_2 \vp_2 \approx -x_3 h^{-3/2}$. Similarly like in Step 1 for $y \in A$ we obtain 
$\vp_2(y_1,y_2) - \vp_2(y_1,-y_2) \approx - y_2 y_1^{-1/2} \approx - y_2 h^{-1/2}$. Note that for $x \in S_4(h)$ we have $x = (h,0,x_3)$, where $x_3 \in (0,h/4]$. It follows that 
\begin{eqnarray*}
&& \int_{A} K_2(x_1-y_1,-y_2,x_3) \vp_2(y_1,y_2) \, dy_1 \, dy_2\\
&=& \int_{A_+} K_2(x_1-y_1,-y_2,x_3) (\vp_2(y_1,y_2)-\vp_2(y_1,-y_2)) \, dy_1 \, dy_2\\
&\approx& - x_3 h^{-1/2} 
 \int_{A_{1+} \cup A_{2+}} \frac{y_2^2}{((h - y_1)^2 + y_2^2 + x_3^2)^{5/2}}
\, dy_1 \, dy_2 \\
&\approx& \frac{-h^{-1/2}}{x_3^4} \int_0^{x_3} \rho^3 \, d\rho 
- x_3 h^{-1/2} \int_{x_3}^{h/2} \rho^{-2} \, d\rho \approx - h^{-1/2}.
\end{eqnarray*}

\vskip 5pt
{\bf{Step 3.}} Estimate $|u_{11}(x)| \le c x_3 h^{-5/2}$, $|u_{33}(x)| \le c x_3 h^{-5/2}$, $|u_{13}(x)| \le c h^{-3/2}$ for $x \in S_1(h) \cup S_2(h) \cup S_3(h)$.

We have
\begin{equation*}
u_{11}(x) = \int_D K_{11}(x_1-y_1,-y_2,x_3) \vp(y_1,y_2) \, dy_1 \, dy_2,
\end{equation*}
Put $D_1 = D \cap B(0,h)$. For $y \in D_1$ we have $\vp(y) \le c h^{1/2}$, for $y \in D \setminus D_1$ we have $\vp(y) \le c (\dist(0,y))^{1/2}$. It follows that 
\begin{eqnarray*}
\left| \int_{D_1} K_{11} \vp  \right| 
&\le& c x_3 \frac{h^2}{h^7} h^{1/2} \int_{D_1} \, dy 
\approx c x_3 h^{-5/2},\\
\left| \int_{D \setminus D_1} K_{11} \vp  \right| 
&\le& c x_3 \int_{h}^{\infty} \frac{\rho^2}{\rho^7} \rho^{1/2} \rho \, d\rho 
\approx c x_3 h^{-5/2}.
\end{eqnarray*}

Since $u_{11}(x) + u_{22}(x) + u_{33}(x) = 0$ and by Step 1 $u_{22}(x) \approx - x_3 h^{-3/2}$ for $x \in S_1(h) \cup S_2(h) \cup S_3(h)$ we get $|u_{33}(x)| \le c x_3 h^{-5/2}$.

Similarly we have
\begin{equation*}
u_{13}(x) = \int_D K_{13}(x_1-y_1,-y_2,x_3) \vp(y_1,y_2) \, dy_1 \, dy_2,
\end{equation*}
\begin{eqnarray*}
\left| \int_{D_1} K_{13} \vp  \right| 
&\le& c h \frac{h^2}{h^7} h^{1/2} \int_{D_1} \, dy 
\approx c h^{-3/2},\\
\left| \int_{D \setminus D_1} K_{13} \vp  \right| 
&\le& c \int_{h}^{\infty} \frac{\rho^3}{\rho^7} \rho^{1/2} \rho \, d\rho 
\approx c h^{-3/2}.
\end{eqnarray*}

\vskip 5pt
{\bf{Step 4.}} Estimate  $u_{13}(x) \approx h^{-3/2}$ for $x \in S_1(h)$.

We have
\begin{equation*}
u_{13}(x) = \int_D K_{3}(x_1-y_1,-y_2,x_3) \vp_1(y_1,y_2) \, dy_1 \, dy_2,
\end{equation*}
$$
K_{3}(x_1-y_1,-y_2,x_3) = C_K \frac{(x_1 - y_1)^2 + y_2^2 - 2 x_3^2}{((x_1 - y_1)^2 + y_2^2 + x_3^2)^{5/2}}. 
$$
Put $D_1 = \{(y_1,y_2): \, y_2 \in (-r,r), y_1 \in (f(y_2),r)\}$. By Lemma \ref{phiderivatives} we get $\vp_1(y) \approx \delta_D^{-1/2}(y)$ for $y \in D_1$. We also have $K_3(x_1-y_1,-y_2,x_3) \ge 0$ for $y \in D_1$ and $x \in S_1(h)$. Let $\beta(y)$ be the acute angle between $0y$ and $y_1$ axis. Put $D_2 = \{(y_1,y_2): \, |y| \in (h,r), \beta(y) \in [0,\pi/6)\}$. Clearly, $D_2 \subset D_1$. For $y \in D_2$ we have
$\vp_1(y) \approx \delta_D^{-1/2}(y) \approx |y|^{-1/2}$ and $K_{3}(x_1-y_1,-y_2,x_3) \ge c |y|^{-3}$. It follows that 
$$
\int_{D_1} K_3 \vp_1 \ge \int_{D_2} |y|^{-7/2} \, dy \approx h^{-3/2}.
$$
We also have
$$
\left|\int_{D \setminus D_1} K_3 \vp_1 \right| 
\le c \int_{D \setminus D_1} \delta_D^{-1/2}(y) \, dy \le c.
$$
Hence $u_{13}(x) \ge c h^{-3/2}$ for $x \in S_1(h)$ and sufficiently small $h$. By Step 3 $|u_{13}(x)| \le c h^{-3/2}$ so $u_{13}(x) \approx  h^{-3/2}$.

\vskip 5pt
{\bf{Step 5.}} Estimates  $u_{11}(x) \approx h^{-3/2}$, $u_{33}(x) \approx -h^{-3/2}$ for $x \in S_2(h)$.

Step 5 is similar to Step 4. We have
\begin{equation*}
u_{11}(x) = \int_D K_{1}(x_1-y_1,-y_2,x_3) \vp_1(y_1,y_2) \, dy_1 \, dy_2,
\end{equation*}
$$
K_{1}(x_1-y_1,-y_2,x_3) = 3 C_K \frac{x_3 (y_1 - x_1)}{((x_1 - y_1)^2 + y_2^2 + x_3^2)^{5/2}}. 
$$
Let $D_1$, $D_2$ be such as in Step 4. We have $K_1(x_1-y_1,-y_2,x_3) \ge 0$ for $y \in D_1$ and $x \in S_2(h)$. For $y \in D_2$ and $x \in S_2(h)$ we have
$K_{1}(x_1-y_1,-y_2,x_3) \ge c h |y|^{-4}$. It follows that 
$$
\int_{D_1} K_1 \vp_1 \ge c h \int_{D_2} |y|^{-9/2} \, dy \approx h^{-3/2}.
$$
We also have $\left|\int_{D \setminus D_1} K_1 \vp_1 \right| \le c$.
Hence $u_{11}(x) \ge c h^{-3/2}$ for $x \in S_2(h)$ and sufficiently small $h$. By Step 3 $|u_{11}(x)| \le c h^{-3/2}$ so $u_{11}(x) \approx  h^{-3/2}$. Since $u_{11}(x) + u_{22}(x) + u_{33}(x) = 0$ and by Step 1 $u_{22}(x) \approx - h^{-1/2}$ for $x \in S_2(h)$ we get $u_{33}(x) \approx -h^{-3/2}$.

\vskip 5pt
{\bf{Step 6.}} Estimates  $|u_{13}(x)| \le c h^{-3/2}$ for $x \in S_4(h)$, $u_{13}(x) \approx -h^{-3/2}$ for $x \in S_3(h)$, $u_{13}(x) \le - c x_3 h^{-5/2}$ for $x \in S_4(h)$.

We have
\begin{equation*}
u_{13}(x) = \int_{\R^2} K_{1}(x_1-y_1,-y_2,x_3) u_3(y_1,y_2,0) \, dy_1 \, dy_2,
\end{equation*}
$$
K_{1}(x_1-y_1,-y_2,x_3) = 3 C_K \frac{x_3 (y_1 - x_1)}{((x_1 - y_1)^2 + y_2^2 + x_3^2)^{5/2}}. 
$$
For $y \in D$ we have $u_3(y_1,y_2,0) = -1$ and for $y \in (\overline{D})^c$ by Corollary \ref{DeltaSqrt1} 
$$
u_3(y_1,y_2,0) = -(-\Delta)^{1/2}\vp(y) \approx (1 + |y|^{-5/2}) \delta_D^{-1/2}(y).
$$

Put 
\begin{eqnarray*}
A_1 &=& \{y \in B(0,h): \, y_1 \le 0\},\\
A_2 &=& \{y \in B(0,r) \setminus B(0,h): \, y_1 < 0, |y_2| \le |y_1|\},\\
A_3 &=& \{y \in B(0,r) \setminus B(0,h): \, y_1 \le 0, |y_2| \ge |y_1|\},\\
A_4 &=& \{y: \, y_2 \in [-h,h], y_1 \in (0,f(y_2)]\},\\
A_5 &=& \{y: \, y_2 \in (h,r] \cup [-r,h), y_1 \in (0,f(y_2)]\},\\
A_6 &=& D^c \setminus (A_1 \cup A_2 \cup A_3 \cup A_4 \cup A_5).
\end{eqnarray*}
Clearly $A_1, A_2, A_3, A_4, A_5, A_6 \subset D^c$. We also put $D_1 = B((0,h),h/2)$.

Let $x \in S_3(h) \cup S_4(h)$. We have 
$$
\left| \int_{A_1} K_1 u_3 \right| \le c h^{-3} \int_{A_1} \delta_D^{-1/2}(y) \, dy \le c h^{-3/2},
$$
$$
\int_{A_2} K_1 u_3 \approx  - x_3 \int_{A_2} |y|^{-9/2} \, dy \approx -x_3 h^{-5/2},
$$
$$
\left| \int_{A_3} K_1 u_3 \right| \le c h \int_{h/\sqrt{2}}^r \, dy_2 \int_{-y_2}^0 \, dy_1 \, |y_1|^{-1/2} y_2^{-4} 
\le c h^{-3/2}.
$$
For $x \in S_3(h) \cup S_4(h)$ and $y \in A_4$ we estimate $|y_1 - x_1| \le y_1 + h \le c h$, $f(y_2) \le c y_2^2$. Hence
$$
\left| \int_{A_4} K_1 u_3 \right| \le c x_3 h^{-4} \int_{-h}^h \, dy_2 \int_{0}^{f(y_2)} \, dy_1 \, (-y_1 + f(y_2))^{-1/2} \le c x_3 h^{-2}. 
$$
For $x \in S_3(h) \cup S_4(h)$ and $y \in A_5$ we estimate $|y_1 - x_1| \le y_1 + h \le c |y_2|$, $f(y_2) \le c y_2^2$. Hence
$$
\left| \int_{A_5} K_1 u_3 \right| \le c x_3 \int_{h}^r \, dy_2 \int_{0}^{f(y_2)} \, dy_1 \, (-y_1 + f(y_2))^{-1/2} y_2^{-4} \le c x_3 h^{-2}. 
$$
We also have 
$$
\left| \int_{A_6} K_1 u_3 \right| \le c x_3 \int_{A_6} |y|^{-13/2} \delta_D^{-1/2}(y) \, dy \le c x_3. 
$$
For $x \in S_3(h) $ we have
$$
\left| \int_{D_1} K_1 u_3 \right| = \left| \int_{D_1} K_1 \right|  \le 
c x_3 h^{-4} \int_{D_1} \, dy \approx x_3 h^{-2}.
$$
For $x \in S_4(h) $ we have
$$
\left| \int_{D_1} K_1 u_3 \right| = 
c x_3 \int_{D_1} \frac{y_1 - h}{((y_1 - h)^2 + y_2^2 + x_3^2)^{5/2}} \, dy_1 \, dy_2 = 0.
$$
For $x \in S_3(h) \cup S_4(h)$ we also have
$$
\left| \int_{D \setminus D_1} K_1 u_3 \right|  \le 
c x_3  \int_{D \setminus D_1} ((y_1 - h)^2 + y_2^2)^{-2} \, dy 
\le c x_3 h^{-2}.
$$
It follows that for $x \in S_3(h) \cup S_4(h)$
$$
|u_{13}(x)| = \left| \int_{\R^2} K_1 u_3 \right| \le c h^{-3/2},
$$
(for $x \in S_3(h)$ such estimate follows also from Step 3).

Now note that $K_{1}(x_1-y_1,-y_2,x_3) \le 0$ and $u_3(y_1,y_2,0) \ge 0$ for $x \in S_3(h) \cup S_4(h)$ and $y \in A_1 \cup A_3$. So $\int_{A_1 \cup A_3} K_1 u_3 \le 0$. It follows that for $x \in S_3(h) \cup S_4(h)$ we have
$$
u_{13}(x) = \int_{\R^2} K_1 u_3 \le \int_{A_2 \cup A_4 \cup A_5 \cup A_6 \cup D} K_1 u_3 \le -c x_3 h^{-5/2} + c_1 x_3 h^{-2}.
$$
Hence for $x \in S_3(h)$ and sufficiently small $h$ we have $u_{13}(x) \approx - h^{-3/2}$. For $x \in S_4(h)$ and sufficiently small $h$ we have $u_{13}(x) \le - c x_3 h^{-5/2}$.

\vskip 5pt
{\bf{Step 7.}} Estimates  $u_{33}(x) \approx h^{-3/2}$, $u_{11}(x) \approx -h^{-3/2}$ for $x \in S_4(h)$.

We have
\begin{equation*}
u_{33}(x) = \int_{\R^2} K_{3}(x_1-y_1,-y_2,x_3) u_3(y_1,y_2,0) \, dy_1 \, dy_2,
\end{equation*}
$$
K_{3}(x_1-y_1,-y_2,x_3) = C_K \frac{(x_1-y_1)^2 + y_2^2 - 2 x_3^2}{((x_1 - y_1)^2 + y_2^2 + x_3^2)^{5/2}}. 
$$
For $x \in S_4(h)$ and $y \in D^c$ we have $K_3(x_1-y_1,-y_2,x_3) > 0$, $u_3(y_1,y_2,0) \approx (1 + |y|^{-5/2}) \delta_D^{-1/2}(y)$. For $y \in D$ we have $u_3(y_1,y_2,0) = -1$. Let $A_1, A_2, A_3, A_4, A_5, A_6, D_1$ be such as in Step 6. We have 
\begin{eqnarray*}
\left| \int_{A_1 \cup A_4} K_3 u_3 \right| 
&\le& \frac{c}{h^{3}} \int_{A_1 \cup A_4} \delta_D^{-1/2}(y) \, dy \\
&\le& \frac{c}{h^{3}}  \int_{0}^h \, dy_2 \int_{-h}^{f(y_2)} \, dy_1 \, (-y_1 + f(y_2))^{-1/2} \approx  h^{-3/2},
\end{eqnarray*}
$$
\int_{A_2} K_3 u_3 \approx  \int_{A_2} |y|^{-7/2} \, dy \approx h^{-3/2},
$$
$$
\left| \int_{A_3 \cup A_5} K_3 u_3 \right| 
\le c  \int_{h/\sqrt{2}}^r \, dy_2 \int_{-y_2}^{f(y_2)} \, dy_1 \, \frac{(-y_1 + f(y_2))^{-1/2}}{y_2^3} 
\approx  h^{-3/2},
$$
$$
\left| \int_{A_6} K_3 u_3 \right| \le c \int_{A_6} |y|^{-11/2} \delta_D^{-1/2}(y) \, dy \le c,
$$
$$
\left| \int_{D \setminus D_1} K_3 u_3 \right|  \le 
c  \int_{D \setminus D_1} ((y_1 - h)^2 + y_2^2)^{-3/2} \, dy 
\le c h^{-1}.
$$

The integral over $D_1$ we compute directly. Recall that $D_1 = B((h,0),h/2)$ and $x = (x_1,x_2,x_3) \in S_4(h)$ so $x_1 = h$, $x_2 =0$, $x_3 \in (0,h/4]$. We have 
\begin{equation}
\label{iD1}
\int_{D_1} K_{3}(x_1-y_1,-y_2,x_3) u_3(y_1,y_2,0) \, dy_1 \, dy_2
= C_K \int_{D_1} \frac{(h-y_1)^2 + y_2^2 - 2 x_3^2}{((h - y_1)^2 + y_2^2 + x_3^2)^{5/2}} \, dy_1 \, dy_2.
\end{equation}
Let us introduce polar coordinates $h - y_1 = \rho \cos \theta$, $y_2 = \rho \sin \theta$. Then (\ref{iD1}) equals $2 \pi C_K \int_0^{h/2} \frac{\rho^2 - 2 x_3^2}{(\rho^2 + x_3^2)^{5/2}} \rho \, d\rho$. By substitution $t = \rho^2$ this is equal to $\pi C_K \int_0^{h^2/4} \frac{t - 2 x_3^2}{(t + x_3^2)^{5/2}} \, dt$. By elementary calculations this is equal to $\frac{-\pi C_K h^2}{2(h^2/4 + x_3^2)^{3/2}}$. Hence $\left|\int_{D_1} K_{3} u_3 \right| \le c/h$.

It follows that $|u_{33}(x)| \le c h^{-3/2}$. Since for $x \in S_4(h)$ and $y \in (\overline{D})^c$ we have $K_{3}(x_1-y_1,-y_2,x_3) > 0$ and $u_3(y_1,y_2,0) > 0$ we get 
$$
u_{33}(x) = \int_{\R^2} K_{3} u_3 \ge \int_{A_2 \cup D} K_{3} u_3 \ge 
\int_{A_2} K_{3} u_3 - \left| \int_{D} K_{3} u_3\right| \ge c h^{-3/2} - c_1 h^{-1}.
$$
It follows that $u_{33}(x) \approx h^{-3/2}$ for $x \in S_4(h)$ and sufficiently small $h$. Since $u_{11}(x) + u_{22}(x) + u_{33}(x) = 0$ and by Step 2 $u_{22}(x) \approx - h^{-1/2}$ for $x \in S_4(h)$ we get $u_{11}(x) \approx - h^{-3/2}$.

\vskip 5pt
{\bf{Step 8.}} Estimate $|u_{12}(x)| \le c x_3 h^{-3/2} |\log h|$ for $x \in S_1(h) \cup S_2(h) \cup S_3(h)$.

We have
\begin{equation}
\label{u12step8}
u_{12}(x) = \int_D K_{12}(x_1-y_1,-y_2,x_3) \vp(y_1,y_2) \, dy_1 \, dy_2,
\end{equation}
$$
K_{12}(x_1-y_1,-y_2,x_3) = -15 C_K \frac{x_3 (x_1-y_1) y_2 }{((x_1 - y_1)^2 + y_2^2 + x_3^2)^{7/2}}. 
$$
Let $D_1, D_2, D_3, D_4, D_5$ and $D_{i+}$, $D_{i-}$ for $i = 1,2,3,4$ be such as in Step 1.
We have
$$
\int_{D_1 \cup D_2} K_{12} \vp = 
- c x_3 \int_{D_{1+} \cup D_{2+}}
\frac{(x_1-y_1) y_2 }{((x_1 - y_1)^2 + y_2^2 + x_3^2)^{7/2}}
(\vp(y_1,y_2) - \vp(y_1,-y_2)) \, dy_1 \, dy_2.
$$
For $y \in D_{1+} \cup D_{2+}$ by Lemma \ref{phiderivatives} we get $|\vp(y_1,y_2) - \vp(y_1,-y_2)| = |2 y_2 \vp_{2}(y_1,\xi)| \le c y_2 (y_2 y_1^{-1/2} + y_1^{1/2} |\log y_1|)$, where $\xi \in (-y_2,y_2)$. Hence
\begin{eqnarray*}
\left| \int_{D_1} K_{12} \vp \right| 
&\le& 
c x_3 \int_{D_{1+}}
\frac{|x_1-y_1| }{((x_1 - y_1)^2 + y_2^2 + x_3^2)^{7/2}}
(y_2^3 y_1^{-1/2} + y_2^2 y_1^{1/2} |\log y_1|) \, dy_1 \, dy_2\\
&\le& 
c x_3 h^{-6} \int_0^h \, dy_1 \int_0^h \, dy_2 (y_2^3 y_1^{-1/2} + y_2^2 y_1^{1/2} |\log y_1|)\\
&+& c x_3 h \int_0^h \, dy_1 \int_{h}^{c_1 y_1^{1/2}} \, dy_2 (y_2^{-4} y_1^{-1/2} + y_2^{-5} y_1^{1/2} |\log y_1|)\\
&\le& 
c x_3 h^{-3/2} |\log h|.
\end{eqnarray*}
Note that for $y \in D_2$ we have $|x_1 - y_1| \le c y_1$. We obtain
\begin{eqnarray*}
\left| \int_{D_2} K_{12} \vp \right| 
&\le& 
c x_3 \int_{D_{2+}}
\frac{|x_1-y_1|}{((x_1 - y_1)^2 + y_2^2 + x_3^2)^{7/2}}
(y_2^3 y_1^{-1/2} + y_2^2 y_1^{1/2} |\log y_1|) \, dy_1 \, dy_2\\
&\le& 
c x_3 \int_h^r \, dy_1 \int_0^{y_1} \, dy_2 (y_2^3 y_1^{-13/2} + y_2^2 y_1^{-11/2} |\log y_1|)\\
&+& c x_3 \int_h^r \, dy_1 \int_{y_1}^{r} \, dy_2 (y_2^{-4} y_1^{1/2} + y_2^{-5} y_1^{3/2} |\log y_1|)\\
&\le& 
c x_3 h^{-3/2} |\log h|.
\end{eqnarray*}
Note that for $y \in D_3 \cup D_4$ we have $\vp(y) \le c \delta_D^{1/2}(y) \le c y_2$. Note also that $|x_1 - y_1| \le 2 h$ for $y \in D_3$ and $|x_1 - y_1| \le h + y_1$ for $y \in D_4$. We get
$$
\left| \int_{D_3} K_{12} \vp \right| \le
c x_3 h^{-5} \int_0^h \, dy_2 \int_0^{f_1(h)} \, dy_1 y_2 \le
c x_3 h^{-1}, 
$$
$$
\left| \int_{D_4+} K_{12} \vp \right| \le
c x_3  \int_h^r \, dy_2 \int_0^{c_1 y_2^{2}} \, dy_1 (h+y_1) y_2^{-5} 
\le c x_3 h^{-1}.
$$
The estimate of $\left| \int_{D_4-} K_{12} \vp \right|$ is the same so $\left| \int_{D_4} K_{12} \vp \right| \le c x_3 h^{-1}$.
Note that for $y \in D_5$ we have $|x_1 - y_1| \le c y_1$ and $\vp(y) \le c$. Hence
$$
\left| \int_{D_5} K_{12} \vp \right| \le
c x_3  \int_{B^c(0,c_1 r^2)} \frac{y_1 |y_2|}{(y_1^2 + y_2^2)^{7/2}} \, dy_1 \, dy_2 \le c x_3.
$$

\vskip 5pt
{\bf{Step 9.}} Estimate $|u_{12}(x)| \le c h^{-1/2} |\log h|$ for $x \in S_4(h)$.

We have
\begin{equation*}
u_{12}(x) = \int_D K_{12}(x_1-y_1,-y_2,x_3) \vp(y_1,y_2) \, dy_1 \, dy_2.
\end{equation*}

Put $A = B((h,0),h/2)$. By the same argument as in Step 8 we obtain $\left| \int_{D \setminus A} K_{12} \vp \right| \le c x_3 h^{-3/2} |\log h|$. We have
$$
\left| \int_{A} K_{12} \vp \right|= 
\left| c x_3 \int_{A}
\frac{(y_1 - h) y_2 }{((y_1 - h)^2 + y_2^2 + x_3^2)^{7/2}} \vp(y_1,y_2)  
\, dy_1 \, dy_2\right| .
$$
By substitution $z_1 = y_1 - h$, $z_2 = y_2$ this is equal to
\begin{equation}
\label{Step9intW}
\left| c x_3 \int_{B(0,h/2)}
\frac{z_1 z_2}{(z_1^2 + z_2^2 + x_3^2)^{7/2}} \vp(z_1 + h,z_2)  
\, dz_1 \, dz_2\right| 
=
\left| c x_3 \int_{W}
\frac{z_1 z_2 g(z_1,z_2)}{(z_1^2 + z_2^2 + x_3^2)^{7/2}}   
\, dz_1 \, dz_2\right|,
\end{equation}
where $g(z_1,z_2) = \vp(z_1 + h,z_2) - \vp(-z_1 + h,z_2) - \vp(z_1 + h,-z_2) + 
\vp(-z_1 + h,-z_2)$ and $W = \{z \in B(0,h/2): \, z_1 \ge 0, z_2 \ge 0\}$. Note that for $z \in W$ we have $g(z_1,z_2) = 4 z_1 z_2 \vp_{12}(\xi_1 + h,\xi_2)$, where $\xi_1 \in (-z_1,z_1)$, $\xi_2 \in (-z_2,z_2)$. By Lemma \ref{phiderivatives} we have for $z \in W$ and $\xi_1$, $\xi_2$ as above
$$
|\vp_{12}(\xi_1 + h,\xi_2)| \le c h^{-1/2} |\log h| + c z_2 h^{-3/2}.
$$
It follows that (\ref{Step9intW}) is bounded from above by
\begin{equation}
\label{Step9intW1}
c x_3 \int_{W}
\frac{z_1^2 z_2^2 (h^{-1/2} |\log h| + z_2 h^{-3/2})}{(z_1^2 + z_2^2 + x_3^2)^{7/2}} \, dz_1 \, dz_2.
\end{equation}
Put $W_1 = \{z: \, z_1 \in [0,x_3], z_2 \in [0,x_3]\}$, $W_2 = \{z \in B(0,h/2) \setminus B(0,x_3): z_1 \ge 0, \, z_2 \ge 0\}$. We have $W \subset W_1 \cup W_2$. (\ref{Step9intW1}) is bounded from above by 
\begin{eqnarray*}
&& 
c x_3 \int_{W_1}
\frac{z_1^2 z_2^2 (h^{-1/2} |\log h| + z_2 h^{-3/2})}{x_3^7} \, dz_1 \, dz_2 \\
&+&
c x_3 \int_{W_2}
\frac{z_1^2 z_2^2 (h^{-1/2} |\log h| + z_2 h^{-3/2})}{(z_1^2 + z_2^2)^{7/2}} \, dz_1 \, dz_2 \\
&\le&
c h^{-1/2} |\log h|.
\end{eqnarray*}

\vskip 5pt
{\bf{Step 10.}} Estimate $|u_{23}(x)| \le c h^{-1/2} |\log h|$ for $x \in S_1(h) \cup S_2(h) \cup S_3(h)$ and $|u_{23}(x)| \le c h^{-3/4} |\log h|$ for $x \in S_4(h)$.

For $x \in S_1(h) \cup S_2(h) \cup S_3(h)$ we have
\begin{equation*}
u_{23}(x) = \int_D K_{23}(x_1-y_1,-y_2,x_3) \vp(y_1,y_2) \, dy_1 \, dy_2.
\end{equation*}
The proof of the estimate $\left| \int_{D} K_{23} \vp \right| \le c h^{-1/2} |\log h|$ is very similar to the proof of the estimate $\left| \int_{D} K_{12} \vp \right| \le c x_3 h^{-3/2} |\log h|$ in Step 8 and it is omitted.

Now we estimate $|u_{23}(x)|$ for $x \in S_4(h)$. Put $p = (-r,0)$, recall that $z = (r,0)$. We have 
\begin{eqnarray*}
u_{23}(x) &=&
\int_{\R^2} K_2(x_1-y_1,-y_2,x_3) u_3(y_1,y_2,0) \, dy_1 \, dy_2\\
&=& \int_{B(0,r/4) \cap B(p,r)} K_2 u_3
+ \int_{(D \cap B(0,r/4)) \setminus (B(p,r) \cup B(z,r))} K_2 u_3\\
&+& \int_{(D^c \cap B(0,r/4)) \setminus (B(p,r) \cup B(z,r))} K_2 u_3
+\int_{B(0,r/4) \cap B(z,r)} K_2 u_3\\
&+& \int_{B^c(0,r/4)} K_2 u_3
= \text{I} + \text{II} +\text{III} +\text{IV} +\text{V}.
\end{eqnarray*}
Note that $u_3(y_1,y_2,0) = -(-\Delta)^{1/2} \vp(y_1,y_2)$ for $(y_1,y_2) \in \R^2 \setminus \partial D$. 

Put $A = B(0,r/4) \cap B(p,r)$. For $y \in A$ by Corollary \ref{DeltaSqrt1} we get $|(-\Delta)^{1/2} \vp(y)| \le c \delta_D^{-1/2}(y) \le c |y_1|^{-1/2}$. It follows that
\begin{eqnarray*}
\left| \text{I} \right| &\le&
c x_3 \int_A \frac{y_2 |y_1|^{-1/2}}{((h - y_1)^2 +y_2^2 + x_3^2)^{5/2}} \, dy_1 \, dy_2\\
&\le& c x_3 \int_0^h \, dy_2 \, \int_{-r/4}^{-f_1(y_2)} \, dy_1 \, 
\frac{y_2 |y_1|^{-1/2}}{h^5} +
c x_3 \int_h^{r/4} \, dy_2 \, \int_{-r/2}^{-f_1(y_2)} \, dy_1 \, 
\frac{y_2 |y_1|^{-1/2}}{y_2^5}\\
&\le& c x_3 h^{-3}.
\end{eqnarray*}
We also have 
\begin{equation*}
\left| \text{II} \right| 
\le c x_3 \int_0^h \, dy_2 \, \int_{0}^{f_1(y_2)} \, dy_1 \, 
y_2 h^{-5} +
c x_3 \int_h^{r/2} \, dy_2 \, \int_{0}^{f_1(y_2)} \, dy_1 \, 
y_2 y_2^{-5} 
\le c x_3 h^{-1}.
\end{equation*}
For $y \in (D^c \cap B(0,r/4)) \setminus (B(p,r) \cup B(z,r))$ by Corollary \ref{DeltaSqrt1} we get $|(-\Delta)^{1/2} \vp(y)| \le c \delta_D^{-1/2}(y) \approx (f(y_2) - y_1)^{-1/2}$. Hence
\begin{equation*}
\left| \text{III} \right| 
\le c x_3 \int_0^{r/4} \, dy_2 \, \int_{-f_1(y_2)}^{f(y_2)} \, dy_1 \, 
(f(y_2) - y_1)^{-1/2} \frac{y_2}{h^5 \vee y_2^5}.
\end{equation*}
For $y_2 \in (0,r/4)$ we have
$$
\int_{-f_1(y_2)}^{f(y_2)}  (f(y_2) - y_1)^{-1/2} \, dy_1 
= \int_0^{f_1(y_2) + f(y_2)} z^{-1/2} \, dz \le c y_2.
$$
It follows that 
\begin{equation*}
\left| \text{III} \right| 
\le c x_3 \int_0^{h} \frac{y_2^2}{h^5} \, dy_2 
+ c x_3 \int_h^{r/4} \frac{y_2^2}{y_2^5} \, dy_2
\le \frac{c x_3}{h^2}.
\end{equation*}
Clearly
$$
\text{IV} =
\int_{B(0,r/4) \cap B(z,r)} \frac{- c x_3 y_2}{((h - y_1)^2 +y_2^2 + x_3^2)^{5/2}} \, dy_1 \, dy_2 = 0.
$$
Using Corollary \ref{DeltaSqrt1} we get
$$
|\text{V}| \le
c x_3 \int_{D} \, dy + c x_3 \int_{D^c} \frac{\delta_D(y)^{-1/2}}{(1 + |y|)^{5/2}} \, dy \le c x_3. 
$$
It follows that for $x \in S_4(h)$ we have
\begin{equation}
\label{u23formula1}
|u_{23}(x)| \le \left| \text{I} + \text{II} +\text{III} +\text{IV} +\text{V} \right|
\le \frac{c x_3}{h^3}.
\end{equation}

On the other hand we have for $x \in S_4(h)$ 
\begin{equation*}
u_{23}(x) = \int_D K_{23}(x_1-y_1,-y_2,x_3) \vp(y_1,y_2) \, dy_1 \, dy_2.
\end{equation*}
Put $W = B((h,0),h/2)$, $W_+ = \{y \in W: \, y_2 > 0\}$. For $x \in S_4(h)$ one may show $\left| \int_{D \setminus W} K_{23} \vp \right| \le c h^{-1/2} |\log h|$. The proof of this inequality is omitted. It is very similar to the proof of the estimate $\left| \int_{D \setminus W} K_{12} \vp \right| \le c x_3 h^{-3/2} |\log h|$  see Step 9 and Step 8.

We have 
\begin{eqnarray}
\nonumber
\int_W K_{23} \vp &=&
-c \int_W \frac{12 x_3^2 - 3 (y_1 - h)^2 - 3 y_2^2}{((y_1 - h)^2 + y_2^2 + x_3^2)^{7/2}} y_2 \vp(y_1,y_2) \, dy_1 \, dy_2\\
\label{23intW}
&=&
-c \int_{W_+} \frac{12 x_3^2 - 3 (y_1 - h)^2 - 3 y_2^2}{((y_1 - h)^2 + y_2^2 + x_3^2)^{7/2}} y_2 (\vp(y_1,y_2) - \vp(y_1,-y_2)) \, dy_1 \, dy_2.
\end{eqnarray}
For $y \in W_+$ we have $\vp(y_1,y_2) - \vp(y_1,-y_2) = 2 y_2 \vp_2(y_1,\xi_2)$ where $\xi_2 \in (-y_2,y_2)$ and $\vp_2(y_1,\xi_2) = \vp_2(h,0) + (y_1-h,\xi_2) \circ \nabla\vp_2(\xi')$, where $\xi'$ is a point between $(h,0)$ and $(y_1,\xi_2)$. It follows that (\ref{23intW}) equals
\begin{eqnarray*}
&-& c \vp_2(h,0) \int_{W_+} \frac{12 x_3^2 - 3 (y_1 - h)^2 - 3 y_2^2}{((y_1 - h)^2 + y_2^2 + x_3^2)^{7/2}} 2 y_2^2 \, dy_1 \, dy_2\\
&-& c \int_{W_+} \frac{12 x_3^2 - 3 (y_1 - h)^2 - 3 y_2^2}{((y_1 - h)^2 + y_2^2 + x_3^2)^{7/2}} 2 y_2^2 (y_1-h,\xi_2) \circ \nabla\vp_2(\xi')\, dy_1 \, dy_2
= \text{I} + \text{II}.
\end{eqnarray*}
Put $V = B(0,h/2)$, $V_+ = \{z \in V: \, z_2 > 0\}$. By substitution $z_1 = y_1 - h$, $z_2 = y_2$ we obtain
\begin{eqnarray*}
\text{I} &=& -c \vp_2(h,0) \int_{V_+} \frac{12 x_3^2 - 3 z_1^2 - 3 z_2^2}{(z_1^2 + z_2^2 + x_3^2)^{7/2}} 2 z_2^2 \, dy_1 \, dy_2\\
&=& -c \vp_2(h,0) \int_{V} \frac{12 x_3^2 - 3 z_1^2 - 3 z_2^2}{(z_1^2 + z_2^2 + x_3^2)^{7/2}}  z_2^2 \, dy_1 \, dy_2.
\end{eqnarray*}
By symmetry of $z_1$, $z_2$ the above integral equals
$$
\frac{1}{2} \int_{V} \frac{12 x_3^2 - 3 z_1^2 - 3 z_2^2}{(z_1^2 + z_2^2 + x_3^2)^{7/2}}  (z_1^2 + z_2^2) \, dy_1 \, dy_2.
$$
Let us introduce polar coordinates $z_1 = \rho \cos \theta$, $z_2 = \rho \sin \theta$. Then the above expression equals $\pi \int_0^{h/2} \frac{12 x_3^2 - 3 \rho^2}{(\rho^2 + x_3^2)^{7/2}} \rho^3 \, d\rho$. By elementary calculation this is equal to $(3 \pi/16) h^4 (x_3^2 + h^2/4)^{-5/2}$. By Lemma \ref{phi2} $\vp_2(h,0) \le c h^{1/2} |\log h|$. Hence $|\text{I}| \le c h^{-1/2} |\log h|$.

Now  we estimate $\text{II}$. For $y \in W_+$ and $\xi_2$, $\xi'$ as above we have
\begin{equation}
\label{xiprime}
(y_1-h,\xi_2) \circ \nabla\vp_2(\xi') 
= (y_1 - h) \vp_{12}(\xi') + \xi_2 \vp_{22}(\xi').
\end{equation}
For any $w \in W$ by Lemma \ref{phiderivatives} we get
$|\vp_{12}(w)| \le c h^{-1/2} |\log h|$, $|\vp_{22}(w)| \le c h^{-1/2}$ so (\ref{xiprime}) is bounded from above by $c |y_1 - h| h^{-1/2} |\log h| + c |y_2| h^{-1/2}$. Put $B_+((h,0),x_3) = \{y \in B((h,0),x_3): \, y_2 > 0\}$. It follows that 
\begin{eqnarray*}
|\text{II}| &\le& 
\frac{c}{x_3^5} \int_{B_+((h,0),x_3)} |y - (h,0)|^3 h^{-1/2} |\log h| \, dy \\
&+& c \int_{W_+ \setminus B_+((h,0),x_3)} |y - (h,0)|^{-2} h^{-1/2} |\log h| \, dy 
\le c h^{-1/2} |\log h| |\log x_3|. 
\end{eqnarray*}
Hence for $x \in S_4(h)$ we have
\begin{equation}
\label{u23formula2}
|u_{23}(x)| \le \left| \int_{D \setminus W} K_{23} \vp \right| + |\text{I}| + |\text{II}| \le c h^{-1/2} |\log h| |\log x_3|.
\end{equation}

For any $\beta > 0$ and $x \in S_4(h)$ we get by (\ref{u23formula1})  $|u_{23}(x)|^{\beta} \le c_1^{\beta} x_3^{\beta} h^{-3\beta}$. Using this and (\ref{u23formula2}) we get $|u_{23}(x)|^{1 + \beta} \le c c_1^{\beta} x_3^{\beta} |\log x_3| h^{-3\beta - 1/2} |\log h|$. Putting $\beta = 1/9$ we obtain $|u_{23}(x)| \le c h^{-3/4} |\log h|^{9/10} \le c h^{-3/4} |\log h|$.
\end{proof}

\begin{lemma}
\label{onD}
For any $(x_1,x_2) \in D$ we have $u_{13}(x_1,x_2,0) = u_{23}(x_1,x_2,0) = 0$ and $u_{33}(x_1,x_2,0) > 0$.
\end{lemma}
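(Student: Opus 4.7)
The key observation is that the symmetric extension (\ref{ext2}) makes $u$ harmonic, and hence real-analytic, on the open set $\R^3 \setminus (D^c \times \{0\})$, which contains a neighbourhood of every point of $D \times \{0\}$; all mixed partial derivatives of $u$ therefore exist and commute at such points. The Steklov identity (\ref{Steklov}) asserts $u_3(x_1,x_2,0) = -1$ on the open set $D \times \{0\}$, so differentiating this identity in the tangential variables $x_1$ and $x_2$ immediately yields $u_{13}(x_1,x_2,0) = u_{23}(x_1,x_2,0) = 0$.

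For the strict positivity $u_{33}(x_1,x_2,0) > 0$ I plan to introduce the auxiliary function $v = u_3 + 1$. It is harmonic in $\R_+^3$. On the boundary it satisfies $v \equiv 0$ on $D \times \{0\}$; and for $(y_1,y_2) \in (\overline D)^c$, because $\vp(y_1,y_2) = 0$ while $\vp > 0$ on $D$,
\begin{equation*}
v(y_1,y_2,0) = 1 - (-\Delta)^{1/2}\vp(y_1,y_2) = 1 + \frac{1}{2\pi}\int_D \frac{\vp(z_1,z_2)}{|(y_1,y_2)-(z_1,z_2)|^3}\,dz_1\,dz_2 \ge 1,
\end{equation*}
while $v \to 1$ as $|x| \to \infty$ in $\overline{\R_+^3}$. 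A Poisson-kernel argument on the upper half-space then gives $v \ge 0$ throughout $\R_+^3$. Since $v$ is a non-negative harmonic function on $\R_+^3$ vanishing at the interior boundary point $(x_1,x_2,0)$ and the half-space trivially satisfies the interior sphere condition there, Hopf's boundary lemma yields $u_{33}(x_1,x_2,0) = \partial_{x_3} v(x_1,x_2,0) > 0$.

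The only delicate point is justifying $v \ge 0$ rigorously, in view of the locally integrable blow-up of the boundary trace of $v$ along $\partial D \times \{0\}$ controlled by $\delta_D^{-1/2}$ via Corollary \ref{DeltaSqrt1}; this is handled by a standard exhaustion argument, applying the classical maximum principle on an increasing sequence of sub-domains of $\R_+^3$ bounded away from $\partial D \times \{0\}$ and passing to the limit.
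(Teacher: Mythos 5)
Your argument is correct and is essentially the paper's proof: the paper also deduces $u_{13}=u_{23}=0$ by differentiating (\ref{Steklov}), then notes $u_3(\cdot,0)>0$ on $\mathrm{int}(D^c)$ and $u_3(\cdot,0)\in L^1(\R^2)$ via Corollary \ref{DeltaSqrt1}, and invokes the normal-derivative (Hopf--Zaremba) lemma from \cite[Lemma 2.33]{ES1992} to conclude $u_{33}>0$. Your explicit construction of $v=u_3+1$, the Poisson-kernel positivity step, and the Hopf boundary lemma simply unpack what that cited lemma packages.
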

\begin{proof}
The equalities $u_{13}(x_1,x_2,0) = u_{23}(x_1,x_2,0) = 0$ for $(x_1,x_2) \in D$ follows easily from (\ref{Steklov}). For $(x_1,x_2) \in \text{int}(D^c)$ we have
$$
u_{3}(x_1,x_2,0) = -(-\Delta)^{1/2}\vp(x) = 
\frac{1}{2 \pi} \int_D \frac{\vp(y)}{|y - x|^3} \, dy > 0.
$$
By Corollary \ref{DeltaSqrt1} we have $f(x_1,x_2) = u_3(x_1,x_2,0) \in L^1(\R^2)$. By the normal derivative lemma (\cite[Lemma 2.33]{ES1992}) we get $u_{33}(x_1,x_2,0) > 0$ for $(x_1,x_2) \in D$.
\end{proof}

\section{Harmonic extension for a ball}

The aim of this section is to show the following result.
\begin{proposition}
\label{extensionball}
Let $\vp$ be the solution of (\ref{maineq1}-\ref{maineq2}) for the ball $B(0,1) \subset \R^2$ and $u$ be the harmonic extension of $\vp$ given by (\ref{ext1}-\ref{ext2}). We have
\begin{equation}
\label{ballHessian}
H(u)(x) > 0, \quad  x \in \R^3 \setminus \{B^c(0,1) \times \{0\}\}.
\end{equation}
\end{proposition}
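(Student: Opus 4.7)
My plan is to combine the full symmetry of the ball with an explicit closed form for the harmonic extension in oblate spheroidal coordinates. By Lemma~\ref{lowerhalfspace} it suffices to prove $H(u)(x)>0$ on $\overline{\R_+^3}\setminus(B^c(0,1)\times\{0\})$. Since both $B(0,1)$ and the kernel $K$ are invariant under rotations about the $x_3$-axis, $u(x_1,x_2,x_3)=v(r,x_3)$ with $r=\sqrt{x_1^2+x_2^2}$, and at any axisymmetric point $(s,0,x_3)$ with $s\ge 0$ the reflection $x_2\mapsto -x_2$ together with axisymmetry forces $u_{12}=u_{23}=0$. Hence
\begin{equation*}
H(u)(s,0,x_3)=u_{22}\bigl(u_{11}u_{33}-u_{13}^{2}\bigr)(s,0,x_3),
\end{equation*}
and for $s>0$ the individual entries are $u_{11}=v_{rr}$, $u_{22}=v_r/s$, $u_{33}=v_{x_3x_3}$, $u_{13}=v_{rx_3}$; everything reduces to checking the sign of $(v_r/s)(v_{rr}v_{x_3x_3}-v_{rx_3}^{2})$.

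For the boundary case $x_3=0$, $|y|<1$, Remark~\ref{Remark2} gives $\vp(y)=C_B\sqrt{1-|y|^{2}}$, and Lemma~\ref{onD} provides $u_{13}(y,0)=u_{23}(y,0)=0$ and $u_{33}(y,0)=-(u_{11}+u_{22})(y,0)>0$. Elementary differentiation of $\vp$ yields $u_{11}u_{22}-u_{12}^{2}=C_B^{2}(1-|y|^{2})^{-2}$, hence
\begin{equation*}
H(u)(y_1,y_2,0)=u_{33}\bigl(u_{11}u_{22}-u_{12}^{2}\bigr)=\frac{C_B^{3}(2-|y|^{2})}{(1-|y|^{2})^{7/2}}>0.
\end{equation*}
For $x_3>0$ the trick is to switch to oblate spheroidal coordinates $(\mu,\nu)$ adapted to the focal disk of radius one: $\mu\ge 0$, $\nu\in[0,1]$ defined by $r^{2}=(1+\mu^{2})(1-\nu^{2})$ and $x_3=\mu\nu$. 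The axisymmetric Laplacian separates, and with $f(\mu)=1-\mu\arccot\mu$, $g(\nu)=\nu$ one checks directly that $\partial_\mu((1+\mu^{2})f'(\mu))=2f(\mu)$ and $\partial_\nu((1-\nu^{2})g'(\nu))=-2g(\nu)$, so $f(\mu)g(\nu)$ is harmonic. Since $f(0)=1$ gives $C_Bf(0)\nu=C_B\sqrt{1-r^{2}}$ on the disk $\mu=0$, and $\nu=0$ on the complement $x_3=0$, $r\ge 1$, while $f(\mu)\nu\to 0$ at infinity, uniqueness of bounded harmonic extensions yields
\begin{equation*}
u(x_1,x_2,x_3)=C_B\bigl(1-\mu\arccot\mu\bigr)\,\nu.
\end{equation*}

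From here I would compute the Hessian of $v$ directly using the chain rule with
\begin{equation*}
\mu_r=\frac{r\mu}{\mu^{2}+\nu^{2}},\quad \nu_r=-\frac{r\nu}{\mu^{2}+\nu^{2}},\quad \mu_{x_3}=\frac{\nu(1+\mu^{2})}{\mu^{2}+\nu^{2}},\quad \nu_{x_3}=\frac{\mu(1-\nu^{2})}{\mu^{2}+\nu^{2}}.
\end{equation*}
A small miracle is the identity $\mu f'(\mu)-f(\mu)=-1/(1+\mu^{2})$, which collapses the first derivative to the clean
\begin{equation*}
v_r(r,x_3)=-\frac{C_B\,r\,\nu}{(\mu^{2}+\nu^{2})(1+\mu^{2})},
\end{equation*}
strictly negative for $r,\nu>0$; the companion identity $f''(\mu)=2/(1+\mu^{2})^{2}$ should yield analogously factored formulas for $v_{rr}$, $v_{x_3x_3}$, $v_{rx_3}$. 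The bulk of the work, and the main obstacle, will be the algebraic manipulation showing $v_{rr}v_{x_3x_3}-v_{rx_3}^{2}<0$ throughout $s>0$, $x_3>0$; combined with $v_r/s<0$ this yields $H(u)>0$. The on-axis locus $s=0$, $x_3>0$ (corresponding to $\mu=x_3$, $\nu=1$) is treated separately: there all off-diagonal Hessian entries vanish and harmonicity forces $u_{11}=u_{22}=-\tfrac12u_{33}=-C_B/(1+x_3^{2})^{2}$, giving $H(u)(0,0,x_3)=2C_B^{3}/(1+x_3^{2})^{6}>0$.
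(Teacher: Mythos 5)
Your explicit representation $u = C_B\bigl(1-\mu\arccot\mu\bigr)\nu$ in oblate spheroidal coordinates is correct — I checked that it is bounded, harmonic, agrees with $\vp$ on the unit disk and vanishes on its complement, and that $u_3 = C_B f'(0) = -1$ on the disk — and the reduction to the two–by–two minor via axisymmetry (so that $u_{12}=u_{23}=0$ at $(s,0,x_3)$ and $H(u)=u_{22}(u_{11}u_{33}-u_{13}^2)$) is sound, as are the boundary ($x_3=0$) and on-axis ($s=0$) computations. However, you have not actually proved the proposition: the one inequality that carries all the weight, namely $v_{rr}v_{zz}-v_{rz}^2<0$ for $s>0$, $x_3>0$, is flagged as ``the main obstacle'' and left open. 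It does not follow from the identities you exhibit. Using $v_r = -C_B\,rG$ with $G=\nu/[(1+\mu^2)(\mu^2+\nu^2)]$ and the harmonic relation $v_{rr}+v_{zz}=-v_r/r$, the target inequality is equivalent to
$$
2G^2 + 3r\,G\,G_r + r^2\bigl(G_r^2+G_z^2\bigr) > 0 ,
$$
and since $2G^2+3aG+a^2=(G+a)(2G+a)$ can be negative when $a=rG_r\in(-2G,-G)$, this is not a coercivity statement that drops out for free; one would have to rule out the bad regime by an explicit (and not obviously tractable) comparison of $rG_r$ and $rG_z$. The paper itself warns that verifying $H(u)>0$ ``directly from the explicit formula for $u$'' is very hard, and its proof of Proposition \ref{extensionball} deliberately avoids this: it introduces the auxiliary harmonic function $w(x)=K(x_1,x_2,x_3+\sqrt{3/2})$ chosen so that $w_{33}$ vanishes on $\partial B(0,1)\times\{0\}$, proves (Lemma \ref{uplusaw}) that $H(u+aw)>0$ for all $a\ge 0$ on a suitable boundary region, then applies Hans Lewy's Theorem \ref{HL} to the continuous family $\Psi^{(b)}=(1-b)u+bw$ to exclude any interior zero of $H(u)$. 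So your argument, while based on a genuinely different and appealing representation, currently contains a gap at exactly the step the paper's method is designed to sidestep; to complete it you would need either a verified algebraic proof of the displayed inequality or a switch to the paper's Lewy-theorem homotopy.
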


Let us recall that $H(u)(x)$ is the determinant of the Hessian matrix of $u$ in $x$. Recall also that the solution of (\ref{maineq1}-\ref{maineq2}) for the ball $B(0,1)$ is given by an explicit formula $\vp(x) = C_{B} (1 - |x|)^{1/2}$, $C_B = 2/\pi$. Hence for $x = (x_1,x_2,x_3)$, where $x_3 > 0$ the function $u$ is given by an explicit formula $u(x) = \int_{B(0,1)} K(x_1-y_1,x_2-y_2,x_3) \vp(y_1,y_2) \, dy_1 \, dy_2$. Applying this it is easy to check numerically that (\ref{ballHessian}) holds (e.g. using Mathematica).  Unfortunately, it seems very hard to prove formally (\ref{ballHessian}) using directly the explicit formula for $u$.

Instead, to show (\ref{ballHessian}) we use a "trick": we add an auxiliary function $w$ to the function $u$ and we use H. Lewy's Theorem \ref{HL}. First, we briefly present the idea of the proof. We define
$$
\Psi^{(b)}(x) = (1 - b) u(x) + b w(x), \quad b \in [0,1],
$$
where $w$ is an appropriately chosen auxiliary function given by
\begin{equation}
\label{defw}
w(x) = K(x_1,x_2,x_3 + \sqrt{3/2}).
\end{equation} 
Note that for any $q \ge 0$ the set $\{(x_1,x_2,x_3): \, K_{33}(x_1,x_2,x_3 + q) = 0, x_3 > - q\} = \{(x_1,x_2,x_3): \, x_1^2 + x_2^2 = (2/3)(x_3 + q)^2, x_3 > - q\}$. The function $w$ is chosen so that $w_{33}(x) = 0$ for $x \in \partial B(0,1) \times \{0\}$ i.e. for $x = (x_1,x_2,0)$ where $x_1^2 + x_2^2 = 1$. Such a choice helps to control $H(\Psi^{(b)})(x)$ near $\partial B(0,1) \times \{0\}$. One can directly check that $\Psi^{(1)} = w$ satisfies $H(\Psi^{(1)})(x) > 0$ for $x \in \R_+^3 \cup B(0,1) \times \{0\}$ (recall that $\R^3_+ = \{(x_1,x_2,x_3): \, x_3 > 0\}$). If $\Psi^{(0)} = u$ does not satisfy $H(\Psi^{(0)})(x) > 0$ for $x \in \R_+^3 \cup B(0,1) \times \{0\}$ one can show that there exists $b \in [0,1)$ for which $H(\Psi^{(b)})(x) \ge 0$ for $x \in \R_+^3 \cup B(0,1) \times \{0\}$ and such that there exists $x_0 \in \R^3_+$ for which $H(\Psi^{(b)})(x_0) = 0$. This gives contradiction with Theorem \ref{HL}. If $\Psi^{(0)} = u$ does not satisfy $H(\Psi^{(0)})(x) > 0$ for $x \in \R_-^3$ one can use Lemma \ref{lowerhalfspace} and again obtain contradiction. This finishes the presentation of the idea of the proof.

\begin{lemma}
\label{uplusaw}
Let $w$ be given by (\ref{defw}) and $v = u + aw$, $a \ge 0$. There exists $M_1 \ge 10$ and $h_1 \in (0,1/2]$ such that for any $a \ge 0$ we have
$$
H(v)(x) > 0, \quad \quad x \in A_1 \cup A_2 \cup A_3 \cup A_4,
$$
where
\begin{eqnarray*}
A_1 &=& \{(x_1,x_2,x_3): \, x_1^2 + x_2^2 \in [(1-h_1)^2,(1+h_1)^2], x_3 \in (0,h_1]\}, \\
A_2 &=& \{(x_1,x_2,x_3): \, x_1^2 + x_2^2 \in [(1+h_1)^2,M_1^2], x_3 \in (0,h_1]\}, \\
A_3 &=& \{(x_1,x_2,0): \, x_1^2 + x_2^2 <1 \},\\
A_4 &=& \{(x_1,x_2,x_3) \in \R_+^3: \, x_1^2 + x_2^2 \ge M_1^2 \,\,\, \text{or} \,\,\, x_3 \ge M_1\}. 
\end{eqnarray*}
\end{lemma}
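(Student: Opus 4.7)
The plan is to treat the four regions by four different arguments and to fix $M_1$ and $h_1$ only at the end.

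Region $A_3$ is handled by a spectral reduction. Since $x_3 = 0$ and $(x_1,x_2) \in B(0,1)$, Lemma \ref{onD} gives $u_{13} = u_{23} = 0$. Expanding $\det \mathrm{Hess}(v)$ along the third row and using harmonicity $v_{11}+v_{22}+v_{33}=0$ yields
\begin{equation*}
H(v) = -(v_{11}+v_{22})\det V^{(2)} - a^{2}\bigl(v_{11}w_{23}^{2} - 2v_{12}w_{13}w_{23} + v_{22}w_{13}^{2}\bigr),
\end{equation*}
where $V^{(2)}$ is the upper-left $2\times 2$ block of $\mathrm{Hess}(v)$. Direct differentiation of $\vp^{(B(0,1))}(y) = (2/\pi)\sqrt{1-|y|^2}$ and of $K$ at $(x_1,x_2,q)$ (with $q = \sqrt{3/2}$) shows that both $U^{(2)}$ (the block of $\mathrm{Hess}(u)$) and $W^{(2)}$ (the block of $\mathrm{Hess}(w)$) are diagonalised by the common orthogonal pair $x = (x_1,x_2)$, $x^{\perp} = (-x_2,x_1)$, with radial/tangential eigenvalues $\alpha_1 = -(2/\pi)(1-r^2)^{-3/2}$, $\alpha_2 = -(2/\pi)(1-r^2)^{-1/2}$ and $\beta_1 = 3C_K q R^{-7/2}(4r^2-q^2)$, $\beta_2 = -3C_K q R^{-5/2}$, where $r = |(x_1,x_2)|$ and $R = r^2 + q^2$. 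Moreover $(w_{13},w_{23}) = \gamma (x_1,x_2)$ is itself radial with $\gamma = C_K(18-3r^2)R^{-7/2} > 0$. Changing to this spectral basis reduces the displayed identity to $H(v) = -B\bigl[A^{2} + AB + a^{2}\gamma^{2}r^{2}\bigr]$, where $A := \alpha_1 + a\beta_1$ and $B := \alpha_2 + a\beta_2$. Since $B<0$ for every $a \ge 0$ and every $r\in [0,1)$, positivity of $H(v)$ reduces to $A^{2} + AB + a^{2}\gamma^{2}r^{2} > 0$; as a quadratic in $A$ this is immediate when $A \le 0$ (the middle term being non-negative), while when $A > 0$ (which forces $r^{2} > q^{2}/4$ and $a$ exceeding a threshold depending on $r$) an elementary algebraic comparison using $q^{2} = 3/2$ gives $|B| < 2a\gamma r$, so the discriminant of the quadratic is negative.

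Region $A_4$ is handled by a far-field expansion. The reflection symmetry $\vp^{(B(0,1))}(y) = \vp^{(B(0,1))}(-y)$ gives $u(x) = c_0 K(x) + O(|x|^{-4})$ with $c_0 := \int \vp\, dy > 0$, and so $u_{ij}(x) = c_0 K_{ij}(x) + O(|x|^{-6})$; similarly $w_{ij}(x) = K_{ij}(x) + O(|x|^{-5})$. Using rotational symmetry to reduce to $x_2 = 0$ and the explicit formulas listed after Remark \ref{constants1}, one verifies $H(K)(x) \ge c|x|^{-12}$ throughout $\R_+^3$. Writing $v_{ij} = (c_0+a)K_{ij} + E_{ij}$ with $|E_{ij}| \le C(1+a)|x|^{-5}$ and expanding the $3\times 3$ determinant gives $H(v)(x) = (c_0+a)^3 H(K)(x) + O\bigl((c_0+a)^2(1+a)|x|^{-13}\bigr)$, and for $M_1$ large enough the error is dominated by the main term uniformly in $a \ge 0$. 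Regions $A_1$ and $A_2$ are handled by boundary perturbation. On $A_1$, Proposition \ref{Hessianboundary} applied in local boundary coordinates at the nearest point of $\partial B(0,1)\times\{0\}$ gives estimates of $u_{ij}$ with explicit blowup rates and signs as $h := \dist(x,\partial B(0,1)\times\{0\}) \to 0$; a direct calculation using these signs shows $H(u)(x)$ is positive and at least of order $h^{-7/2}$. Since $w$ is $C^{\infty}$ near $A_1$ with $|w_{ij}|$ bounded by a constant depending only on $q$, the cubic $H(u+aw) = H(u) + aT_1 + a^2 T_2 + a^3 H(w)$ has strictly positive constant and leading coefficients (the latter by the $A_4$ computation applied to the shift $w = K(\cdot + qe_3)$), and the mixed coefficients $T_1, T_2$ are controlled by the product bounds of Proposition \ref{Hessianboundary} and the sign information contained there; a careful sign-bookkeeping of these mixed terms, together with the domination of $H(u)$ by a negative power of $h_1$, yields $H(u+aw)>0$ on $A_1$ for all $a \ge 0$ when $h_1$ is small enough. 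Region $A_2$ is treated identically, using Corollary \ref{DeltaSqrt1} to replace $u_3 = -1$ by $u_3 = -(-\Delta)^{1/2}\vp$ on the outer side of $\partial B(0,1)$.

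The main obstacle will be controlling the middle coefficients $T_1, T_2$ on $A_1$ and $A_2$: without the precise sign information from Proposition \ref{Hessianboundary} they could in principle be of the same order as $H(u)$, so the positivity of the cubic in $a$ hinges on the specific cancellations encoded in those Hessian estimates. The algebraic cubic step on $A_3$, by contrast, is elementary once the spectral decomposition is in place.
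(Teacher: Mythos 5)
Your region-$A_4$ argument contains a concrete false claim. You assert that ``$H(K)(x) \ge c|x|^{-12}$ throughout $\R_+^3$''. In fact, as the paper records at the end of Section~5 (for $w = K(\cdot + q e_3)$, equivalently for $K$ after a shift),
\begin{equation*}
H(K)(y) = 27 C_K^3\, \frac{y_3\,(y_1^2+y_2^2+2y_3^2)}{(y_1^2+y_2^2+y_3^2)^{15/2}},
\end{equation*}
so $H(K)(y) \approx y_3|y|^{-13}$ and tends to $0$ as $y_3 \to 0^+$ with $|y|$ fixed. Since $A_4$ contains points with $x_1^2+x_2^2 \ge M_1^2$ and $x_3 > 0$ arbitrarily small, the main term $(c_0+a)^3 H(K)(x)$ in your expansion is not bounded below by $c(c_0+a)^3|x|^{-12}$; e.g.\ at $x_3 = |x|^{-1}$ it is $O((c_0+a)^3|x|^{-14})$, strictly smaller than your stated error $O((c_0+a)^2(1+a)|x|^{-13})$. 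The far-field expansion therefore does not close in the thin slab of $A_4$ near $\{x_3 = 0\}$. The paper instead handles this slab by a sign argument (its Subcase 4c, reducing to Subcase 1d): for such $x$ both $u_{33}(x) < 0$ (pointwise negativity of $K_{33}(x-y)$ over $y \in B(0,1)$) and $w_{33}(x) < 0$, whence $v_{33} < 0$, and since $v_{22} < 0$ and $v_{11}+v_{22}+v_{33}=0$ one gets $v_{11} > 0$ and $v_{11}v_{33}-v_{13}^2 < 0$, giving $H(v) = v_{22}\,(v_{11}v_{33}-v_{13}^2) > 0$ without any asymptotic expansion.

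Your $A_1$ and $A_2$ treatment is, as you yourself flag, not a proof: it asserts that ``a careful sign-bookkeeping of $T_1, T_2$ \dots yields $H(u+aw)>0$'' without exhibiting the bookkeeping. The difficulty here is genuine and is exactly where the choice $q=\sqrt{3/2}$ enters: this makes $w_{33}(\pm 1,0,0)=0$, so that in the boundary coordinates used in Proposition~\ref{Hessianboundary} the coefficient $\eps(x)$ of $a$ in $v_{33}$ is small. The paper's Case~1 (four subcases $T_1,\dots,T_4$, one for each of the sets $S_4,S_3,S_2,S_1$ of Proposition~\ref{Hessianboundary}) is precisely this bookkeeping, and it hinges on the smallness of $\eps(x)$; your sketch only mentions that $|w_{ij}|$ is bounded near $A_1$, which is insufficient — with a generic shift $q$ the cubic in $a$ would acquire a dangerous middle term. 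Also, the paper's $A_2$ case is not ``treated identically'' to $A_1$; it is the short sign argument of Subcase~1d ($u_{33}<0$, $w_{33}<0$), not a boundary-estimate perturbation.

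Your $A_3$ argument, by contrast, is a genuinely different and attractive route. The paper factors $H(v) = v_{22}(v_{11}v_{33}-v_{13}^2)$ on the meridian $\{x_2=0, x_1\le 0\}$ and then splits $A_3$ into $T_1 = \{r\le q/2\}$ (where $w_{11}\le 0$ gives $v_{11}<0$ immediately) and $T_2 = \{r>q/2\}$ (where it uses several ad hoc inequalities and completes squares). Your observation that $U^{(2)}$ and $W^{(2)}$ are simultaneously diagonal in the radial/tangential frame and that $(w_{13},w_{23})$ is radial collapses $H(v)$ to $-B\,[A^2+BA+a^2\gamma^2 r^2]$ with $B<0$; the quadratic-in-$A$ viewpoint then unifies the two subcases via a discriminant criterion. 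Provided the one elementary inequality $|B|<2a\gamma r$ is actually verified for all $a$ with $A>0$ (which reduces to $\frac{1}{1-r^2} \ge \frac{3q(4r^2-q^2)}{2r(18-3r^2)-3q(r^2+q^2)}$ on $r \in (q/2,1)$ — numerically true but not checked in your sketch), this would be a cleaner proof of that case. However, as it stands the proposal does not constitute a complete proof of the lemma because of the gaps on $A_4$ and $A_1$, $A_2$.
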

\begin{proof}
First note that for any fixed $x_3 > 0$ the function $(x_1,x_2) \to v(x_1,x_2,x_3)$ is radial so it is enough to show the assertion for $x \in (A_1 \cup A_2 \cup A_3 \cup A_4) \cap L$, where $L = \{(x_1,x_2,x_3): \, x_2 = 0, x_1 \le 0\}$. Put $A'_i = A_i \cap L$, $i = 1,2,3,4$. For $x \in A'_1 \cup A'_2 \cup A'_3 \cup A'_4$ we have $v_{12}(x) = v_{23}(x) = 0$ and $v_{22}(x) < 0$. Hence $H(v)(x) = v_{22}(x) f(a,x)$, where
\begin{equation}
\label{fax}
f(a,x) = 
\left|
\begin{array}{cc} 
v_{11} & v_{13} \\       
v_{13} & v_{33}
\end{array}
\right| 
= 
\left|
\begin{array}{cc} 
u_{11} + a w_{11} & u_{13} + a w_{13}\\       
u_{13} + a w_{13} & u_{33} + a w_{33}
\end{array}
\right|
\end{equation}
and it is enough to show $f(a,x) < 0$ for $x \in A'_1 \cup A'_2 \cup A'_3 \cup A'_4$.

We will consider 4 cases: $x \in A'_1$, $x \in A'_2$, $x \in A'_3$, $x \in A'_4$. 

\vskip 5pt
{\bf{Case 1.}} $x \in A'_1$.

Put $q_0 = \sqrt{3/2}$ and $z_0 = (-1,0,0)$. Note that $w_{33}(z_0) = 0$, $w_{11}(z_0) = C_K q_0 (12 - 3q_0^2) (1+ q_0^2)^{-7/2} \approx 9.185 C_K (1+ q_0^2)^{-7/2}$, $w_{13}(z_0) = -C_K  (12q_0^2 - 3) (1+ q_0^2)^{-7/2} =- 15 C_K (1+ q_0^2)^{-7/2}$. Let us denote $w_{11}(x) = p_1(x)$, $w_{13}(x) = p_2(x)$. It is clear that for sufficiently small $h_1$ and $x \in A'_1$ we have
\begin{equation}
\label{p1p2}
\sqrt{\frac{9}{10}} |p_2(x)| > |p_1(x)|.
\end{equation}
Let $h_0$ be the constant from Proposition \ref{Hessianboundary}. For any $h \in (0,h_0]$ put
\begin{eqnarray*}
T_1(h) &=& \{(-1+h,0,x_3): \, x_3 \in (0,h/4]\},\\
T_2(h) &=& \{(-1+h,0,x_3): \, x_3 \in (h/4,h]\} 
\cup \{(x_1,0,h): \, x_1 \in [-1,-1+h)\},\\
T_3(h) &=& \{(x_1,0,h): \, x_1 \in [-\sqrt{2/3}h-1,-1]\},\\
T_4(h) &=& \{(x_1,0,h): \, x_1 \in [-1-h,-\sqrt{2/3}h-1)\}
\cup \{(-1-h,0,x_3): \, x_3 \in (0,h) \}.
\end{eqnarray*}
Note that the value $-\sqrt{2/3}h-1$ in the definition of $T_3(h)$, $T_4(h)$ is chosen so that $w_{33}(-\sqrt{2/3}h-1,0,h) = 0$. Note also that $w_{33}(x) \ge 0$ for $x \in T_1(h) \cup T_2(h) \cup T_3(h)$ and $w_{33}(x) < 0$ for $x \in T_4(h)$. 

We will consider 4 subcases: $x \in T_1(h)$, $x \in T_2(h)$, $x \in T_3(h)$, $x \in T_4(h)$.

\vskip 2pt
{\bf{Subcase 1a.}} $x \in T_1(h)$. 

By (\ref{fax}), Proposition \ref{Hessianboundary} and definition of $w$ we have
\begin{equation*}
f(a,x) = 
\left|
\begin{array}{cc} 
-b_1(x)h^{-3/2} + p_1(x) a & -b_2(x)h^{-3/2} - p_2(x) a\\       
-b_2(x)h^{-3/2} - p_2(x) a & \eps(x) a + b_1(x)h^{-3/2} + b_3(x) h^{-1/2}
\end{array}
\right|,
\end{equation*}
where $0 < B'_1 \le b_1(x) \le B_1$, $0 \le b_2(x) \le B_2$, $0 < B'_3 \le b_3(x) \le B_3$, $0 < P'_1 \le p_1(x) \le P_1$, $0 < P'_2 \le p_2(x) \le P_2$, 
$0 \le \eps(x) \le E(h) \le E(h_0)$, $\lim_{h \to 0^+} E(h) = 0$. More precisely, estimates of $b_1(x)$, $b_2(x)$ follow from estimates of $u_{11}(x)$, $u_{13}(x)$ for $S_4(h)$ in Proposition \ref{Hessianboundary}, estimates of $b_3(x)$ follow from $u_{33}(x) = -u_{11}(x) - u_{22}(x)$ and estimates of $u_{11}(x)$, $u_{22}(x)$ for $S_4(h)$ in Proposition \ref{Hessianboundary}. Estimates of $p_1(x)$, $p_2(x)$ follow from formulas of $w_{11}(z_0)$, $w_{13}(z_0)$ and continuity of $w_{11}(x)$, $w_{13}(x)$ near $z_0$. Estimates of $\eps(x)$ and $\lim_{h \to 0^+} E(h) = 0$ follow from equality $w_{33}(z_0) = 0$ and continuity of $w_{33}(x)$ near $z_0$.

Hence
\begin{eqnarray*}
&& f(a,x) =
- \eps(x) b_1(x)a h^{-3/2} - b_1^2(x) h^{-3} - b_1(x)b_3(x) h^{-2} + \eps(x) p_1(x) a^2\\
&& +  b_1(x) p_1(x) a h^{-3/2} + p_1(x) b_3(x) a h^{-1/2} - b_2^2(x) h^{-3} - p_2^2(x) a^2 - 2 b_2(x) p_2(x) a h^{-3/2}.
\end{eqnarray*}
Note that for sufficiently small $h$ we have
$$
p_1(x) b_3(x) a h^{-1/2} <  p_1(x) b_1(x) a h^{-3/2}.
$$
For sufficiently small $h$, using this and (\ref{p1p2}) we get
\begin{eqnarray*}
(9/10)p_2^2(x)a^2 + b_1^2(x) h^{-3} 
&>& p_1^2(x)a^2 + b_1^2(x) h^{-3} \\
&\ge& 2 b_1(x) p_1(x) a h^{-3/2} \\
&>& b_1(x) p_1(x) a h^{-3/2} + b_3(x) p_1(x) a h^{-1/2}.
\end{eqnarray*}
For sufficiently small $h$ we also have $p_1(x) \eps(x) a^2 < (1/10) p_2^2(x)a^2$. It follows that for sufficiently small $h_1 > 0$ and for all $0 < h \le h_1$, $a \ge 0$, $x \in T_1(h)$ we have $f(a,x) < 0$.

\vskip 2pt
{\bf{Subcase 1b.}} $x \in T_2(h)$. 

By (\ref{fax}), Proposition \ref{Hessianboundary} and definition of $w$ we have
\begin{equation*}
f(a,x) = 
\left|
\begin{array}{cc} 
b_1(x)h^{-3/2} + p_1(x) a & -b_2(x)h^{-3/2} - p_2(x) a\\       
-b_2(x)h^{-3/2} - p_2(x) a & \eps(x) a - b_1(x)h^{-3/2} + b_3(x) h^{-1/2}
\end{array}
\right|,
\end{equation*}
where $-B_1 \le b_1(x) \le B_1$, $0 < B'_2 \le b_2(x) \le B_2$, $0 < B'_3 \le b_3(x) \le B_3$, $0 < P'_1 \le p_1(x) \le P_1$, $0 < P'_2 \le p_2(x) \le P_2$, 
$0 \le \eps(x) \le E(h) \le E(h_0)$, $\lim_{h \to 0^+} E(h) = 0$. More precisely, estimates of $b_1(x)$, $b_2(x)$ follow from estimates of $u_{11}(x)$, $u_{13}(x)$ for $S_3(h)$ in Proposition \ref{Hessianboundary}, estimates of $b_3(x)$ follow from $u_{33}(x) = -u_{11}(x) - u_{22}(x)$ and estimates of $u_{11}(x)$, $u_{22}(x)$ for $S_3(h)$ in Proposition \ref{Hessianboundary}. Estimates of $p_1(x)$, $p_2(x)$, $\eps(x)$ and $\lim_{h \to 0^+} E(h) = 0$ follow by the same arguments as in Subcase 1a.
Hence
\begin{eqnarray*}
&&f(a,x) =
\eps(x) b_1(x) a h^{-3/2} - b_1^2(x) h^{-3} + b_1(x)b_3(x) h^{-2} + \eps(x) p_1(x) a^2 \\
&& - b_1(x) p_1(x) a h^{-3/2} + p_1(x) b_3(x) a h^{-1/2} - b_2^2(x) h^{-3} - p_2^2(x) a^2 - 2 b_2(x) p_2(x) a h^{-3/2}.
\end{eqnarray*}
Let us first assume that $b_1(x) \ge 0$. Then 
for sufficiently small $h$ we have
\begin{eqnarray*}
\eps(x) b_1(x) a h^{-3/2} &<& b_2(x) p_2(x) a h^{-3/2},\\
p_1(x) b_3(x) a h^{-1/2} &<& b_2(x) p_2(x) a h^{-3/2},\\
b_1(x)b_3(x) h^{-2} &<& b_2^2(x) h^{-3},\\
\eps(x) p_1(x) a^2 &<& p_2^2(x) a^2,
\end{eqnarray*}
which implies $f(a,x) < 0$.

Now let us assume that $b_1(x) < 0$. By (\ref{p1p2}) for sufficiently small $h$ we get
\begin{eqnarray*}
&& (9/10)p_2^2(x)a^2 + b_1^2(x) h^{-3} 
> p_1^2(x)a^2 + b_1^2(x) h^{-3} 
\ge |2 b_1(x) p_1(x) a h^{-3/2}|,\\
&& p_1(x) \eps(x) a^2 < (1/10) p_2^2(x)a^2,\\
&& p_1(x) b_3(x) a h^{-1/2} < 2 b_2(x) p_2(x) a h^{-3/2},
\end{eqnarray*}
which implies $f(a,x) < 0$.

It follows that for sufficiently small $h_1 > 0$ and for all $0 < h \le h_1$, $a \ge 0$, $x \in T_2(h)$ we have $f(a,x) < 0$.

\vskip 2pt
{\bf{Subcase 1c.}} $x \in T_3(h)$. 

By (\ref{fax}), Proposition \ref{Hessianboundary} and definition of $w$ we have
\begin{equation*}
f(a,x) = 
\left|
\begin{array}{cc} 
b_1(x)h^{-3/2} + p_1(x) a & -b_2(x)h^{-3/2} - p_2(x) a\\       
-b_2(x)h^{-3/2} - p_2(x) a & \eps(x) a - b_1(x)h^{-3/2} + b_3(x) h^{-1/2}
\end{array}
\right|,
\end{equation*}
where $0 < B'_1 \le b_1(x) \le B_1$, $-B_2 \le b_2(x) \le B_2$, $0 < B'_3 \le b_3(x) \le B_3$, $0 < P'_1 \le p_1(x) \le P_1$, $0 < P'_2 \le p_2(x) \le P_2$, 
$0 \le \eps(x) \le E(h) \le E(h_0)$, $\lim_{h \to 0^+} E(h) = 0$. More precisely, estimates of $b_1(x)$, $b_2(x)$ follow from estimates of $u_{11}(x)$, $u_{13}(x)$ for $S_2(h)$ in Proposition \ref{Hessianboundary}, estimates of $b_3(x)$ follow from $u_{33}(x) = -u_{11}(x) - u_{22}(x)$ and estimates of $u_{11}(x)$, $u_{22}(x)$ for $S_2(h)$ in Proposition \ref{Hessianboundary}. Estimates of $p_1(x)$, $p_2(x)$, $\eps(x)$ and $\lim_{h \to 0^+} E(h) = 0$ follow by the same arguments as in Subcase 1a.

For sufficiently small $h$ we have
\begin{eqnarray}
\label{b3b1}
b_3(x) h^{-1/2} &<& b_1(x) h^{-3/2}/2,\\
\label{b2b1eps}
\frac{2 B_2}{B'_1} \eps(x) &<& \frac{P'_2}{2}\\
\label{epsp1}
\eps(x) (p_1(x) + 2 \eps(x)) &<& \frac{p_2^2(x)}{4}.
\end{eqnarray}

If $\eps(x) a - b_1(x) h^{-3/2} + b_3(x) h^{-1/2} < 0$ then clearly $f(a,x) < 0$. So we may assume $\eps(x) a - b_1(x) h^{-3/2} + b_3(x) h^{-1/2} \ge 0$ which implies (see (\ref{b3b1}))
\begin{eqnarray}
\label{epsb1}
&& \eps(x) a \ge b_1(x) h^{-3/2} - b_3(x) h^{-1/2} > (b_1(x) h^{-3/2})/2,\\
\label{epsb1b3}
&& \eps(x) a > \eps(x) a - b_1(x) h^{-3/2} + b_3(x) h^{-1/2} \ge 0.
\end{eqnarray}
By (\ref{b2b1eps}) and (\ref{epsb1}) we get 
\begin{equation}
\label{b2h}
|b_2(x)| h^{-3/2} = \frac{2 |b_2(x)|}{b_1(x)} \frac{b_1(x) h^{-3/2}}{2} <
\frac{2 B_2}{B'_1} \eps(x) a < \frac{P'_2 a}{2} < \frac{p_2(x) a}{2}.
\end{equation}
By (\ref{epsb1}), (\ref{epsb1b3}), (\ref{b2h}), (\ref{epsp1}) we get
\begin{eqnarray*}
f(a,x) &\le&
(p_1(x) a + b_1(x) h^{-3/2}) \eps(x) a - \left(\frac{p_2(x) a}{2}\right)^2\\
&\le& (p_1(x) a + 2 \eps(x) a) \eps(x) a - \frac{p_2^2(x) a^2}{4} < 0.
\end{eqnarray*}

It follows that for sufficiently small $h_1 > 0$ and for all $0 < h \le h_1$, $a \ge 0$, $x \in T_3(h)$ we have $f(a,x) < 0$.

\vskip 2pt
{\bf{Subcase 1d.}} $x \in T_4(h)$. 

Note that for $x = (x_1,0,x_3) \in T_4(h)$ we have $w_{33}(x) < 0$. We also have
$$
u_{33}(x) = \int_{B(0,1)} K_{33}(x_1-y_1,x_2-y_2,x_3) \vp(y_1,y_2) \, dy_1 \, dy_2.
$$
Recall that $K_{33}(x_1-y_1,x_2-y_2,x_3) = C_K x_3 ((x_1 - y_1)^2 + (x_2 - y_2)^2 + x_3^2)^{-7/2} (6 x_3^2 - 9(x_1-y_1)^2 - 9(x_2 - y_2)^2)$. Hence to have $K_{33}(x_1-y_1,-y_2,x_3) < 0$ for all $(y_1,y_2) \in B(0,1)$ and $x_1 \le -1$ it is sufficient to have $6 x_3^2 - 9(x_1+1)^2 < 0$. Note that for $x = (x_1,0,x_3) \in T_4(h)$ we have $0 < x_3 < -\sqrt{3/2}(x_1+1)$, $x_1 < -1$. It follows that $6 x_3^2 - 9(x_1+1)^2 < 0$ and $u_{33}(x) < 0$. Hence $u_{33}(x) + aw_{33}(x) < 0$. Note that $u_{22}(x) + a w_{22}(x) < 0$ so $u_{11}(x) + a w_{11}(x) = -u_{22}(x) - a w_{22}(x) - u_{33}(x) - a w_{33}(x) > 0$. This and (\ref{fax}) implies that $f(a,x) < 0$ for any $a \ge 0$ and $x \in T_4(h)$.

\vskip 5pt
{\bf{Case 2.}} $x \in A'_2$.

This case follows from the same arguments as in subcase 1d.

\vskip 5pt
{\bf{Case 3.}} $x \in A'_3$.

Note that $w_{33}(x) > 0$ for $x \in A'_3$. Put $\overline{x}_3 = x_3 + \sqrt{3/2}$. We have 
$$
w_{11}(x) = C_K \ox_3 (x_1^2 + \ox_3^2)^{-7/2} (12 x_1^2 - 3 \ox_3^2).
$$
Note that 
$$
\{(x_1,0,x_3): \, w_{11}(x_1,0,x_3) = 0, x_1 \le 0, x_3 > -\sqrt{3/2}\}
= \{(x_1,0,x_3): \, x_3 + \sqrt{3/2} = -2 x_1\}.
$$
Put $T_1 = \left\{(x_1,0,0): \, x_1 \in \left[\frac{-\sqrt{3}}{2 \sqrt{2}},0\right]\right\}$, 
$T_2 = \left\{(x_1,0,0): \, x_1 \in \left(-1,\frac{-\sqrt{3}}{2 \sqrt{2}}\right)\right\}$. We have $A'_2 = T_1 \cup T_2$. Note that $w_{11}(-\sqrt{3}/(2\sqrt{2}),0,0)) = 0$, $w_{11}(x) \le 0$ for $x \in T_1$ and $w_{11}(x) > 0$ for $x \in T_2$. Note also that for $x = (x_1,0,0) \in A'_3$ we have $u(x) = \vp(x_1,0) = C_B (1 - x_1^2)^{1/2}$ so $u_{11}(x) < 0$. 

We will consider 2 subcases: $x \in T_1$, $x \in T_2$.

\vskip 2pt
{\bf{Subcase 3a.}} $x \in T_1$. 

Note that $w_{11}(x) \le 0$, $u_{11}(x) < 0$ so $u_{11}(x) + a w_{11}(x) < 0$ for $a \ge 0$. It follows that $u_{33}(x) + a w_{33}(x) > 0$ (because $u_{33} + a w_{33} = -(u_{11} + a w_{11} + u_{22} + a w_{22})$). Hence $f(a,x) < 0$.

\vskip 2pt
{\bf{Subcase 3b.}} $x \in T_2$.

For $(y_1,y_2) \in B(0,1)$ and $y = (y_1,y_2,0)$ we have $u(y) = \vp(y_1,y_2) = C_B (1 - y_1^2 - y_2^2)^{1/2}$. Therefore for $x \in T_2$ we obtain $u_{11}(x) = \vp_{11}(x_1,0) = -C_B (1 - x_1^2)^{-3/2}$, $u_{33}(x) = -\vp_{11}(x_1,0) - \vp_{22}(x_1,0) = C_B (1 - x_1^2)^{-3/2} (2 - x_1^2)$. Hence
\begin{equation}
\label{u33u11}
u_{33}(x) < 2 |u_{11}(x)|.
\end{equation}
For $x \in T_2$ we also have $-w_{22}(x) - w_{11}(x) = w_{33}(x) > 0$ so 
\begin{equation}
\label{w22w11}
|w_{22}(x)| >  |w_{11}(x)|.
\end{equation}
Note that for $x = (x_1,x_2,x_3) = (x_1,0,0) \in T_2$ we have $\frac{\ox_3}{|x_1|} = \frac{\sqrt{3/2}}{|x_1|}$ and  $\frac{\ox_3}{|x_1|} \in \left(\sqrt{\frac{3}{2}},2\right)$.

For $x \in T_2$ we have
$$
\frac{|w_{13}(x)|}{|w_{22}(x)|} = \frac{|x_1|}{\ox_3} \frac{(12 \ox_3^2 - 3 x_1^2)}{(3 x_1^2 + 3 \ox_3^2)} 
= \frac{|x_1|}{\ox_3} \left( 4 - \frac{5}{\left(\frac{\ox_3}{|x_1|}\right)^2 + 1} \right)
> \frac{2 |x_1|}{\ox_3} > 1,
$$
so
\begin{equation}
\label{w13w22}
|w_{13}(x)| >  |w_{22}(x)|.
\end{equation}
If $a = 0$ then by explicit formulas $f(a,x) < 0$. If $a > 0$ and $u_{11}(x) + a w_{11}(x) \le 0$ then $u_{33}(x) + a w_{33}(x) = -(u_{11}(x) + a w_{11}(x) + u_{22}(x) + a w_{22}(x)) > 0$ and $u_{13}(x) + a w_{13}(x) = a w_{13}(x) \ne 0$ (see (\ref{w13w22})) so $f(a,x) < 0$. So we may assume $a > 0$ and $u_{11}(x) + a w_{11}(x) > 0$.

Again by (\ref{fax}) and (\ref{u33u11}), (\ref{w13w22}) we get
\begin{equation*}
f(a,x) < 
\left|
\begin{array}{cc} 
u_{11}(x) + a w_{11}(x) & a |w_{22}(x)|\\       
a |w_{22}(x)|           & 2 |u_{11}(x)| - a w_{11}(x) - a w_{22}(x)
\end{array}
\right|.
\end{equation*}
Hence
\begin{eqnarray*}
f(a,x) &<& -2 |u_{11}(x)|^2 + 3 |u_{11}(x)| w_{11}(x) a - |u_{11}(x)| |w_{22}(x)| a \\
&&  - w_{11}^2(x) a^2 + w_{11}(x) |w_{22}(x)| a^2 - |w_{22}(x)|^2 a^2.
\end{eqnarray*}
By (\ref{w22w11}) this is bounded from above by
\begin{eqnarray*}
&& -2 |u_{11}(x)|^2 +2  |u_{11}(x)| |w_{11}(x)| a 
           - w_{11}^2(x) a^2 + w_{11}(x) |w_{22}(x)| a^2 - |w_{22}(x)|^2 a^2\\
&=&
-\left(\sqrt{2} |u_{11}(x)| - \frac{w_{11}(x) a}{\sqrt{2}}\right)^2
-\left(\frac{w_{11}(x) a}{\sqrt{2}} - \frac{|w_{22}(x)| a}{\sqrt{2}}\right)^2
-\left(\frac{|w_{22}(x)| a}{\sqrt{2}}\right)^2\\
&<& 0.
\end{eqnarray*}

\vskip 5pt
{\bf{Case 4.}} $x \in A'_4$.

Recall that $\ox_3 = x_3 + \sqrt{3/2}$ and put $\ox = (x_1,x_2,\ox_3)$. Recall also that $w(x) = K(\ox)$. We have
\begin{eqnarray*}
K_{11}(\ox) &=& 
C_K \ox_3 (x_1^2 + x_2^2 + \ox_3^2)^{-7/2} (12 x_1^2 - 3x_2^2 - 3\ox_3^2),\\
K_{13}(\ox) &=& 
C_K x_1 (x_1^2 + x_2^2 + \ox_3^2)^{-7/2} (12 \ox_3^2 - 3x_1^2 - 3x_2^2),\\
K_{33}(\ox) &=& 
C_K \ox_3 (x_1^2 + x_2^2 + \ox_3^2)^{-7/2} (6 \ox_3^2 - 9x_1^2 - 9x_2^2).
\end{eqnarray*}
For any $M \ge 10$ put
\begin{eqnarray*}
T_1(M) &=&
\{(x_1,0,x_3): \, \ox_3 = M, x_1 \le 0, \ox_3 \ge 3 |x_1|\},\\
T_2(M) &=&
\{(x_1,0,x_3): \, \ox_3 = M, x_1 \le 0, \sqrt{3/2} |x_1| \le \ox_3 < 3 |x_1|\},\\
T_3(M) &=&
\{(x_1,0,x_3): \, \ox_3 = M, x_1 \le 0,  |x_1| \le \ox_3 < \sqrt{3/2} |x_1|\}\\ 
&& \cup \{(x_1,0,x_3): \, x_1 = -M,  0 < \ox_3 < M\}.
\end{eqnarray*}
We will consider 3 subcases: $x \in T_1(M)$, $x \in T_2(M)$, $x \in T_3(M)$.

\vskip 2pt
{\bf{Subcase 4a.}} $x \in T_1(M)$.

Put $B = B(0,1) \subset \R^2$. We have
\begin{eqnarray*}
u_{11}(x) &=& 
\int_B (K_{11}(x_1-y_1,-y_2,x_3) - K_{11}(\ox)) \vp(y_1,y_2) \, dy_1 \, dy_2\\
&& + K_{11}(\ox) \int_B \vp(y_1,y_2) \, dy_1 \, dy_2,
\end{eqnarray*}
\begin{equation}
\label{K11x}
K_{11}(\ox) = \frac{C_K \ox_3 (12 x_1^2 - 3 \ox_3^2)}{(x_1^2 + \ox_3^2)^{7/2}}
< \frac{C_K \ox_3^3 \left(\frac{12}{9} - 3\right)}{(x_1^2 + \ox_3^2)^{7/2}}
< \frac{-c}{\ox_3^4}.
\end{equation}
For $(y_1,y_2) \in B$ we also have
$$
|K_{11}(x_1-y_1,-y_2,x_3) - K_{11}(\ox)| \le 
(|y_1| + |y_2| + |x_3 - \ox_3|) |\nabla K_{11}(\xi)| 
\le 4 |\nabla K_{11}(\xi)|,
$$
where $\xi$ is a point between $(x_1-y_1,-y_2,x_3)$ and $\ox = (x_1,0,\ox_3)$. For such $\xi$ we have
\begin{equation}
\label{K11xi}
|\nabla K_{11}(\xi)| \le \frac{c}{x_3^5}.
\end{equation}
By (\ref{K11x}), (\ref{K11xi}) for sufficiently large $M$ and all $x \in T_1(M)$ we have $u_{11}(x) < 0$. We also have $a w_{11}(x) = a K_{11}(\ox) < 0$ for $a \ge 0$, $x \in T_1(M)$. Hence $u_{11}(x) + a w_{11}(x) < 0$ which implies $f(a,x) < 0$. It follows that for sufficiently large $M_1 \ge 10$ and for all $M \ge M_1$, $a \ge 0$, $x \in T_1(M)$ we have $f(a,x) < 0$.

\vskip 2pt
{\bf{Subcase 4b.}} $x \in T_2(M)$.

First we need the following auxiliary lemma.

\begin{lemma}
\label{f24}
Let $f(y_1,y_3) = - 6 y_1^3 - 3 y_1^2 y_3 + 24 y_1 y_3^2 - 3 y_3^3$. For any $y_3 > 0$ and $y_1 \in [y_3/3,y_3]$ we have $f(y_1,y_3) > 4 y_3^3$.
\end{lemma}
\begin{proof}
The proof is elementary. Fix $y_3 > 0$ and put $g(y_1) = f(y_1,y_3)$. We have $g'(y_1) = -18 y_1^2 - 6 y_1 y_3 + 24 y_3^2$, $g'(y_1) = 0$ for $y_1 = (-8/6) y_3$ and $y_1 = y_3$ so $g$ is increasing for $y_1 \in [(-8/6)y_3,y_3]$. We also have $g(y_3/3) = (40/9) y_3^3$ so for any $y_1 \in [y_3/3,y_3]$ we have $g(y_1) > 4 y_3^3$. 
\end{proof}

Put $b = \int_B \vp(y_1,y_2) \, dy_1 \, dy_2$. For $x \in T_2(M)$ we have
\begin{equation*}
f(a,x) =
\left|
\begin{array}{cc} 
K_{11}(\ox) (a + b) + \eps_{11}(x) & K_{13}(\ox) (a + b) + \eps_{13}(x)\\       
K_{13}(\ox) (a + b) + \eps_{13}(x) & K_{33}(\ox) (a + b) + \eps_{33}(x)
\end{array}
\right|,
\end{equation*}
where 
$$
\eps_{ij}(x) = 
\int_B (K_{ij}(x_1-y_1,-y_2,x_3) - K_{ij}(\ox)) \vp(y_1,y_2) \, dy_1 \, dy_2
$$
for $(i,j) = (1,1)$ or $(1,3)$ or $(3,3)$. For $(y_1,y_2) \in B$ we have
$$
|K_{ij}(x_1-y_1,-y_2,x_3) - K_{ij}(\ox)| \le 
(|y_1| + |y_2| + |x_3 - \ox_3|) |\nabla K_{ij}(\xi)| 
\le 4 |\nabla K_{ij}(\xi)|,
$$
where $\xi$ is a point between $(x_1-y_1,-y_2,x_3)$ and $\ox = (x_1,0,\ox_3)$. We have $|\nabla K_{ij}(\xi)| \le c x_3^{-5}$, so 
\begin{equation}
\label{epsij}
|\eps_{ij}(x)| \le \frac{c b}{x_3^5}.
\end{equation}
Put 
\begin{equation*}
f_1(a,x) =
\left|
\begin{array}{cc} 
K_{11}(\ox) (a + b)  & K_{13}(\ox) (a + b) \\       
K_{13}(\ox) (a + b)  & K_{33}(\ox) (a + b)
\end{array}
\right|.
\end{equation*}
We have $|K_{ij}(\ox)| \le c x_3^{-4}$ so by (\ref{epsij}) we obtain
\begin{equation}
\label{ff1}
|f(a,x) - f_1(a,x)| \le c (a + b) b x_3^{-9}.
\end{equation}
On the other hand we have
\begin{eqnarray}
\nonumber
|f_1(a,x)| 
&\ge&
(a+b)^2 \left(K_{13}^2(\ox) - K_{11}(\ox) K_{33}(\ox) \right) \\
\nonumber
&\ge&
(a+b)^2 \left(K_{13}^2(\ox) - \left( \frac{K_{11}(\ox) + K_{33}(\ox)}{2} \right)^2 \right) \\
\label{K13K22}
&=&
(a+b)^2 \left(|K_{13}(\ox)|^2 - \left( \frac{|K_{22}(\ox)|}{2} \right)^2 \right).
\end{eqnarray}
We have
$$
|K_{13}(\ox)| -  \frac{|K_{22}(\ox)|}{2} 
= \frac{1}{2} C_K (|x_1|^2 + \ox_3^2)^{-7/2} (- 6 |x_1|^3 - 3 |x_1|^2 \ox_3 + 24 |x_1| \ox_3^2 - 3 \ox_3^3).
$$
By Lemma \ref{f24} we obtain
$$
|K_{13}(\ox)| -  \frac{|K_{22}(\ox)|}{2} \ge
\frac{1}{2} C_K (|x_1|^2 + \ox_3^2)^{-7/2} 4 \ox_3^3 \ge c x_3^{-4}.
$$
Using this and (\ref{K13K22}) we obtain
$$
|f_1(a,x)| \ge (a+b)^2 \left( |K_{13}(\ox)| -  \frac{|K_{22}(\ox)|}{2} \right)^2
\ge c (a + b)^2 x_3^{-8}.
$$
It follows that $f_1(a,x) < - c (a + b)^2 x_3^{-8}$. Using this and (\ref{ff1}) we obtain that for sufficiently large $M_1 \ge 10$ and for all $M \ge M_1$, $a \ge 0$, $x \in T_2(M)$ we have $f(a,x) < 0$.

\vskip 2pt
{\bf{Subcase 4c.}} $x \in T_3(M)$.

This subcase follows from the same arguments as in subcase 1d.
\end{proof}

\begin{proof}[proof of Proposition \ref{extensionball}]
On the contrary assume that there exists $z = (z_1,z_2,z_3) \in \R^3 \setminus (B^c(0,1) \times \{0\})$ such that $H(u)(z) \le 0$. By Lemma \ref{lowerhalfspace} we may assume that $z_1 \ge 0$. By an explicit formula for $\vp$ and Lemma \ref{onD} we may assume that $z_1 > 0$.  Define
$$
\Psi^{(b)}(x) = (1-b) u(x) + b w(x), \quad \quad b \in [0,1],
$$
where $w$ is given by (\ref{defw}). By direct computation for any $x = (x_1,x_2,x_3) \in \R^3$ with $x_3 > -\sqrt{3/2}$ we have
$$
H(w)(x) = C_K^3 \frac{27 (x_3 + \sqrt{3/2})(x_1^2 + x_2^2 + 2 (x_3 + \sqrt{3/2})^2)}{(x_1^2 + x_2^2 + (x_3 + \sqrt{3/2})^2)^{15/2}} > 0.
$$
Recall that $\R_+^3 = \{(x_1,x_2,x_3) \in \R^3: \, x_3 > 0\}$ and put $\Omega = \R_+^3 \setminus (A_1 \cup A_2 \cup A_4)$, where $A_1$, $A_2$, $A_4$ are sets from Lemma \ref{uplusaw}. By this lemma we obtain that $z \in \Omega$ and $H(\Psi^{(b)})(x) > 0$ for all $b \in [0,1]$ and $x \in \partial \Omega$. Note that $\Psi^{(0)} = u$ and $\Psi^{(1)} = w$, $H(\Psi^{(0)})(z) < 0$, $H(\Psi^{(1)})(x) > 0$ for all $x \in \overline{\Omega}$. Clearly, all second partial derivatives of $\Psi^{(b)}$ are uniformly Lipschitz continuos on $\overline{\Omega}$ that is
$$
\exists c \,\, \forall b \in [0,1] \,\, \forall x,y \in \overline{\Omega} \,\,
\forall i,j \in \{1,2,3\} \quad
\left|\Psi_{ij}^{(b)}(x) - \Psi_{ij}^{(b)}(x) \right| \le c |x - y|.
$$
It follows that there exists $b_0 \in [0,1)$ such that $H(\Psi^{(b_0)})(z_0) = 0$ for some $z_0 \in \Omega$ and $H(\Psi^{(b_0)})(x) \ge 0$ for all $x \in \overline{\Omega}$. This gives contradiction with Theorem \ref{HL}.
\end{proof}

\section{Concavity of $\vp$}

In this section we prove the main result of this paper Theorem \ref{mainthm}. This is done by using the method of continuity, H. Lewy's Theorem \ref{HL} and results from Sections 3, 4, 5.

For any $\eps \ge 0$ we define 
\begin{equation}
\label{ve}
v^{(\eps)}(x) = u(x) + \eps \left(-\frac{x_1^2}{2} -\frac{x_2^2}{2} + x_3^2 \right), \quad \quad x \in \R^3 \setminus (D^c \times \{0\}),
\end{equation}
where $u$ is the harmonic extension of $\vp$ given by (\ref{ext1}-\ref{ext2}) and $\vp$ is the solution of (\ref{maineq1}-\ref{maineq2}) for an open bounded set $D \subset \R^2$. When $D$ is not fixed we will sometimes write $v^{(\eps,D)}$ instead of $v^{(\eps)}$.

\begin{lemma}
\label{lowerhalfspace1}
Let $C_1 > 0$, $R_1 > 0$, $\kappa_2 \ge \kappa_1 > 0$, $D \in F(C_1,R_1,\kappa_1,\kappa_2)$, $\vp$ be the solution of (\ref{maineq1}-\ref{maineq2}) for $D$ and $u$ the harmonic extension of $\vp$ given by (\ref{ext1}-\ref{ext2}). For any $\eps \ge 0$ let $v^{(\eps)}$ be given by (\ref{ve}). For any $(x_1,x_2,x_3) \in \R_+^3$ we have $H(v^{(\eps)})(x_1,x_2,-x_3) = H(v^{(\eps)})(x_1,x_2,x_3)$. 
\end{lemma}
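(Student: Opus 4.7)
The plan is to reduce this to Lemma \ref{lowerhalfspace} by exploiting the very simple symmetry of the auxiliary quadratic term. Write $v^{(\eps)} = u + q$ with $q(x) = \eps(-x_1^2/2 - x_2^2/2 + x_3^2)$. The function $q$ has constant second derivatives, namely $q_{11} = q_{22} = -\eps$, $q_{33} = 2\eps$, and $q_{12} = q_{13} = q_{23} = 0$; in particular every entry of its Hessian is invariant under the reflection $\hat{x} := (x_1,x_2,-x_3)$.

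For the harmonic extension $u$, the proof of Lemma \ref{lowerhalfspace} records that $u_{ii}(\hat{x}) = u_{ii}(x)$ for $i = 1,2,3$, $u_{12}(\hat{x}) = u_{12}(x)$, $u_{13}(\hat{x}) = -u_{13}(x)$ and $u_{23}(\hat{x}) = -u_{23}(x)$. Adding $q$ preserves these relations, so first I would record that
\begin{eqnarray*}
v^{(\eps)}_{ii}(\hat{x}) &=& v^{(\eps)}_{ii}(x), \quad i = 1,2,3,\\
v^{(\eps)}_{12}(\hat{x}) &=& v^{(\eps)}_{12}(x),\\
v^{(\eps)}_{13}(\hat{x}) &=& -v^{(\eps)}_{13}(x), \quad v^{(\eps)}_{23}(\hat{x}) = -v^{(\eps)}_{23}(x).
\end{eqnarray*}

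Then I would expand
$$
H(v^{(\eps)}) = v^{(\eps)}_{11} v^{(\eps)}_{22} v^{(\eps)}_{33} + 2 v^{(\eps)}_{12} v^{(\eps)}_{13} v^{(\eps)}_{23} - v^{(\eps)}_{11} (v^{(\eps)}_{23})^2 - v^{(\eps)}_{22} (v^{(\eps)}_{13})^2 - v^{(\eps)}_{33} (v^{(\eps)}_{12})^2
$$
and observe that every term is invariant under $x \mapsto \hat{x}$: the diagonal product and the three squared-cross terms are unchanged because the sign flips (if any) cancel in pairs, while the mixed term $2v^{(\eps)}_{12} v^{(\eps)}_{13} v^{(\eps)}_{23}$ picks up the product of two sign changes and is therefore also unchanged. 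This gives $H(v^{(\eps)})(\hat{x}) = H(v^{(\eps)})(x)$, as claimed. There is no genuine obstacle here; the only thing to be careful about is matching the sign pattern with the expanded determinant so that the invariance of the mixed term is visible.
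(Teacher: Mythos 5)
Your proof is correct and is exactly what the paper has in mind: it omits the proof with the remark that it is ``similar to the proof of Lemma \ref{lowerhalfspace}'', and your argument is precisely that argument with the (sign-even, constant) Hessian of the quadratic correction $q$ added in. No gaps.
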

The proof of this lemma is similar to the proof of Lemma \ref{lowerhalfspace} and it is omitted.

\begin{proposition}
\label{vepsilon}
Fix $C_1 > 0$, $R_1 > 0$, $\kappa_2 \ge \kappa_1 > 0$ and $D \in F(C_1,R_1,\kappa_1,\kappa_2)$. Denote $\Lambda = \{C_1,R_1,\kappa_1,\kappa_1\}$. Let $\vp$ be the solution of (\ref{maineq1}-\ref{maineq2}) for $D$, $u$ the harmonic extension of $\vp$ and $v^{(\eps)}$ given by (\ref{ve}). For $M \ge 10$, $h \in (0,1/2]$, $\eta \in (0,1/2]$ we define
\begin{eqnarray*}
U_1(M) 
&=&
\{x \in \R^3: \, x_1^2 + x_2^2 \le M^2, x_3 = M \,\, \text{or} \,\, x_3 = -M\}\\
&& 
\cup \{x \in \R^3: \, x_1^2 + x_2^2 = M^2, x_3 \in [-M,M] \setminus \{0\}\},\\
U_2(h) 
&=&
\{x \in \R^3:\, (x_1,x_2) \in D, \delta_D((x_1,x_2)) \le h, x_3 \in [-h,h]\}\\
&& 
\cup \{x \in \R^3:\, (x_1,x_2) \notin D, \delta_D((x_1,x_2)) \le h, x_3 \in [-h,h] \setminus \{0\}\},\\
U_3(M,h,\eta) 
&=&
\{ x \in \R^3: \, (x_1,x_2) \notin D, \delta_D((x_1,x_2)) \ge h, x_1^2 + x_2^2 \le M^2,  \\
&& \quad \quad  \quad \quad \quad \quad  \quad \quad \quad \quad  \quad \quad \quad \quad  \quad
 x_3 \in [-\eta,\eta] \setminus \{0\}\}.
\end{eqnarray*}
Then we have
\begin{eqnarray*}
&&\exists c_1 = c_1(\Lambda) \in (0,1] \,\, \exists M_0 \ge 10 \,\, \exists h_1 = h_1(\Lambda) \in (0,1/2] \,\, \forall M \ge M_0 \, \, \forall \eps \in (0,c_1 M^{-7}]\\
&& \exists \eta = \eta(\Lambda,M,\eps) \in (0,1/2] \,\, \exists C = C(\Lambda,M,\eps) > 0 \, \, \forall x \in U_{1}(M) \cup U_2(h_1) \cup U_3(M,h_1,\eta)\\
&& \quad \quad \quad \quad \quad  \quad \quad \quad H(v^{(\eps)})(x) \ge C.
\end{eqnarray*}
\end{proposition}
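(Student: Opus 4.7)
The strategy is to apply Lemma \ref{lowerhalfspace1} to reduce to $x_3 \geq 0$ and then handle the three regions $U_1(M), U_2(h_1), U_3(M, h_1, \eta)$ by separate arguments. The perturbation $\eps(-x_1^2/2 - x_2^2/2 + x_3^2)$ has constant Hessian $\mathrm{diag}(-\eps, -\eps, 2\eps)$; in each region I compare $H(v^{(\eps)})$ either with $H(u)$ (when the latter is large and positive) or with the contribution of the $\eps$-term itself (when $H(u)$ degenerates).

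For $U_1(M)$, write $u(x) = b K(x) + R(x)$ with $b = \int_D \vp\,dy_1 dy_2$ and $R(x) = \int_D [K(x-y) - K(x)]\vp(y)\,dy$; Taylor expansion in $y$ yields $|\partial^\alpha R(x)| = O(|x|^{-3-|\alpha|})$, while the entries of $\nabla^2(bK)$ are of order $|x|^{-4}$. The explicit formula
$$H(K)(x) = 27\,C_K^3\,x_3(x_1^2+x_2^2+2x_3^2)(x_1^2+x_2^2+x_3^2)^{-15/2} > 0 \quad (x_3>0)$$
from the proof of Proposition \ref{extensionball} then shows $H(u) \approx b^3 H(K) > 0$ on the parts of $U_1(M)$ where $x_3$ is not too small. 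The $\eps$-correction modifies $H$ by at most $O(\eps\,|x|^{-8}) = O(M^{-15})$ for $\eps \leq c_1 M^{-7}$, which is of smaller order. On the vertical face $x_1^2+x_2^2 = M^2$ with $x_3\to 0$, where $H(K)$ degenerates, the entries $u_{13}, u_{23}$ remain of order $M^{-4}$ and the direct determinant computation of the $U_3$ step below yields $H(v^{(\eps)}) \geq c\,\eps/M^8 > 0$.

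The main step is the estimate on $U_2(h_1)$. For $x \in U_2(h_1)$ with $x_3 \geq 0$, let $x_0 \in \partial D$ be the closest point to $(x_1, x_2)$ and adopt the local frame $CS_{x_0}$ of Lemma \ref{ball}; then $x = (x_1', 0, x_3)$ with $\max(|x_1'|, x_3) \leq h_1$ belongs to one of $S_1(h), \ldots, S_4(h)$ of Section 4 for some $h \leq h_1$ (or to $D \times \{0\}$ near $\partial D$ when $x_3=0$ and $(x_1, x_2) \in D$). Reading off the sizes of $u_{ij}$ from Proposition \ref{Hessianboundary} (resp.\ Lemma \ref{onD} and Corollary \ref{nT1}), expand
$$H(u) = u_{22}(u_{11}u_{33} - u_{13}^2) - u_{12}(u_{12}u_{33} - u_{13}u_{23}) - u_{23}(u_{11}u_{23} - u_{12}u_{13}).$$
The key observation is that $u_{11}u_{33} - u_{13}^2 \leq -c\,h^{-3}$: on $S_2, S_4$ and on $D \times \{0\}$ near $\partial D$ because $u_{11}, u_{33}$ have opposite signs of order $h^{-3/2}$; on $S_1, S_3$ because $u_{13}^2 \approx h^{-3}$ provides a strictly negative contribution that dominates $|u_{11}u_{33}| \leq c\,x_3^2 h^{-5}$ once $x_3/h$ is a sufficiently small fraction (arranged by shrinking the threshold $h/4$ in the definitions of $S_i$ to a smaller structural constant). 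Multiplied by $u_{22} < 0$ this produces the main positive contribution, of order $h^{-7/2}$ on $S_2 \cup S_4 \cup (D\times\{0\})_{\delta \leq h_1}$ and of order $x_3\,h^{-9/2}$ on $S_1 \cup S_3$. The remaining two terms involve $u_{12}$ or $u_{23}$, which by Proposition \ref{Hessianboundary} carry an extra factor of $h^{1/2}$ (up to logarithms) compared to the diagonal entries and hence are of strictly smaller order. The $\eps$-correction changes $H$ by at most $O(\eps\,h^{-3})$, still of smaller order than $h^{-7/2}$ for $h$ small. In the sub-regime on $S_1 \cup S_3$ where $x_3 \lesssim \eps\,h^{3/2}$ and the main term $x_3\,h^{-9/2}$ falls below the $\eps$-contribution, the diagonal entries of $v^{(\eps)}$ are dominated by $\mathrm{diag}(-\eps, -\eps, 2\eps)$ and the same computation as in the $U_3$ step gives $H(v^{(\eps)}) \geq c\,\eps\,u_{13}^2 \geq c'\,\eps\,h^{-3} > 0$.

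For $U_3(M, h_1, \eta)$, on the compact set $\{(x_1, x_2) \in \text{int}(D^c): \delta_D \geq h_1,\ x_1^2+x_2^2 \leq M^2\}$ the function $u$ is smooth up to $x_3 = 0^+$. Since $u \equiv 0$ on $\text{int}(D^c) \times \{0\}$, we have $u_{11} = u_{12} = u_{22} = 0$ there, and harmonicity of $u$ on $\R_+^3$ then forces $u_{33} = -u_{11} - u_{22} = 0$; only $u_{13}, u_{23}$ can be nonzero, so
$$H(v^{(\eps)})(x_1, x_2, 0^+) = \det\!\begin{pmatrix} -\eps & 0 & u_{13} \\ 0 & -\eps & u_{23} \\ u_{13} & u_{23} & 2\eps \end{pmatrix} = \eps(u_{13}^2 + u_{23}^2 + 2\eps^2) \geq 2\eps^3 > 0.$$
Compactness plus continuity in $x_3$ extend this to $U_3 \cap \{x_3 \in (0,\eta]\}$ for $\eta = \eta(\Lambda, M, \eps)$ small enough, and Lemma \ref{lowerhalfspace1} extends it to $x_3 \in [-\eta, 0)$. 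The principal obstacle of the whole argument is the $U_2(h_1)$ step: although Proposition \ref{Hessianboundary} supplies sharp sizes for the individual $u_{ij}$, all of them blow up simultaneously as $\delta_D \to 0$ and at comparable rates, so positivity of $H(u)$ is not immediate from these estimates; it requires the careful sign tracking in the $3\times 3$ determinant expansion together with a delicate handling of the transition into the $U_3$-type regime at points very close to $\text{int}(D^c) \times \{0\}$.
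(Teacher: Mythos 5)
Your overall strategy mirrors the paper's: separate analysis on the large cylinder boundary $U_1(M)$, the tube around $\partial D\times\{0\}$, and the slab over $\operatorname{int}(D^c)\times\{0\}$, with the $\eps$-perturbation rescuing positivity where $H(u)$ degenerates. Your treatment of $U_2(h_1)$ is essentially the paper's (though the paper avoids your proposed ``shrink the threshold $h/4$'' device by the observation that when $u_{11},u_{33}>0$ one has $u_{11}u_{33}\le(u_{22}/2)^2\lesssim x_3^2h^{-3}$, which is already negligible against $u_{13}^2\approx h^{-3}$ without touching the geometry of the $S_i$). Your treatment of $U_3$ is a genuinely cleaner alternative: computing $H(v^{(\eps)})(x_1,x_2,0^+)=\eps(u_{13}^2+u_{23}^2+2\eps^2)\ge 2\eps^3$ and propagating by continuity sidesteps the paper's lower bound $|u_{13}|\ge\tilde C h_1^3/M^7$ and the correlated choice $c_1\le\frac14\tilde C h_1^3$; one just has to note that $\eta$ can be taken depending only on $\Lambda,M,\eps$ because the second derivatives of $u$ are controlled by $\Lambda$-dependent constants on the compact region $\{\delta_D\ge h_1,\ |x|\le M,\ 0\le x_3\le h_1/2\}$.

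The genuine gap is in $U_1(M)$. You write $u=bK+R$ and invoke $|\partial^\alpha R|=O(|x|^{-3-|\alpha|})$; this is correct but not sharp, and it is too weak to carry the argument on the vertical face $x_1^2+x_2^2=M^2$ when $x_3$ is of order one or smaller. With the uniform bound $|R_{ij}|=O(M^{-5})$ the first-order perturbation of the determinant is $\operatorname{tr}(\operatorname{adj}(b\nabla^2K)\nabla^2R)=O(M^{-8})\cdot O(M^{-5})=O(M^{-13})$, and this does not carry the factor $x_3$ that $b^3H(K)\approx 27C_K^3b^3x_3M^{-13}$ carries; so for $x_3\lesssim c(\Lambda)$ the error is of the same order as the main term and ``$H(u)\approx b^3H(K)$'' is not established. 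Meanwhile your $U_3$-type fallback requires the diagonal entries $|u_{ii}|\le cx_3M^{-5}$ to be negligible against $\eps$, hence $x_3\lesssim\eps M^5\le c_1M^{-2}$, leaving the band $x_3\in[c_1M^{-2},\,c(\Lambda)]$ covered by neither estimate. Closing this hole requires either the refined error estimates $|R_{11}|,|R_{22}|,|R_{33}|,|R_{12}|=O(x_3M^{-6})$ (which follow because $K_{11},K_{22},K_{33},K_{12}$ all carry an explicit $x_3$ factor, so differentiation in $y\in\R^2$ preserves it), making the first-order error $O(x_3M^{-14})$ and hence uniformly of lower order than $b^3H(K)$; or, as the paper does, dispensing with the determinant comparison altogether and directly proving the signed bound $v_{11}v_{33}-v_{13}^2\le -cM^{-8}$ uniformly over $U_1(M)$ via a six-case analysis (in which Case 1.2 borrows Lemma \ref{f24} and Subcase 4b from the proof of Lemma \ref{uplusaw}). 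The determinant $H(bK+R)$ is not stable under entrywise $O(M^{-5})$ perturbations because of the precise cancellations between $K_{11}K_{33}$ and $K_{13}^2$; the cofactor-expansion structure $H(v^{(\eps)})=v_{22}\,(v_{11}v_{33}-v_{13}^2)+W_1+W_2$ is what the paper uses to isolate and control exactly this cancellation.
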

\begin{proof}
In the whole proof we use convention stated in Remark \ref{constants1}. We have
$H(v^{(\eps)})(x) = W_1(x) + W_2(x) + W_3(x)$, where
\begin{eqnarray*}
W_1(x) &=& v_{12}^{(\eps)}(x) 
\left(v_{13}^{(\eps)}(x)v_{23}^{(\eps)}(x)-v_{12}^{(\eps)}(x)v_{33}^{(\eps)}(x)\right),\\
W_2(x) &=& -v_{23}^{(\eps)}(x) 
\left(v_{11}^{(\eps)}(x)v_{23}^{(\eps)}(x)-v_{13}^{(\eps)}(x)v_{12}^{(\eps)}(x)\right),\\
W_3(x) &=& v_{22}^{(\eps)}(x) f(\eps,x),\\
f(\eps,x) &=& v_{11}^{(\eps)}(x)v_{33}^{(\eps)}(x)-(v_{13}^{(\eps)}(x))^2.
\end{eqnarray*}
The proof consists of 3 parts.

\vskip 5pt
{\bf{Part 1.}} Estimates on $U_1(M)$.

We may assume in this part that $x_2 = 0$, $x_3 > 0$, $x_1 \le 0$.

By formulas $u_{ij}(x) = 
\int_D K_{ij}(x_1-y_1,x_2-y_2,x_3) \vp(y_1,y_2) \, dy_1 \, dy_2$ and explicit formulas for $K_{ij}$ (see Section 2), there exist $M_1 \ge 10$ and $c$ such that for any $M \ge M_1$ and $x \in U_1(M)$ we have $|u_{11}(x)| \le c x_3 M^{-5}$, 
$u_{22}(x) \approx - x_3 M^{-5}$, $|u_{33}(x)| \le c x_3 M^{-5}$, $|u_{13}(x)| \le c M^{-4}$, $|u_{23}(x)| \le c M^{-5}$, $|u_{12}(x)| \le c x_3 M^{-6}$.

Let us fix arbitrary $M \ge M_1$.

Let $x \in U_1(M)$ (recall that we assume that $x_2 = 0$, $x_3 > 0$, $x_1 \le 0$). We have
\begin{eqnarray}
\label{W1}
|W_1(x)| &\le& 
c  x_3 M^{-6} (M^{-4} M^{-5} + x_3 M^{-6} (x_3 M^{-5} + 2 \eps))
\le c x_3 M^{-15} + c \eps M^{-10},\\
\label{W2}
|W_2(x)| &\le& 
c  M^{-5} ((x_3 M^{-5} + \eps) M^{-5} + M^{-4} x_3 M^{-6})
\le c x_3 M^{-15} + c \eps M^{-10}.
\end{eqnarray}
Now we estimate $W_3(x)$. We have
\begin{equation}
\label{v22epsilon}
v_{22}^{(\eps)}(x) = u_{22}(x) - \eps \approx - c x_3 M^{-5} - \eps.
\end{equation}
The most important is the estimate of $f(\eps,x)$. To obtain this estimate we will consider 6 cases.

\vskip 2pt
{\bf{Case 1.1.}} $x_3 = M$, $|x_1| < x_3/3$.

Put $m(x) = C_K (x_1^2 + x_3^2)^{-7/2}$. We have
$$
u_{11}(x) \approx K_{11}(x) = m(x) x_3 (12 x_1^2 - 3 x_3^2) < c M^{-7} x_3 \left(12 \left(\frac{x_3}{3}\right)^2 - 3 x_3^2 \right),
$$
so $u_{11}(x) \le - c M^{-4}$.
We also have
$$
u_{33}(x) \approx K_{33}(x) = m(x) x_3 (6 x_3^2 - 9 x_1^2) \ge c M^{-7} x_3 
\left( 6 x_3^2 - 9 \left(\frac{x_3}{3}\right)^2 \right),
$$
so $u_{33}(x) \ge c M^{-4}$. Therefore for any $\eps \ge 0$ we have 
$v_{11}^{(\eps)}(x) \le - c M^{-4}$, $v_{33}^{(\eps)}(x) \ge c M^{-4}$. Hence $f(\eps,x) \le -c M^{-8}$.

\vskip 2pt
{\bf{Case 1.2.}} $x_3 = M$, $|x_1| \in [x_3/3,x_3/\sqrt{3/2}]$.

By the arguments from Subcase 4b in the proof of Lemma \ref{uplusaw} we have 
$u_{11}(x) u_{33}(x)-(u_{13}(x))^2 < -c M^{-8}$ for sufficiently large $M$. For any $\eps \ge 0$ we have
$$
\left|f(\eps,x) - \left(u_{11}(x) u_{33}(x)-(u_{13}(x))^2\right)\right|
\le 2 \eps^2 + 2\eps |u_{11}(x)| + \eps |u_{33}(x)|.
$$
For any $c_1 \in (0,1]$ and all $\eps \in (0,c_1 M^{-7}]$ this is bounded from above by $c c_1 M^{-11}$. It follows that for sufficiently small $c_1 \in (0,1]$, for sufficiently large $M$ and all $\eps \in (0,c_1 M^{-7}]$ we have $f(\eps,x) < -c M^{-8}$.

\vskip 2pt
{\bf{Case 1.3.}} $x_3 = M$, $|x_1| \in [x_3/\sqrt{3/2},x_3]$.

We have
$$
u_{11}(x) \approx K_{11}(x) = m(x) x_3 (12 x_1^2 - 3 x_3^2)
\approx M^{-7} x_3 \left(12 \frac{x_3^2}{3/2} - 3 x_3^2\right)
\approx M^{-4}.
$$
For $y \in D \subset B(0,1)$ we also have
\begin{eqnarray*}
&& K_{33}(x_1-y_1,-y_2,x_3) \le
C_K x_3 ((x_1-y_1)^2 + y_2^2 + x_3^2)^{-7/2} (6 x_3^2 - 9(x_1-y_1)^2)\\
&=& C_K x_3 ((x_1-y_1)^2 + y_2^2 + x_3^2)^{-7/2} (6 x_3^2 - 9 x_1^2 + 18 x_1 y_1 - 9 y_1^2) \le c M^{-5},
\end{eqnarray*}
so $u_{33}(x) \le c M^{-5}$. For sufficiently small $c_1 \in (0,1]$ and all $\eps \in (0,c_1 M^{-7}]$ we obtain $v_{11}^{(\eps)}(x) \approx M^{-4}$, $v_{33}^{(\eps)}(x) \le c M^{-5}$. We also have 
$u_{13}(x) \approx K_{13}(x) = m(x) x_1 (12 x_3^2 - 3 x_1^2)
\ge c M^{-4}$. It follows that for sufficiently small $c_1$, for sufficiently large $M$ and all $\eps \in (0,c_1 M^{-7}]$ we have $f(\eps,x) < - c M^{-8}$.

\vskip 2pt
{\bf{Case 1.4.}} $x_3 \in [M/4,M]$, $x_1 = -M$.

We have
$$
u_{11}(x) \approx K_{11}(x) = m(x) x_3 (12 x_1^2 - 3 x_3^2),
$$
so $u_{11}(x) \ge  c M^{-4}$.
We also have
$$
u_{33}(x) \approx K_{33}(x) = m(x) x_3 (6 x_3^2 - 9 x_1^2),
$$
so $u_{33}(x) \le -c M^{-4}$. Therefore for sufficiently small $c_1 \in (0,1]$ and all $\eps \in (0,c_1 M^{-7}]$ we have $v_{11}^{(\eps)}(x) \ge c M^{-4}$, $v_{33}^{(\eps)}(x) \le -c M^{-4}$. Hence $f(\eps,x) \le -c M^{-8}$.

\vskip 2pt
{\bf{Case 1.5.}} $x_3 \in [1,M/4]$, $x_1 = -M$.

We have
$$
u_{13}(x) \approx K_{13}(x) = m(x) x_1 (12 x_3^2 - 3 x_1^2),
$$
so $u_{13}(x) \le  - c M^{-4}$.
We also have
$$
u_{11}(x) \approx K_{11}(x) = m(x) x_3 (12 x_1^2 - 3 x_3^2),
$$
$$
u_{33}(x) \approx K_{33}(x) = m(x) x_3 (6 x_3^2 - 9 x_1^2),
$$
so $u_{11}(x) \ge c M^{-5}$, $u_{33}(x) \le -c M^{-5}$. Therefore for sufficiently small $c_1 \in (0,1]$ and all $\eps \in (0,c_1 M^{-7}]$ we have $v_{11}^{(\eps)}(x) \ge c M^{-5}$, $v_{33}^{(\eps)}(x) \le -c M^{-5}$. Hence $f(\eps,x) \le -c M^{-8}$.

\vskip 2pt
{\bf{Case 1.6.}} $x_3 \in (0,1]$, $x_1 = -M$.

By similar arguments as in Case 1.5 we get $u_{13}(x) \le - c M^{-4}$, $|u_{11}(x)| \le c M^{-5}$, $|u_{33}(x)| \le c M^{-5}$. Therefore for sufficiently small $c_1 \in (0,1]$ and all $\eps \in (0,c_1 M^{-7}]$ we have $|v_{11}^{(\eps)}(x)| \le c M^{-5}$, $|v_{33}^{(\eps)}(x)| \le c M^{-5}$. Hence for sufficiently small $c_1 \in (0,1]$, for sufficiently large $M$ and all $\eps \in (0,c_1 M^{-7}]$ we have $f(\eps,x) \le -c M^{-8}$.

\vskip 5pt

Finally in all 6 cases we get that for sufficiently small $c_1 \in (0,1]$, for sufficiently large $M$ and all $\eps \in (0,c_1 M^{-7}]$ we have $f(\eps,x) \le -c M^{-8}$. By (\ref{v22epsilon}) we get 
$W_3(x) = v_{22}^{(\eps)}(x) f(\eps,x) \ge c x_3 M^{-13} + c \eps M^{-8}$. By (\ref{W1}), (\ref{W2}) we have 
$|W_1(x) + W_2(x)| \le c x_3 M^{-15} + c \eps M^{-10}$. Recall that $H(v^{(\eps)})(x) = W_1(x) + W_2(x) + W_3(x)$. It follows that there exists sufficiently small $c'_1 = c'_1(\Lambda) \in (0,1]$ and sufficiently large $M_0 \ge M_1 \ge 10$ such that for any $M \ge M_0$ and $\eps \in (0,c'_1 M^{-7}]$ and all $x \in U_1(M)$ we have $H(v^{(\eps)})(x) \ge c \eps M^{-8}$. 

Let us fix the above $M_0$ and $M \ge M_0$ in the rest of the proof of this proposition.

\vskip 5pt
{\bf{Part 2.}} Estimates on $U_2(h)$.

We will use notation and results from Section 4. In particular we choose a point on $\partial D$ and choose a Cartesian coordinate system with origin at that point in the same way as in Section 4 (see Figures 1, 2, 3). Let $h \in (0,h_0]$, where $h_0$ is from Proposition \ref{Hessianboundary}. By Lemma \ref{lowerhalfspace1} we may assume $x_3 \ge 0$, by continuity we may assume $x_3 > 0$. It follows that it is enough to estimate $H(v^{(\eps)})(x)$ for $x \in S_1(h) \cup S_2(h) \cup S_3(h) \cup S_4(h)$. We will consider 2 cases. Assume that $\eps \in (0,1]$.

\vskip 2pt
{\bf{Case 2.1.}} $x \in S_1(h) \cup S_2(h) \cup S_3(h)$.

If $x \in S_1(h) \cup S_3(h)$ we have $(v_{13}^{(\eps)}(x))^2 = u_{13}^2(x) \ge c h^{-3}$, $v_{11}^{(\eps)}(x) v_{33}^{(\eps)}(x) = u_{11}(x) u_{33}(x) + 2 \eps u_{11}(x) - \eps u_{33}(x) - 2 \eps^2$, 
$|2 \eps u_{11}(x)| \le c \eps h^{-3/2}$, $|- \eps u_{33}(x)| \le c \eps h^{-3/2}$.

If $u_{11}(x) \le 0$ or $u_{33}(x) \le 0$ then $u_{11}(x) u_{33}(x) \le 0$ (recall that $u_{11}(x) + u_{33}(x) = -u_{22}(x) > 0$). If $u_{11}(x) > 0$ and $u_{33}(x) > 0$ then
$$
u_{11}(x) u_{33}(x) \le \left(\frac{u_{11}(x) + u_{33}(x)}{2}\right)^2 =
\left(\frac{u_{22}(x)}{2}\right)^2 \le c h^{-1}.
$$
Hence $f(\eps,x) = -(v_{13}^{(\eps)}(x))^2 + v_{11}^{(\eps)}(x) v_{33}^{(\eps)}(x) \le -c h^{-3}$ for sufficiently small $h$ and all $\eps \in (0,1]$.

If $x \in S_2(h)$ we have $u_{11}(x) \approx h^{-3/2}$, $u_{33}(x) \approx -h^{-3/2}$. Hence for sufficiently small $h$ and all $\eps \in (0,1]$ we have $v_{11}^{(\eps)}(x) \approx h^{-3/2}$, $v_{33}^{(\eps)}(x) \approx -h^{-3/2}$ and 
$f(\eps,x) \le -c h^{-3}$.

Hence for any $x \in S_1(h) \cup S_2(h) \cup S_3(h)$ for sufficiently small $h$ and all $\eps \in (0,1]$ we have $f(\eps,x) \le -c h^{-3}$. We have $v_{22}^{(\eps)}(x) \approx -x_3 h^{-3/2} - \eps$. It follows that $W_3(x) = v_{22}^{(\eps)}(x) f(\eps,x) \ge c x_3 h^{-9/2} + c \eps h^{-3}$. By Proposition \ref{Hessianboundary} we also have 
\begin{eqnarray*}
|W_1(x)| &\le& 
c x_3 h^{-3/2} |\log h| 
\left(h^{-3/2} h^{-1/2} |\log h| + (2 \eps + x_3 h^{-5/2}) x_3 h^{-3/2} |\log h|\right)\\
&\le& c x_3 h^{-7/2} |\log h|^2 + c \eps h^{-1} |\log h|^2,
\end{eqnarray*}
\begin{eqnarray*}
|W_2(x)| &\le& 
c h^{-1/2} |\log h| 
\left((\eps + x_3 h^{-5/2}) h^{-1/2} |\log h| + h^{-3/2} x_3 h^{-3/2} |\log h| \right)\\
&\le& c x_3 h^{-7/2} |\log h|^2 + c \eps h^{-1} |\log h|^2.
\end{eqnarray*} 
Hence there exists sufficiently small $h'_1$ such that for all $h \in (0,h'_1]$ and $\eps \in (0,1]$ we have $H(v^{(\eps)})(x) \ge c x_3 h^{-9/2} + c \eps h^{-3}$.

\vskip 2pt
{\bf{Case 2.2.}} $x \in S_4(h)$.

By Proposition \ref{Hessianboundary} for sufficiently small $h$ and all $\eps \in (0,1]$ we have $W_3(x) \ge c h^{-1/2} h^{-3} = c h^{-14/4}$,
\begin{eqnarray*}
|W_1(x)| &\le& 
c h^{-1/2} |\log h| 
\left(h^{-3/2} h^{-3/4} |\log h| +  h^{-3/2} h^{-1/2} |\log h|\right)\\
&\le& c h^{-11/4} |\log h|^2,
\end{eqnarray*}
\begin{eqnarray*}
|W_2(x)| &\le& 
c h^{-3/4} |\log h| 
\left(h^{-3/2} h^{-3/4} |\log h| + h^{-1/2} |\log h| h^{-3/2} \right)\\
&\le& c h^{-12/4} |\log h|^2.
\end{eqnarray*}
So there exists sufficiently small $h''_1$ such that for all $h \in (0,h''_1]$ and $\eps \in (0,1]$ we have $H(v^{(\eps)})(x) \ge c h^{-14/4}$.

Let us fix $h_1 = h'_1 \wedge h''_1$ in the rest of the proof of this proposition.

\vskip 5pt
{\bf{Part 3.}} Estimates on $U_3(M,h_1,\eta)$.

Let us choose arbitrary point on $\partial D$ and choose a Cartesian coordinate system in the same way as in Part 2. Note that it is enough to estimate $H(v^{(\eps)})(x)$ for $x \in U'_3(M,h_1,\eta) = \{(x_1,x_2,x_3): \, x_2 = 0, x_1 \in [-M,-h_1], x_3 \in (0,\eta]\}$ and sufficiently small $\eta = \eta(\Lambda,M,\eps)$.

Let $x \in U'_3(M,h_1,1/2)$. Note that $\dist(x,\partial D) \ge h_1$. By formulas $u_{ij}(x) = 
\int_D K_{ij}(x_1-y_1,x_2-y_2,x_3) \vp(y_1,y_2) \, dy_1 \, dy_2$ and explicit formulas for $K_{ij}$ (see Section 2) we have $|u_{11}(x)| \le c x_3 h_1^{-5}$, 
$|u_{22}(x)| \le c x_3 h_1^{-5}$, $|u_{33}(x)| \le c x_3 h_1^{-5}$, $|u_{13}(x)| \le c h_1^{-4}$, $|u_{23}(x)| \le c h_1^{-4}$, $|u_{12}(x)| \le c x_3 h_1^{-5}$. Note also that by our choice of coordinate system for any $y = (y_1,y_2) \in D$ we have $y_1 > 0$. From now on let us assume additionally that $x = (x_1,x_2,x_3) \in U'_3(M,h_1,1/2)$ is such that $x_3 \le |x_1|/\sqrt{6}$ (this condition implies $12 x_3^2 \le 2 x_1^2$). For such $x = (x_1,x_2,x_3)$ and any $y = (y_1,y_2) \in D$ we have $12 x_3^2 - 3(x_1 - y_1)^2 - 3(x_2 - y_2)^2 \le - (x_1 - y_1)^2 \le - x_1^2 \le -h_1^2$.

It follows that 
\begin{eqnarray}
\nonumber
|u_{13}(x)| 
&=&
\left|C_K \int_D \frac{(x_1 - y_1) (12 x_3^2 - 3(x_1 - y_1)^2 - 3(x_2 - y_2)^2)}{((x_1 - y_1)^2 + (x_2 - y_2)^2 + x_3^2)^{7/2}} \, \vp(y_1,y_2) \, dy_1 \, dy_2 \right| \\
\label{Ctildeh1}
&\ge& \frac{\tilde{C} h_1^3}{M^7}.
\end{eqnarray}
The constant $\tilde{C}$ will play an important role in the rest of the proof and this is the reason why it is not as usual denoted by $c$. Clearly, $\tilde{C}$ depends only on $\Lambda$.

Let us recall that in Parts 1 and 2 of this proof we have fixed constants $M_0$, $M \ge M_0$, $h_1$. At the end  of Part 1 we have chosen a constant $c'_1 \in (0,1]$. Let us choose a constant $c_1$
 to be 
\begin{equation}
\label{newc1}
c_1 = c'_1 \wedge \frac{1}{4} \tilde{C}h_1^3,
\end{equation}
where $\tilde{C}$ is a constant from (\ref{Ctildeh1}). In the rest of the proof let us fix this constant $c_1$ and $\eps \in (0,c_1 M^{-7}]$. The reason to define $c_1$ by (\ref{newc1}) is so that $2 \eps^2 \le 2 c_1^2 M^{-14} \le \frac{1}{8} \tilde{C}^2 h_1^6 M^{-14}$ which implies
\begin{equation}
\label{2eps3}
2 \eps^3 \le \frac{1}{4} \frac{\eps}{2} \tilde{C}^2 h_1^6 M^{-14},
\end{equation}
which will be crucial in the sequel.

Note that for sufficiently small $\eta = \eta(\Lambda,M,\eps)$ and $x \in U'_3(M,h_1,\eta)$ we have $x_3 \le |x_1|/\sqrt{6}$ and
\begin{eqnarray*}
v_{22}^{(\eps)}(x) &=& -\eps + u_{22}(x) \le - \eps + c x_3 h_1^{-5} \le - \frac{\eps}{2},\\
v_{11}^{(\eps)}(x) &=& -\eps + u_{11}(x) \le - \eps + c x_3 h_1^{-5} \le - \frac{\eps}{2}.
\end{eqnarray*}
We have
\begin{eqnarray*}
&& H(v^{(\eps)})(x) =
v_{11}^{(\eps)}(x) v_{22}^{(\eps)}(x) v_{33}^{(\eps)}(x)
+ 2 v_{12}^{(\eps)}(x) v_{23}^{(\eps)}(x) v_{13}^{(\eps)}(x)\\
&& - v_{22}^{(\eps)}(x) \left(v_{13}^{(\eps)}(x)\right)^2
   - v_{11}^{(\eps)}(x) \left(v_{23}^{(\eps)}(x)\right)^2
	 - v_{33}^{(\eps)}(x) \left(v_{12}^{(\eps)}(x)\right)^2,
\end{eqnarray*}
\begin{equation}
\label{Ctildeh1eps}
- v_{22}^{(\eps)}(x) \left(v_{13}^{(\eps)}(x)\right)^2 
\ge \frac{\eps}{2} \frac{\tilde{C}^2 h_1^6}{M^{14}},
\end{equation}
$$
- v_{11}^{(\eps)}(x) \left(v_{23}^{(\eps)}(x)\right)^2 \ge 0,
$$
\begin{eqnarray}
\label{v1233}
\left| v_{33}^{(\eps)}(x) \left(v_{12}^{(\eps)}(x)\right)^2 \right|
&\le&
(c x_3 h_1^{-5})^2 (2 \eps + c x_3 h_1^{-5}),\\
\label{v122313}
|v_{12}^{(\eps)}(x) v_{23}^{(\eps)}(x) v_{13}^{(\eps)}(x)|
&\le&
c x_3 h_1^{-5} h_1^{-4} h_1^{-4},\\
\label{112233}
|v_{11}^{(\eps)}(x) v_{22}^{(\eps)}(x) v_{33}^{(\eps)}(x)|
&\le&
(\eps + c x_3 h_1^{-5})^2 (2 \eps + c x_3 h_1^{-5}).
\end{eqnarray}
Note that the right hand sides of (\ref{v1233}), (\ref{v122313}), (\ref{112233}) are bounded by $2 \eps^3 + x_3 C(\Lambda,h_1)$ (note that $h_1$ depends only on $\Lambda$ so $C(\Lambda,h_1) = C(\Lambda)$). By (\ref{2eps3}) and (\ref{Ctildeh1eps}) we have $2 \eps^3 \le - \frac{1}{4} v_{22}^{(\eps)}(x) \left(v_{13}^{(\eps)}(x)\right)^2$. We also have $x_3 C(\Lambda,h_1) < - \frac{1}{4} v_{22}^{(\eps)}(x) \left(v_{13}^{(\eps)}(x)\right)^2$ for sufficiently small $\eta = \eta(\Lambda,M,\eps)$ and $x \in U'_3(M,h_1,\eta)$. For such $\eta$ and $x$ we have 
$$
H(v^{(\eps)})(x) \ge 
- \frac{1}{2} v_{22}^{(\eps)}(x) \left(v_{13}^{(\eps)}(x)\right)^2
\ge \frac{\eps}{4} \frac{\tilde{C}^2 h_1^6}{M^{14}}.
$$
\end{proof}

\begin{lemma}
\label{ball1}
Let $\vp$ be the solution of (\ref{maineq1}-\ref{maineq2}) for $B(0,1)$, $u$ the harmonic extension of $\vp$ and $v^{(\eps)}$ given by (\ref{ve}). For $M \ge 10$, $h \in (0,1/2]$, $\eta \in (0,1/2]$ we define
\begin{eqnarray*}
U_1(M) 
&=&
\{x \in \R^3: \, x_1^2 + x_2^2 \le M^2, x_3 = M \,\, \text{or} \,\, x_3 = -M\}\\
&& 
\cup \{x \in \R^3: \, x_1^2 + x_2^2 = M^2, x_3 \in [-M,M] \setminus \{0\}\},\\
U_2(h) 
&=&
\{x \in \R^3:\, x_1^2 + x_2^2 \in [(1-h)^2,1), x_3 \in [-h,h]\}\\
&& 
\cup \{x \in \R^3:\, x_1^2 + x_2^2 \in [1,(1+h)^2], x_3 \in [-h,h] \setminus \{0\}\},\\
U_3(M,h,\eta) 
&=&
\{x \in \R^3:\, x_1^2 + x_2^2 \in [(1+h)^2,M^2], x_1^2 + x_2^2 \le M^2, x_3 \in [-\eta,\eta] \setminus \{0\}\}.
\end{eqnarray*}
Then we have
\begin{eqnarray*}
&&\exists c_1 \in (0,1] \,\, \exists M_0 \ge 10 \,\, \exists h_1 \in (0,1/2] \,\, \forall M \ge M_0 \, \, \exists \eta = \eta(M) \in (0,1/2]\\
&& \forall \eps \in (0,c_1 M^{-7}] \, \, \forall x \in U_{1}(M) \cup U_2(h_1) \cup U_3(M,h_1,\eta)\\
&& \quad \quad \quad \quad \quad \quad \quad \quad H(v^{(\eps)})(x) > 0.
\end{eqnarray*}
\end{lemma}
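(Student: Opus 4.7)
I would combine Proposition \ref{vepsilon}, applied with $D = B(0,1) \in F(\Lambda_0)$ for a suitable $\Lambda_0$, with Proposition \ref{extensionball} and a local Taylor analysis on $U_3$. The conclusion of Proposition \ref{vepsilon} on $U_1(M) \cup U_2(h_1)$ does not involve the auxiliary parameter $\eta$, so the constants $c_1, M_0, h_1$ furnished there can be adopted as the constants of the present lemma. The task reduces to choosing $\eta = \eta(M) \in (0,1/2]$ so that $H(v^{(\eps)})(x) > 0$ for every $\eps \in (0, c_1 M^{-7}]$ and every $x \in U_3(M, h_1, \eta)$. By Lemma \ref{lowerhalfspace1} it suffices to treat $x_3 > 0$.

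Fix $(x_1, x_2)$ with $r := (x_1^2 + x_2^2)^{1/2} \in [1+h_1, M]$. Since $\vp \equiv 0$ on $B^c(0,1)$, the Poisson-integral branch $u|_{\R_+^3}$ has vanishing trace in an $h_1$-neighborhood of $(x_1, x_2)$, so Schwarz reflection extends it to a harmonic function in a three-dimensional neighborhood of $(x_1, x_2, 0)$ that is odd in $x_3$. This yields a Taylor expansion, with remainder uniform on the compact set $K = \{(x_1,x_2) : r \in [1+h_1, M]\}$,
\begin{equation*}
u(x_1, x_2, x_3) = x_3\,\phi(x_1, x_2) + x_3^3\,\rho(x_1, x_2) + O(x_3^5),
\end{equation*}
with $\phi = -(-\Delta)^{1/2}\vp$ on $\text{int}(B^c(0,1))$. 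Differentiating this expansion and substituting into $H(v^{(\eps)}) = \det\!\bigl(u_{ij} + \eps\,\psi_{ij}\bigr)$, where $\psi_{11} = \psi_{22} = -1$, $\psi_{33} = 2$ and the other $\psi_{ij}$ vanish, a routine expansion gives
\begin{equation*}
H(v^{(\eps)})(x) = \eps\,A(x_1,x_2) + x_3\,G(x_1,x_2) + R, \qquad |R| \le C_{M,h_1}\bigl(x_3^3 + \eps\,x_3^2 + \eps^2 x_3 + \eps^3\bigr),
\end{equation*}
with $A = \phi_1^2 + \phi_2^2$ and $G = 2\phi_{12}\phi_1\phi_2 - \phi_{11}\phi_2^2 - \phi_{22}\phi_1^2$.

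The radial symmetry $\phi(x_1, x_2) = \Phi(r)$ reduces these leading coefficients to $A = \Phi'(r)^2$ and $G = -\Phi'(r)^3/r$ by a direct polar-coordinate computation. Differentiating the explicit formula $\Phi(r) = c\int_{B(0,1)} \vp(y)\,|y - (r,0)|^{-3}\,dy$ produces
\begin{equation*}
\Phi'(r) = -3c \int_{B(0,1)} \vp(y)\,(r - y_1)\,|y - (r,0)|^{-5}\,dy < 0 \qquad \text{for every } r > 1,
\end{equation*}
since the integrand is non-negative (because $r - y_1 \ge r - 1 > 0$ for $y \in \overline{B(0,1)}$) and strictly positive on $\{\vp > 0\}$. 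By continuity, $|\Phi'(r)| \ge c_0 = c_0(M, h_1) > 0$ on $[1+h_1, M]$, so $A \ge c_0^2$ and $G \ge c_0^3/M$ throughout $K$.

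Finally I pick $\eta = \eta(M) \in (0, 1/2]$ small enough that the elementary estimates $x_3^3 \le \eta^2 x_3$, $\eps\,x_3^2 \le \eta\,\eps x_3 \le \tfrac{\eta}{2}(c_1 M^{-7}\eps + \eta x_3)$, $\eps^2 x_3 \le c_1 M^{-7}\eps x_3 \le \tfrac{c_1 M^{-7}}{2}(c_1 M^{-7}\eps + \eta x_3)$, and $\eps^3 \le c_1^2 M^{-14}\eps$ combine to force $|R| \le \tfrac{1}{2}(\eps A + x_3 G)$ uniformly in $\eps \in (0, c_1 M^{-7}]$ and $x_3 \in (0, \eta]$; because $c_1, h_1$ are already fixed and $c_0, C_{M,h_1}$ depend only on $M$ and $h_1$, the required smallness of $\eta$ depends only on $M$. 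Then $H(v^{(\eps)})(x) \ge \tfrac{1}{2}(\eps A + x_3 G) > 0$ on $U_3(M, h_1, \eta)$. The main subtleties are the Schwarz-reflection-based Taylor expansion with uniform remainder on $K$ and the strict negativity of $\Phi'$ on $(1, \infty)$; beyond these, the argument is a routine matching of leading terms against error bounds.
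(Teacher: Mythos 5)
Your approach to the $U_3$ part is genuinely different from the paper's. The paper works on the ray $\{x_2=0\}$, where rotational symmetry forces $v_{12}^{(\eps)}=v_{23}^{(\eps)}=0$, and exploits the factorization $H(v^{(\eps)})=v_{22}^{(\eps)}\bigl(v_{11}^{(\eps)}v_{33}^{(\eps)}-(v_{13}^{(\eps)})^2\bigr)$: the first factor is negative trivially since $u_{22}<0$ and $\eps>0$, and the second is negative because $|u_{13}|^2$ is bounded below by a kernel-sign argument (the same fact as your $\Phi'(r)<0$) while $v_{11}^{(\eps)}v_{33}^{(\eps)}$ is forced small once $x_3$ and $\eps$ are small. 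No Taylor expansion in $x_3$ is needed. You instead do an odd Schwarz reflection across $D^c\times\{0\}$, expand $H(v^{(\eps)})$ around $(\eps,x_3)=(0,0)$, and identify the leading terms $\eps A+x_3 G$, with the nice closed forms $A=\Phi'^2$, $G=-\Phi'^3/r$; the radial reduction and the identity $G=-\Phi'^3/r$ are correct and elegant, and both your lower bound $\Phi'<0$ and the paper's lower bound on $|u_{13}|$ express the same sign fact about the Poisson kernel.

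There is, however, a gap in the remainder step. You assert $|R|\le C_{M,h_1}(x_3^3+\eps x_3^2+\eps^2 x_3+\eps^3)$ and then claim that $|R|\le\tfrac12(\eps A+x_3 G)$ can be arranged for all $\eps\in(0,c_1M^{-7}]$ by shrinking $\eta$ alone. That does not cover the pieces that survive at $x_3=0$: after your substitutions the $\eps^3$ bound and the $\eps$-linear half of the $\eps^2 x_3$ bound leave a requirement of the form $C_{M,h_1}\,c_1^2 M^{-14}\lesssim A_{\min}(M)$, which involves neither $\eta$ nor $x_3$ and is not automatic for the $c_1$ and $M_0$ you adopt from Proposition \ref{vepsilon}. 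It is true (one checks that $A_{\min}(M)\approx\Phi'(M)^2\approx M^{-8}$ dominates $M^{-14}$), so it can be arranged by enlarging $M_0$ or shrinking $c_1$ — both harmless for the $U_1\cup U_2$ argument — but this must be said; the assertion that the required smallness "depends only on $M$" and is supplied by $\eta$ is not correct as written. A cleaner route, which also simplifies the bookkeeping, is to note that $H(v^{(\eps)})(x_1,x_2,0)=\eps A+2\eps^3$ exactly, so the $\eps^3$ piece of $R$ is $+2\eps^3\ge 0$ and need not be estimated at all; only the mixed terms ($\eps^2 x_3$, $\eps x_3^2$, $x_3^3$, and higher) remain, and for those an $\eta=\eta(M)$ argument with one weighted AM--GM genuinely suffices.
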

\begin{remark}
It is important here that $\eta$ does not depend on $\eps$.
\end{remark}
\begin{proof}
Existence of $c_1$, $M_0$, $h_1$ and the estimate  $H(v^{(\eps)})(x) >0$ for $x \in U_{1}(M) \cup U_2(h_1)$ (where $M \ge M_0$, $\eps \in (0,c_1 M^{-7}]$) follow from the arguments from the proof of Proposition \ref{vepsilon}. 

Let $\eps \in (0,1]$. Fix $M \ge M_0$ and let $x \in U_3(M,h_1,1/2)$. We may assume that $x_2 =0$, $x_3 > 0$, $x_1 < 0$. We have $H(v^{(\eps)})(x) = v_{22}^{(\eps)}(x) f(\eps,x)$, where
$f(\eps,x) = v_{11}^{(\eps)}(x)v_{33}^{(\eps)}(x)-(v_{13}^{(\eps)}(x))^2$. We have $u_{22}(x) < 0$ so $v_{22}^{(\eps)}(x) = u_{22}(x) - \eps < 0$. We also have $|u_{11}(x)| \le c x_3 h_1^{-5}$, $|u_{33}(x)| \le c x_3 h_1^{-5}$ which gives
$$
v_{11}^{(\eps)}(x)v_{33}^{(\eps)}(x) = (u_{11}(x) - \eps) (u_{33}(x) + 2\eps)
< c x_3 h_1^{-10} + c x_3 h_1^{-5}.
$$
Let us additionally assume that $x_3$ is sufficiently small so that $x_3 \le \frac{|x_1| - 1}{\sqrt{6}}$. For such $x$ by the arguments from the proof of Proposition \ref{vepsilon} we have $|u_{13}(x)| \ge c h_1^3 M^{-7}$ so $|v_{13}^{(\eps)}(x)|^2 = |u_{13}(x)|^2 \ge c h_1^6 M^{-14}$. 
Hence for sufficiently small $\eta = \eta(M)$ and $x \in U_3(M,h_1,\eta)$ we have $f(\eps,x) < 0$, which implies $H(v^{(\eps)})(x) > 0$.
\end{proof}

\begin{proposition}
\label{ballepsilon}
Let $\vp$ be the solution of (\ref{maineq1}-\ref{maineq2}) for $B(0,1)$, $u$ the harmonic extension of $\vp$ and $v^{(\eps)}$ given by (\ref{ve}). For $M \ge 10$ put
$$
\Omega_M = \{x \in \R^3: \, x_1^2 + x_2^2 \le M^2, x_3 \in [-M,M]\} 
\setminus \{x \in \R^3: \, x_1^2 + x_2^2 \in [1,M^2], x_3 = 0\}. 
$$
Let $c_1$ and $M_0$ be the constants from Lemma \ref{ball1}. Then we have
$$
\forall M \ge M_0 \,\, \forall \eps \in (0,c_1 M^{-7}] \, \, \forall x \in \Omega_M \quad \quad H(v^{(\eps)})(x) > 0.
$$
\end{proposition}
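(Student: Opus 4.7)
The plan is to argue by contradiction using Theorem \ref{HL}, interpolating between $u$ — for which $H(u) > 0$ throughout $\R^3 \setminus (B^c(0,1) \times \{0\})$ by Proposition \ref{extensionball} — and the target $v^{(\eps)}$. Suppose for some $x_0 \in \Omega_M$ we have $H(v^{(\eps)})(x_0) \le 0$. By Lemma \ref{lowerhalfspace1} we may, after reflecting, assume $x_0 \in \{x_3 \ge 0\}$; Lemma \ref{ball1} applied at the chosen $\eps$ then forces $x_0$ to lie outside the shell $U_1(M) \cup U_2(h_1) \cup U_3(M, h_1, \eta)$, and hence in the topological interior of $\Omega_M$ in $\R^3$, strictly separated from both the outer cylinder boundary and the removed annulus.

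I introduce the interpolating family
$$
w^{(t)}(x) = u(x) + t\left(-\tfrac{x_1^2}{2} - \tfrac{x_2^2}{2} + x_3^2\right), \qquad t \in [0, \eps],
$$
so that $w^{(0)} = u$, $w^{(\eps)} = v^{(\eps)}$, and every $w^{(t)}$ is harmonic on $\R^3 \setminus (B^c(0,1) \times \{0\})$. The critical uniform fact is that for every $t \in [0, \eps]$ — using Proposition \ref{extensionball} at $t = 0$ and Lemma \ref{ball1} for $t > 0$ — one has $H(w^{(t)}) > 0$ on the entire shell $U_1(M) \cup U_2(h_1) \cup U_3(M, h_1, \eta)$, and crucially $\eta$ is independent of $t$, so the shell is fixed throughout the interpolation. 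Define
$$
t^* = \sup\{\tau \in [0, \eps] : H(w^{(t)}) > 0 \text{ on } \Omega_M \text{ for every } t \in [0, \tau]\}.
$$
Proposition \ref{extensionball} gives $t^* > 0$, while the contradiction hypothesis forces $t^* < \eps$. A standard compactness argument — restricting to a suitably chosen compact subset $K \subset \Omega_M$ whose complement in $\Omega_M$ lies in the shell, then using joint continuity of $(t,x) \mapsto H(w^{(t)})(x)$ — produces a point $y^* \in K$ with $H(w^{(t^*)})(y^*) = 0$ and $H(w^{(t^*)}) \ge 0$ throughout $\Omega_M$. Lemma \ref{ball1} ensures $y^*$ is not in the shell, placing it in the topological interior of $\Omega_M$.

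The contradiction is then extracted via Theorem \ref{HL}. Because $y^*$ lies in the topological interior of $\Omega_M$, $w^{(t^*)}$ is harmonic on a neighborhood of $y^*$ in $\R^3$. Hans Lewy's dichotomy leaves two options: either $H(w^{(t^*)})$ assumes both signs near $y^*$, contradicting $H(w^{(t^*)}) \ge 0$ on the neighborhood; or $H(w^{(t^*)})$ vanishes identically in a neighborhood of $y^*$. In the latter case, real analyticity of the harmonic function $w^{(t^*)}$, together with connectedness of the open set $\R^3 \setminus (B^c(0,1) \times \{0\})$ (removing a $2$-dimensional subset does not disconnect $\R^3$), propagates $H(w^{(t^*)}) \equiv 0$ to this entire open set. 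But then $H(w^{(t^*)})$ vanishes on the shell $U_1(M) \cup U_2(h_1) \cup U_3(M, h_1, \eta)$, contradicting strict positivity there. The principal technical point — and what makes the continuity argument viable — is the $\eps$-independence of $\eta$ in Lemma \ref{ball1}: it guarantees a fixed safety shell of positivity across the entire interpolation, confining the eventual zero of $H(w^{(t^*)})$ safely into the topological interior of $\Omega_M$ where Hans Lewy's theorem can be applied.
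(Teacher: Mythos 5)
Your proposal is correct and is essentially the same argument as the paper's: both interpolate linearly in the auxiliary-function coefficient from $0$ up to $\eps$, use Proposition \ref{extensionball} at the endpoint $0$ and Lemma \ref{ball1} (with its $\eps$-independent $\eta$) to maintain a fixed shell of strict positivity throughout the interpolation, and then extract a first critical parameter with an interior zero of the Hessian to contradict Theorem \ref{HL}. You spell out the compactness argument producing $(t^*, y^*)$ and the ``vanishing identically'' branch of Lewy's dichotomy (ruled out by strict positivity on the shell plus real analyticity and connectedness), which the paper leaves implicit, but the underlying mechanism is the same.
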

\begin{proof}
On the contrary assume that there exists $M_1 \ge M_0$, $\eps_1 \in (0,c_1 M_1^{-7}]$, $z \in \Omega_{M_1}$ such that 
$
H(v^{(\eps_1)})(z) \le 0
$.
By Lemma \ref{ball1} there exists $h_1 \in (0,1/2]$ and $\eta_1 = \eta_1(M_1) \in (0,1/2]$ such that $\forall \eps \in (0,c_1 M_1^{-7}]$, $\forall x \in U_{1}(M_1) \cup U_2(h_1) \cup U_3(M_1,h_1,\eta_1)$ $H(v^{(\eps)})(x) > 0$. 

Note that by $v^{(0)} = u$ and by Proposition \ref{extensionball} we have $H(v^{(0)})(x) > 0$ for all $x \in \Omega_{M_1}$. It follows that there exists $\eps_2 \in (0,\eps_1]$ and $\tilde{z} \in \Omega_{M_1} \setminus (U_{1}(M_1) \cup U_2(h_1) \cup U_3(M_1,h_1,\eta_1))$ such that $H(v^{(\eps_2)})(\tilde{z}) = 0$ and $H(v^{(\eps_2)})(x) \ge 0$ for all $x \in \Omega_{M_1}$. This gives contradiction with Theorem \ref{HL}.
\end{proof}

As a direct conlusion of Propositions \ref{vepsilon} and \ref{ballepsilon} we obtain
\begin{corollary}
\label{Qepsilon}
Fix $C_1 > 0$, $R_1 > 0$, $\kappa_2 \ge \kappa_1 > 0$ and $D \in F(C_1,R_1,\kappa_1,\kappa_2)$. Denote $\Lambda = \{C_1,R_1,\kappa_1,\kappa_1\}$. Let $\vp^{(D)}$ be the solution of (\ref{maineq1}-\ref{maineq2}) for $D$, $u^{(D)}$ the harmonic extension of $\vp^{(D)}$ given by (\ref{ext1}-\ref{ext2}) and $v^{(\eps,D)}$ given by (\ref{ve}).
Then we have
\begin{eqnarray*}
&&\exists c_1 = c_1(\Lambda) \in (0,1] \,\, \exists M_0 \ge 10 \,\, \exists h_1 = h_1(\Lambda) \in (0,1/2] \,\, \forall M \ge M_0 \, \, \forall \eps \in (0,c_1 M^{-7}]\\
&& \exists \eta = \eta(\Lambda,M,\eps) \in (0,(1/2) \wedge \eps] \,\, \exists c_2 = c_2(\Lambda,M,\eps) > 0 \\
&& \forall x \in Q(M,D,\eps) \quad \quad  H(v^{(\eps,D)})(x) \ge c_2,\\
&& \forall x \in \Omega(M,B(0,1),\eps) \quad \quad  H(v^{(\eps,B(0,1))})(x) \ge c_2,
\end{eqnarray*}
where $Q(M,D,\eps) = Q_1(M) \cup Q_2(M,D,\eps) \cup Q_3(M,D,\eps)$,
\begin{eqnarray*}
Q_1(M) 
&=&
\{x \in \R^3: \, x_1^2 + x_2^2 \le M^2, x_3 = M \,\, \text{or} \,\, x_3 = -M\}\\
&& 
\cup \{x \in \R^3: \, x_1^2 + x_2^2 = M^2, x_3 \in [-M,M] \setminus \{0\}\},\\
Q_2(M,D,\eps) 
&=&
\{x \in \R^3:\, (x_1,x_2) \in D, \delta_D((x_1,x_2)) \le h_1, x_3 \in [-\eta,\eta]\},\\
Q_3(M,D,\eps) 
&=&
\{x \in \R^3:\, (x_1,x_2) \in D^c, x_1^2 + x_2^2 \le M^2, x_3 \in [-\eta,\eta] \setminus \{0\}\},\\
\Omega'(M,D,\eps)
&=&
\{x \in \R^3: \, x_1^2 + x_2^2 \le M^2, x_3 \in [-M,M]\}\\
&& \quad \quad \quad \quad \quad \quad \quad \quad
\setminus \overline{(Q_2(M,D,\eps) \cup Q_3(M,D,\eps))},\\
\Omega(M,D,\eps)
&=&\overline{\Omega'(M,D,\eps)}.
\end{eqnarray*}
\end{corollary}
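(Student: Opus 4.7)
The corollary packages Propositions \ref{vepsilon} and \ref{ballepsilon} into the form needed for the method of continuity in Section 6, so the proof amounts to combining them and verifying a few set inclusions. I would first apply Proposition \ref{vepsilon} to the given $D \in F(\Lambda)$ to obtain the constants $c_1 = c_1(\Lambda) \in (0,1]$, $M_0 \ge 10$, $h_1 = h_1(\Lambda) \in (0,1/2]$ demanded by the statement; and, for each $M \ge M_0$ and $\eps \in (0, c_1 M^{-7}]$, an auxiliary $\eta_0 = \eta_0(\Lambda, M, \eps) \in (0, 1/2]$ and a positive constant $C' = C'(\Lambda, M, \eps)$ such that $H(v^{(\eps, D)}) \ge C'$ on $U_1(M) \cup U_2(h_1) \cup U_3(M, h_1, \eta_0)$.

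For the first bound I would set $\eta := \eta_0 \wedge \eps \wedge h_1$, which satisfies $\eta \in (0, (1/2) \wedge \eps]$ as required, and then verify the inclusion
\[
Q(M, D, \eps) = Q_1(M) \cup Q_2(M, D, \eps) \cup Q_3(M, D, \eps) \subset U_1(M) \cup U_2(h_1) \cup U_3(M, h_1, \eta).
\]
Here $Q_1(M) = U_1(M)$ holds by inspection; $Q_2(M, D, \eps)$ agrees with the interior half of $U_2(h_1)$ after shrinking the $x_3$-slab from $[-h_1, h_1]$ to $[-\eta, \eta]$; and $Q_3(M, D, \eps)$ splits into the exterior portion with $\delta_D \le h_1$, which lies in $U_2(h_1)$, and the portion with $\delta_D \ge h_1$, which is contained in $U_3(M, h_1, \eta)$. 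This yields the first inequality with constant $C'$.

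For the second bound Proposition \ref{ballepsilon} gives $H(v^{(\eps, B(0,1))})(x) > 0$ for every $x \in \Omega_M$, where $\Omega_M$ is the cylinder minus the forbidden annulus $A := \{x_1^2 + x_2^2 \in [1, M^2], x_3 = 0\}$. I would then show that $\Omega(M, B(0,1), \eps)$ is a compact subset of $\Omega_M$. Boundedness and closedness follow at once from the definition via $\overline{\Omega'}$. For the containment $\Omega \subset \Omega_M$ it suffices to check that no point of $A$ lies in $\overline{\Omega'}$. For $B(0,1)$ the set $\overline{Q_2 \cup Q_3}$ equals the closed slab $\{(1-h_1)^2 \le x_1^2 + x_2^2 \le M^2, \, x_3 \in [-\eta, \eta]\}$, so each $p \in A$ is interior to this slab within the cylinder; any sequence in $\Omega'$ converging to $p$ would need either $x_1^2 + x_2^2 < (1-h_1)^2$ or $|x_3| > \eta$ along the sequence, both inconsistent with convergence to $p$. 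Hence $A \cap \overline{\Omega'} = \emptyset$. Since $v^{(\eps, B(0,1))}$ is smooth on $\R^3 \setminus (B^c(0,1) \times \{0\}) \supset \Omega_M$, the Hessian determinant is continuous and strictly positive on the compact set $\Omega(M, B(0,1), \eps)$, hence attains a positive minimum $C''$. Taking $c_2 := C' \wedge C''$ gives both inequalities uniformly, with the same $\eta$.

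No genuine obstacle arises here; the analytic content is entirely inside Propositions \ref{vepsilon} and \ref{ballepsilon}. The only item requiring a moment's care is the topological verification that taking closures in the definition of $\Omega$ does not reintroduce points of $A$, for $A$ is precisely where Proposition \ref{ballepsilon} offers no control.
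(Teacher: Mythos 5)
Your proposal is correct and takes the same route as the paper, which presents the corollary as a direct consequence of Propositions \ref{vepsilon} and \ref{ballepsilon} without spelling out the bookkeeping. The set inclusions $Q_1(M)=U_1(M)$, $Q_2\subset U_2(h_1)$, $Q_3\subset U_2(h_1)\cup U_3(M,h_1,\eta)$ after shrinking $\eta$ to $\eta_0\wedge\eps\wedge h_1$, together with the compactness argument showing that $\Omega(M,B(0,1),\eps)$ is a compact subset of $\Omega_M$ on which the continuous Hessian determinant attains a positive minimum, are precisely the details the paper leaves implicit.
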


\begin{proof}[proof of Theorem \ref{mainthm}] 

$\\${\bf{Step 1.}} 

In this step we will use the notation from Corollary \ref{Qepsilon}. We will show that for any $\Lambda = \{C_1,R_1,\kappa_1,\kappa_2\}$, $D \in F(\Lambda)$ and $x \in \R^3 \setminus (D^c \times \{0\})$ we have 
$H(u^{(D)})(x) > 0$.

Fix $\Lambda = \{C_1,R_1,\kappa_1,\kappa_2\}$ where $C_1 > 0$, $R_1 > 0$, $\kappa_2 \ge \kappa_1 > 0$ and fix $D_0 \in F(\Lambda)$. Let $\{D(t)\}_{t \in [0,1]}$, $D(0) = D_0$, $D(1) = B(0,1)$ be the family of domains defined by (\ref{construction}). By Lemma \ref{deformation} there exists $\Lambda' = \{C'_1,R'_1,\kappa'_1,\kappa'_2\}$ where $C'_1 > 0$, $R'_1 > 0$, $\kappa'_2 \ge \kappa'_1 > 0$ such that $\forall t \in [0,1]$ $D(t) \in F(\Lambda')$. Note that $H(u^{(D_0)})(x)$ does not vanish identically in $\R^3 \setminus (D_0^c \times \{0\})$ because it does not vanish near $\partial D_0 \times \{0\}$. 

On the contrary assume that there exists $x_0 \in \R^3 \setminus (D_0^c \times \{0\})$ such that $H(u^{(D_0)})(x_0) \le 0$. If $H(u^{(D_0)})(x_0) = 0$ and $\forall x \in \R^3 \setminus (D_0^c \times \{0\})$ $H(u^{(D_0)})(x) \ge 0$ then we get contradiction with Theorem \ref{HL}. So we may assume that $H(u^{(D_0)})(x_0) < 0$.

By  Corollary \ref{Qepsilon} applied to $\Lambda' = \{C'_1,R'_1,\kappa'_1,\kappa'_1\}$ there exist $M \ge M_0 \ge 10$, $\eps \in (0,c_1 M^{-7}]$ such that $x_0 \in \Omega(M,D_0,\eps)$ and $H(v^{(\eps,D_0)})(x_0) < 0$. Let us fix such $M$ and $\eps$. By Corollary \ref{Qepsilon} $\forall t \in [0,1]$ $\forall x \in Q(M,D(t),\eps)$   $H(v^{(\eps,t)})(x) \ge c = c(\Lambda',M,\eps) > 0$, where $v^{(\eps,t)} = v^{(\eps,D(t))}$.

By the construction in Lemma \ref{deformation} there exist $n \in \N$ and $0 = t(0) < t(1) < \ldots < t(n) = 1$ such that $\forall i \in \{0,\ldots,n-1\}$ $\forall t \in [t(i),t(i+1)]$
\begin{equation}
\label{smallh13}
d(D(t(i)),D(t)) < h_1/3,
\end{equation}
where $d(D_1,D_2) = [\sup_{x \in \partial D_1} \dist (x,\partial D_2)] \wedge [\sup_{x \in \partial D_2} \dist (x,\partial D_1)]$ and $h_1 = h_1(\Lambda')$ is the constant from Corollary \ref{Qepsilon}. Let us note that by our assumption 
$\inf\{H(v^{(\eps,t(0))})(x): \, x \in \Omega(M,D(t(0)),\eps)\} < 0$. 
By Corollary \ref{Qepsilon}
$\inf\{H(v^{(\eps,t(n))})(x): \, x \in \Omega(M,D(t(n)),\eps)\} > 0$.
Hence there exists $j \in \{0,\ldots,n-1\}$ such that 
$\inf\{H(v^{(\eps,t(j))})(x): \, x \in \Omega(M,D(t(j)),\eps)\} < 0$
and
$\inf\{H(v^{(\eps,t(j + 1))})(x): \, x \in \Omega(M,D(t(j + 1)),\eps)\} \ge 0$. 
Let us fix such $j$.

Let us define
\begin{eqnarray*}
A &=&
\{x \in \R^3: \, x_1^2 + x_2^2 \le M^2, x_3 \in [-M,M]\}\\
&& \quad \setminus \{x \in \R^3: \, \dist((x_1,x_2),D^c(t(j))) < 2h_1/3, x_3 \in (-\eta,\eta)\},\\
P_1 &=& \{ x \in Q_1(M): \, |x_3| \ge \eta\},\\ 
P_2 &=& \{x \in \R^3: \, \dist((x_1,x_2),D^c(t(j))) < 2h_1/3, x_1^2 + x_2^2 \le M^2, x_3 = -\eta \,\, \text{or} \,\, \eta\},\\
P_3 &=& \{x \in \R^3: \, (x_1,x_2) \in D(t(j)), \dist((x_1,x_2),\partial D(t(j))) = 2h_1/3, x_3 \in [-\eta,\eta]\}.
\end{eqnarray*}
Note that $\partial A = P_1 \cup P_2 \cup P_3$. By (\ref{smallh13}) (applied for $i = j$) for any $t \in [t(j),t(j+1)]$ we have $\Omega(M,D(t),\eps) \subset A$, $P_2 \cup P_3 \subset Q_2(M,D(t),\eps) \cup Q_3(M,D(t),\eps)$  so $\partial A \subset Q(M,D(t),\eps)$. By Corollary \ref{Qepsilon} for any $t \in [t(j),t(j+1)]$ and $x \in \partial A$ we have $H(v^{(\eps,t)})(x) \ge c$, where $c = c(\Lambda',M,\eps)$.

Now we will justify uniform Lipschitz property of $v_{ik}^{(\eps,t)}$. Note that $v^{(\eps,t)}$ are harmonic on $\R^3 \setminus (D^c(t) \times \{0\})$. Note also that for any $t \in [t(j),t(j+1)]$ $\dist(D^c(t) \times \{0\},A) \ge h_1/3 \wedge \eta$. This implies that for $t \in [t(j),t(j+1)]$ all second derivatives $v_{ik}^{(\eps,t)}$ are uniformly Lipschitz continuous on $A$. That is there exists $c > 0$ such that for any $t \in [t(j),t(j+1)]$, $x,y \in A$, $i,k \in \{1,2,3\}$ we have
\begin{equation}
\label{Lipcont}
\left|v_{ik}^{(\eps,t)}(x) - v_{ik}^{(\eps,t)}(y)\right| \le c |x - y|.
\end{equation}

Now we will show that if $[t(j),t(j+1)] \ni t \to s$ then for any $x \in A$
\begin{equation}
\label{Hessconv}
H(v^{(\eps,t)})(x) \to H(v^{(\eps,s)})(x).
\end{equation}
Denote $\vp^{(t)} = \vp^{(D(t))}$, $u^{(t)} = u^{(D(t))}$. By Lemma \ref{phiconv} for any $y \in D(s)$ if $t \to s$ then $\vp^{(t)}(y) \to \vp^{(s)}(y)$. If $x \in A$ and $x_3 > 0$ then we have 
$u_{ik}^{(t)}(x) = 
\int_{\R^2} K_{ik}(x_1-y_1,x_2-y_2,x_3) \vp^{(t)}(y_1,y_2) \, dy_1 \, dy_2$
which implies (\ref{Hessconv}). Using this and Lemma \ref{lowerhalfspace1} we get (\ref{Hessconv}) for $x \in A$ with $x_3 < 0$. (\ref{Hessconv}) for $x \in A$ with $x_3 = 0$ follows from (\ref{Lipcont}).

By the fact that $\Omega(M,D(t),\eps) \subset A$ for $t \in [t(j),t(j+1)]$ and our assumptions on $j$ we have
$\inf\{H(v^{(\eps,t(j))})(x): \, x \in A\} < 0$ and 
$\inf\{H(v^{(\eps,t(j+1))})(x): \, x \in A\} \ge 0$. Put 
$$
s = \inf\{t \in [t(j),t(j+1)]: \, \exists x \in A \,\,\,\, H(v^{(\eps,t)})(x)< 0\}.
$$
There exist a sequence $\{s(n)\}_{n = 1}^{\infty} \subset [t(j),t(j+1)]$ and $\{x(n)\}_{n = 1}^{\infty} \subset A$ such that 
$$
H(v^{(\eps,s(n))})(x(n))< 0
$$
and $s(n) \to s$. Since $A$ is compact we may assume that $x(n) \to z \in A$. By pointwise convergence and uniform Lipschitz continuity 
$H(v^{(\eps,s)})(z) = 0$. Since for any $x \in \partial A$ $H(v^{(\eps,s)})(x) > 0$ we have $z \in \text{int}A$. On the other hand, by pointwise convergence, we have $H(v^{(\eps,s)})(x) \ge 0$ for any $x \in A$. This gives contradiction with Theorem \ref{HL}.

\vskip 5pt
{\bf{Step 2.}}

By $\text{sign}(\text{Hess}(u(y)))$ we denote a signature of the Hessian matrix of $u(y)$. In this step we will show that for arbitrary $\Lambda = \{C_1,R_1,\kappa_1,\kappa_1\}$, $D \in F(\Lambda)$ and $y \in \R^3 \setminus (D^c \times \{0\})$ we have $\text{sign}(\text{Hess}(u(y))) = (1,2)$ and $\vp$ is strictly concave on $D$.

Fix $\Lambda = \{C_1,R_1,\kappa_1,\kappa_1\}$ where $C_1 > 0$, $R_1 > 0$, $\kappa_2 \ge \kappa_1 > 0$ and fix $D \in F(\Lambda)$. Let $\vp$ be the solution of (\ref{maineq1}-\ref{maineq2}) for $D$, $u$ the harmonic extension of $\vp$. Let $(x_1,x_2) \in D$, put $x = (x_1,x_2,0)$. Denote $f(x) = u_{11}(x) u_{22}(x) - u_{12}^2(x)$. By Lemma \ref{onD} $u_{13}(x) = u_{23}(x) = 0$, $u_{33}(x) > 0$. By Step 1 $H(u)(x) > 0$. Hence $f(x) > 0$. We have $u_{11}(x) + u_{22}(x) + u_{33}(x) = 0$ so $u_{11}(x) + u_{22}(x) < 0$. This and $f(x) > 0$ implies that $u_{11}(x) < 0$, $u_{22}(x) < 0$. Hence $\text{sign}(\text{Hess}(u(x))) = (1,2)$. Since $H(u)(y) > 0$ for any $y \in \R^3 \setminus (D^c \times \{0\})$ we get $\text{sign}(\text{Hess}(u(y))) = (1,2)$. 

Inequalities $f(x) > 0$, $u_{11}(x) < 0$, $u_{22}(x) < 0$ give that $\vp(x_1,x_2) = u(x_1,x_2,0)$ is strictly concave on $D$.

\vskip 5pt
{\bf{Step 3.}}

In this step we will show that for any open bounded convex set $D \subset \R^2$ $\vp$ is concave on $D$.

Fix an open bounded convex set $D \subset B(0,1) \subset \R^2$. It is well known (see e.g. \cite[page 451]{CF1985}) that there exists a sequence of sets $D_n$ such that $D_n \in F(\Lambda_n)$ for some $\Lambda_n = \{C_{1,n},R_{1,n},\kappa_{1,n},\kappa_{2,n}\}$ and $\bigcup_{n = 1}^{\infty} D_n = D$, $D_n \subset D_{n+1}$, $n \in \N$, $d(D_n,D) \to 0$ as $n \to \infty$ (where $C_{1,n} > 0$, $R_{1,n} > 0$, $\kappa_{2,n} \ge \kappa_{1,n} > 0$). Let $\vp^{(n)}$, $\vp$ denote solutions of (\ref{maineq1}-\ref{maineq2}) for $D_n$ and $D$. By Step 2 $\vp^{(n)}$ are concave on $D_n$. By Lemma \ref{phiconv} we have $\lim_{n \to \infty} \vp^{(n)}(x) = \vp(x)$ for $x \in D$. So $\vp$ is concave on $D$.

By scaling we may relax the assumption $D \subset B(0,1)$.
\end{proof}

\section{Extensions and conjectures}

\begin{proof}[proof of Theorem \ref{generalthm}]

a) It is well known that if $\psi_r(x) = \psi(rx)$, for some $r > 0$ and all $x \in \R^d$ then $(-\Delta)^{\alpha/2} \psi_r(x) = r^{\alpha} (-\Delta)^{\alpha/2} \psi(rx)$ (see e.g. \cite[page 9]{book2009}).
Fix $x_0 \in \partial D$ and $\lambda \in (0,1)$. Put $f(x) = \vp(\lambda x + (1 - \lambda) x_0) - \lambda^{\alpha} \vp(x)$. We have $(-\Delta)^{\alpha/2} f(x) = 0$ for $x \in D$ and $f(x) \ge 0$ for $x \in D^c$. Hence $f(x) \ge 0$ for $x \in D$.

b) Fix $x, y \in D$ and $\lambda \in (0,1)$. Put $z = \lambda x + (1-\lambda) y$. Let $l$ be the line which contains $x$ and $y$. Let $x_0 \in \partial D$ be the point on $l$ which is closer to $x$ than to $y$ and $y_0 \in \partial D$ be the point on $l$ which is closer to $y$ than to $x$. We have
$$
z = y \frac{|z - x_0|}{|y - x_0|} + x_0 \left(1 - \frac{|z - x_0|}{|y - x_0|}\right).
$$
By a) we get
$$
\vp(z) \ge \left(\frac{|z - x_0|}{|y - x_0|}\right)^{\alpha} \vp(y) 
\ge \left(\frac{|z - x|}{|y - x|}\right)^{\alpha} \vp(y) 
= (1 - \lambda)^{\alpha} \vp(y).
$$
We also have
$$
z = x \frac{|z - y_0|}{|x - y_0|} + y_0 \left(1 - \frac{|z - y_0|}{|x - y_0|}\right).
$$
Again by a) we get
$$
\vp(z) \ge \left(\frac{|z - y_0|}{|x - y_0|}\right)^{\alpha} \vp(x) 
\ge \left(\frac{|z - y|}{|x - y|}\right)^{\alpha} \vp(x) 
= \lambda^{\alpha} \vp(x).
$$
\end{proof}

Now we present some conjectures concerning solutions of (\ref{aeq1}-\ref{aeq2}).
\begin{conjecture}
\label{alpha1}
Let $\alpha = 1$, $d \ge 3$. If $D \subset \R^d$ is an arbitrary bounded convex set then the solution of (\ref{aeq1}-\ref{aeq2}) is concave on $D$.
\end{conjecture}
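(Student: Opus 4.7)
The plan is to mirror the strategy used for Theorem \ref{mainthm}, with the Gleason--Wolff theorem playing the role that Hans Lewy's Theorem \ref{HL} played in dimension two. Let $\vp = \vp^{(D)}$ solve (\ref{aeq1}-\ref{aeq2}) for $\alpha = 1$ on a convex bounded $D \subset \R^d$ ($d \ge 3$), and let $u = u^{(D)}$ be its harmonic extension to $\R^{d+1}_+$ via the Poisson kernel of the upper half-space, extended to $\R^{d+1}_-$ by the reflection formula $u(x',x_{d+1}) = u(x',-x_{d+1}) - 2x_{d+1}$ for $x_{d+1} < 0$, just as in (\ref{ext2}). Then $u$ is harmonic on $\R^{d+1} \setminus (D^c \times \{0\})$. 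As a first reduction I would aim to show that the Hessian of $u$ has constant signature $(1,d)$ on this set: then, because $u_{i,d+1}(x',0) = 0$ for $i \le d$ and $u_{d+1,d+1}(x',0) > 0$ for $x' \in D$ (the $d$-dimensional analogue of Lemma \ref{onD} follows by the normal-derivative lemma together with Corollary \ref{DeltaSqrt1} generalised in the obvious way), the remaining $d \times d$ block must be negative definite, giving strict concavity of $\vp$ on $D$.

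The first phase would generalise the boundary analysis of Sections~3 and~4. Fixing a point $x_0 \in \partial D$ and flattening the boundary locally by a $C^{2,1}$ chart, one obtains sharp estimates of $\vp_{ij}$ in terms of $\delta_D$: $\vp_{\vec n \vec n} \approx -\delta_D^{-3/2}$, $\vp_{\vec T \vec T} \approx -\delta_D^{-1/2}$, and $|\vp_{\vec n \vec T}| \le c\delta_D^{-1/2}|\log \delta_D|$ for any tangential direction $\vec T$. The one-dimensional integral computations used in Lemmas~\ref{phi22}--\ref{phi12} carry over: the tangential direction $y_2$ there is replaced by tangential coordinates $(y_2,\dots,y_d)$, and the key inner integrals reduce to the same type $\int (x-a)^{-1/2}(b-x)^{-1/2} dx$ after integrating radially in the tangent hyperplane. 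From these, the estimates on $u_{ij}$ near $\partial D \times \{0\}$ (Proposition~\ref{Hessianboundary}) extend with the expected form: the normal-normal and $(d+1,d+1)$-directions behave like $h^{-3/2}$, the tangential-tangential directions like $-h^{-1/2}$, and the mixed tangential terms are subleading.

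The second phase would establish the positivity of the appropriate principal minors of the Hessian of $u^{(B(0,1))}$ for the unit ball, using the explicit formula $\vp^{(B(0,1))}(x) = C_B(1-|x|^2)^{1/2}$. Here I would again introduce an auxiliary harmonic function $w(x) = K(x',x_{d+1}+q)$ with $q$ chosen so that $w_{d+1,d+1}$ vanishes on $\partial B \times \{0\}$; the proof of Proposition~\ref{extensionball} is rotationally symmetric in the tangent variables, so by rotating we may assume we are on a two-plane containing the $x_{d+1}$-axis and reduce the subcase analysis of Lemma~\ref{uplusaw} to essentially the same computations with the dimension $d$ entering only through multiplicative constants. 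The method of continuity then deforms an arbitrary smooth strictly convex $D$ to $B(0,1)$ via $D(t) = (1-t)D + tB(0,1)$ (Lemma~\ref{deformation} is dimension-free), and one controls the Hessian signature on a large cylinder, after adding $\eps(-|x'|^2/2 + (d-1)x_{d+1}^2/(2))$ or a similar harmonic perturbation adjusted so that the trace vanishes. The Gleason--Wolff theorem (Theorem~1 of \cite{GW1991}) then prevents the signature from jumping along the deformation: if the minimum signature-distinguishing principal minor vanished at some interior point without vanishing identically, it would have to change sign in every neighbourhood, producing a strictly degenerate harmonic function and contradicting their result.

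The main obstacle I expect is in the auxiliary function and the boundary case-analysis of the ball step. In dimension two the signature had only the form $(1,2)$ and we effectively tracked a single principal minor, whereas for $d \ge 3$ the statement ``signature $(1,d)$'' involves all $d$ trailing principal minors of the $d \times d$ tangential-tangential block simultaneously, and one must find a harmonic auxiliary perturbation (replacing the simple $\eps(-x_1^2/2 - x_2^2/2 + x_3^2)$ used in (\ref{ve})) that controls \emph{all} of these minors near $(\text{int}\, D^c) \times \{0\}$ without disturbing the positive minor. A natural candidate is to add an $\eps$-small quadratic $\eps Q(x)$ with $\Delta Q = 0$ and $Q$ strictly concave in the tangential variables on a large cylinder, but verifying the analogue of Proposition~\ref{vepsilon} for this $Q$ will require a careful multi-index generalisation of the case-by-case estimates in Parts~1--3 of that proposition, and this is where the proof is most likely to resist routine adaptation.
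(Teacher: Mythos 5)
You should first note that the statement you are proving is labelled a \emph{conjecture} in the paper, and the paper does not prove it: after the informal sketch that follows Conjecture \ref{alpha1}, the author explicitly writes that it ``remains an open challenging problem.'' So there is no proof in the paper to compare against; what there is is a one-paragraph heuristic proposing exactly the programme you describe (harmonic extension to $\R^{d+1}$, constant Hessian signature via the method of continuity, and Gleason--Wolff in place of Lewy). Your proposal matches that sketch in spirit and fills in some plausible details, but it does not close the conjecture either. Two points of substance are worth flagging.

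First, your worry about having to ``track all $d$ trailing principal minors simultaneously'' is not actually an obstacle. The 2D proof in Section~6 only ever tracks the full Hessian determinant $H(u)$, never the individual minors, and the same scalar argument is available in higher dimensions: if $H(u)$ never vanishes on the connected set $\R^{d+1}\setminus(D^c\times\{0\})$, no eigenvalue crosses zero, so the signature is constant there; one reads it off at a single convenient point (far from $D$, where $u$ is close to a translate of $K$) and finds $(1,d)$. On $D\times\{0\}$ the Hessian is block-diagonal with $u_{d+1,d+1}>0$, so the tangential $d\times d$ block must carry the $d$ negative eigenvalues, i.e.\ be negative definite. The auxiliary perturbation need only keep the \emph{determinant} away from zero near $(\mathrm{int}\,D^c)\times\{0\}$, for which a single harmonic quadratic with nonsingular Hessian suffices. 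The correct analogue of $-x_1^2/2-x_2^2/2+x_3^2$ is $Q(x)=-|x'|^2/2+(d/2)\,x_{d+1}^2$ (trace zero, $\mathrm{Hess}\,Q=\mathrm{diag}(-1,\dots,-1,d)$, signature $(1,d)$); the coefficient $(d-1)/2$ you wrote gives $\Delta Q=-1\ne 0$, so that function is not harmonic. (Incidentally, the signature $(1,d-1)$ stated in the paper's sketch appears to be a typo, since for $d=2$ the paper itself establishes $(1,2)$; your $(1,d)$ is the consistent reading.)

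Second, the genuine gap --- which you only gesture at --- is the quantitative content: redoing the $C^{2,1}$ boundary analysis of Sections~3--4, the explicit ball computation of Section~5, and the $\eps$-perturbed cylinder estimates of Proposition~\ref{vepsilon} in $\R^{d+1}$, and then checking that the Gleason--Wolff theorem (whose statement and hypotheses in dimension $n\ge 4$ are more delicate than Lewy's three-dimensional result) applies to the reflected extension $u$ on $\R^{d+1}\setminus(D^c\times\{0\})$ in exactly the form the contradiction argument requires. None of those estimates is routine, none is carried out here, and until they are, what you have is the same research programme the paper already proposes, not a proof.
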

It seems that using the generalization of H. Lewy's result obtained by S. Gleason and T. Wolff \cite[Theorem 1]{GW1991} one can show this conjecture. Let $\alpha = 1$, $d \ge 3$ and $D \subset \R^d$ be a sufficiently smooth bounded convex set such that $\partial D$ has a strictly positive curvature, $\vp$ the solution of (\ref{aeq1}-\ref{aeq2}) and $u$ its harmonic extension in $\R^{d+1}$. It seems that using the method of continuity, in the similar way as in this paper, one can show that the Hessian matrix of $u$ has a constant signature $(1,d - 1)$. This implies concavity of $\vp$ on $D$. Anyway, Conjecture \ref{alpha1} remains an open challenging problem.

\begin{conjecture}
\label{alphageneral}
Let $d \ge 2$, $D \subset \R^d$ be an arbitrary bounded convex set and $\vp$ be the solution of (\ref{aeq1}-\ref{aeq2}).

a) If $\alpha \in (1,2)$ then $\vp$ is $1/\alpha$-concave on $D$.

b) If $\alpha \in (0,1)$ then $\vp$ is concave on $D$.
\end{conjecture}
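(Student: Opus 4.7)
The natural strategy is to carry the scheme of Theorem \ref{mainthm} over to general $\alpha\in(0,2)$ by replacing the classical harmonic extension (\ref{ext1}) with the Caffarelli--Silvestre extension. For $\alpha\in(0,2)$, extend $\vp$ to $\R^{d+1}_+$ via the $\alpha$-Poisson kernel $P_{\alpha}(x,y,t)\propto t^{\alpha}(|x-y|^{2}+t^{2})^{-(d+\alpha)/2}$, producing $u$ which satisfies the degenerate equation $\mathrm{div}(t^{1-\alpha}\nabla u)=0$ together with boundary conditions analogous to (\ref{harmonic})--(\ref{Dirichlet}) (with the normal derivative $u_{d+1}$ replaced by the weighted one $\lim_{t\to 0^{+}}t^{1-\alpha}u_{t}$). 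Reflect $u$ across $\{t=0\}$ in the spirit of (\ref{ext2}), absorbing the Neumann-type jump into a polynomial correction, so that the extended function solves the degenerate-elliptic equation on $\R^{d+1}\setminus(D^{c}\times\{0\})$.

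For part (b), with $\alpha\in(0,1)$, the target is strict concavity of $\vp=u|_{t=0}$ itself, so the argument can mimic Sections 3--6 step by step. First, prove boundary asymptotics of $\vp_{ij}$ and $u_{ij}$ near $\partial D$ and $\partial D\times\{0\}$ respectively, with the exponents $1/2$ and $3/2$ replaced by $\alpha/2$ and $1+\alpha/2$ (the powers dictated by the explicit ball solution $C_{B}(r^{2}-|x|^{2})^{\alpha/2}$ of Remark \ref{Remark2}). Next, prove the analogue of Proposition \ref{extensionball} on $B(0,1)$ by the same device of adding a shifted Poisson-type kernel $w(x)=P_{\alpha}(x_{1},\ldots,x_{d},x_{d+1}+c)$ with $c$ chosen so that $w_{d+1,d+1}=0$ on $\partial B\times\{0\}$. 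Finally, run the continuity deformation $D(t)=(1-t)D+tB(0,1)$ exactly as in Section 6, exploiting that the analogue of Corollary \ref{Qepsilon} remains valid with the same auxiliary perturbation $\eps(-|x'|^{2}/2+d\, x_{d+1}^{2}/2)$ (adapted to Kelvin-type symmetries of the weighted equation).

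The single irreplaceable ingredient, and the main obstacle, is a Lewy/Gleason--Wolff-type constant-rank theorem for solutions of $\mathrm{div}(|t|^{1-\alpha}\nabla u)=0$: one needs the assertion that if $H(u)$ vanishes at an interior point without vanishing identically, then $H(u)$ takes both signs near that point. For $\alpha=1$ this is exactly Theorem \ref{HL} (or its higher-dimensional extension in \cite{GW1991}), but for $\alpha\neq 1$ the extension satisfies a degenerate-elliptic equation rather than Laplace's, and the sign-change statement is genuinely open. The Gleason--Wolff proof relies crucially on real-analyticity and on orthogonality identities specific to harmonic functions, neither of which transfers verbatim to the weighted setting. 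This gap is precisely why Theorem \ref{mainthm} is proved only for $\alpha=1$ and why part (b) is stated as a conjecture.

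For part (a), with $\alpha\in(1,2)$, concavity of $\vp$ itself fails by Remark \ref{Remark3}, so one must work with $\psi=\vp^{1/\alpha}$. Two plausible routes suggest themselves. Route (i): apply the extension program above to $\psi$, which on $B(0,1)$ equals $C_{B}^{1/\alpha}(r^{2}-|x|^{2})^{1/2}$ and thus mirrors the Makar-Limanov ball computation; the equation for $\psi$ is now fully nonlinear, but the method of continuity absorbs nonlinearity once an analogue of Proposition \ref{extensionball} is established. Route (ii): bypass the extension and seek a Pr\'ekopa/Borell--Brascamp--Lieb-type inequality for $\Pp^{x}(\tau_{D}>t)$ that implies $1/\alpha$-concavity of $\vp(x)=\int_{0}^{\infty}\Pp^{x}(\tau_{D}>t)\,dt$ upon integration, paralleling Borell's treatment of Brownian motion. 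Route (i) inherits the Lewy-type obstruction discussed above; route (ii) requires discovering a new $\alpha$-entropic inequality tailored to the non-local, non-Gaussian kernel of the $\alpha$-stable semigroup, and this appears to be the deeper difficulty.
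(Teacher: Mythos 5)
This statement is stated in the paper as a \emph{conjecture}, not a theorem: the paper provides no proof of it, and indeed the preceding Remark \ref{remarkgeneral} together with the discussion after Conjecture \ref{alpha1} are the only content the paper offers. Your proposal is therefore not a proof either, and you correctly say so: what you give is a programme plus a clear diagnosis of why it does not close.

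Your diagnosis is essentially right, and it is the same diagnosis the paper implicitly gives for the $\alpha=1$, $d\geq 3$ case (Conjecture \ref{alpha1}): the entire method-of-continuity argument of Sections 3--6 funnels into a single use of a Lewy/Gleason--Wolff constant-rank theorem for the extension, and that input is only available when the extension solves Laplace's equation, i.e.\ $\alpha=1$. For $\alpha\neq 1$ the Caffarelli--Silvestre extension solves $\mathrm{div}(t^{1-\alpha}\nabla u)=0$, a degenerate equation for which no sign-change (constant-rank) theorem for the Hessian determinant is known; the Gleason--Wolff argument uses analyticity and algebraic identities particular to harmonic gradient maps and does not transfer. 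You identify this as the irreplaceable missing ingredient, and that is correct. Two small additional remarks. First, for part (a) the obstruction is compounded: even granting a constant-rank theorem for the extension, you would be applying it to the extension of $\psi=\vp^{1/\alpha}$, whose trace equation is nonlinear and whose extension is no longer of Caffarelli--Silvestre type, so the analogue of (\ref{Steklov}) becomes nonlinear in the trace; the method of continuity alone does not ``absorb'' this, because the constant-rank ingredient itself would need to be stated for the right class of functions. Second, the boundary-asymptotics programme you sketch (replacing the exponents $1/2$, $3/2$ by $\alpha/2$, $1+\alpha/2$) is plausible at the level of the ball solution of Remark \ref{Remark2}, but the estimates in Section 3 rest on the explicit Poisson kernel for a ball for the operator $(-\Delta)^{1/2}$; the corresponding kernels for general $\alpha$ have different exponents and the Lemma \ref{12harmonic} argument (that $\vp_1,\vp_2$ are singular $\alpha$-harmonic) needs the integrability threshold to be re-examined since $\vp_{ij}\sim\delta_D^{\alpha/2-2}$ near the boundary and is only locally integrable when $\alpha>2$, so the argument you would want would have to be reorganized rather than transcribed. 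None of this contradicts your conclusion that the conjecture remains open; it only sharpens the list of what would have to change.
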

\begin{remark}
\label{remarkgeneral}
For any $\alpha \in (1,2)$, $\eta \in (0,1 - 1/\alpha)$ and $d \ge 2$ there exists a bounded convex set $D \subset \R^d$ (a sufficiently narrow bounded cone) such that the solution of (\ref{aeq1}-\ref{aeq2}) is not $1/\alpha + \eta$ concave on $D$.
\end{remark}
\begin{proof}[Justification of Remarks \ref{Remark3} and \ref{remarkgeneral}]
It is clear that it is sufficient to show Remark \ref{remarkgeneral}. For any $\theta \in (0,\pi/2)$, $d \ge 2$ let
$$
D(\theta) = \{(x_1,\ldots,x_d): \, \sqrt{x_2^2 + \ldots + x_d^2} < x_1 \tan \theta, |x| < 1\}.
$$
Let $\alpha \in (0,2)$ and $\vp$ be the solution of (\ref{aeq1}-\ref{aeq2}) for $D(\theta)$. 

By \cite[Theorem 3.13, Lemma 3.7]{K1999} for any $\eps > 0$ there exists $\theta \in (0,\pi/2)$ and $c > 0$ such that 
\begin{equation}
\label{phieps}
\vp(x) \le c |x|^{\alpha - \eps}, \quad \quad x \in D(\theta).
\end{equation}
Theorem 3.13 and Lemma 3.7 in \cite{K1999} are formulated only for $d \ge 3$ but small modifications of proofs in \cite{K1999} give these results also for $d = 2$. (\ref{phieps}) for any $d \ge 2$ also follows from the recent paper \cite{BSS2014}.

Fix $d \ge 2$, $\alpha \in (1,2)$, $\eta \in (0,1-1/\alpha)$ and $\eps \in \left( 0,\frac{\alpha^2\eta}{1 + \eta \alpha}\right)$. There exists $\theta \in (0,\pi/2)$ and $c > 0$ such that the solution $\vp$ of (\ref{aeq1}-\ref{aeq2}) for $D(\theta)$ satisfies $\vp(x) \le c |x|^{\alpha - \eps}$. Fix $x_0 = (a,0,\ldots,0) \in D(\theta)$. If $\vp$ is $1/\alpha + \eta$ concave on $D(\theta)$ then for any $\lambda \in (0,1)$ we have 
$$
\vp(\lambda x_0) \ge \lambda^{\frac{\alpha}{1+\eta \alpha}} \vp(x_0) =
\lambda^{\alpha - \frac{\alpha^2\eta}{1 + \eta \alpha}} \vp(x_0).
$$
On the other hand $\vp(\lambda x_0) \le c \lambda^{\alpha - \eps} |x_0|^{\alpha - \eps}$, so 
$$
c \lambda^{\alpha - \eps} |x_0|^{\alpha - \eps} 
\ge \lambda^{\alpha - \frac{\alpha^2\eta}{1 + \eta \alpha}} \vp(x_0),
$$
which gives
$$
\lambda^{\frac{\alpha^2\eta}{1 + \eta \alpha} - \eps} 
\ge \vp(x_0) c^{-1} |x_0|^{\eps - \alpha}
$$
for any $\lambda \in (0,1)$, contradiction.
\end{proof}

We finish this section with an open problem concerning $p$-concavity of the first eigenfunction for the fractional Laplacian with Dirichlet boundary condition. 

Let $\alpha \in (0,2)$, $d \ge 1$, $D \subset \R^d$ be a bounded open set and let us consider the following Dirichlet eigenvalue problem for $(-\Delta)^{\alpha/2}$
\begin{eqnarray}
\label{eigen1}
(-\Delta)^{\alpha/2} \vp_n(x) &=& \lambda_n \vp_n(x), \quad \quad x \in D,\\ 
\label{eigen2}
\vp_n(x) &=& 0, \quad \quad \quad \quad \quad \, x \in D^c.
\end{eqnarray}
It is well known (see e.g. \cite{CS1997}, \cite{K1998}) that there exists a sequence of eigenvalues $0 < \lambda_1 < \lambda_2 \le \lambda_3 \le \ldots$, $\lambda_n \to \infty$ and corresponding eigenfunctions $\vp_n \in L^2(D)$. $\{\vp_n\}_{n=1}^{\infty}$ form an orthonormal basis in $L^2(D)$, all $\vp_n$ are continuous and bounded on $D$, one may assume that $\vp_1 > 0$ on $D$.

\vskip 2pt
{\bf{Open problem.}}  For any $\alpha \in (0,2)$, $d \ge 2$ find $p = p(d,\alpha) \in [-\infty,1]$ such that for arbitrary open bounded convex set $D \subset \R^d$ the first eigenfunction of (\ref{eigen1}-\ref{eigen2}) is $p$-concave on $D$. It is not clear whether such $p = p(d,\alpha) \in [-\infty,1]$ exists.

\vskip 2pt
Any results, even numerical, concerning this problem would be very interesting.

\vskip 5pt

{\bf{ Acknowledgements.}} 
I thank R. Ba{\~n}uelos for posing 
the problem of $p$-concavity of $E^x(\tau_D)$ for symmetric 
$\alpha$-stable processes. I also thank K.-A. Lee for interesting discussions. I am grateful for the hospitality of the Institute of Mathematics, Polish Academy of Sciences, the branch in Wroc{\l}aw, where a part of this paper was written.

\end{document}